      \def\@setcopyright{}
      \def\serieslogo@{}
\newcommand{\Complex}{\mathbb C}
\newcommand{\Real}{\mathbb R}
\newcommand{\N}{\mathbb N}
\newcommand{\ddbar}{\overline\partial}
\newcommand{\pr}{\partial}
\newcommand{\ol}{\overline}
\newcommand{\Td}{\widetilde}
\newcommand{\norm}[1]{\left\Vert#1\right\Vert}
\newcommand{\abs}[1]{\left\vert#1\right\vert}
\newcommand{\set}[1]{\left\{#1\right\}}
\newcommand{\To}{\rightarrow}
\newcommand{\R}{\mathbb{R}}
\newcommand{\C}{\mathbb{C}}
\theoremstyle{plain}
\newtheorem{theorem}{Theorem}[section]
\newtheorem{lemma}[theorem]{Lemma}
\newtheorem{corollary}[theorem]{Corollary}
\newtheorem{proposition}[theorem]{Proposition}
\newtheorem{definition}[theorem]{Definition}
\newtheorem{example}[theorem]{Example}
\numberwithin{equation}{section}
\begin{document}
\title[{$G$-equivariant embedding theorems for CR manifolds of high codimension}]
{$G$-equivariant embedding theorems for CR manifolds of high codimension}
\author[Kevin Fritsch]{Kevin Fritsch}
\address{Ruhr-Universit\"at Bochum
Fakult\"at f\"ur Mathematik, Germany}
\email{kevin.fritsch@rub.de}

\author[Hendrik Herrmann]{Hendrik Herrmann}
\address{Mathematical Institute, University of Cologne, Weyertal 86-90, 50931 Cologne, Germany}
\email{heherrma@math.uni-koeln.de or post@hendrik-herrmann.de}
\author[Chin-Yu Hsiao]{Chin-Yu Hsiao}
\address{Institute of Mathematics, Academia Sinica and National Center for Theoretical Sciences, Astronomy-Mathematics Building, No. 1, Sec. 4, Roosevelt Road, Taipei 10617, Taiwan}
\thanks{Chin-Yu Hsiao was partially supported by Taiwan Ministry of Science and Technology project 107-2115-M-001-012-MY2 and Academia Sinica Career Development
Award. }
\thanks{Kevin Fritsch was partially supported by the CRC TRR 191: ``Symplectic Structures in Geometry, Algebra and Dynamics''.}
\thanks{Kevin Fritsch and Hendrik Herrmann would like to thank the Mathematical Institute, Academia Sinica for  the hospitality,  wonderful accomodations and financial support during their visits in February/March and September/October 2018.}
\email{chsiao@math.sinica.edu.tw or chinyu.hsiao@gmail.com}

\begin{abstract}
Let $(X,T^{1,0}X)$ be a $(2n+1+d)$-dimensional compact CR manifold with codimension $d+1$, $d\geq1$, and let $G$ be 
a $d$-dimensional compact Lie group with CR action on $X$ and $T$ be a globally
defined vector field on $X$ such that $\Complex TX=T^{1,0}X\oplus T^{0,1}X\oplus\Complex T\oplus\Complex\underline{\mathfrak{g}}$, where $\underline{\mathfrak{g}}$ is the space of vector fields on $X$ induced by the Lie algebra of $G$. In this work, we show that if $X$ is strongly pseudoconvex in the direction of $T$ and $n\geq 2$, then there exists a $G$-equivariant CR embedding of $X$ into $\Complex^N$, for some $N\in\mathbb N$. We also establish a CR orbifold version of Boutet de Monvel's embedding theorem. 
\end{abstract}

\maketitle \tableofcontents

\section{Introduction and statement of the main results}\label{s-gue180710}

The embedding of CR manifolds in general is a subject with
long tradition. One paradigm is the embedding theorem of compact strongly pseudoconvex
CR manifolds of codimension one. A famous theorem of Louis Boutet de Monvel~\cite{BdM1:74b}
asserts that such manifolds can be embedded
by CR maps into the complex Euclidean space, provided the dimension of the manifold is
greater than or equal to five. 

The goal of this paper is to study 
equivariant embeddings of CR manifolds with high codimension
which admit a compact Lie group action $G$. Let us see a simple example and explain briefly our motivation. 
Let $G$ be a compact Lie group and assume that $G$ acts holomorphically on a K\"ahler manifold $(M,\omega)$, where $\omega$ denotes the K\"ahler form on $M$. Let $\mu: M \to \mathfrak{g}^*$ be a moment map induced by $\omega$.  Assume that $0 \in \mathfrak{g}^*$ is regular. Then, \emph{$\mu^{-1}(0)$ is a CR manifold and the Lie group $G$ acts on $\mu^{-1}(0)$}. The study of $G$-equivariant CR embeddability for $\mu^{-1}(0)$ is closely related to some important problems in Mathematical physics and geometric quantization theory. It should be noticed that $\mu^{-1}(0)$ is a CR manifold \emph{with high codimension in general and the action of $G$ is transversal and CR on $X$} (see Example \ref{HighCodimensionExample1}). Therefore, it is very natural to study $G$-equivariant CR embedding problems for CR manifolds with arbitrary codimension. When the codimension of $X$ is one, the problems about $G$-equivariant CR embedding and $G$-equivariant Szeg\H{o} kernels were studied in~\cite{HHL16},~\cite{HLM16} ~\cite{HHL17} and~\cite{Hsiao18}.
In this paper, we consider a more general situation; we do not assume that $G$ is transversal. 
 We consider a $(2n+1+d)$-dimensional compact CR manifold $(X,T^{1,0}X)$ with codimension $d+1$, $d\geq1$, and let $G$ be a $d$-dimensional compact Lie group with a locally free CR action on X. Let $T$ be a globally
defined vector field on $X$ ($T$ is not necessarily CR) such that $\Complex TX=T^{1,0}X\oplus T^{0,1}X\oplus\Complex T\oplus\Complex\underline{\mathfrak{g}}$ holds, where $\underline{\mathfrak{g}}$ is the space of vector fields on $X$ induced by the Lie algebra of $G$. In this work, we show that if $X$ is strongly pseudoconvex in the direction of $T$ and $n\geq 2$, then there exists a $G$-equivariant CR embedding of $X$ into $\Complex^N$ for some $N\in\mathbb N$. Since the action $G$ is locally free  $X/G$ is a strongly pseudoconvex CR orbifold (see Definition~\ref{d-gue181015} for the definition of CR orbifolds). Actually, every compact strongly pseudoconvex CR orbifold can be obtained in this way (see Theorem~\ref{t-gue181015}). As an application of our result we establish a CR orbifold version of Boutet de Monvel's embedding theorem which is interesting in itself because it plays an important role in orbifold geometry. 

We now formulate our main result. We refer the reader to Section~\ref{s:prelim} for some standard notations and terminology used here.
Let $(X, T^{1,0}X)$ be a $(2n+1+d)$-dimensional compact and orientable CR manifold of codimension $d+1$, $d\geq1$, where $T^{1,0}X$ is a CR structure on $X$, that is, $T^{1,0}X$ is a subbundle of rank $n$ of the complexified tangent bundle $\mathbb{C}TX$, satisfying $T^{1,0}X\cap T^{0,1}X=\{0\}$, where $T^{0,1}X=\overline{T^{1,0}X}$, and $[\mathcal V,\mathcal V]\subset\mathcal V$, where $\mathcal V=C^\infty(X, T^{1,0}X)$. In this work, we assume that $X$ admits a group action of a $d$-dimensional compact Lie group $G$.  We assume throughout that  this action is CR (see Definition~\ref{d-gue180710}). Let $T\in C^\infty(X,TX)$ be a global 
defined vector field such that 
 \[\Complex TX=T^{1,0}X\oplus T^{0,1}X\oplus\Complex T\oplus\Complex\underline{\mathfrak{g}},\]
  where $\underline{\mathfrak{g}}$ is the space of vector fields on $X$ induced by the Lie algebra of $G$. Let $\omega_0(x)\in C^\infty(X,T^*X)$ be the globally real one form on $X$ such that 
\[
\begin{split}
&\langle\,\omega_0\,,\,V\,\rangle=0,\ \ \forall V\in T^{1,0}X\oplus T^{0,1}X\oplus\Complex\underline{\mathfrak{g}}, \\
&\langle\,\omega_0\,,\,T\,\rangle=-1\ \ \mbox{on $X$}.
\end{split}\]
For $x\in X$, let $\mathcal{L}_{\omega_0,x}$ be the Levi form with respect to $\omega_0$ at $x$ (see Definition~\ref{d-gue180710I}). We say that $X$ is strongly pseudoconvex in the direction of $T$ if the Levi form $\mathcal{L}_{\omega_0,x}$ is positive definite at every point $x$ of $X$. In Section~\ref{Sec:Examples}, we give several examples to motivate the study of high codimension CR manifolds.

We now introduce the concept of $G$-finite smooth (CR) functions. Let 
\[\mathcal{R}=\set{\mathcal{R}_m;\, m=1,2,\ldots}\]
denote the set of all irreducible unitary representations of the group $G$, including only one representation from each equivalence class (see the discussion in the beginning of Section~\ref{s-gue180710mI}). 
For each $\mathcal{R}_m$, we  write $\mathcal{R}_m$ as a matrix $\left(\mathcal{R}_{m,j,k}\right)^{d_m}_{j,k=1}$, where $d_m$ is the dimension of $\mathcal{R}_m$. Fix a Haar measure $d\mu(g)$ on $G$ so that $\int_Gd\mu(g)=1$. Take an irreducible unitary representation $\mathcal{R}_m$, for every $g\in G$, put 
\[\chi_m(g):={\rm Tr\,}\left(\mathcal{R}_{m,j,k}(g)\right)^{d_m}_{j,k=1}=\sum^{d_m}_{j=1}\mathcal{R}_{m,j,j}(g).\]
Let $u\in C^\infty(X)$ be a smooth function. The $m$-th Fourier component of $u$ is given by 
\[u_m(x):=d_m\int_G (g^*u)(x)\ol{\chi_m(g)}d\mu(g)\in C^\infty(X)\]
(see Definition~\ref{d-gue180712cm}). For every $m\in\mathbb N$, put 
\[C^\infty_m(X):=\set{f\in C^\infty(X);\, \mbox{there is a $F\in C^\infty(X)$ such that $f=F_m$ on $X$}}.\]
We will show in Corollary~\ref{c-gue180716ap} that $u$ lies in $ C^\infty_m(X)$ if and only if $u=u_m$ holds on $X$. Given a smooth function $f\in C^\infty(X)$, we say that $f$ is a $G$-finite smooth function on $X$ if $f=\sum^K_{j=1}f_j$ with $K\in\mathbb N$ and $f_j\in C^\infty_{m_j}(X)$, for some $m_j\in\mathbb N$, $j=1,\ldots,K$. Let $C^\infty_G(X)$ be the set of all $G$-finite smooth functions on $X$. Note that by construction we have that \(\text{span}_{\C}Gf\) is finite dimensional for any \(f\in C^\infty_G(X)\). 

Let $\ddbar_b: C^\infty(X)\To\Omega^{0,1}(X)$ be the tangential Cauchy-Riemann operator (see \eqref{e-gue180710m}). Put 
\begin{equation}\label{e-gue180803mpy}
H^0_b(X):=\set{u\in C^\infty(X);\, \ddbar_bu=0}
\end{equation}
and set 
\begin{equation}\label{e-gue180803mpyI}
H^0_{b,G}(X):=H^0_b(X)\bigcap C^\infty_G(X).
\end{equation}
For a smooth function $u\in C^\infty(X)$, we say that $u$ is a $G$-finite smooth CR function if $u\in H^0_{b,G}(X)$. The main result of this work is the following.

\begin{theorem}\label{t-gue180803m}
Let $(X, T^{1,0}X)$ be a $(2n+1+d)$-dimensional compact and orientable CR manifold of codimension $d+1$, $d\geq1$. Assume that $X$ admits a CR action of a $d$-dimensional compact Lie group $G$.  Let $T$ be a globally
defined vector field on $X$ such that $\Complex TX=T^{1,0}X\oplus T^{0,1}X\oplus\Complex T\oplus\Complex\underline{\mathfrak{g}}$, where $\underline{\mathfrak{g}}$ is the space of vector fields on $X$ induced by the Lie algebra of $G$. If $X$ is strongly pseudoconvex in the direction of $T$ and $n\geq 2$, then we can find $G$-finite smooth CR functions $f_j\in H^0_{b,G}(X)$, $j=1,2,\ldots,N$, $N\in\mathbb N$, such that the map
\[\begin{split}
F\colon X&\To\Complex^N\\
x&\mapsto (f_1(x),\ldots,f_N(x))
\end{split}\]
is an embedding. 
\end{theorem}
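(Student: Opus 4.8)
\medskip
\noindent\textbf{Proof strategy.}
The theorem reduces at once to the \emph{unequivariant} assertion that $X$ admits \emph{some} CR embedding into a complex Euclidean space. Suppose $g_{1},\dots,g_{N}\in H^{0}_{b}(X)$ are smooth CR functions for which $(g_{1},\dots,g_{N})\colon X\To\Complex^{N}$ is an embedding. Expanding each $g_{j}=\sum_{m}(g_{j})_{m}$ in its Fourier components for the $G$-action and using standard Peter--Weyl theory for the smooth Fr\'echet $G$-module $C^{\infty}(X)$ (so that these partial sums converge in every $C^{k}$-norm), the truncations $g^{(M)}_{j}:=\sum_{m\le M}(g_{j})_{m}$ are $G$-finite smooth CR functions with $g^{(M)}_{j}\to g_{j}$ in $C^{1}(X)$. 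Since being an embedding is an open condition in $C^{1}(X,\Real^{2N})$ and $X$ is compact, $(g^{(M)}_{1},\dots,g^{(M)}_{N})$ is an embedding for large $M$; with $f_{j}:=g^{(M)}_{j}\in H^{0}_{b,G}(X)$ this is the statement, and since $\text{span}_{\Complex}\bigl(\bigcup_{j}Gf_{j}\bigr)$ is a finite dimensional $G$-subrepresentation of $H^{0}_{b}(X)$, replacing the $f_{j}$ by a basis of it turns the embedding into a $G$-equivariant one.

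It remains to prove a Boutet de Monvel--type CR embedding result for the manifolds in the hypothesis, and I would do this through the Szeg\H{o} projection $S\colon L^{2}(X)\To\Ker\ddbar_{b}\cap L^{2}(X)$ (relative to a $G$-invariant volume density). The first step is closed range of $\ddbar_{b}$ on functions and on $(0,1)$-forms: the plan is to prove a subelliptic estimate with a $\tfrac12$-gain of regularity in the directions $T^{1,0}X\oplus T^{0,1}X\oplus\Complex T$, coming from positivity of the Levi form $\mathcal{L}_{\omega_{0}}$, while the $d$ directions spanned by $\underline{\mathfrak{g}}$ are controlled using compactness of $G$; the hypothesis $n\ge 2$ is what makes the estimate on $(0,1)$-forms work, exactly as in Kohn's proof in codimension one. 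Closed range makes $S$ well defined, and since the $G$-action is CR, $S$ commutes with the $G$-representation; hence $S$ preserves each $C^{\infty}_{m}(X)$ and the operator $S_{m}\colon u\mapsto(Su)_{m}$ is the orthogonal projection onto $\overline{H^{0}_{b}(X)\cap C^{\infty}_{m}(X)}$.

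The geometric heart is the structure of $S$, or of the $S_{m}$. Adapting the Boutet de Monvel--Sj\"ostrand parametrix, one expects $S$ to be a complex Fourier integral operator whose wavefront set lies over the ray bundle $\{-t\omega_{0}(x):t>0\}$ and whose canonical relation also records the $G$-action, so that the kernel $S(x,y)$ is smooth off the set $\{x\in Gy\}$ and there has a model $\int_{0}^{\infty}e^{it\Psi(x,y)}a(x,y,t)\,dt$ with $\operatorname{Im}\Psi\ge 0$ and a leading symbol governed by $\det\mathcal{L}_{\omega_{0}}$; integrating against $\overline{\chi_{m}}$ converts this into an asymptotic description of $S_{m}(x,y)$ whose leading term is nonvanishing on the diagonal for all large $m$. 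From such expansions one produces, near any $x_{0}\in X$, finitely many CR functions --- built from $x\mapsto S(x,q)$, its derivatives in $q$, and their components $S_{m}(x,q)$, with $q$ near $x_{0}$ and $m$ ranging over sufficiently many representations (including the trivial one) --- whose differentials at $x_{0}$ separate the vectors of $T_{x_{0}}X$, giving a local CR immersion there, and which separate nearby points of $X$, the dependence on the representation $\mathcal{R}_{m}$ being exactly what distinguishes points lying on a single $G$-orbit. The strict positivity of $\operatorname{Im}\Psi$ off the diagonal gives the smallness of $|S(x,y)|$ for $x\notin Gy$ needed to upgrade this to global injectivity by the standard covering argument over the compact set of not-yet-separated pairs of points, and truncating Fourier series as in the first paragraph produces the $f_{j}$.

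The main difficulty lies in the second and third steps in the degenerate geometry at hand: establishing closed range of $\ddbar_{b}$, and especially the Boutet de Monvel--Sj\"ostrand parametrix for $S$, when strict pseudoconvexity holds only in the single direction $T$ while the remaining $d$ transversal directions come from the (locally free) $G$-action. Producing the correct mixed elliptic/flat subelliptic estimate and the Szeg\H{o} parametrix here, and verifying that the leading symbol is genuinely nondegenerate --- which is precisely where ``$n\ge 2$'' and ``strongly pseudoconvex in the direction of $T$'' are used essentially --- is the technical core; by comparison the reduction to $G$-finite functions and the passage from a local to a global embedding are soft.
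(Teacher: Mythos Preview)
Your first paragraph is correct and is a useful reduction: Fourier components of CR functions are CR (since $\ddbar_b$ commutes with $g^*$), the partial sums converge in $C^\infty$ by Peter--Weyl, and being an embedding is $C^1$-open on a compact manifold. So the theorem does reduce to producing \emph{some} smooth CR embedding of $X$.

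Your proposed route to that embedding, however, diverges from the paper and has a gap. You want closed range and a Boutet de Monvel--Sj\"ostrand parametrix for the \emph{full} Szeg\H{o} projection $S$. The obstacle is that $\Box_b$ is \emph{not} subelliptic on this $X$: its characteristic variety contains not only the ray $\{-t\omega_0\}$ but also the $d$ codirections dual to $\underline{\mathfrak{g}}$, so there is no reason for $S$ to be an FIO with the clean canonical relation you describe, and ``the $d$ directions are controlled using compactness of $G$'' hides exactly the missing argument. The paper never analyses the full $S$. It decomposes into Fourier components first and observes that each $T_j$ acts as a \emph{bounded} operator on $\Omega^{0,q}_m(X)$, so that on each component $\Box^{(q)}_{b,m}$ inherits the subelliptic estimate of the augmented operator $\hat\Box_b^{(q)}=\Box_b^{(q)}+\sum_j T_j^*T_j$ and therefore has closed range; this yields the soft estimate $\|(I-S_m)u\|_{C^\ell}\le C_m\|\ddbar_b u\|_{C^K}$, and nothing further about $S$ is needed.

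The paper also uses no parametrix. Instead of reading off peak sections from the Szeg\H{o} kernel, it builds them by hand: near each $p$ it constructs a phase $\varphi$ with $\ddbar_b\varphi$ vanishing to infinite order at $p$ and $\operatorname{Im}\varphi\ge c|x|^2$, sets $\tilde f_k=k^{d/2}e^{ik\varphi}\chi\,\tau(\sqrt{k}\,x''/\log k)$, averages over $G$ against a chosen $G$-finite weight $b\in C^\infty_G(G)$ to get $\hat f_k\in C^\infty_G(X)$ with $\ddbar_b\hat f_k=O(k^{-\infty})$, and corrects by $S$; since $\hat f_k$ is already $G$-finite, $S\hat f_k$ is a finite sum $\sum_j S_{m_j}g_{j,k}$ and only the componentwise estimate above is invoked. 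Varying $b$ (in particular its values on the finite isotropy $N_p$ versus its complement) and inserting amplitudes $\beta_j\approx z_j,\,x_{2n+1}$ produces CR functions that separate points on and off the $G$-orbit and give injective differential---this is what makes your ``dependence on the representation $\mathcal{R}_m$'' precise---and a covering argument finishes. So your reduction is sound, but the heavy FIO machinery you propose is both unestablished in this geometry and unnecessary; the Fourier decomposition is not post-processing but the device that makes the analysis work.
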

To obtain a \(G\)-equivariant CR embedding we need to show that \(F\) in Theorem~\ref{t-gue180803m} can be chosen equivariant (see Lemma~\ref{t-gue180807}) in a way that \(F(X)\) becomes a CR submanifold of \(\Complex^N\) (see Theorem~\ref{thm:CREmbedding}). For the general high codimension case, this is not obvious. 

We have a $G$-action on $H^0_b(X)$ given by $gf(x) := f(g^{-1}x)$. 
One may verify that if $f \in H^0_{b,G}(X)$, then the $G$-orbit through $f$ is contained in a finite-dimensional subspace.
We have a $G$-action on the dual $H^0_b(X)^*$ also given by $g\lambda(f) = \lambda(g^{-1}f)$.
Now if $F \colon X \rightarrow \Complex^N$, $x \mapsto (f_1 (x), \ldots, f_N(x))$ is an embedding with $f_i \in H^0_{b,G}(X)$, we define 
$W:={\rm span\,}\set{\bigcup^N_{j=1}Gf_j}$
as the smallest finite-dimensional space in $H^0_b(X)$ containing all $G$-orbits through the $f_i$.
We claim that the CR map 
\begin{eqnarray*}
\hat{F} \colon X &\To& W^* \\
x &\mapsto& ( h \mapsto h(x))
\end{eqnarray*}
is a $G$-equivariant embedding. The equation $gx \mapsto (h \mapsto h(gx)) = (h \mapsto (g^{-1}h) (x)) = g(h \mapsto h(x))$ shows the equivariance. Now let $h_1,...,h_M$ be a basis for $W$ and $h_1^*, \ldots, h_M^*$ be the dual basis.
We have $\hat{F}(x) = \sum_i \alpha_i(x) h_i^*$ with $h_j(x) = \varphi(x) (h_j) = \sum_i \alpha_i(x) h_i^* (h_j) = \alpha_j (x)$.
From this discussion, we get the following $G$-equivariant embedding result.
\begin{lemma}\label{t-gue180807}
With the same assumptions and notations used in Theorem~\ref{t-gue180803m}, the $G$-equivariant CR map $\hat F: X\To\Complex^{M}$ is an embedding. 
\end{lemma}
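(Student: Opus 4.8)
The plan is to deduce the embedding property of $\hat F$ directly from that of the map $F=(f_1,\dots,f_N)$ supplied by Theorem~\ref{t-gue180803m}, exploiting that $\hat F$ is, up to one fixed linear map, an ``enlargement'' of $F$. First I would record that $W={\rm span\,}\set{\bigcup^N_{j=1}Gf_j}$ is finite dimensional: each $f_j$ lies in $C^\infty_G(X)$, so ${\rm span\,}_\Complex Gf_j$ is finite dimensional by construction, and $W$ is the sum of these $N$ subspaces; hence $M:=\dim W<\infty$ and $W^*\cong\Complex^M$. With a basis $h_1,\dots,h_M$ of $W$ and the dual basis $h_1^*,\dots,h_M^*$ fixed as above, $\hat F$ reads, in these coordinates, $x\mapsto(h_1(x),\dots,h_M(x))\in\Complex^M$.

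Next I would set up the factorization through $F$. Since $f_1,\dots,f_N\in W$, there are constants $c_{jk}\in\Complex$ with $f_j=\sum^M_{k=1}c_{jk}h_k$ for $j=1,\dots,N$. Let $A=(c_{jk})$ be the associated $N\times M$ matrix; identifying $W^*$ with $\Complex^M$ via the dual basis, this gives $F=A\circ\hat F$ as maps $X\To\Complex^N$.

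From here everything is routine. If $\hat F(x)=\hat F(y)$, then $F(x)=A\hat F(x)=A\hat F(y)=F(y)$, so $x=y$ since $F$ is injective; thus $\hat F$ is injective. If $d\hat F_x(v)=0$ for some tangent vector $v$, then $dF_x(v)=A\,d\hat F_x(v)=0$, so $v=0$ since $F$ is an immersion; thus $\hat F$ is an immersion. Because $X$ is compact and $\hat F$ is a continuous injection into the Hausdorff space $\Complex^M$, it is automatically a homeomorphism onto its image, so $\hat F$ is an embedding. Moreover, each component $h_k$ of $\hat F$ lies in $W\subset H^0_b(X)$, hence $\ddbar_b h_k=0$ and $\hat F$ is a CR map, while its $G$-equivariance was already checked in the paragraph preceding the statement.

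There is no genuine obstacle here: the analytic substance is entirely contained in Theorem~\ref{t-gue180803m}, and the present lemma is a formal consequence of enlarging the target to the $G$-invariant space $W^*$. The only items needing a little care are the finite dimensionality of $W$ (so that $\hat F$ really maps into some $\Complex^M$) and the fact that the new components $h_k$ are still CR; both are immediate from the preceding constructions.
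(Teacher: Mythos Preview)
Your proof is correct and follows essentially the same approach as the paper: the paper's discussion preceding the lemma computes that in the dual-basis coordinates $\hat F(x)=(h_1(x),\dots,h_M(x))$ and then leaves implicit exactly the factorization $F=A\circ\hat F$ through a linear map that you have spelled out. Your version simply makes explicit the injectivity, immersion, and compactness steps that the paper's phrase ``From this discussion, we get the following $G$-equivariant embedding result'' leaves to the reader.
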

In general, given an arbitrary real submanifold \(X'\) of \(\Complex^{M}\) it is not true that \(X'\) is a CR  submanifold in the sense that  \(\C TX'\cap T^{1,0}\C^M\) defines a CR structure on \(X'\).
Since the embedding \(\hat{F}\) is a CR map, we find \(d\hat{F}(T^{1,0}X)\subset T^{1,0}\C^M\) and hence \(d\hat{F}(T^{1,0}X)\) defines a CR structure on \(\hat{F}(X)\) which is contained in \(\C T\hat{F}(X)\cap T^{1,0}\C^M\). The following theorem shows that the $G$-equivariant embedding \(\hat{F}\) can be chosen to be a CR embedding, that is, \(\hat{F}(X)\) is CR submanifold of \(\C^M\) and its induced CR structure  coincides with \(d\hat{F}(T^{1,0}X)\) (see Section~\ref{Sec:InducedCRStructures}).

\begin{theorem}\label{thm:CREmbedding}
With the same assumptions and notations used in Theorem~\ref{t-gue180803m}, there exists a \(G\)-equivariant CR embedding $ \hat{F}: X\To\Complex^{M}$ of \(X\) into \(\C^M\) for some \(M\in\N\). In particular, \(\hat{F}\) is a smooth embedding and \(\hat{F}(X)\) is a CR submanifold of \(\C^M\) with \[d\hat{F}\left(T^{1,0}X\right)=\C T\hat{F}(X)\cap T^{1,0}\C^N\]
where \(d\hat{F}\) denotes the differential of the map \(\hat{F}\), \(\C T\hat{F}(X)\) the complexified tangent space of the submanifold \(\hat{F}(X)\) and \(T^{1,0}\C^N\) the standard complex structure on \(\C^N\).
\end{theorem}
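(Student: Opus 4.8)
The plan is to refine the family of $G$-finite CR functions produced by Theorem~\ref{t-gue180803m} so that, in addition to giving an embedding, their differentials span at every point of $X$ the annihilator of $T^{0,1}X$ in $\C T^*X$, and then to pass to the $G$-equivariant map by the construction described before Lemma~\ref{t-gue180807}. The key reformulation is this. Let $f_1,\dots,f_M\in H^0_b(X)$ and $F=(f_1,\dots,f_M)\colon X\To\C^M$. For $\xi\in\C T_xX$ one has $dF_x(\xi)\in T^{1,0}_{F(x)}\C^M$ if and only if $\overline\xi(f_j)=0$ for all $j$, i.e.\ $\overline\xi\in\bigcap_j\Ker\big(df_j(x)\big)$. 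Hence, if for every $x\in X$ the covectors $df_1(x),\dots,df_M(x)$ span the $(n+1+d)$-dimensional annihilator $(T^{0,1}_xX)^{\perp}\subset\C T^*_xX$, then $\bigcap_j\Ker\big(df_j(x)\big)=T^{0,1}_xX$, so $dF_x(\xi)\in T^{1,0}$ forces $\xi\in T^{1,0}_xX$; combined with the CR condition $dF_x(T^{1,0}_xX)\subset T^{1,0}$ this gives
\[
dF_x\big(\C T_xX\big)\cap T^{1,0}_{F(x)}\C^M=dF_x\big(T^{1,0}_xX\big),
\]
a subbundle of constant rank $n$. Such an $F$ is automatically an immersion (taking conjugates, the $df_j(x)$ and $\overline{df_j(x)}$ together span $\C T^*_xX$), so if it is moreover injective it is a CR embedding realizing $dF(T^{1,0}X)$ as the induced CR structure. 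The spanning condition is the crucial extra input beyond immersion: as noted before the statement, in higher codimension a CR map that is a smooth embedding need not be a CR embedding.

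It therefore suffices to produce finitely many $f_1,\dots,f_M\in H^0_{b,G}(X)$ which form an injective map to $\C^M$ and whose differentials span $(T^{0,1}_xX)^{\perp}$ at every $x\in X$. Injectivity is free, since Theorem~\ref{t-gue180803m} already provides an embedding family in $H^0_{b,G}(X)$ and enlarging the family preserves injectivity. For the spanning condition I would use compactness to reduce to the pointwise statement: \emph{for each $x_0\in X$ there exist $h_1,\dots,h_{n+1+d}\in H^0_{b,G}(X)$ with $dh_1(x_0),\dots,dh_{n+1+d}(x_0)$ linearly independent in $\C T^*_{x_0}X$.} Granting this, the differentials stay independent on a neighbourhood $U_{x_0}$ of $x_0$ by continuity; one covers $X$ by finitely many $U_{x_0}$ and lets $f_1,\dots,f_M$ be the union of all the corresponding $h_i$ together with an embedding family from Theorem~\ref{t-gue180803m}. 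At every $x$ the differentials then span an $(n+1+d)$-dimensional subspace of $(T^{0,1}_xX)^{\perp}$ — hence all of it, the inclusion being automatic as the $f_j$ are CR — and $F=(f_1,\dots,f_M)$ is injective.

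The pointwise statement is the heart of the matter, and I expect it to need the same analytic input as Theorem~\ref{t-gue180803m} — the $G$-equivariant Szeg\H{o} kernel asymptotics — together with the hypotheses $n\ge2$ and strong pseudoconvexity in the direction of $T$. I would construct, near $x_0$, $G$-finite CR functions sharply localized at $x_0$ in prescribed isotypic components $C^\infty_m(X)$, and read off their $1$-jets at $x_0$. Their differentials at $x_0$ must exhaust: (i) a basis of $(T^{1,0}_{x_0}X)^{*}$ — the Boutet de Monvel part, where $n\ge 2$ and positivity of $\mathcal L_{\omega_0}$ enter through closed range and Hodge theory for $\ddbar_b$; (ii) the covector dual to $T$ — here the normalization $\langle\omega_0,T\rangle=-1$ together with positivity of $\mathcal L_{\omega_0}$ makes $T$ the ``phase'' direction of the localized CR functions, so that $Th(x_0)\ne0$ can be arranged; and (iii) for each generator $\mathcal X$ of $\underline{\mathfrak{g}}$ a function $h$ with $\mathcal X h(x_0)\ne 0$. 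Step (iii) is where I expect the main obstacle, precisely because $G$ is not assumed transversal: one must choose the localized functions in a \emph{nontrivial} irreducible representation $\mathcal R_m$ and vary the matrix coefficient, so that $\mathcal X$ acts on the $G$-orbit through $d\mathcal R_m$ and cannot annihilate every such function, forcing a nonzero orbit-derivative at $x_0$ once the action is locally free; and directions (ii) and (iii) must be controlled simultaneously. A short linear-algebra argument then extracts $n+1+d$ of these functions with linearly independent differentials at $x_0$.

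Finally, given a finite family $f_1,\dots,f_M\in H^0_{b,G}(X)$ with these two properties, I would conclude as in the discussion preceding Lemma~\ref{t-gue180807}: set $W:=\operatorname{span}\big\{\bigcup_j Gf_j\big\}$, which is finite-dimensional because each $f_j$ is $G$-finite, and define $\hat F\colon X\To W^{*}$ by $\hat F(x)=(h\mapsto h(x))$. Then $\hat F$ is $G$-equivariant and CR; it is injective since $W\supseteq\{f_1,\dots,f_M\}$ separates points; and, writing $\hat F=(h_1,\dots,h_M)$ for a basis $h_1,\dots,h_M$ of $W$, the covectors $\{dh_i(x)\}_i$ span the same subspace of $\C T^*_xX$ as $\{dh(x):h\in W\}$, which lies in $(T^{0,1}_xX)^{\perp}$ (every $h\in W$ is CR) and contains $\{df_j(x)\}_j$, hence equals $(T^{0,1}_xX)^{\perp}$. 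By the reformulation above, $\hat F$ is then a $G$-equivariant CR embedding of $X$ into $\C^M$ with $d\hat F(T^{1,0}X)=\C T\hat F(X)\cap T^{1,0}\C^M$.
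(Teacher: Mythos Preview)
Your approach is correct and, in fact, somewhat more direct than the paper's. The pointwise statement you isolate --- that at each $x_0\in X$ there exist $n+1+d$ functions in $H^0_{b,G}(X)$ with linearly independent differentials at $x_0$ --- is exactly what is established in Section~\ref{s-gue180722} en route to Theorem~\ref{t-gue180803m}: the functions $f^{(1)}_p,\dots,f^{(n+1)}_p,H^{(1)}_p,\dots,H^{(d)}_p$ of \eqref{e-gue180802raI} do the job by \eqref{e-gue180802ra}, and your step~(iii) is precisely Theorem~\ref{t-gue180802m}, where the ``varying the matrix coefficient'' idea appears as the choice of $\alpha_\ell\in C^\infty_G(G)$ satisfying \eqref{e-gue180802II}. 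So there is no gap; you have simply reorganized the output of Section~\ref{s-gue180722} around the dual criterion ``$\{df_j(x)\}$ spans $(T^{0,1}_xX)^\perp$'' and then passed to the equivariant $\hat F$ as in Lemma~\ref{t-gue180807}.

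The paper's own proof (Theorem~\ref{t-gue181015I}) takes a different route: rather than arranging the spanning of $(T^{0,1})^\perp$ directly, it adjoins to the equivariant embedding $F$ a $G$-\emph{invariant} CR map $E$ (coming from the orbifold analysis of Section~\ref{s-gue180920}) whose differential is injective on $\hat H_xX=\operatorname{span}\{H_xX,T(x)\}$, and then checks $\dim_\R(T_yH(X)+iT_yH(X))=2n+2d+2$ by observing that the $G$-representation on the target complexifies, so $\dim_\R\mathfrak g^\C y=2d$, and the remaining two real dimensions are forced by the injectivity of $dE$ on $\hat H_xX$. What each buys: the paper's argument cleanly separates the $\underline{\mathfrak g}$-directions (handled by complexifying the linear $G$-action on $\C^m$) from the $T$-direction (handled by $E$), and reuses the $G$-invariant peak functions of Section~\ref{s-gue180920}; your argument avoids the detour through the orbifold section and the $G^\C$-orbit count, at the cost of invoking the full set of $H^{(\ell)}_p$ from Section~\ref{s-gue180722} and carrying the spanning condition through the passage to $W^*$.
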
 

Before we state our results on  CR orbifold embeddings let us say some words on the importance of the positivity assumption in Theorem~\ref{t-gue180803m}. That assumption is roughly speaking the existence of a non vanishing real one form \(\omega_0\) with \(\omega_0(T^{1,0}X\oplus T^{1,0}X)=0\) such that
\begin{itemize}
	\item[(i)] \(-\frac{1}{2i}d\omega_0\) induces a Hermitian metric on \(T^{1,0}X\),
	\item [(ii)] \(\underline{\mathfrak{g}}\) is annihilated by \(\omega_0\), that is, \(\underline{\mathfrak{g}}\subset \ker\omega_0\).
\end{itemize}
The following nonembeddable example shows that these conditions are important.
\begin{example}
	Let \(X_1=S^3\) be the 3-sphere together with a CR structure \(T^{1,0}X_1\) such that \((X_1,T^{1,0}X_1)\) is not realizable as CR submanifold of the euclidean space (see \cite{Burns02} and also \cite{Burns}, \cite{Jacobowitz}, \cite{Rossi}) and let \((X_2,T^{1,0}X_2)\) be a strongly pseudoconvex CR manifold of codimension one with a transversal CR \(S^1\)-action. Consider the CR manifold \((X,T^{1,0}X)\) of codimension two given by \(X=X_1\times X_2\) and \(T^{1,0}X=T^{1,0}X_1\oplus T^{1,0}X_2\).
	We have that \(X\) admits a CR \(S^1\) action. But \(X\) is not CR embeddable into some \(\C^N\) since \(X_1\) is not CR embeddable.
	Let us see which of the previous assumptions fails to be satisfied.
	We can choose a non-vanishing real one form \(\omega_1\in\Omega^1(X_1)\) with \(\omega_1(T^{1,0}X_1)=0\) and let \(T_1\) be a vector field with \(\omega_1(T_1)=-1\). Let \(T_2\) be the vector field induced by the transversal CR \(S^1\) action on \(X_2\) and \(\omega_2\) the unique real one form defined by \(\omega_2(T_2)=-1\) and \(\omega_2(T^{1,0}X_2)=0\).
	We can identify \(T_1,T_2,\omega_1,\omega_2\) with vector fields and one forms on \(X=X_1\times X_2\) in a natural way. Let \(\omega_0\in\Omega^1(X)\) be a real one form with \(T^{1,0}X\oplus T^{0,1}X\subset \ker \omega_0\). Since \(\omega_0(T^{1,0}X)=0\) we can write \(\omega_0=a\omega_1+b\omega_2\) for smooth functions \(a,b\in C^\infty (X,\R)\). Assuming positivity of \(-\frac{1}{2i}d\omega_0\) (Assumption~(i)) we find that \(a,b>0\) must hold. The assumption \(\omega_0(T_2)=0\) (Assumption~(ii)) leads to \(b=0\). This shows that both assumptions cannot be satisfied at the same time in this example.
\end{example} 

In Section~\ref{s-gue181015}, we introduce the notion of CR orbifolds and study some basic properties of CR orbifolds. In Section~\ref{s-gue180920}, we establish a CR orbifold version of Boutet de Monvel's embedding theorem. 
\begin{theorem}\label{t-gue180922m}
Under the assumptions used in Theorem~\ref{t-gue180803m}, the CR orbifold $X/G$ can be CR embedded into \(\C^N\), for some \(N\in\N\). 
\end{theorem}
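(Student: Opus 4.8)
The plan is to deduce the orbifold embedding from the $G$-equivariant CR embedding of $X$ furnished by Theorem~\ref{thm:CREmbedding}, by passing to holomorphic invariants. Fix a $G$-equivariant CR embedding $\hat F=(h_1,\dots,h_M)\colon X\to\C^M$ as in Theorem~\ref{thm:CREmbedding}, where $G$ acts on $\C^M$ through a unitary representation (average an inner product on $W$; equivalently decompose $W$ into unitary irreducibles). Since $G$ acts by complex-linear maps, the action extends to a holomorphic action of the complexification $G_{\C}$, and by Hilbert's finiteness theorem the ring $\C[z_1,\dots,z_M]^{G_{\C}}$ of $G$-invariant holomorphic polynomials is generated by finitely many $p_1,\dots,p_N$. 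Put $\Psi_j:=p_j\circ\hat F$. Because $\hat F$ is CR and each $p_j$ is holomorphic, $\ddbar_b\Psi_j=0$, so $\Psi_j\in H^0_b(X)$; because $\hat F$ is $G$-equivariant and $p_j$ is $G$-invariant, $\Psi_j$ is $G$-invariant. As a CR function on the CR orbifold $X/G$ is precisely a $G$-invariant CR function on $X$ (by Definition~\ref{d-gue181015}), the map $\Psi=(\Psi_1,\dots,\Psi_N)\colon X\to\C^N$ descends to a CR morphism $\overline\Psi\colon X/G\to\C^N$ of CR orbifolds; it remains to see $\overline\Psi$ is a CR embedding.

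For injectivity we must show the $p_j$ separate the $G$-orbits on $\hat F(X)$, and this is exactly where the positivity hypothesis, in the guise of $\underline{\mathfrak{g}}\subset\ker\omega_0$, is used: this condition says the $G$-orbits in $X$ are tangent to $\ker\omega_0$, i.e.\ $X$ is the CR analogue of a zero level set of a moment map, while $-\tfrac1{2i}d\omega_0>0$ on $T^{1,0}X$ is the residual (Reeb-direction) positivity. I would use this to arrange the equivariant embedding so that $\hat F(X)$ lies in the zero set $\mu^{-1}(0)$ of the moment map $\mu\colon\C^M\to\mathfrak{g}^*$ of the linear $G$-action, either by choosing $\hat F$ compatibly with $\omega_0$ (possible by strong pseudoconvexity) or, if necessary, by correcting $\hat F$ within $H^0_{b,G}(X)$ and applying a $G_{\C}$-equivariant modification into $\mu^{-1}(0)$. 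Once $\hat F(X)\subset\mu^{-1}(0)$, Kempf--Ness theory gives that every $G_{\C}$-orbit meeting $\mu^{-1}(0)$ is closed and meets $\mu^{-1}(0)$ in a single $G$-orbit, and distinct closed $G_{\C}$-orbits are separated by $\C[z]^{G_{\C}}$; hence two points of $X$ in different $G$-orbits have disjoint $G_{\C}$-orbit closures under $\hat F$, so some $p_j\circ\hat F$ separates them, and $\overline\Psi$ is injective. (Geometrically: strong pseudoconvexity forces each $G_{\C}$-orbit to cut $\hat F(X)$ in exactly one $G$-orbit, so the holomorphic invariants already see the quotient.)

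To finish I would check that $\overline\Psi$ is an immersion of CR orbifolds and that $\overline\Psi(X/G)$ carries the induced CR structure — the orbifold counterpart of the second assertion of Theorem~\ref{thm:CREmbedding}. Fix $x_0\in X$ with finite isotropy $H=G_{x_0}$. By the slice theorem a $G$-neighbourhood of $Gx_0$ is $G\times_H S$ for an $H$-invariant CR slice $S$, and $(S,H)$ is an orbifold chart of $X/G$ at $[x_0]$; correspondingly, at $y_0=\hat F(x_0)$ the linear slice theorem gives a $G$-neighbourhood $G\times_H V$ in $\C^M$ with $\hat F$ restricting to an $H$-equivariant CR embedding $S\hookrightarrow V$. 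The restriction of $(p_1,\dots,p_N)$ to $V$ is, up to a linear change of coordinates on $\C^N$, a Hilbert map for $\C[V]^{H}$, and since $\hat F(S)$ lies in the Kempf--Ness set of $V$ (inherited from $\hat F(X)\subset\mu^{-1}(0)$), the same argument shows that $S\xrightarrow{\hat F}V\xrightarrow{(p_j)}\C^N$ is an $H$-invariant CR map separating $H$-orbits whose differential is injective transversally to the Reeb and $T^{1,0}$ directions — that is, it realises $S/H$ as a CR suborbifold of $\C^N$ with the induced CR structure. Patching these charts using the global injectivity of the previous paragraph yields a CR embedding $\overline\Psi\colon X/G\hookrightarrow\C^N$.

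The main obstacle I anticipate is the reduction in the second paragraph: Theorem~\ref{thm:CREmbedding} only produces \emph{some} $G$-equivariant CR embedding, and one genuinely needs the hypothesis $\underline{\mathfrak{g}}\subset\ker\omega_0$, together with positivity, either to show that its image automatically consists of polystable points or to correct it inside $H^0_{b,G}(X)$ so that $\hat F(X)\subset\mu^{-1}(0)$; without such control the ring $\C[z]^{G_{\C}}$ can fail to separate $G$-orbits (this is precisely the content of the nonembeddable example in the introduction, where no suitable $\omega_0$ exists). A second, more routine point is the chart-matching in the last paragraph, i.e.\ identifying the GIT model $V/\!/H$ of a slice representation with the CR-orbifold structure of $X/G$ near an orbifold point and verifying that the induced CR structure is the correct one.
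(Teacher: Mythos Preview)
Your approach is genuinely different from the paper's, and the obstacle you identify in your second paragraph is a real gap, not a technicality.

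The paper does \emph{not} deduce Theorem~\ref{t-gue180922m} from Theorem~\ref{thm:CREmbedding}; it reruns the local-to-global construction of Section~\ref{s-gue180722} directly for the trivial representation. Concretely, for each $p\in X$ it takes the same almost-CR Gaussian peaks $\Td f^{(j)}_k$ from \eqref{e-gue180801sI}, averages them over $G$ to obtain $G$-\emph{invariant} almost-CR functions $\Td g^{(j)}_k(x)=\int_G\Td f^{(j)}_k(g\circ x)\,d\mu(g)$, and then applies the Szeg\H{o} projection $S$. Using the same Fourier-component regularity (here for the trivial component $m$), one gets $g^{(j)}_k\in C^\infty(X)^G\cap\Ker\ddbar_b$ with $\lim_k dg^{(j)}_k(p)$ spanning $dz_1,\dots,dz_n,dx_{2n+1}$. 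This yields local $G$-invariant CR immersions injective on the transversal space $\hat H_xX$, together with $G$-invariant peak functions $g_p$ separating $Gp$ from points outside $GV_p$. Patching over a finite cover gives a $G$-invariant CR map $X\to\Complex^N$ that is injective on $X/G$ and a local immersion in the slice directions, hence a CR embedding of the orbifold. No invariant theory, no Kempf--Ness, no polystability is needed.

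Your plan, by contrast, needs the holomorphic invariants $\C[z]^{G_\C}$ to separate the $G$-orbits on $\hat F(X)$. As you note, this fails in general unless those orbits are polystable (equivalently lie on the Kempf--Ness set). The step ``arrange $\hat F(X)\subset\mu^{-1}(0)$'' is not justified: nothing in the construction of $\hat F$ in Lemma~\ref{t-gue180807} or Theorem~\ref{thm:CREmbedding} controls the moment-map image, and there is no evident $G_\C$-equivariant correction inside $H^0_{b,G}(X)$ that would push $\hat F(X)$ into $\mu^{-1}(0)$ without destroying injectivity or the CR property. The hypothesis $\underline{\mathfrak g}\subset\ker\omega_0$ is a condition on $X$, not on the linear $G$-action on $\C^M$, and does not by itself imply that $\hat F$ intertwines $\omega_0$ with the restriction of a K\"ahler moment map on $\C^M$. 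So the separation-of-orbits step genuinely remains open in your argument, whereas the paper sidesteps it entirely by manufacturing the needed $G$-invariant CR functions from scratch with the same microlocal machinery already developed.
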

It turns out that any effective CR orbifold can be written as a quotient \(X/G\) where \(X\) is a CR manifold equipped with a CR action of a compact Lie group \(G\) (see Lemma~\ref{t-gue181015}). 
\begin{corollary}\label{thm:BMOrbifold}
	Let \(X\) be a compact and orientable strongly pseudoconvex effective CR orbifold of codimension one. Then \(X\) can be CR embedded  into \(\C^N\), for some \(N\in\N\).
\end{corollary}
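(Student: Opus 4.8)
The plan is to obtain the statement as a formal consequence of Theorem~\ref{t-gue180922m} and the structure theory of CR orbifolds, so that the only genuine work at this level is to check that a manifold presentation of $X$ meets the precise hypotheses of Theorem~\ref{t-gue180803m}.

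First I would apply the structure theorem for CR orbifolds (see Lemma~\ref{t-gue181015}) to write the given effective orbifold as a quotient $X\cong Y/G$, where $Y$ is a compact CR manifold carrying a locally free CR action of a $d$-dimensional compact Lie group $G$. Using that $X$ is in addition strongly pseudoconvex, I would like the presentation to come equipped with a globally defined vector field $T$ on $Y$ such that $\Complex TY=T^{1,0}Y\oplus T^{0,1}Y\oplus\Complex T\oplus\Complex\underline{\mathfrak{g}}$ and $Y$ is strongly pseudoconvex in the direction of $T$; this is the content of the strongly pseudoconvex version of the structure theorem. Should one only have the bare presentation $Y/G$, the fallback is to build the data by hand: pull back to $Y$ a global, non-vanishing real one-form $\theta$ on $X$ representing the codimension-one CR structure with positive Levi form, choose a $G$-invariant complement of $T^{1,0}Y\oplus T^{0,1}Y\oplus\Complex\underline{\mathfrak{g}}$ inside $\Complex TY$ via a $G$-averaged Riemannian metric, let $T$ span that complement, and normalize the resulting $\omega_0$ by $\langle\omega_0,T\rangle=-1$.

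Next I would verify the remaining hypotheses of Theorem~\ref{t-gue180803m} for $Y$. Since $Y\to X$ is a locally free $G$-quotient of relative dimension $d$ and the quotient map is a CR submersion carrying $T^{1,0}Y$ onto $T^{1,0}X$, the manifold $Y$ has dimension $2n+1+d$ and codimension $d+1$, where $n$ denotes the CR dimension of the orbifold $X$; compactness is clear, and orientability of $Y$ follows from that of $X$ together with the orientability of the compact group $G$. For strong pseudoconvexity in the direction of $T$: by construction $\underline{\mathfrak{g}}\subset\ker\omega_0$, and the Levi form $\mathcal{L}_{\omega_0,y}$ at $y\in Y$ is the pullback under the CR submersion $Y\to X$ of the orbifold Levi form at the image of $y$, hence positive definite; thus $\mathcal{L}_{\omega_0,y}$ is positive definite at every $y\in Y$. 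Finally, the hypothesis $n\ge 2$ for $Y$ is exactly the Boutet de Monvel dimension restriction $\dim X\ge 5$ on the orbifold, which I take to be part of the standing assumptions. Theorem~\ref{t-gue180922m} then produces a CR embedding of $X\cong Y/G$ into $\Complex^N$ for some $N\in\N$, which is the assertion.

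The substantive content here is carried entirely by the two results being invoked: the $G$-equivariant $\ddbar_b$ and Szeg\H{o}-kernel analysis behind Theorem~\ref{t-gue180803m}, which manufactures enough $G$-finite CR functions to separate points and tangent vectors, and the descent argument of Theorem~\ref{t-gue180922m}, which shows that the equivariant CR embedding of $Y$ factors through an honest CR embedding of the quotient orbifold. The main obstacle at the level of the corollary itself is therefore only the compatibility bookkeeping made explicit above --- checking that ``strongly pseudoconvex CR orbifold of codimension one'' translates, after passing to the manifold $Y$, into precisely ``compact orientable CR manifold of codimension $d+1$, strongly pseudoconvex in the direction of a global vector field $T$'', with the CR dimension unchanged so that the hypothesis $n\ge 2$ is inherited.
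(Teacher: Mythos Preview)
Your proposal is correct and follows exactly the route the paper intends: the corollary is stated immediately after Theorem~\ref{t-gue180922m} and Lemma~\ref{t-gue181015} with no separate proof, since it is meant to be their direct combination. Your write-up simply makes explicit the compatibility checks (compactness, orientability, codimension count, positivity of the Levi form on the frame-bundle presentation~$Y$, and the inherited condition $n\ge 2$) that the paper leaves implicit.
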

 
 This paper is organized as follows. In Section~\ref{s:prelim}, we fix some terminology and give basic definitions and examples for CR manifolds of high codimension with Lie group actions. In Section~\ref{s-gue180710m}, we study the Fourier decomposition of the Kohn Laplacian. Section~\ref{s-gue180722} contains the proof of Theorem~\ref{t-gue180803m}.
 The orbifold version of Boutet de Monvel's embedding theorem (see Corollary~\ref{thm:BMOrbifold}) is proven in Section~\ref{Sec:CROrbifolds}. Combining the results of Section~\ref{s-gue180722} and Section~\ref{Sec:CROrbifolds}, we prove Theorem~\ref{thm:CREmbedding} in Section~\ref{Sec:InducedCRStructures}.

\section{Preliminaries}\label{s:prelim}

\subsection{Some standard notations}\label{s-gue150508b}
We use the following notations: $\mathbb N=\set{1,2,\ldots}$,
$\mathbb N_0=\mathbb N\cup\set{0}$, $\Real$
is the set of real numbers,
\[\Real_+:=\set{x\in\Real;\, x>0},\ \ \ol\Real_+:=\set{x\in\Real;\, x\geq0}.\]
For a multiindex $\alpha=(\alpha_1,\ldots,\alpha_m)\in\mathbb N_0^m$
we set $\abs{\alpha}=\alpha_1+\cdots+\alpha_m$. For $x=(x_1,\ldots,x_m)\in\Real^m$ we write
\[
x^\alpha=x_1^{\alpha_1}\ldots x^{\alpha_m}_m,\quad
 \pr_{x_j}=\frac{\pr}{\pr x_j}\,,\quad
\pr^\alpha_x=\pr^{\alpha_1}_{x_1}\ldots\pr^{\alpha_m}_{x_m}=\frac{\pr^{\abs{\alpha}}}{\pr x^\alpha}\,.
\]
Let $z=(z_1,\ldots,z_m)$, $z_j=x_{2j-1}+ix_{2j}$, $j=1,\ldots,m$, be coordinates of $\Complex^m$,
where
$x=(x_1,\ldots,x_{2m})\in\Real^{2m}$ are coordinates of $\Real^{2m}$.
Throughout the paper we also use the notation
$w=(w_1,\ldots,w_m)\in\Complex^m$, $w_j=y_{2j-1}+iy_{2j}$, $j=1,\ldots,m$, where
$y=(y_1,\ldots,y_{2m})\in\Real^{2m}$.
We write
\[
\begin{split}
&z^\alpha=z_1^{\alpha_1}\ldots z^{\alpha_m}_m\,,\quad\ol z^\alpha=\ol z_1^{\alpha_1}\ldots\ol z^{\alpha_m}_m\,,\\
&\pr_{z_j}=\frac{\pr}{\pr z_j}=
\frac{1}{2}\Big(\frac{\pr}{\pr x_{2j-1}}-i\frac{\pr}{\pr x_{2j}}\Big)\,,\quad\pr_{\ol z_j}=
\frac{\pr}{\pr\ol z_j}=\frac{1}{2}\Big(\frac{\pr}{\pr x_{2j-1}}+i\frac{\pr}{\pr x_{2j}}\Big),\\
&\pr^\alpha_z=\pr^{\alpha_1}_{z_1}\ldots\pr^{\alpha_m}_{z_m}=\frac{\pr^{\abs{\alpha}}}{\pr z^\alpha}\,,\quad
\pr^\alpha_{\ol z}=\pr^{\alpha_1}_{\ol z_1}\ldots\pr^{\alpha_m}_{\ol z_m}=
\frac{\pr^{\abs{\alpha}}}{\pr\ol z^\alpha}\,.
\end{split}
\]

Let $X$ be a $C^\infty$ orientable paracompact manifold.
We let $TX$ and $T^*X$ denote the tangent bundle of $X$ and the cotangent bundle of $X$ respectively.
The complexified tangent bundle of $X$ and the complexified cotangent bundle of $X$
will be denoted by $\Complex TX$ and $\Complex T^*X$ respectively. We write $\langle\,\cdot\,,\cdot\,\rangle$
to denote the pointwise duality between $TX$ and $T^*X$.
We extend $\langle\,\cdot\,,\cdot\,\rangle$ bilinearly to $\Complex TX\times\Complex T^*X$.

Let $E$ be a $C^\infty$ vector bundle over $X$. The fiber of $E$ at $x\in X$ will be denoted by $E_x$.
Let $F$ be another vector bundle over $X$. We write
$F\boxtimes E^*$ to denote the vector bundle over $X\times X$ with fiber over $(x, y)\in X\times X$
consisting of the linear maps from $E_y$ to $F_x$.

Let $Y\subset X$ be an open set. The spaces of
smooth sections of $E$ over $Y$ and distribution sections of $E$ over $Y$ will be denoted by $C^\infty(Y, E)$ and $\mathscr D'(Y, E)$ respectively.
Let $\mathscr E'(Y, E)$ be the subspace of $\mathscr D'(Y, E)$ whose elements have compact support in $Y$.
For $m\in\Real$, we let $H^m(Y, E)$ denote the Sobolev space
of order $m$ of sections of $E$ over $Y$. Put
\begin{gather*}
H^m_{\rm loc\,}(Y, E)=\big\{u\in\mathscr D'(Y, E);\, \varphi u\in H^m(Y, E),
      \,\forall\varphi\in C^\infty_0(Y)\big\}\,,\\
      H^m_{\rm comp\,}(Y, E)=H^m_{\rm loc}(Y, E)\cap\mathscr E'(Y, E)\,.
\end{gather*}

\subsection{CR manifolds with high codimension}\label{s-gue180710I} 

Let $(X, T^{1,0}X)$ be a compact and orientable CR manifold of dimension $2n+d+1$, $n\geq 2$, $d\geq1$, where $T^{1,0}X$ is a CR structure of $X$, that is, $T^{1,0}X$ is a subbundle of rank $n$ of the complexified tangent bundle $\mathbb{C}TX$, satisfying $T^{1,0}X\cap T^{0,1}X=\{0\}$, where $T^{0,1}X=\overline{T^{1,0}X}$, and $[\mathcal V,\mathcal V]\subset\mathcal V$, where $\mathcal V=C^\infty(X, T^{1,0}X)$. In this work, we assume that $X$ admits a action of a $d$-dimensional compact Lie group $G$.  Let $\mathfrak{g}$ denote the Lie algebra of $G$. For any $\xi \in \mathfrak{g}$, we write $\xi_X$ to denote the vector field on $X$ induced by $\xi$. That is, $(\xi_X u)(x)=\frac{\partial}{\partial t}\left(u(e^{t\xi}\circ x)\right)|_{t=0}$, for any $u\in C^\infty(X)$. Let $\underline{\mathfrak{g}}={\rm Span\,}(\xi_X;\, \xi\in\mathfrak{g})$. 
\begin{definition}\label{d-gue180710}
We say that the Lie group action of $G$ is CR if for every $\xi_X\in\underline{\mathfrak{g}}$, we have 
\[[\xi_X, \mathcal V]\subset\mathcal V,\]
where $\mathcal V=C^\infty(X, T^{1,0}X)$. 
\end{definition}

We assume throughout that  the action of $G$ is CR. 
Let $\hat T\in C^\infty(X,TX)$ be a global 
defined vector field such that 
 \begin{equation}\label{e-gue180710}
 \Complex TX=T^{1,0}X\oplus T^{0,1}X\oplus\Complex\hat T\oplus\Complex\underline{\mathfrak{g}}.
 \end{equation} 
 Note that under the assumption \((T^{1,0}X\oplus T^{0,1}X)\cap \Complex\underline{\mathfrak{g}}=\{0\}\) we have that such a vector field \(\hat{T}\) exists if and only if \(X\) is orientable. 
Let $\hat\omega_0(x)\in C^\infty(X,T^*X)$ be the globally real one form on $X$ such that 
\begin{equation}\label{e-gue180710I}
\begin{split}
&\langle\,\hat\omega_0\,,\,V\,\rangle=0,\ \ \forall V\in T^{1,0}X\oplus T^{0,1}X\oplus\Complex\underline{\mathfrak{g}}, \\
&\langle\,\hat\omega_0\,,\,\hat T\,\rangle=-1\ \ \mbox{on $X$}.
\end{split}
\end{equation}

\begin{definition}\label{d-gue180710I}
For $p\in X$, the Levi form with respect to $\hat\omega_0$ at $p$ is the Hermitian quadratic form on $T^{1,0}_pX$ given by 
\[\mathcal{L}_{\hat\omega_0,p}(U,V):=-\frac{1}{2i}\langle\,d\hat\omega_0(p)\,,\,U\wedge\ol V\,\rangle,\ \ \forall U, V\in T^{1,0}_pX.\]
\end{definition}
In this work, we assume that $\mathcal{L}_{\hat\omega_0,x}$ is positive definite at every point $x\in X$. We refer the reader to Section~\ref{Sec:Examples} for examples of CR manifolds which satisfy the conditions above.

Fix $g\in G$. Let $g^*:\Lambda^r_x(\Complex T^*X)\To\Lambda^r_{g^{-1}\circ x}(\Complex T^*X)$ be the pull-back map.
Fix a Haar measure $d\mu=d\mu(g)$ on $G$ so that $\int_Gd\mu(g)=1$. Put 
\begin{equation}\label{e-gue180712}
\omega_0(x):=\int_G(g^*\hat\omega_0)(x)d\mu(g)\in C^\infty(X,T^*X).
\end{equation}
That is, $\omega_0(x)$ is the global one form on $X$ defined as follows: For every $x\in X$ and every $V\in T_xX$, we have
\[\langle\,\omega_0(x)\,,\,V\,\rangle=\int_G\langle\,(g^*\hat\omega_0)(x)\,,\,V\,\rangle d\mu(g)
=\int_G\langle\,\hat\omega_0(g\circ x)\,,\,(dg)V\,\rangle d\mu(g).\]
Then $\omega_0(x)$ is a $G$-invariant global one form.
\begin{lemma}\label{l-gue180712}
We have that $\omega_0(x)$ is a non-vanishing global one form on $X$, 
\[\langle\,\omega_0\,,\,V\,\rangle=0,\ \ \forall V\in T^{1,0}X\oplus T^{0,1}X\oplus\Complex\underline{\mathfrak{g}}\]
and $\mathcal{L}_{\omega_0,x}$ the Levi form with respect to $\omega_0$ is positive definite at every point $x\in X$. 
\end{lemma}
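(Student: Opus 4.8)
The plan is to exploit the fact that $\omega_0$ is obtained from $\hat\omega_0$ by averaging over $G$, so each of the three assertions is inherited from the corresponding property of $\hat\omega_0$ by an invariance argument. First I would record the elementary observation that since the $G$-action on $X$ is CR, the bundle $T^{1,0}X\oplus T^{0,1}X$ is $G$-invariant, i.e.\ $(dg)(T^{1,0}_xX\oplus T^{0,1}_xX)=T^{1,0}_{g\circ x}X\oplus T^{0,1}_{g\circ x}X$ for all $g\in G$; this follows by differentiating the CR condition $[\xi_X,\mathcal V]\subset\mathcal V$ and using that $G$ is connected (or, if $G$ is not assumed connected, directly from the definition of a CR action at the level of the group). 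Likewise $\underline{\mathfrak g}$ is $G$-invariant because $(dg)\xi_X = (\mathrm{Ad}(g)\xi)_X\in\underline{\mathfrak g}$. Hence for any $V\in T^{1,0}_xX\oplus T^{0,1}_xX\oplus\Complex\underline{\mathfrak g}_x$ and any $g\in G$ we have $(dg)V$ again in $T^{1,0}_{g\circ x}X\oplus T^{0,1}_{g\circ x}X\oplus\Complex\underline{\mathfrak g}_{g\circ x}$, so $\langle\hat\omega_0(g\circ x),(dg)V\rangle=0$ by \eqref{e-gue180710I}; integrating over $G$ gives $\langle\omega_0(x),V\rangle=0$, which is the second assertion.

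Next, to see that $\omega_0$ is non-vanishing it suffices to evaluate it on a suitable vector field. I would check that $\langle\omega_0(x),\hat T\rangle$ need not equal $-1$ in general, but one can still argue as follows: decompose $(dg)\hat T$ according to \eqref{e-gue180710} as $(dg)\hat T = c(g,x)\,\hat T_{g\circ x} + (\text{terms in } T^{1,0}\oplus T^{0,1}\oplus\Complex\underline{\mathfrak g})$ for a smooth function $c(g,x)$ with $c(e,x)=1$; then $\langle\hat\omega_0(g\circ x),(dg)\hat T\rangle=-c(g,x)$. Because $g\mapsto(dg)\hat T$ is continuous, $G$ is compact and $\hat T$ is nowhere vanishing and transverse to $T^{1,0}X\oplus T^{0,1}X\oplus\Complex\underline{\mathfrak g}$, the function $c(g,x)$ is bounded and — after possibly replacing $\hat\omega_0$ by $-\hat\omega_0$ — one shows $c(g,x)>0$ for all $g$, so that $\langle\omega_0(x),\hat T\rangle=-\int_G c(g,x)\,d\mu(g)<0$; in particular $\omega_0(x)\neq0$. (One must be slightly careful here: the cleanest route is to observe that $\hat T$ may be chosen so that its image under $dg$ has positive $\hat T$-component for every $g$, which is possible because the set of admissible $\hat T$ is convex and $G$-averaging preserves it; alternatively one can simply note that $\omega_0$, being $G$-invariant and annihilating $T^{1,0}X\oplus T^{0,1}X\oplus\Complex\underline{\mathfrak g}$, is a section of a real line bundle, and its vanishing locus is $G$-invariant, so it is enough to exclude vanishing after averaging, which the positivity of the Levi form below will also force.)

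For the third assertion, fix $x\in X$ and $U,V\in T^{1,0}_xX$. Since $d$ commutes with pullback, $d\omega_0(x)=\int_G g^*(d\hat\omega_0)(x)\,d\mu(g)$, hence
\[
\mathcal L_{\omega_0,x}(U,V) = -\frac1{2i}\langle d\omega_0(x),U\wedge\ol V\rangle
= \int_G \Big(-\frac1{2i}\big\langle d\hat\omega_0(g\circ x),\,(dg)U\wedge\overline{(dg)V}\,\big\rangle\Big)\,d\mu(g).
\]
Here I use that the $G$-action is CR together with the fact that $\omega_0$ (and $\hat\omega_0$ after restriction) annihilates $T^{1,0}X\oplus T^{0,1}X$, so that the value of $d\hat\omega_0$ on a pair of $(1,0)$-vectors is tensorial and agrees with the Levi form: writing $(dg)U = U' + (\text{section of } \Complex\hat T\oplus\Complex\underline{\mathfrak g})$ with $U'\in T^{1,0}_{g\circ x}X$, the correction terms contribute zero because $\hat\omega_0$ vanishes on $\Complex\underline{\mathfrak g}$ and on $T^{1,0}$, and the $\hat T$-correction pairs against $U'\wedge\ol{V'}$ to give something controlled — the honest statement is that $-\frac1{2i}\langle d\hat\omega_0(g\circ x),(dg)U\wedge\overline{(dg)V}\rangle = \mathcal L_{\hat\omega_0,g\circ x}(U',V')$ where $U'$ is the $T^{1,0}$-part of $(dg)U$, and the map $U\mapsto U'$ is a $\Complex$-linear isomorphism $T^{1,0}_xX\to T^{1,0}_{g\circ x}X$. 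Therefore each integrand is the value of a positive definite Hermitian form on $(U',V')$, hence $\geq 0$, and is strictly positive when $U=V\neq0$; integrating over the probability measure $d\mu$ yields $\mathcal L_{\omega_0,x}(U,U)>0$. This in particular re-proves that $\omega_0$ is non-vanishing, since a nonzero $d\omega_0$ forces $\omega_0\neq0$ at that point (or rather: if $\omega_0(x)=0$ then $\omega_0$ vanishes on a neighborhood along the $G$-orbit structure — cleanest is to note $\mathcal L_{\omega_0,x}\neq 0\Rightarrow d\omega_0(x)\neq0\Rightarrow\omega_0\not\equiv0$ near $x$, and $G$-invariance plus connectedness of orbits upgrades this).

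The main obstacle I anticipate is the bookkeeping in the second and third paragraphs: one must verify carefully that pushing a $(1,0)$-vector forward by $dg$ and then projecting to $T^{1,0}$ (modulo the $\hat T$- and $\underline{\mathfrak g}$-directions) is a well-defined linear isomorphism under which the Levi forms at $x$ and $g\circ x$ correspond, and that the projection ambiguity genuinely drops out when paired against $\hat\omega_0$. The key inputs are exactly the CR invariance of $T^{1,0}X\oplus T^{0,1}X$ under $G$ and the Ad-invariance of $\underline{\mathfrak g}$; once those are in hand, everything reduces to the standard fact that a convex (here: positive-definiteness) condition on Hermitian forms is preserved under averaging against a probability measure. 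No genuinely new ideas beyond equivariance are needed.
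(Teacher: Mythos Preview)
Your approach is the same as the paper's --- average $\hat\omega_0$ over $G$ and verify each property --- but you make it harder than necessary by using only that $dg$ preserves $T^{1,0}X\oplus T^{0,1}X$. In fact the CR condition $[\xi_X,\mathcal V]\subset\mathcal V$ integrates to $(dg)(T^{1,0}_xX)=T^{1,0}_{g\circ x}X$ \emph{exactly}, with no $\hat T$- or $\underline{\mathfrak g}$-component. This kills your ``correction terms'' outright: for $U\in T^{1,0}_xX$ one simply has $(dg)U\in T^{1,0}_{g\circ x}X$, and after fixing a $G$-invariant Hermitian metric $(dg)U$ has unit length whenever $U$ does, so by compactness of $X$ the integrand $-\tfrac{1}{2i}\langle d\hat\omega_0(g\circ x),(dg)U\wedge\overline{(dg)U}\rangle$ is bounded below by a uniform $C>0$; integrating over $G$ gives $\mathcal L_{\omega_0,x}(U,U)\geq C$. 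That is the paper's proof, and it sidesteps entirely the bookkeeping you flag as the main obstacle.

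Without this observation your Levi-form paragraph has a genuine gap: if $(dg)U$ really carried a component $a\hat T$, the cross-term $-\tfrac{1}{2i}\langle d\hat\omega_0,\,U'\wedge\bar a\hat T\rangle$ need not vanish (nothing forces $d\hat\omega_0(Z,\hat T)=0$ for $Z\in T^{1,0}X$), so ``something controlled'' does not yield positivity. Your first non-vanishing argument via $c(g,x)>0$ is likewise unjustified --- the sign of $c$ could depend on $g$ and replacing $\hat\omega_0$ by $-\hat\omega_0$ does not fix this. The clean route, which you mention as a fallback and which the paper uses, is to deduce non-vanishing from Levi-positivity: since $\omega_0$ and $\hat\omega_0$ both annihilate the corank-one subbundle $T^{1,0}X\oplus T^{0,1}X\oplus\Complex\underline{\mathfrak g}$, one has $\omega_0=f\hat\omega_0$ for some real $f$, hence $\mathcal L_{\omega_0,x}=f(x)\,\mathcal L_{\hat\omega_0,x}$ and positivity forces $f>0$.
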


\begin{proof}
Fix any $G$-invariant Hermitian metric $\langle\,\cdot\,,\,\cdot\,\rangle$ on $\Complex TX$. Since $\mathcal{L}_{\hat\omega_0,x}$ is positive definite on $X$ and $X$ is compact, there is a constant $C>0$ such that 
\begin{equation}\label{e-gue180712m}
-\frac{1}{2i}\langle\,d\hat\omega_0(x)\,,\,U\wedge\ol U\,\rangle\geq C,\ \ \mbox{for every  $U\in T^{1,0}_xX$ with $\langle\,U\,,\,U\,\rangle=1$ and every $x\in X$}.
\end{equation}
Now, for every $U\in T^{1,0}_xX$ with $\langle\,U\,,\,U\,\rangle=1$ and every $x\in X$, we have 
\begin{equation}\label{e-gue180712mI}
\begin{split}
&-\frac{1}{2i}\langle\,d\omega_0(x)\,,\,U\wedge\ol U\,\rangle=-\frac{1}{2i}\int_G\langle\,d(g^*\hat\omega_0)(x)\,,\,U\wedge\ol U\,\rangle d\mu(g)\\
&=-\frac{1}{2i}\int_G\langle\,g^*(d\hat\omega_0)(x)\,,\,U\wedge\ol U\,\rangle d\mu(g)=-\frac{1}{2i}\int_G\langle\,d\hat\omega_0)(g\circ x)\,,\,(dg)U\wedge \ol{dg(U)}\,\rangle d\mu(g).
 \end{split}
\end{equation}
Since $G$ is CR and the Hermitian metric $\langle\,\cdot\,,\,\cdot\,\rangle$ is  $G$-invariant, we have $(dg)U\in T^{1,0}_{g\circ x}X$ and 
$\langle\,(dg)U\,,\,(dg)U\,\rangle=1$, for every $g\in G$. From this observation, \eqref{e-gue180712m} and \eqref{e-gue180712mI}, we deduce that 
\[-\frac{1}{2i}\langle\,d\omega_0(x)\,,\,U\wedge\ol U\,\rangle\geq C,\ \ \mbox{for every  $U\in T^{1,0}_xX$ with $\langle\,U\,,\,U\,\rangle=1$ and every $x\in X$},\]
where $C>0$ is the constant as in \eqref{e-gue180712m}. Hence, $\omega_0(x)$ is a non-vanishing global one form on $X$ and $\mathcal{L}_{\omega_0,x}$ is positive definite at every point $x\in X$. 

For every $V\in T^{1,0}_xX\oplus T^{0,1}_xX\oplus\Complex\underline{\mathfrak{g}}_x$, we have 
\[\begin{split}
\langle\,\omega_0(x)\,,\,V\,\rangle=\int_G\langle\,(g^*\hat\omega_0)(x)\,,\,V\,\rangle d\mu(g)=\int_G\langle\,\hat\omega_0(g\circ x)\,,\,(dg)V\,\rangle d\mu(g)=0
\end{split}\]
since $(dg)V\in  T^{1,0}_{g\circ x}X\oplus T^{0,1}_{g\circ x}X\oplus\Complex\underline{\mathfrak{g}}_{g\circ x}$, for every $g\in G$. The lemma follows. 
\end{proof}

Fix any global one form $\Td T\in C^\infty(X, TX)$ with $\langle\,\omega_0\,,\,\Td T\,\rangle=-1$ on $X$. Put 
\[T(x):=\int_G (g^*\Td T)(x)d\mu(g),\]
where $g^*\Td T$ denotes the pull-back of $\Td T$. Recall that $(g^*\Td T)(x)=dg^{-1}(\Td T(g\circ x))$, where $dg^{-1}: T_{g\circ x}X\To T_xX$ is the differential of the map $g^{-1}: X\To X$, $x\To g^{-1}\circ x$. 
Then, $T$ is a $G$-invariant global vector filed on $X$.

\begin{lemma}\label{l-gue180712I}
We have $\langle\,\omega_0\,,\,T\,\rangle=-1$ on $X$.
\end{lemma}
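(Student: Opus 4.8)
The plan is to unwind the definition of $T$ and then apply the $G$-invariance of $\omega_0$ recorded in Lemma~\ref{l-gue180712}. First I would use the definition $T(x)=\int_G(g^*\Td T)(x)\,d\mu(g)$ and pull the pairing inside the integral to get
\[
\langle\,\omega_0(x)\,,\,T(x)\,\rangle=\int_G\langle\,\omega_0(x)\,,\,(g^*\Td T)(x)\,\rangle\,d\mu(g),
\]
so that, since $\int_G d\mu(g)=1$, it suffices to show that the integrand equals $-1$ for every $g\in G$ and every $x\in X$.

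Next I would rewrite the integrand in terms of a pull-back of $\omega_0$. Recall $(g^*\Td T)(x)=dg^{-1}(\Td T(g\circ x))$, and that for the diffeomorphism $g^{-1}\colon X\to X$ one has $\langle\,((g^{-1})^*\omega_0)(g\circ x)\,,\,w\,\rangle=\langle\,\omega_0(x)\,,\,dg^{-1}(w)\,\rangle$ for all $w\in T_{g\circ x}X$. Hence
\[
\langle\,\omega_0(x)\,,\,(g^*\Td T)(x)\,\rangle=\langle\,((g^{-1})^*\omega_0)(g\circ x)\,,\,\Td T(g\circ x)\,\rangle.
\]
Now I would invoke that $\omega_0$ is $G$-invariant, i.e. $h^*\omega_0=\omega_0$ for every $h\in G$ — this follows from \eqref{e-gue180712} together with the invariance of the Haar measure, since $h^*\omega_0=\int_G(gh)^*\hat\omega_0\,d\mu(g)=\int_G g^*\hat\omega_0\,d\mu(g)=\omega_0$ — and in particular $(g^{-1})^*\omega_0=\omega_0$. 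Therefore the right-hand side above equals $\langle\,\omega_0(g\circ x)\,,\,\Td T(g\circ x)\,\rangle$, which is $-1$ by the defining property $\langle\,\omega_0\,,\,\Td T\,\rangle=-1$. Substituting back gives $\langle\,\omega_0\,,\,T\,\rangle=-1$ on $X$.

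There is essentially no serious obstacle here; the argument is a one-line averaging computation. The only point that needs a little care is the bookkeeping of pull-backs versus the inverse diffeomorphism, so that the $G$-invariance of $\omega_0$ is applied to the correct map, together with the already noted fact that averaging over $G$ with respect to the Haar measure produces a genuinely $G$-invariant form.
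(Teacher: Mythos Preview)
Your proof is correct and follows essentially the same argument as the paper: unwind the definition of $T$ as an average, rewrite the pairing via the pull-back identity for $(g^{-1})^*$, and then invoke the $G$-invariance of $\omega_0$ together with $\langle\omega_0,\Td T\rangle=-1$. You are slightly more explicit than the paper in justifying why $\omega_0$ is $G$-invariant, but otherwise the two arguments are identical.
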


\begin{proof}
For every $x\in X$, we have 
\[\begin{split}
\langle\,\omega_0(x)\,,\,T(x)\,\rangle&=\int_G\langle\,\omega_0(x)\,,\,(dg^{-1})\Td T(g\circ x)\,\rangle d\mu(g)\\
&=\int_G\langle\,((g^{-1})^*\omega_0)(g\circ x)\,,\,\Td T(g\circ x)\,\rangle d\mu(g)\\
&=\int_G\langle\,\omega_0(g\circ x)\,,\,\Td T(g\circ x)\,\rangle d\mu(g)=-1
\end{split}\]
since $(g^{-1})^*\omega_0=\omega$, for every $g\in G$. The lemma follows. 
\end{proof}

From Lemma~\ref{l-gue180712I}, we deduce that there is a $G$-invariant global 
defined vector field $T\in C^\infty(X, TX)$ such that 
 \begin{equation}\label{e-gue180710z}
 \Complex TX=T^{1,0}X\oplus T^{0,1}X\oplus\Complex T\oplus\Complex\underline{\mathfrak{g}}
 \end{equation} 
 and there is a $G$-invariant global one form $\omega_0\in C^\infty(X,T^*X)$ such that 
 \begin{equation}\label{e-gue180712pa}
\begin{split}
&\langle\,\omega_0\,,\,V\,\rangle=0,\ \ \forall V\in T^{1,0}X\oplus T^{0,1}X\oplus\Complex\underline{\mathfrak{g}}, \\
&\langle\,\omega_0\,,\,T\,\rangle=-1\ \ \mbox{on $X$},\\
&\mbox{$\mathcal{L}_{\omega_0,x}$ the Levi form with respect to $\omega_0$ is positive definite at every point $x\in X$}.
\end{split}
\end{equation}

Denote by $T^{*1,0} X$ and $T^{*0,1}X$ the dual bundles of
$T^{1,0}X$ and $T^{0,1}X$ respectively. That is, 
\[\begin{split}
&T^{*1,0}X=\Bigr(T^{0,1}X\oplus\Complex T\oplus\Complex\underline{\mathfrak{g}}\Bigr)^\perp,\\
&T^{*0,1}X=\Bigr(T^{1,0}X\oplus\Complex T\oplus\Complex\underline{\mathfrak{g}}\Bigr)^\perp.
\end{split}\]
Define the vector bundle of $(0,q)$ forms by
$T^{*0,q}X:=\Lambda^q(T^{*0,1}X)$.
Let $D\subset X$ be an open set. Let $\Omega^{0,q}(D)$
denote the space of smooth sections of $T^{*0,q}X$ over $D$ and let $\Omega_0^{0,q}(D)$
be the subspace of $\Omega^{0,q}(D)$ whose elements have compact support in $D$. We write $C^\infty(D):=\Omega^{0,0}(D)$, $C^\infty_0(D):=\Omega^{0,0}_0(D)$. 

Take any $G$-invariant Hermitian metric on $TG$ and let $\xi_1,\ldots,\xi_d$ be an orthonormal basis for $\mathfrak{g}$. Put 
\begin{equation}\label{e-gue180713}
T_j:=\xi_{j,X}\in C^\infty(X,TX),\ \ j=1,\ldots,d,
\end{equation}
where $\xi_{j,X}$ denotes the vector field on $X$ induced by $\xi_j$, $j=1,\ldots,d$. 
Fix a $G$-invariant Hermitian metric $\langle\,\cdot\,|\,\cdot\,\rangle$ on $\Complex TX$ such that 
\begin{equation}\label{e-gue180710II}
\begin{split}
&T^{1,0}X\perp T^{0,1}X\perp\Complex T\perp\underline{\mathfrak{g}},\\
&\langle\,T\,|\,T\,\rangle=1.
\end{split}
\end{equation}
The $G$-invariant Hermitian metric $\langle\,\cdot\,|\,\cdot\,\rangle$ on $\Complex TX$ induces a $G$-invariant Hermitian metric $\langle\,\cdot\,|\,\cdot\,\rangle$ on the bundle $\oplus^{2n+d+1}_{j=1}\Lambda^j(\Complex T^*X)$ and let $\abs{\cdot}$ denote the corresponding norm. For $q=1,\ldots,n$, let
\[\tau^{0,q}: \Lambda^q(\Complex T^*X)\To T^{*0,q}X\]
be the orthogonal projection with respect to $\langle\,\cdot\,|\,\cdot\,\rangle$. The tangential Cauchy Riemann operator is given by
\begin{equation}\label{e-gue180710m}
\ddbar_b:=\tau^{0,q+1}\circ d: \Omega^{0,q}(X)\To\Omega^{0,q+1}(X).
\end{equation}
Let $dv_X=dv_X(x)$ be the volume form on $X$ induced by the Hermitian metric $\langle\,\cdot\,|\,\cdot\,\rangle$.
The natural global $L^2$ inner product $(\,\cdot\,|\,\cdot\,)$ on $C^\infty(X,\Lambda^r(\Complex T^*X))$ 
induced by $dv_X(x)$ and $\langle\,\cdot\,|\,\cdot\,\rangle$ is given by
\begin{equation}\label{e-gue180710mI}
(\,u\,|\,v\,):=\int_X\langle\,u(x)\,|\,v(x)\,\rangle\, dv_X(x)\,,\quad u,v\in C^\infty(X,\Lambda^r(\Complex T^*X))\,.
\end{equation} 
For $u\in C^\infty(X,\Lambda^r(\Complex T^*X))$, we write $\norm{u}^2:=(\,u\,|\,u\,)$. Let $L^2(X, \Lambda^r(\Complex T^*X))$ and $L^2_{(0,q)}(X)$ be the completions of $C^\infty(X,\Lambda^r(\Complex T^*X))$ and $\Omega^{0,q}(X)$ with respect to 
$(\,\cdot\,|\,\cdot\,)$ respectively. We write $L^2(X):=L^2_{(0,0)}(X)$. We extend $\ddbar_b$ to $L^2_{(0,q)}(X)$ by 
\[\begin{split}
&\ddbar_b:{\rm Dom\,}\ddbar_b\subset L^2_{(0,q)}(X)\To L^2_{(0,q+1)}(X),\\ 
&{\rm Dom\,}\ddbar_b:=\set{u\in L^2(X);\, \ddbar_bu\in L^2_{(0,1)}(X)}.
\end{split}\]
Put 
\[{\rm Ker\,}\ddbar_b:=\set{u\in L^2(X);\, \ddbar_bu=0}.\]

Since the action of $G$ is CR, we have 
\[g^*:T^{*0,q}_xX\To T^{*0,q}_{g^{-1}\circ x}X,\  \ \forall x\in X.\]
Thus, for $u\in\Omega^{0,q}(X)$, we have $g^*u\in\Omega^{0,q}(X)$. Moreover, we have 
\begin{equation}\label{e-gue180711p}
\ddbar_b(g^*u)=g^*(\ddbar_bu),\ \ \forall u\in\Omega^{0,q}(X),\ \ \forall g\in G. 
\end{equation}
Let $\ol{\pr}^*_b: \Omega^{0,q+1}(X)\To\Omega^{0,q}(X)$ be the formal adjoint of $\ddbar_b$ with respect to $(\,\cdot\,|\,\cdot\,)$ and let 
\[\Box^{(q)}_b:=\ddbar_b\,\ol{\pr}^*_b+\ol{\pr}^*_b\,\ddbar_b: \Omega^{0,q}(X)\To\Omega^{0,q}(X).\]
We  can check that 
\begin{equation}\label{e-gue180711pI}
\begin{split}
\ol{\pr}^*_b(g^*u)=g^*(\ol{\pr}^*_bu),\ \ \forall u\in\Omega^{0,q}(X),\ \ \forall g\in G,\\
\Box^{(q)}_b(g^*u)=g^*(\Box^{(q)}_bu),\ \ \forall u\in\Omega^{0,q}(X),\ \ \forall g\in G. 
\end{split}
\end{equation}

Let $d: C^\infty(X, \Lambda^r(\Complex T^*X))\To  C^\infty(X, \Lambda^{r+1}(\Complex T^*X))$ be the exterior derivative and let 
$d^*: C^\infty(X, \Lambda^{r+1}(\Complex T^*X))\To  C^\infty(X, \Lambda^{r}(\Complex T^*X))$ be the formal adjoint of $d$ with respect to $(\,\cdot\,|\,\cdot\,)$. 
Let 
\begin{equation}\label{e-gue180716}
\triangle^{(r)}:=d^*d+dd^*: C^\infty(X, \Lambda^r(\Complex T^*X))\To C^\infty(X, \Lambda^r(\Complex T^*X)).
\end{equation}
We have 
\begin{equation}\label{e-gue180711pII}
\triangle^{(r)}(g^*u)=g^*(\triangle^{(r)}u),\ \ \forall u\in C^\infty(X, \Lambda^r(\Complex T^*X)),\ \ \forall g\in G.
\end{equation}

Consider $\triangle^{(r)}+I: C^\infty(X, \Lambda^r(\Complex T^*X))\To C^\infty(X, \Lambda^r(\Complex T^*X))$. We extend $\triangle^{(r)}+I$ to the $L^2$ space by 
\[\begin{split}
&\triangle^{(r)}+I: {\rm Dom\,}(\triangle^{(r)}+I)\subset L^2(X,\Lambda^r(\Complex T^*X))\To L^2(X,\Lambda^r(\Complex T^*X)),\\
&{\rm Dom\,}(\triangle^{(r)}+I)=\set{u\in L^2(X,\Lambda^r(\Complex T^*X));\, (\triangle^{(r)}+I)u\in L^2(X,\Lambda^r(\Complex T^*X))}.
\end{split}\]
The operator $\triangle^{(r)}+I$ is a nonnegative self-adjoint operator. Let ${\rm Spec\,}(\triangle^{(r)}+I)$ denote the spectrum of $\triangle^{(r)}+I$. Then, ${\rm Spec\,}(\triangle^{(r)}+I)$ is a discrete subset of $]0,+\infty[$ and for every $\lambda\in {\rm Spec\,}(\triangle^{(r)}+I)$, $\lambda$ is an eigenvalue of $\triangle^{(r)}+I$ and the space 
\[E_\lambda(X,\Lambda^r(\Complex T^*X)):=\set{u\in{\rm Dom\,}(\triangle^{(r)}+I);\, (\triangle^{(r)}+I)u=\lambda u}\]
is a finte dimensional subspace of $C^\infty(X, \Lambda^r(\Complex T^*X))$. For every $\lambda\in {\rm Spec\,}(\triangle^{(r)}+I)$, let 
\[P_\lambda: L^2(X,\Lambda^r(\Complex T^*X))\To E_\lambda(X,\Lambda^r(\Complex T^*X))\]
be the orthogonal projection with respect to $(\,\cdot\,|\,\cdot\,)$. The square root of $\triangle^{(r)}+I$ is given by 
\begin{equation}\label{e-gue180719}
\begin{split}
\sqrt{\triangle^{(r)}+I}: C^\infty(X, \Lambda^r(\Complex T^*X))&\To L^2(X,\Lambda^r(\Complex T^*X)),\\
u&\To \sum_{\lambda\in{\rm Spec\,}(\triangle^{(r)}+I)}\sqrt{\lambda}(P_\lambda u).
\end{split}
\end{equation}
It is easy to see that \eqref{e-gue180719} is well-defined. 

We extend $\sqrt{\triangle^{(r)}+I}$ to the $L^2$ space by 
\[\begin{split}
&\sqrt{\triangle^{(r)}+I}: {\rm Dom\,}\sqrt{\triangle^{(r)}+I}\subset L^2(X,\Lambda^r(\Complex T^*X))\To L^2(X,\Lambda^r(\Complex T^*X)),\\
&{\rm Dom\,}\sqrt{\triangle^{(r)}+I}=\set{u\in L^2(X,\Lambda^r(\Complex T^*X));\, \sqrt{\triangle^{(r)}+I}u\in L^2(X,\Lambda^r(\Complex T^*X))}.
\end{split}\]
It is well-known (see~\cite{Seeley}) that
\begin{equation}\label{e-gue180719I}
\mbox{$\sqrt{\triangle^{(r)}+I}$ is a classical elliptic pseudodifferential operator of order one}.
\end{equation}
Hence, $\sqrt{\triangle^{(r)}+I}: C^\infty(X, \Lambda^r(\Complex T^*X))\To C^\infty(X, \Lambda^r(\Complex T^*X))$. 
Moreover, from \eqref{e-gue180711pII}, it is not difficult to check that 
\begin{equation}\label{e-gue180719II}
\sqrt{\triangle^{(r)}+I}(g^*u)=g^*(\sqrt{\triangle^{(r)}+I}u),\ \ \forall u\in C^\infty(X, \Lambda^r(\Complex T^*X)),\ \ \forall g\in G.
\end{equation}

\subsection{Examples}\label{Sec:Examples}
	In this section we state some examples of CR manifolds with high codimension in the context of  Section~\ref{s-gue180710I}. 
\begin{example}\label{HighCodimensionExample1}
	Let $(Z,\omega)$ be a K\"ahler manifold. Let $G$ be a compact Lie group acting holomorphic on $Z$ and leaving $\omega$ invariant. 
	Assume that there exists a moment map $\mu \colon Z \rightarrow \mathfrak{g}^*$ and $0 \in \mathfrak{g}^*$ is a regular value. 
	We will show that $\mathcal{M} := \mu^{-1}(0)$ is a CR submanifold of $Z$ with transversal $G$-action.\\
	First note that $\mathcal{M}$ is a $G$-invariant manifold, since $\mu$ is equivariant.
	Then we have $T_x \mathcal{M} = $ ker $d_x \mu = (\mathfrak{g} x )^{\perp_\omega} = \mathfrak{g} x \oplus (\Complex \mathfrak{g}x)^{\perp_\omega}$.
	Because $0$ is a regular value, we have that the dimension of $\mathfrak{g}x$ does not depend on $x \in \mathcal{M}$ and $\mathfrak{g}x$ can not contain a complex subspace because of $\mathfrak{g}x \subset (\mathfrak{g}x)^{\perp_\omega}$.
\end{example}

Note that momentum maps and the zero levels of momentum maps play an important role in the study of Lie group actions. 
Especially the quotient $\mathcal{M} / G$ is of high interest.

\begin{example}
	Let $X$ be a CR manifold of dimension $2n+d$ and CR codimension $d$ with transversal CR action of a compact group $G$. 
	Let $U$ be an open, $G$-invariant subset of $X$ with smooth boundary.
	Then $Y := \partial U$ is a CR submanifold of $X$ of dimension $2n+d-1= 2n-2 +d+1$ with CR codimension $d+1$.\\
	To see this, one checks that $\Complex TY =\Complex \underline{\mathfrak{g}} \oplus (T^{1,0} X \oplus T^{0,1} X) \cap \Complex TY$ and dim$_\Complex T^{1,0}X \cap \Complex TY = n-1$.\\
	Now assume that $X$ is orientable and $U$ is given by $U = \{ \rho <0 \}$, where $\rho$ is a $G$-invariant strictly plurisubharmonic function on $X$. By strictly plurisubharmonic we mean that $\langle\,\pr_b\,\ddbar_b\rho(x)\,,\,Z\wedge\ol Z\,\rangle>0$, for every $x\in X$ and every $Z\in T^{1,0}_xX$, $Z\neq0$, where 
	the differential operators are defined in (\ref{e-gue180710m}).
	We may define a one-form $\omega$ on $Y$ via $\omega = (-\partial_b+\ddbar_b)\rho$ on $(T^{1,0} X \oplus T^{0,1}X) \cap \C TY$ and $\omega( \underline{\mathfrak{g}}) = 0$.
	We then have $\omega( T^{1,0} Y \oplus T^{0,1} Y) = 0$, $\omega (T) \neq 0$ for some non-vanishing vector field $T$ and one checks in local coordinates that $d \omega$ is positive on $T^{1,0} Y$.
\end{example}

\begin{example}
	Let $X$ be a CR manifold of codimension $1$ with a CR group action of a compact group $G$ such that $\underline{\mathfrak{g}} \subset T^{1,0}X \oplus T^{0,1}X$.
	Assume that there exists a $G$-invariant $1$-form $\omega$ such that $(U,V) \mapsto d\omega (U, \bar{V})$ is a positive definite form on $T^{1,0}X$.
	Furthermore, assume that there exists a vector field $T$ on $X$ such that $d\omega (T, \xi) = 0$ for $\xi \in \underline{\mathfrak{g}}$ and $\Complex T \oplus T^{1,0}X \oplus T^{0,1}X = \Complex TX$.\\
	Define $\mu^\xi (x) := -\omega( \xi_X (x))$ for $\xi \in \mathfrak{u}$ and $\xi_X$ the induced vector field on $X$.\\
	Since $\omega$ is $G$-invariant, we have $d \mu^\xi = -d \iota_{\xi_X} \omega = \iota_{\xi_X} d\omega$.
	Define $\mu \colon X \rightarrow \mathfrak{g}^*$, $\mu(x)(\xi) = \mu^\xi (x)$ and assume that $0$ is a regular value.\\
	We see that the manifold $\mathcal{M} := \mu^{-1}(0)$ is $G$-invariant because $\mu^\xi (gx) = \omega( \xi(gx)) = \omega (d g ( Ad_{g^{-1}} \xi)(x)) = \omega ((Ad_{g^{-1}} \xi)(x) ) = 0$ for $x \in \mathcal{M}$ and $Ad$ denoting the adjoint action of $G$ on $\mathfrak{g}$.
	
	We have $d \mu^\xi (T) = d\omega( \xi_X, T) = 0$. 
	The CR structure on $X$ induces a subbundle $W:=(T^{1,0}X \oplus T^{0,1}X) \cap TX$ and a linear map $J_x \colon W_x \rightarrow W_x$ with $J_x^2 =Id_x$.
	The form $d\omega$ induces hermitian metrics on the complex spaces $(W_x, J_x)$ and one checks that $T\mathcal{M} = T \oplus \underline{\mathfrak{g}}^{\perp_{d\omega}} = T \oplus \underline{\mathfrak{g}} \oplus (\Complex_J \underline{\mathfrak{g}})^{\perp_{d \omega}}$.
	As in example \ref{HighCodimensionExample1}, the bundle $\underline{\mathfrak{g}}$ does not contain a complex subspace of $W$ and therefore, $\mathcal{M}$ is a CR submanifold of $X$.
\end{example}

\begin{example}
	Let \(X=\{(z,z_{n+1})\in\C^{n}\times \C\mid \|z\|^2-|z_{n+1}|^2=1\}\). Putting \(T^{1,0}X=\C TX\cap T^{1,0}\C^{n+1}\) we have that \((X,T^{1,0}X)\) is a CR manifold of codimension one which is neither compact nor strongly pseudoconvex. We have that \[\omega=\frac{i}{2}\left(z_{n+1}d\overline{z_{n+1}}-\overline{z_{n+1}}dz_{n+1}+\sum_{j=1}^n \overline{z}_jdz_j-z_jd\overline{z}_j\right) \]
	defines a nonvanishing real one form on \(X\) with \(T^{1,0}X\oplus T^{0,1}X \subset \ker\omega\). 
	Furthermore, \(T:=i\sum_{j=1}^{n+1}z_j\frac{\partial}{\partial z_j}-\overline{z}_j\frac{\partial}{\partial \overline{z}_j}\) is a transversal vector field with \(\omega(T)=-1\). Consider the CR \(S^1\) action on \(X\) given by \(s\circ(z,z')=(sz,s^2z')\) and 
	denote its induced vector field by \(T_1\). Since \(\omega\) is invariant under the \(S^1\) action a CR moment map is given by \(\mu\colon X\to \R\), \(\mu(z,z')=\langle\omega,T_1\rangle(z,z')=1-|z_{n+1}|^2 \). It follows that \((\mathcal{M}:=\mu^{-1}(0),T^{1,0}\mathcal{M}:=\C T\mathcal{M}\cap T^{1,0}X)\) is a compact CR manifold of codimension two with \(\C T\mathcal{M}= T^{1,0}\mathcal{M}\oplus T^{0,1}\mathcal{M}\oplus\C T_1\oplus\C T\). Denoting the pullback of \(\omega\) to \(\mathcal{M}\) by \(\omega_0\) we find \(\langle\omega_0,T\rangle=-1\), \(\langle\omega_0,T_1\rangle=0\) and \(T^{1,0}\mathcal{M}\oplus T^{0,1}\mathcal{M}\subset\ker\omega_0\). A direct calculation then shows  that \((\mathcal{M},T^{1,0}\mathcal{M})\) is strongly pseudoconvex in the direction of \(T\), that is, \(\mathcal{L}_{\omega_0}\)  (see Definition~\ref{d-gue180710I}) is positive everywhere.
\end{example}
\begin{example}\label{Ex:GluingAlongCenter}
	Let $X$ be a CR manifold of codimension $1$ and assume that there exists a transversal CR $S^1$-action on $X$. 
	We may embed $S^1$ into $U_n$, the unitary matrices of degree $n$, as multiples of the identity.
	Then $U_n \times X$ is a CR manifold on which $S^1$ acts via $(s, (U, x)) \mapsto (Us^{-1}, sx)$.
	The action is CR and free, therefore the quotient $(U_n \times X) /S^1 =: U_n \times^{S^1} X$ is a manifold and we may push the CR structure onto the quotient 
	$T^{1,0} (U_n \times^{S^1} X) := \pi_* T^{1,0} (U_n \times X)$, where $\pi$ denotes the projection.\\
	This does give a CR structure on $U_n \times^{S^1}X$, such that the $U_n$-action is CR, locally free and transversal.
	Note that we may decompose the $U_n$-action into an $S^1 \times SU_n$-action and if $X$ is a strictly pseudoconvex CR manifold, then $U_n \times^{S^1} X$ is positive in the sense of Definition \ref{d-gue180710I} for the vector field $T$ induced by the $S^1$-action.
\end{example}

\begin{example}
	Consider \(( U_n \times^{S^1} X,T^{1,0} (U_n \times^{S^1} X))\) as in Example~\ref{Ex:GluingAlongCenter}. We have that \(U_n \times^{S^1} X\) admits a transversal CR \(S^1\times SU_n\) action. Set \(\hat{X}=U_n \times^{S^1} X\) and denote by \(T\) the real vector field induced by the \(S^{1}\) part of the action. Take an \(SU_n\) invariant function \(\psi\in C^\infty(\hat{X},\R)\) and define a CR structure on \(\hat{X}\) by
	\[T^{1,0}_x\hat{X}=\{Z+i(T(\psi)Z-Z(\psi)T)\mid Z\in T^{1,0}_x (U_n \times^{S^1} X) \},\,\,\,x\in \hat{X}.\]
	We have that \((\hat{X},T^{1,0}\hat{X})\) admits a CR  \(SU_n\) action and that \(T\) is a transversal vector field, that is,
	\(\C T \hat{X}=T^{1,0}\hat{X}\oplus T^{0,1}\hat{X}\oplus \C\underline{\mathfrak{su}}_n \oplus \C T.\)
	Note that in general \(T\) fails to be a CR vector field for \((\hat{X},T^{1,0}\hat{X})\).
	Assuming that \(X\) is strongly pseudoconvex and that  the \(C^2\)-norm of \(\psi\) is sufficiently small we have that \((\hat{X},T^{1,0}\hat{X})\) is strongly pseudoconvex in the direction of \(T\). More precisely, there exists a real nonvanishing one form \(\omega_0\) with \(\langle\omega_0,T\rangle=-1\) and \(T^{1,0}\hat{X}\oplus T^{0,1}\hat{X}\oplus \C\underline{\mathfrak{su}}_n\subset\ker \omega_0\) such that \(\mathcal{L}_{\omega_0}\)  (see Definition~\ref{d-gue180710I}) is positive everywhere.
\end{example}

\begin{example}
	Let $(M, T^{1,0}M)$ be a compact connected orientable CR manifold of dimension $2n+1$ and codimension $1$, $n\geq1$. Fix a global non-vanishing real $1$-form $\omega_0\in C^\infty(M,T^*M)$ such that $\langle\,\omega_0\,,\,u\,\rangle=0$, for every $u\in T^{1,0}M\oplus T^{0,1}M$.  Assume that $M$ admits a $d$-dimensional connected compact CR Lie group action $G$ and $G$ preserves $\omega_0$. Let $\mathfrak{g}$ denote the Lie algebra of $G$. For any $\xi \in \mathfrak{g}$, we write $\xi_X$ to denote the vector field on $M$ induced by $\xi$. The moment map associated to the form $\omega_0$ is the map $\mu:M \to \mathfrak{g}^*$ such that, for all $x \in M$ and $\xi \in \mathfrak{g}$, we have $\langle \mu(x), \xi \rangle = \omega_0(\xi_X(x))$. Assume that $0$ is a regular value of $\mu$ and $-\frac{1}{2i}d\omega_0|_{T^{1,0}M}$ is positive at every point of $\mu^{-1}(0)$. It was shown in~\cite[Thm. 2.5]{HH17} that $\mu^{-1}(0)$ is a CR manifold of  dimension $2n+1-d$ and codimension $d+1$ and $\mu^{-1}(0)$ satisfies the assumptions of Theorem~\ref{t-gue180803m}. 
\end{example}

\section{Fourier analysis and the Kohn Laplacian}\label{s-gue180710m}

We first recall some classical facts about Fourier analysis on Lie groups in Section~\ref{s-gue180710mI}. Then, in Section~\ref{s-gue180712}, we give a description of \(G\)-finite functions on manifolds with Lie group action in terms of a Fourier decomposition. In Section~\ref{s-gue180716} we study the restriction of the Kohn Laplacian to the components of this Fourier decomposition.

\subsection{Fourier analysis on Lie groups}\label{s-gue180710mI}

We recall that a representation of the group $G$ is a group homomorphism $\rho: G\To GL(\Complex^d)$ for some $d\in\mathbb N$. The representation represents the elements of the group as $d\times d$ complex square matrices so that multiplication commutes with $\rho$. The number $d$ is the dimension of the representation $\rho$. 
A representation $\rho$ is unitary if each $\rho(g)$, $g\in G$, is a unitary matrix. A representation $\rho$ is reducible if we have a splitting $\Complex^d=V_1\oplus V_2$ so that $\rho(g)V_j =V_j$ for all $g\in G$, for both $j=1,2$ and $0<{\rm dim\,}V_1<d$, where $V_1$ and $V_2$ are vector subspaces of $\Complex^d$. If $\rho$ is not reducible, it is called irreducible. Two representations $\rho_1$ and $\rho_2$ are equivalent if they have the same dimension and there is an invertible matrix $A$ such that $\rho_1(g)=A\rho_2(g)A^{-1}$  for all $g\in G$. To understand all representations of the group $G$, it often suffices to study the irreducible unitary representations. 
We let 
\[\mathcal{R}=\set{\mathcal{R}_m;\, m=1,2,\ldots}\]
denote the set of all irreducible unitary representations of the group $G$, including only one representation from each equivalence class.
For each $\mathcal{R}_m$, we  write $\mathcal{R}_m$ as a matrix $\left(\mathcal{R}_{m,j,k}\right)^{d_m}_{j,k=1}$, where $d_m$ is the dimension of $\mathcal{R}_m$. Fix a Haar measure $d\mu(g)$ on $G$ so that $\int_Gd\mu(g)=1$ and let $(\,\cdot\,|\,\cdot\,)_G$ be the $L^2$ inner product on $C^\infty(G)$ induced by $d\mu(g)$. Let $L^2(G)$ be the completion of $C^\infty(G)$ with respect to  $(\,\cdot\,|\,\cdot\,)_G$. We recall the Peter-Weyl theorem on compact Lie groups (see~\cite{Taylor}).

\begin{theorem}\label{t-gue180711}
We have that the set $\set{\sqrt{d_m}\mathcal{R}_{m,j,k};\, j, k=1,\ldots,d_m, m=1,2,\ldots}$ form an
orthonormal basis of $L^2(G)$. 
\end{theorem}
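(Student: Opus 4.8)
\textit{Proof plan.} The plan is to establish separately the two assertions contained in the statement: first, that the rescaled matrix coefficients $\sqrt{d_m}\,\mathcal{R}_{m,j,k}$ form an orthonormal system in $L^2(G)$ (the Schur orthogonality relations), and second, that their closed linear span is all of $L^2(G)$ (completeness). The first part is algebraic and rests on Schur's lemma; the second part is the analytic core of the theorem and uses the spectral theory of compact self-adjoint operators applied to convolution operators on $G$.

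For orthonormality, fix irreducible unitary representations $\mathcal{R}_m$ and $\mathcal{R}_{m'}$, acting on $\Complex^{d_m}$ and $\Complex^{d_{m'}}$. For any linear map $A\colon\Complex^{d_{m'}}\To\Complex^{d_m}$, the averaged operator
\[
\widetilde A:=\int_G \mathcal{R}_m(g)\,A\,\mathcal{R}_{m'}(g)^{-1}\,d\mu(g)
\]
satisfies $\mathcal{R}_m(h)\widetilde A=\widetilde A\,\mathcal{R}_{m'}(h)$ for all $h\in G$ (using left invariance of $d\mu$ and $\int_Gd\mu=1$), so by Schur's lemma $\widetilde A=0$ if $\mathcal{R}_m$ and $\mathcal{R}_{m'}$ are inequivalent, while $\widetilde A=\frac{1}{d_m}({\rm Tr\,}A)\,{\rm Id}$ if $m=m'$ (the scalar being read off by taking traces). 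Choosing $A$ to run through the elementary matrices, and using that $\mathcal{R}_{m'}(g)^{-1}=\overline{\mathcal{R}_{m'}(g)}^{\,t}$ for unitary matrices, one obtains the entrywise identity
\[
\int_G \mathcal{R}_{m,j,k}(g)\,\overline{\mathcal{R}_{m',j',k'}(g)}\,d\mu(g)=\frac{1}{d_m}\,\delta_{mm'}\,\delta_{jj'}\,\delta_{kk'},
\]
which is precisely the orthonormality of $\set{\sqrt{d_m}\,\mathcal{R}_{m,j,k};\, j,k=1,\ldots,d_m,\ m=1,2,\ldots}$ with respect to $(\,\cdot\,|\,\cdot\,)_G$.

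For completeness, let $E\subset L^2(G)$ be the closure of the span of all matrix coefficients of all finite-dimensional representations of $G$, and suppose $f\in L^2(G)$ with $f\perp E$; we must show $f=0$. For $\phi\in C(G)$ with $\phi(g)=\overline{\phi(g^{-1})}$, consider the convolution operator $T_\phi u:=u*\phi$ on $L^2(G)$. It has continuous kernel $(x,y)\mapsto\phi(y^{-1}x)$ on the compact manifold $G\times G$, hence is Hilbert--Schmidt; the symmetry of $\phi$ makes it self-adjoint; and it commutes with the left regular representation of $G$. By the spectral theorem for compact self-adjoint operators, $L^2(G)$ is the orthogonal direct sum of $\Ker T_\phi$ and a Hilbert sum of finite-dimensional eigenspaces for the nonzero eigenvalues of $T_\phi$. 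Each such eigenspace is left-invariant, hence a finite-dimensional subrepresentation of the regular representation, and its elements are continuous (being in the range of $T_\phi$), so they are matrix coefficients of that subrepresentation; thus every eigenspace for a nonzero eigenvalue is contained in $E$. Consequently $f\perp E$ forces $T_\phi f=0$ for every admissible $\phi$. Letting $\phi$ run through a symmetric approximate identity on $G$ gives $T_\phi f\to f$ in $L^2(G)$, so $f=0$ and $E=L^2(G)$. Together with the orthogonality relations, this shows that $\set{\sqrt{d_m}\,\mathcal{R}_{m,j,k}}$ is an orthonormal basis of $L^2(G)$.

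The main obstacle is the completeness argument, and more specifically the step identifying the nonzero eigenspaces of $T_\phi$ with spaces of matrix coefficients: this uses regularity of solutions of $u*\phi=\lambda u$ (so that the eigenfunctions lie in $C(G)$), together with the elementary fact that a finite-dimensional left-translation-invariant subspace of $C(G)$ is spanned by the matrix coefficients of the representation it carries. For a compact \emph{Lie} group one may instead invoke the existence of a faithful finite-dimensional unitary representation, so that the algebra generated by matrix coefficients separates points of $G$, is closed under complex conjugation, and contains the constants; the Stone--Weierstrass theorem then gives uniform, hence $L^2$, density of $E$ in $C(G)$. Either way, the Schur relations established above pin down the specific orthonormal basis in the statement; full details can be found in~\cite{Taylor}.
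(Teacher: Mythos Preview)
Your proof sketch is correct and follows the standard route to the Peter--Weyl theorem: Schur orthogonality for the orthonormality, and spectral theory of compact self-adjoint convolution operators (or Stone--Weierstrass in the Lie case) for completeness. There is nothing to correct.

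As for the comparison: the paper does not prove this statement at all. It is quoted as a classical fact (``We recall the Peter--Weyl theorem on compact Lie groups'') with a reference to~\cite{Taylor}, and is used as a black box in the subsequent Fourier analysis on $X$. So your write-up goes well beyond what the paper does here; you are supplying a proof where the authors simply cite one.
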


Let $f\in C^\infty(G)$ be a smooth function. Fix an irreducible unitary representation $\mathcal{R}_m$ and fix a matrix element $\mathcal{R}_{m,j,k}$. The Fourier component of $f$ with respect to $\mathcal{R}_{m,j,k}$ is the function 
\begin{equation}\label{e-gue180710mp}
f_{m,j,k}(g):=d_m\mathcal{R}_{m,j,k}(g)\int_Gf(g)\ol{\mathcal{R}_{m,j,k}(g)}d\mu(g)\in C^\infty(G). 
\end{equation}
For every $\ell\in\mathbb N$, let $\norm{\cdot}_{C^\ell(G)}$ be a $C^\ell$ norm on $G$. The following result is the smooth version of the Peter-Weyl theorem on compact Lie groups (see  the discussion after Theorem 2 in~\cite{Taylor})

\begin{theorem}\label{t-gue180710mp}
With the notations used above, let $f\in C^\infty(G)$. Then, for every $\ell\in\mathbb N$ and every $\varepsilon>0$, there is a $N_0\in\mathbb N$ such that for every $N\geq N_0$, we have 
\begin{equation}\label{e-gue180710s}
\norm{f-\sum^N_{m=1}\sum^{d_m}_{j,k=1}f_{m,j,k}}_{C^\ell(G)}\leq\varepsilon.
\end{equation}
\end{theorem}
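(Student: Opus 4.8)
The plan is to upgrade the $L^2$-convergence of Theorem~\ref{t-gue180711} to convergence in every $C^\ell$-norm by trading derivatives for an elliptic operator that is compatible with the Peter--Weyl decomposition.

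Write $S_Nf:=\sum^N_{m=1}\sum^{d_m}_{j,k=1}f_{m,j,k}$. First I would record the structural facts. For each $m$ put $V_m:=\mathrm{span}_{\Complex}\set{\mathcal{R}_{m,j,k};\,j,k=1,\ldots,d_m}\subset C^\infty(G)$; by Theorem~\ref{t-gue180711}, $L^2(G)$ is the orthogonal Hilbert space direct sum of the finite-dimensional subspaces $V_m$, and a direct computation with the orthonormal basis $\set{\sqrt{d_m}\mathcal{R}_{m,j,k}}$ shows that $f_{m,j,k}$ is exactly the $L^2$-orthogonal projection of $f$ onto $\Complex\mathcal{R}_{m,j,k}$, so that $S_N$ is the $L^2$-orthogonal projection onto $W_N:=\bigoplus^N_{m=1}V_m$. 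Next, fix a bi-invariant Riemannian metric on $G$ (which exists since $G$ is compact) and let $\Delta\geq0$ be its Laplace--Beltrami operator; put $P:=I+\Delta$, an elliptic differential operator of order $2$ with $\mathrm{Spec}(P)\subset[1,+\infty[$. Since left and right translations are isometries of a bi-invariant metric, $P$ commutes with the $G\times G$-action on $C^\infty(G)$; as each $V_m$ is an irreducible $G\times G$-submodule (it is isomorphic to $\mathcal{R}^*_m\boxtimes\mathcal{R}_m$), Schur's lemma gives $P|_{V_m}=\mu_m\,\mathrm{Id}$ with $\mu_m\geq1$. In particular $P$ and every power $P^s$, $s\in\mathbb N$, act diagonally with respect to the decomposition $\set{V_m}$, hence commute with each $S_N$.

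Now I would invoke the standard elliptic package. Fix $\ell$ and pick $s\in\mathbb N$ with $2s>\ell+\tfrac12\dim G$. Elliptic regularity for the elliptic operator $P^s$ of order $2s$, combined with the bound $\norm{u}_{L^2}\leq\norm{P^su}_{L^2}$ coming from $\mathrm{Spec}(P)\subset[1,+\infty[$, gives $\norm{u}_{H^{2s}(G)}\leq C_1\norm{P^su}_{L^2(G)}$; the Sobolev embedding $H^{2s}(G)\hookrightarrow C^\ell(G)$ then yields a constant $C>0$, depending only on $G$, $\ell$, $s$, with
\[
\norm{u}_{C^\ell(G)}\leq C\,\norm{P^su}_{L^2(G)},\qquad u\in C^\infty(G).
\]
Applying this to $u=f-S_Nf\in C^\infty(G)$ and using $P^s(f-S_Nf)=P^sf-S_N(P^sf)$ (the commutation just established), we obtain
\[
\norm{f-S_Nf}_{C^\ell(G)}\leq C\,\norm{P^sf-S_N(P^sf)}_{L^2(G)}.
\]
Since $f\in C^\infty(G)$, also $P^sf\in C^\infty(G)\subset L^2(G)$, so Theorem~\ref{t-gue180711} applied to the $L^2$-function $P^sf$ gives $S_N(P^sf)\to P^sf$ in $L^2(G)$ as $N\to\infty$; hence the right-hand side tends to $0$, and choosing $N_0$ so that it is $\leq\varepsilon$ for all $N\geq N_0$ completes the argument.

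The only step that is not soft is the commutation $S_N\circ P^s=P^s\circ S_N$, i.e.\ the compatibility of the Fourier truncation with differentiation; this is precisely what forces the choice of a group-adapted elliptic operator (bi-invariance of $\Delta$, or, alternatively, mere left-invariance together with the remark that the $V_m$ are the isotypic components of the left regular representation and are therefore preserved by every left-equivariant operator) --- the Laplacian of a generic metric on $G$ would not do. The elliptic estimate, the Sobolev embedding, and the $L^2$-Peter--Weyl theorem are all standard and quotable, e.g.\ from~\cite{Taylor} and~\cite{Seeley}.
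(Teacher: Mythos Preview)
Your argument is correct and is essentially the standard proof of the smooth Peter--Weyl theorem. Note, however, that the paper does not give its own proof of this statement: it is quoted as a known fact with a pointer to~\cite{Taylor} (``see the discussion after Theorem~2 in~\cite{Taylor}''), so there is nothing in the paper to compare against beyond the citation. Your approach---bi-invariant Laplacian, Schur on the $G\times G$-irreducible blocks $V_m$ to get $[P,S_N]=0$, then elliptic estimate plus Sobolev to trade $C^\ell$ for $L^2$---is exactly the mechanism behind that reference, so you have supplied the details the paper chose to outsource.
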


For every $m\in\mathbb N$, put
\begin{equation}\label{e-gue180731}
C^\infty_m(G)=\set{\sum^{d_m}_{j,k=1}c_{j,k}\mathcal{R}_{m,j,k}(g)\in C^\infty(G);\, c_{j,k}\in\Complex, j, k=1,\ldots,d_m}.
\end{equation}
Let $f\in C^\infty(G)$ be a smooth function. We say that $f$ is a smooth $G$-finite function on $G$ if $f=\sum^N_{j=1}f_j$, $N\in\mathbb N$, where $f_j\in C^\infty_{m_j}(G)$, $m_j\in\mathbb N$, $j=1,\ldots,N$. Let $C^\infty_G(G)$ denote the set of all smooth $G$-finite functions on $G$.

\subsection{Fourier analysis on $X$}\label{s-gue180712}

Fix an irreducible unitary representation $\mathcal{R}_m$, for every $g\in G$, put 
\[\chi_m(g):={\rm Tr\,}\left(\mathcal{R}_{m,j,k}(g)\right)^{d_m}_{j,k=1}=\sum^{d_m}_{j=1}\mathcal{R}_{m,j,j}(g).\]
Fix $r=0,1,\ldots,2n+d+1$. We need the following.

\begin{definition}\label{d-gue180712cm}
Let $u\in C^\infty(X,\Lambda^r(\Complex T^*X))$ be a smooth section. The $m$-th Fourier component of $u$ is given by 
\[u_m(x):=d_m\int_G (g^*u)(x)\ol{\chi_m(g)}d\mu(g)\in C^\infty(X,\Lambda^r(\Complex T^*X)).\]
\end{definition}
Note that if $u\in\Omega^{0,q}(X)$, then $u_m\in\Omega^{0,q}(X)$, for every $m=1,2,\ldots$. We have

\begin{theorem}\label{l-gue180713}
Let $u\in C^\infty(X,\Lambda^r(\Complex T^*X))$ be a smooth section. Then,
\begin{equation}\label{e-gue180713mp}
\lim_{N\To+\infty}\sum^N_{m=1}u_m(x)=u(x),\ \ \mbox{for every point $x\in X$},
\end{equation}
\begin{equation}\label{e-gue180715}
(\,u_m\,|\,u_\ell\,)=0\ \ \mbox{if $m\neq\ell$},
\end{equation}
and
\begin{equation}\label{e-gue180715I}
\sum^N_{m=1}\norm{u_m}^2\leq\norm{u}^2,\ \ \forall N\in\mathbb N.
\end{equation}
\end{theorem}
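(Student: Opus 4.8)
The plan is to transfer everything to the group $G$ via the orbit map and apply the smooth Peter--Weyl theorem (Theorem~\ref{t-gue180710mp}) together with the orthogonality relations for characters. Fix $x\in X$ and consider the smooth map $\Phi_x\colon G\to\Lambda^r(\Complex T^*X)$, $g\mapsto (g^*u)(x)$, taking values in the fixed finite-dimensional vector space $\Lambda^r_x(\Complex T^*X)$ (after trivializing locally, or just viewing $g^*u$ evaluated at $x$ as an element of a fixed fibre — note $(g^*u)(x)\in\Lambda^r_x(\Complex T^*X)$ since $g^*$ maps $\Lambda^r_{g\circ x}$ to $\Lambda^r_x$). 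Writing $\Phi_x$ in components with respect to a basis of $\Lambda^r_x(\Complex T^*X)$ gives finitely many scalar smooth functions on $G$, so Theorem~\ref{t-gue180710mp} applies componentwise. The key computation is to identify $u_m(x)$ with the projection of $\Phi_x$ onto the isotypic component $C^\infty_m(G)$; by the classical formula, convolution of $\Phi_x$ with $d_m\overline{\chi_m}$ is exactly that projection, i.e. $u_m(x) = d_m\int_G \Phi_x(g)\overline{\chi_m(g)}\,d\mu(g)$ is the $m$-th isotypic piece of $\Phi_x$ evaluated suitably. Here I would use that $\sum_{j,k} f_{m,j,k} = d_m \, \overline{\chi_m} * f$ for the group convolution, where $f_{m,j,k}$ is as in~\eqref{e-gue180710mp}, and evaluate at the identity $e\in G$ — but one must be slightly careful since $u_m(x)$ involves $\Phi_x$ integrated against $\overline{\chi_m}$ rather than a convolution evaluated at $e$. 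In fact $\int_G (g^*u)(x)\overline{\chi_m(g)}\,d\mu(g)$ is precisely $\sum_{j,k}\big(\int_G \Phi_x(g)\overline{\mathcal R_{m,j,j}(g)}\,d\mu(g)\big)$, which matches the constant term (the ``$\mathcal R_{m,j,k}(e)$'' factor with the diagonal picked out) of the Fourier decomposition of $\Phi_x$; I will make this bookkeeping precise.

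Granting this identification, \eqref{e-gue180713mp} follows immediately: apply Theorem~\ref{t-gue180710mp} with $\ell=0$ to each scalar component of $\Phi_x$, so $\sum_{m=1}^N\sum_{j,k}(\Phi_x)_{m,j,k}\to\Phi_x$ uniformly on $G$; evaluating at $e\in G$ and regrouping gives $\sum_{m=1}^N u_m(x)\to u(x)$, which is $\Phi_x(e)$. For \eqref{e-gue180715}, the orthogonality of distinct isotypic components is inherited from the $L^2(G)$ orthogonality of the $\mathcal R_{m,j,k}$ (Theorem~\ref{t-gue180711}): I would write $(u_m\mid u_\ell) = \int_X \langle u_m(x)\mid u_\ell(x)\rangle\,dv_X(x)$, expand both $u_m(x)$ and $u_\ell(x)$ as double group integrals, and use the invariance of the Haar measure together with the character orthogonality relation $\int_G \chi_m(g)\overline{\chi_\ell(g)}\,d\mu(g)=\delta_{m\ell}$ — more precisely, after a change of variables in the group, the cross term reduces to $\int_G\int_G \langle (g^*u)(x)\mid (h^*u)(x)\rangle \overline{\chi_m(g)}\chi_\ell(h)\,d\mu(g)d\mu(h)$ type expressions which vanish for $m\neq\ell$ by orthogonality. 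Alternatively, and more cleanly, one observes that $g^*$ is a unitary operator on $L^2(X,\Lambda^r(\Complex T^*X))$ with respect to the $G$-invariant metric and volume form, so $u\mapsto u_m$ is the orthogonal projection onto the $m$-th isotypic subspace of the unitary $G$-representation $L^2(X,\Lambda^r(\Complex T^*X))$, whence distinct projections have orthogonal ranges; this also gives \eqref{e-gue180715I} at once, since $\sum_{m=1}^N u_m$ is the orthogonal projection onto a subspace and hence $\|\sum_{m=1}^N u_m\|^2 = \sum_{m=1}^N\|u_m\|^2\le\|u\|^2$ (using \eqref{e-gue180715}).

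The main obstacle I anticipate is purely a matter of care rather than depth: reconciling the pointwise/fibrewise statement \eqref{e-gue180713mp} with the group-side Peter--Weyl theorem, because $g^*u$ evaluated at $x$ lands in a fibre that genuinely depends on $x$, and one must check that the convolution identity on $G$ is being applied correctly (the factor $d_m$, the role of the character versus the matrix coefficients, and the evaluation point). Once the dictionary ``$u_m(x) = $ ($m$-th isotypic projection of $g\mapsto (g^*u)(x)$, read off at $g=e$)'' is set up carefully, all three assertions are formal consequences of Theorems~\ref{t-gue180711} and~\ref{t-gue180710mp} and the unitarity of the $G$-action on $L^2$. I would therefore structure the write-up as: (1) set up $\Phi_x$ and the fibrewise interpretation; (2) prove the convolution/projection identity and deduce \eqref{e-gue180713mp}; (3) invoke unitarity of $g^*$ on $L^2$ to get that $u\mapsto u_m$ is an orthogonal projection, yielding \eqref{e-gue180715} and \eqref{e-gue180715I}.
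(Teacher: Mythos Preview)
Your proof is correct. For \eqref{e-gue180713mp} you do exactly what the paper does: fix $x$, view $g\mapsto (g^*u)(x)$ as a smooth (vector-valued) function on $G$, apply the smooth Peter--Weyl theorem componentwise, and evaluate at the identity, noting that $\sum_{j,k}f_{m,j,k}(e_0)=u_m(x)$.

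For \eqref{e-gue180715} and \eqref{e-gue180715I} your route genuinely diverges from the paper's. The paper argues by hand: it uses the $G$-invariance of the metric to write $(\,u_m\,|\,u_\ell\,)=\int_G(\,h^*u_m\,|\,h^*u_\ell\,)\,d\mu(h)$, computes $h^*u_m$ explicitly via the matrix-coefficient expansion of $\overline{\chi_m(gh^{-1})}$, and then invokes the orthogonality of the $\mathcal R_{m,j,k}$; the Bessel-type inequality \eqref{e-gue180715I} is obtained by a separate pointwise Parseval computation on $G$ integrated over $X$. Your observation that $g\mapsto g^*$ is a unitary representation on $L^2(X,\Lambda^r(\Complex T^*X))$ (because the metric and volume form are $G$-invariant) and that $u\mapsto u_m=d_m\int_G\overline{\chi_m(g)}\,g^*u\,d\mu(g)$ is precisely the standard orthogonal projection onto the $m$-th isotypic subspace short-circuits both computations at once: orthogonality of distinct isotypic components gives \eqref{e-gue180715}, and projections being norm-nonincreasing gives \eqref{e-gue180715I}. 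This is cleaner and more conceptual; the paper's approach, on the other hand, has the minor advantage of being completely self-contained (it rederives the projection property from the Schur orthogonality relations rather than quoting it) and of producing the explicit pointwise identity \eqref{e-gue180715mpf}, which is not needed here but could be useful elsewhere.
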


\begin{proof}
Fix $x\in X$. Consider the matrix-valued smooth function on $G$: 
\[f: g\in G\To (g^*u)(x).\]
Let $f_{m,j,k}$ be as in \eqref{e-gue180710mp}. We have 
\begin{equation}\label{e-gue180713mpI}
f_{m,j,k}(g)=d_m\mathcal{R}_{m,j,k}(g)\int (g^*u)(x)\ol{\mathcal{R}_{m,j,k}(g)}d\mu(g).
\end{equation}
By Theorem~\ref{t-gue180710mp}, for every $\varepsilon>0$, there is a $N_0\in\mathbb N$ such that for every $N\geq N_0$, we have 
\begin{equation}\label{e-gue180713mpIIz}
\abs{(g^*u)(x)-\sum^N_{m=1}\sum^{d_m}_{j,k=1}f_{m,j,k}(g)}\leq\varepsilon,\ \ \forall g\in G. 
\end{equation}
Take $g=e_0$ in \eqref{e-gue180713mpIIz}, where $e_0$ denotes the identity element in $G$, we get 
\begin{equation}\label{e-gue180713mpII}
\abs{u(x)-\sum^N_{m=1}\sum^{d_m}_{j,k=1}f_{m,j,k}(e_0)}\leq\varepsilon,
\end{equation}
for every $N\geq N_0$. From \eqref{e-gue180713mpI}, it is easy to see that 
\[\sum^N_{m=1}\sum^{d_m}_{j,k=1}f_{m,j,k}(e_0)=\sum^N_{m=1}u_m(x).\]
From this observation and \eqref{e-gue180713mpII}, we get \eqref{e-gue180713mp}. 

By Theorem~\ref{t-gue180711}, we can check that
\begin{equation}\label{e-gue180715m}
\begin{split}
&\sum^{+\infty}_{m=1}\sum^{d_m}_{j,k=1}\int_G\abs{f_{m,j,k}(g)}^2d\mu(g)\\
&=\sum^{+\infty}_{m=1}\sum^{d_m}_{j,k=1}d_m\abs{\int_G(g^*u)(x)\ol{\mathcal{R}_{m,j,k}(g)}d\mu(g)}^2\\
&=\int_G\abs{(g^*u)(x)}^2d\mu(g),\ \ \mbox{for every $x\in X$}. 
\end{split}
\end{equation}

We notice that for every $p, q\in C^\infty(X,\Lambda^r(\Complex T^*X))$, we have 
\[(\,p\,|\,q\,)=(\,h^*p\,|\,h^*q\,),\ \ \mbox{for every $h\in G$}.\]
Hence, for every $m\in\mathbb N$, $\ell\in\mathbb N$, we have
\begin{equation}\label{e-gue180715mI}
(\,u_m\,|\,u_\ell\,)=\int_G(\,h^*u_m\,|\,h^*u_\ell)d\mu(h).
\end{equation}
Now, for every $h\in G$, we have 
\begin{equation}\label{e-gue180715mII}
\begin{split}
h^*u_m&=d_m\int_G (h^*g^*u)(x)\ol{\chi_m(g)}d\mu(g)\\
&=d_m\int_G((g\circ h)^*u)(x)\ol{\chi_m(g)}d\mu(g)\\
&=d_m\int_G(g^*u)(x)\ol{\chi_m(g\circ h^{-1})}d\mu(g).
\end{split}
\end{equation}
We can check 
\begin{equation}\label{e-gue180715mIII}
\ol{\chi_m(g\circ h^{-1})}=\sum^{d_m}_{j=1}\sum^{d_m}_{s=1}\ol{\mathcal{R}_{m,j,s}(g)}\mathcal{R}_{m,j,s}(h).
\end{equation}
From \eqref{e-gue180715mIII} and \eqref{e-gue180715mII}, we get 
\begin{equation}\label{e-gue180715mIIq}
\begin{split}
h^*u_m=d_m\int_G(g^*u)(x)\Bigr(\sum^{d_m}_{j=1}\sum^{d_m}_{s=1}\ol{\mathcal{R}_{m,j,s}(g)}\mathcal{R}_{m,j,s}(h)d\mu(g)\Bigr)
\end{split}
\end{equation}
and similarly, 
\begin{equation}\label{e-gue180715mIIqa}
\begin{split}
h^*u_\ell=d_\ell\int_G(g^*u)(x)\Bigr(\sum^{d_m}_{j=1}\sum^{d_m}_{s=1}\ol{\mathcal{R}_{\ell,j,s}(g)}\mathcal{R}_{\ell,j,s}(h)d\mu(g)\Bigr).
\end{split}
\end{equation}
From Theorem~\ref{t-gue180711}, we see that 
\begin{equation}\label{e-gue180715pa}
\int \mathcal{R}_{m,j,s}(h)\ol{\mathcal{R}_{\ell,j_1,s_s}(h)}d\mu(h)=0,\ \ \mbox{if $m\neq\ell$, for every $j,s=1,\ldots,d_m$, $j_1, s_1=1,\ldots,d_\ell$}. 
\end{equation}
From \eqref{e-gue180715mIIq}, \eqref{e-gue180715mIIqa} and \eqref{e-gue180715pa}, for every $x\in X$, we obtain
\begin{equation}\label{e-gue180715y}
\int_G\langle\,h^*u_m(x)\,|\,h^*u_\ell(x)\,\rangle d\mu(h)=0\ \ \mbox{if $m\neq\ell$}.
\end{equation}
From \eqref{e-gue180715y} and \eqref{e-gue180715mI}, we get \eqref{e-gue180715}. 

Now, form \eqref{e-gue180715mIIq} and Theorem~\ref{t-gue180711}, for every $x\in X$, we have
\begin{equation}\label{e-gue180715mpf}
\int_G\langle\,h^*u_m(x)\,|\,h^*u_m(x)\,\rangle d\mu(h)=\sum^{d_m}_{j,s=1}d_m\abs{\int_G(g^*u)(x)\ol{\mathcal{R}_{m,j,s}(g)}d\mu(g)}^2.
\end{equation}
From \eqref{e-gue180715mpf} and \eqref{e-gue180715m}, we deduce that 
\begin{equation}\label{e-gue180715mpfI}
\sum^N_{m=1}\int_G\langle\,h^*u_m(x)\,|\,h^*u_m(x)\,\rangle d\mu(h)\leq\int_G\abs{(g^*u)(x)}^2d\mu(g),
\end{equation}
for every $x\in X$ and every $N\in\mathbb N$. From \eqref{e-gue180715mpfI} and \eqref{e-gue180715mI}, we have 
\[\begin{split}
\sum^N_{m=1}\norm{u_m}^2&=\int_X\sum^N_{m=1}\Bigr(\int_G\langle\,h^*u_m(x)\,|\,h^*u_m(x)\,\rangle d\mu(h)dv_X(x)\Bigr)\\
&\leq\int_X\int_G\abs{(g^*u)(x)}^2d\mu(g)dv_X(x)=\norm{u}^2,
\end{split}\]
for every $N\in\mathbb N$. We get \eqref{e-gue180715I}. 
\end{proof}

We pause and introduce some notations. For $s\in\mathbb N$, let $\norm{\cdot}_s$ denote the standard Sobolev norm on $C^\infty(X,\Lambda^r(\Complex T^*X))$ of order $s$ and let $\norm{\cdot}_{C^s(X,\Lambda^r(\Complex T^*X))}$ denote the standard $C^s(X,\Lambda^r(\Complex T^*X))$ norm on $C^\infty(X,\Lambda^r(\Complex T^*X))$. Let $v_j\in C^\infty(X,\Lambda^r(\Complex T^*X))$, $j=1,2,\ldots$. We say that $v_j\To v$ in $C^\infty(X,\Lambda^r(\Complex T^*X))$ topology, where $v\in C^\infty(X,\Lambda^r(\Complex T^*X))$, if for every $s\in\mathbb N$ and every $\varepsilon>0$, there is a $j_0\in\mathbb N$ such that for every $j\geq j_0$, we have 
\[\norm{v_j-v}_{C^s(X,\Lambda^r(\Complex T^*X))}\leq\varepsilon,\ \ \forall j\geq j_0.\]

We can now prove 

\begin{theorem}\label{t-gue180715u}
Let $u\in C^\infty(X,\Lambda^r(\Complex T^*X))$ be a smooth section. Then,
\begin{equation}\label{e-gue180715u}
\mbox{$\lim_{N\To+\infty}\sum^N_{m=1}u_m=u$ in $C^\infty(X,\Lambda^r(\Complex T^*X))$ topology}. 
\end{equation}
\end{theorem}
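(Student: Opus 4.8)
The plan is to upgrade the pointwise convergence and the $L^2$-orthogonality statements of Theorem~\ref{l-gue180713} to convergence in every Sobolev norm, hence in the $C^\infty$ topology, by exploiting the ellipticity of $\sqrt{\triangle^{(r)}+I}$ and its commutation with the $G$-action. First I would observe that since $\sqrt{\triangle^{(r)}+I}$ is a classical elliptic pseudodifferential operator of order one (see \eqref{e-gue180719I}), elliptic estimates give, for each $s\in\mathbb N$, a constant $C_s>0$ with
\[
\norm{v}_{s}\leq C_s\norm{(\triangle^{(r)}+I)^{s}v}_{0},\qquad v\in C^\infty(X,\Lambda^r(\Complex T^*X)),
\]
and by Sobolev embedding it suffices to prove convergence in each $\norm{\cdot}_s$. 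So it is enough to show that $\sum_{m=1}^N (\triangle^{(r)}+I)^s u_m$ converges in $L^2$ to $(\triangle^{(r)}+I)^s u$ for every fixed $s$.

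The key point is that, by \eqref{e-gue180711pII} (or \eqref{e-gue180719II}), the operator $\triangle^{(r)}+I$ commutes with every pull-back $g^*$; since the Fourier component $u\mapsto u_m$ in Definition~\ref{d-gue180712cm} is defined purely by averaging pull-backs against $\ol{\chi_m(g)}$, it follows that
\[
\bigl((\triangle^{(r)}+I)^s u\bigr)_m=(\triangle^{(r)}+I)^s u_m
\]
for every $m$. Applying the three conclusions of Theorem~\ref{l-gue180713} to the smooth section $w:=(\triangle^{(r)}+I)^s u$ in place of $u$, we get pointwise convergence $\sum_{m=1}^N w_m\to w$, $L^2$-orthogonality $(w_m\,|\,w_\ell)=0$ for $m\neq\ell$, and Bessel's inequality $\sum_{m=1}^N\norm{w_m}^2\leq\norm{w}^2$. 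Orthogonality plus Bessel show that $\sum_{m=1}^\infty w_m$ converges in $L^2$ to some limit $\tilde w$; the pointwise convergence (together with, say, dominated convergence against test sections, or simply uniqueness of distributional limits) forces $\tilde w=w$. Thus $\sum_{m=1}^N w_m\to w$ in $L^2$, i.e. $\sum_{m=1}^N u_m\to u$ in $\norm{\cdot}_{2s}$ after applying the elliptic estimate above, and since $s$ was arbitrary this gives convergence in the $C^\infty$ topology.

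I would also want to double-check the one slightly delicate point: that $u\mapsto u_m$ really does commute with $(\triangle^{(r)}+I)^s$, which requires differentiating under the integral sign over the compact group $G$ — this is routine since the integrand depends smoothly on $g$ and $G$ is compact, so the Bochner integral commutes with the (closed, everywhere-defined on $C^\infty$) differential operator. The main obstacle, such as it is, is bookkeeping rather than substance: one must be careful that the $L^2$ limit of $\sum w_m$ is identified with $w$ using the already-established \emph{pointwise} convergence \eqref{e-gue180713mp} (applied to $w$), rather than trying to reprove convergence from scratch, and that the elliptic estimate is invoked with the correct power of $\triangle^{(r)}+I$ to dominate the desired Sobolev norm. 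Everything else is an immediate consequence of Theorem~\ref{l-gue180713} and \eqref{e-gue180719I}.
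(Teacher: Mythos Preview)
Your proposal is correct and follows essentially the same route as the paper's proof: both exploit that the Hodge Laplacian commutes with the Fourier projections $u\mapsto u_m$ (so $((\triangle^{(r)})^j u)_m=(\triangle^{(r)})^j u_m$), combine orthogonality and Bessel from Theorem~\ref{l-gue180713} applied to $(\triangle^{(r)})^j u$ to get a Cauchy sequence in each Sobolev norm via an elliptic estimate, and then identify the limit with $u$ using the pointwise convergence \eqref{e-gue180713mp}. The only cosmetic difference is that the paper works with $\triangle^{(r)}$ (and therefore carries the lower-order term $\norm{f}^2$ in its elliptic estimate \eqref{e-gue180716a}), whereas you use $\triangle^{(r)}+I$, which is invertible and lets you drop that term.
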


\begin{proof}
Let $\triangle^{(r)}: C^\infty(X, \Lambda^r(\Complex T^*X))\To C^\infty(X, \Lambda^r(\Complex T^*X))$ be as in \eqref{e-gue180716}. From \eqref{e-gue180711pII}, it is not difficult to see that $\triangle^{(r)}u_m=(\triangle^{(r)}u)_m$ and hence
\begin{equation}\label{e-gue180716I}
(\triangle^{(r)})^ju_m=((\triangle^{(r)})^ju)_m,
\end{equation}
where $((\triangle^{(r)})^ju)_m$ denotes the $m$-th Fourier component of $(\triangle^{(r)})^ju$. For every $N\in\mathbb N$, let 
\[v_N:=\sum^N_{m=1}u_m\in C^\infty(X,\Lambda^r(\Complex T^*X)).\]
From \eqref{e-gue180715}, \eqref{e-gue180715I} and \eqref{e-gue180716I}, we have 
\begin{equation}\label{e-gue180716II}
\begin{split}
&\norm{(\triangle^{(r)})^jv_N}^2=\norm{\sum^N_{m=1}(\triangle^{(r)})^ju_m}^2=\norm{\sum^N_{m=1}((\triangle^{(r)})^ju)_m}^2\\
&=\sum^N_{m=1}\norm{((\triangle^{(r)})^ju)_m}^2\leq\norm{(\triangle^{(r)})^ju}^2, 
\end{split}
\end{equation}
for every $j\in\mathbb N_0$ and every $N\in\mathbb N$. Hence, for every $j\in\mathbb N_0$, we have 
\begin{equation}\label{e-gue180716III}
\lim_{N, M\To+\infty}\norm{((\triangle^{(r)})^j)(v_N-v_M)}^2=0.
\end{equation}
Since $\triangle^{(r)}$ is elliptic, for every $s\in\mathbb N$, there is a constant $C_s>0$ such that 
\begin{equation}\label{e-gue180716a}
\norm{f}^2_s\leq\Bigr(\norm{(\triangle^{(r)})^sf}^2+\norm{f}^2\Bigr),\ \ \mbox{for every $f\in C^\infty(X,\Lambda^r(\Complex T^*X))$}.
\end{equation}
From \eqref{e-gue180716a} and \eqref{e-gue180716III}, we conclude that for every $s\in\mathbb N$, there is a $\Td u_s\in H^s(X,\Lambda^r(\Complex T^*X))$ such that 
$v_N\To\Td u_s$ in $H^s(X,\Lambda^r(\Complex T^*X))$. From the Sobolev embedding theorem, there is a $s_0\in\mathbb N$ such that for all $s\geq s_0$, $\Td u_s\in C^0(X, \Lambda^r(\Complex T^*X))$ and $v_N\To\Td u_s$ in $C^0(X,\Lambda^r(\Complex T^*X))$. From \eqref{e-gue180713mp}, we see that $v_N\To u$ pointwise and hence 
\[u=\Td u_s\ \ \mbox{for all $s\in\mathbb N$ with $s\geq s_0$}.\]
We have proved that $v_N\To u$ in $H^s(X,\Lambda^r(\Complex T^*X))$, for all $s\in\mathbb N$ with $s\geq s_0$. By Sobolevs embedding theorem, $v_N\To u$ in $C^\infty(X,\Lambda^r(\Complex T^*X))$ topology. The theorem follows. 
\end{proof}

For every $m\in\mathbb N$, put 
\begin{equation}\label{e-gue180716z}
\Omega^{0,q}_m(X):=\set{f\in\Omega^{0,q}(X);\, \mbox{there is a $F\in\Omega^{0,q}(X)$ such that $f=F_m$ on $X$}}. 
\end{equation}
Let $L^2_{(0,q),m}(X)$ be the $L^2$ completion of $\Omega^{0,q}_m(X)$ with resect to $(\,\cdot\,|\,\cdot\,)$. We denote $C^\infty_m(X):=\Omega^{0,0}_m(X)$, $L^2_m(X):=L^2_{(0,0),m}(X)$. We need 

\begin{lemma}\label{l-gue180716z}
Fix $m\in\mathbb N$, $\ell\in\mathbb N$ with $m\neq\ell$. Let $\mathcal{R}_{m,j,k}$ be any matrix element of $\mathcal{R}_m$ and let $\mathcal{R}_{\ell,j_1,k_2}$ be any matrix element of $\mathcal{R}_\ell$. Let $A\in\Omega^{0,q}(X)$. Then,
\begin{equation}\label{e-gue180716am}
\int_G(h^*g^*A)(x)\ol{\mathcal{R}_{m,j,k}(g)}\,\ol{\mathcal{R}_{\ell,j_1,k_1}(h)}d\mu(g)d\mu(h)=0,\ \ \mbox{for every $x\in X$}. 
\end{equation}
\end{lemma}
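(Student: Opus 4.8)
The plan is to reduce the claim to the Schur orthogonality relations contained in the Peter--Weyl theorem (Theorem~\ref{t-gue180711}), exactly as in the computation leading to \eqref{e-gue180715pa}. The only structural input is the cocycle identity for pull-backs, $h^*g^* = (g\circ h)^*$, which was already used in \eqref{e-gue180715mII}, together with the fact that $G$ is compact and $A$ is smooth, so that all the integrals over $G\times G$ converge absolutely and Fubini's theorem applies freely.

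First I would rewrite the inner $g$-integral. Fixing $x\in X$ and $h\in G$, using $h^*g^*A = (g\circ h)^*A$ and the right invariance of the Haar measure (substitute $g\mapsto gh^{-1}$), one gets
\[
\int_G (h^*g^*A)(x)\,\ol{\mathcal{R}_{m,j,k}(g)}\,d\mu(g)
=\int_G (g^*A)(x)\,\ol{\mathcal{R}_{m,j,k}(g\circ h^{-1})}\,d\mu(g).
\]
Next I would use that $\mathcal{R}_m$ is a unitary representation: $\mathcal{R}_m(g\circ h^{-1})=\mathcal{R}_m(g)\mathcal{R}_m(h)^{*}$, hence in matrix components
\[
\ol{\mathcal{R}_{m,j,k}(g\circ h^{-1})}=\sum_{s=1}^{d_m}\ol{\mathcal{R}_{m,j,s}(g)}\,\mathcal{R}_{m,k,s}(h).
\]
Substituting this back separates the $g$- and $h$-dependence, so that
\[
\int_G (h^*g^*A)(x)\,\ol{\mathcal{R}_{m,j,k}(g)}\,d\mu(g)
=\sum_{s=1}^{d_m}\Bigl(\int_G (g^*A)(x)\,\ol{\mathcal{R}_{m,j,s}(g)}\,d\mu(g)\Bigr)\mathcal{R}_{m,k,s}(h).
\]

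Finally I would multiply by $\ol{\mathcal{R}_{\ell,j_1,k_1}(h)}$ and integrate in $h$. Interchanging the finite sum with the $h$-integral gives
\[
\int_G\!\!\int_G (h^*g^*A)(x)\,\ol{\mathcal{R}_{m,j,k}(g)}\,\ol{\mathcal{R}_{\ell,j_1,k_1}(h)}\,d\mu(g)\,d\mu(h)
=\sum_{s=1}^{d_m}\Bigl(\int_G (g^*A)(x)\,\ol{\mathcal{R}_{m,j,s}(g)}\,d\mu(g)\Bigr)\!\int_G \mathcal{R}_{m,k,s}(h)\,\ol{\mathcal{R}_{\ell,j_1,k_1}(h)}\,d\mu(h),
\]
and since $m\neq\ell$, each factor $\int_G \mathcal{R}_{m,k,s}(h)\,\ol{\mathcal{R}_{\ell,j_1,k_1}(h)}\,d\mu(h)$ vanishes by the orthonormality of $\{\sqrt{d_m}\,\mathcal{R}_{m,j,k}\}$ in $L^2(G)$ (Theorem~\ref{t-gue180711}), which yields \eqref{e-gue180716am}. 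There is essentially no analytic obstacle here; the only point requiring care is the index bookkeeping in the unitarity identity and keeping track of the complex conjugations, and noting that everything is a componentwise (matrix-entry) computation so that the $\Lambda^r(\Complex T^*X)$-valued factor $(g^*A)(x)$ simply plays the role of a constant coefficient.
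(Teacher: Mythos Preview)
Your proposal is correct and follows essentially the same route as the paper: use $h^*g^*=(g\circ h)^*$, substitute $g\mapsto gh^{-1}$ via right invariance of Haar measure, expand $\ol{\mathcal{R}_{m,j,k}(g\circ h^{-1})}=\sum_{s}\ol{\mathcal{R}_{m,j,s}(g)}\,\mathcal{R}_{m,k,s}(h)$ using unitarity, and then kill the $h$-integral by Schur orthogonality (Theorem~\ref{t-gue180711}). The paper's proof is line-for-line the same computation, just written as a single chain of equalities rather than isolating the inner $g$-integral first.
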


\begin{proof}
We have
\begin{equation}\label{e-gue180716amI}
\begin{split}
&\int_G(h^*g^*A)(x)\ol{\mathcal{R}_{m,j,k}(g)}\,\ol{\mathcal{R}_{\ell,j_1,k_1}(h)}d\mu(g)d\mu(h)\\
&=\int_G((g\circ h)^*A)(x)\ol{\mathcal{R}_{m,j,k}(g)}\,\ol{\mathcal{R}_{\ell,j_1,k_1}(h)}d\mu(g)d\mu(h)\\
&=\int_G(g^*A)(x)\ol{\mathcal{R}_{m,j,k}(g\circ h^{-1})}\,\ol{\mathcal{R}_{\ell,j_1,k_1}(h)}d\mu(g)d\mu(h)\\
&=\int_G(g^*A)(x)\Bigr(\sum^{d_m}_{s=1}\ol{\mathcal{R}_{m,j,s}(g)}\mathcal{R}_{m,k,s}(h)\Bigr)\ol{\mathcal{R}_{\ell,j_1,k_1}(h)}d\mu(g)d\mu(h). 
\end{split}
\end{equation}
By Theorem~\ref{t-gue180711}, we have 
\begin{equation}\label{e-gue180716amII}
\int_G\mathcal{R}_{m,k,s}(h)\ol{\mathcal{R}_{\ell,j_1,k_1}(h)}d\mu(h)=0,\ \ \mbox{for every $s=1,\ldots,d_m$}.
\end{equation}
From \eqref{e-gue180716amII} and \eqref{e-gue180716amI}, the lemma follows. 
\end{proof}

We can prove 

\begin{theorem}\label{t-gue180716ap}
Fix $m\in\mathbb N$ and let $\mathcal{R}_{m,j,k}$ be any matrix element of $\mathcal{R}_m$. Let $A\in\Omega^{0,q}(X)$ and set 
$f(x)=\int_G(g^*A)(x)\ol{\mathcal{R}_{m,j,k}(g)}d\mu(g)$. Then, $f=f_m$ on $X$. 
\end{theorem}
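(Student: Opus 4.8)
The plan is to compute the $m$-th Fourier component $f_m$ of $f$ directly from Definition~\ref{d-gue180712cm} and check that it reproduces $f$. To organize the bookkeeping, write $a_s(x):=\int_G(g^*A)(x)\ol{\mathcal{R}_{m,j,s}(g)}\,d\mu(g)$ for $s=1,\dots,d_m$, so that by definition $f=a_k$.

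First I would pull $f$ back by an arbitrary $h\in G$. Since each coefficient $\ol{\mathcal{R}_{m,j,k}(g)}$ is a scalar independent of $x$, the pull-back commutes with the vector-valued Haar integral, so
\[(h^*f)(x)=\int_G (h^*g^*A)(x)\,\ol{\mathcal{R}_{m,j,k}(g)}\,d\mu(g)=\int_G ((g\circ h)^*A)(x)\,\ol{\mathcal{R}_{m,j,k}(g)}\,d\mu(g).\]
Changing variables $g\mapsto g\circ h^{-1}$ (invariance of the Haar measure) and using that $\mathcal{R}_m$ is a unitary homomorphism, exactly as in the proof of Lemma~\ref{l-gue180716z} one obtains $\ol{\mathcal{R}_{m,j,k}(g\circ h^{-1})}=\sum_{s=1}^{d_m}\ol{\mathcal{R}_{m,j,s}(g)}\,\mathcal{R}_{m,k,s}(h)$, and therefore
\[(h^*f)(x)=\sum_{s=1}^{d_m}a_s(x)\,\mathcal{R}_{m,k,s}(h).\]

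Next I would insert this into $f_m(x)=d_m\int_G (h^*f)(x)\,\ol{\chi_m(h)}\,d\mu(h)$, expand $\ol{\chi_m(h)}=\sum_{t=1}^{d_m}\ol{\mathcal{R}_{m,t,t}(h)}$, and invoke the Peter--Weyl orthogonality relations of Theorem~\ref{t-gue180711} (with the $\sqrt{d_m}$ normalization), i.e. $\int_G \mathcal{R}_{m,k,s}(h)\,\ol{\mathcal{R}_{m,t,t}(h)}\,d\mu(h)=\tfrac{1}{d_m}\delta_{k,t}\delta_{s,t}$. Summing over $t$ collapses the double sum to $\tfrac{1}{d_m}\delta_{k,s}$, so $f_m(x)=d_m\sum_{s=1}^{d_m}a_s(x)\tfrac{1}{d_m}\delta_{k,s}=a_k(x)=f(x)$, which is the assertion. (In particular this shows $f\in\Omega^{0,q}_m(X)$ in the notation of \eqref{e-gue180716z}.)

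There is no real obstacle here: the argument is pure bookkeeping. The only points that deserve care are that the pull-back passes through the Haar integral (because the coefficients are constants in $x$), the change of variable $g\mapsto g\circ h^{-1}$ combined with the correct use of unitarity $\mathcal{R}_{m,s,k}(h^{-1})=\ol{\mathcal{R}_{m,k,s}(h)}$, and getting the $d_m$-normalization in the orthogonality relations right. Alternatively one can phrase it conceptually: the identity $(h^*f)(x)=\sum_s a_s(x)\mathcal{R}_{m,k,s}(h)$ says that for each fixed $x$ the function $h\mapsto(h^*f)(x)$ on $G$ lies in $C^\infty_m(G)$, so its Peter--Weyl expansion is concentrated in the $m$-th block; evaluating at the identity $e_0\in G$, exactly as in the proof of Theorem~\ref{l-gue180713}, yields $f_m=f$.
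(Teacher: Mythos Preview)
Your argument is correct, but it follows a genuinely different route from the paper. The paper does not compute $f_m$ directly; instead it invokes the $C^\infty$ convergence result Theorem~\ref{t-gue180715u} to write $f=\sum_{\ell\geq1}f_\ell$, and then uses Lemma~\ref{l-gue180716z} to kill every component $f_\ell$ with $\ell\neq m$, leaving $f=f_m$. Your approach bypasses both of these ingredients: by first identifying the $G$-translate $(h^*f)(x)$ as a finite linear combination $\sum_s a_s(x)\mathcal{R}_{m,k,s}(h)$ and then integrating against $\ol{\chi_m}$ using only the Schur orthogonality relations, you obtain $f_m=f$ in one step. This is more self-contained (it does not need the analytic convergence statement of Theorem~\ref{t-gue180715u}, nor the separate vanishing Lemma~\ref{l-gue180716z}), whereas the paper's proof has the advantage of reusing machinery already established and making transparent why the other Fourier components disappear. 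Your alternative remark at the end---that $h\mapsto(h^*f)(x)$ lies in $C^\infty_m(G)$ for each fixed $x$---is essentially the conceptual content of both arguments.
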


\begin{proof}
From Theorem~\ref{t-gue180715u}, we have 
\begin{equation}\label{e-gue180716ap}
\lim_{N\To+\infty}\sum^{N}_{\ell=1}f_\ell(x)=f(x)\ \ \mbox{in $C^\infty(X,\Lambda^q(\Complex T^*X))$ topology}.
\end{equation}
Now, for every $\ell\neq m$, from \eqref{e-gue180716am}, we have 
\begin{equation}\label{e-gue180716apI}
f_\ell(x)=\int_G(h^*g^*A)(x)\ol{\mathcal{R}_{m,j,k}(g)}\,\ol{\chi_{\ell}(h)}d\mu(g)d\mu(h)=0.
\end{equation}
From \eqref{e-gue180716apI} and \eqref{e-gue180716ap}, we get $f=f_m$ and the theorem follows. 
\end{proof}

\begin{corollary}\label{c-gue180716ap}
Let $u\in\Omega^{0,q}(X)$ be a \((0,q)\)-form. Then, $u\in\Omega^{0,q}_m(X)$ if and only if $u=u_m$ on $X$. 
\end{corollary}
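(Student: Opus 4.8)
The plan is to prove both implications directly from the definition of the $m$-th Fourier component and the orthogonality relations already established.

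\medskip

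\noindent\textbf{Proof.}
Suppose first that $u = u_m$ on $X$. Since $u_m$ is the $m$-th Fourier component of the smooth $(0,q)$-form $u$, taking $F = u$ in the definition \eqref{e-gue180716z} of $\Omega^{0,q}_m(X)$ shows immediately that $u \in \Omega^{0,q}_m(X)$.

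Conversely, suppose $u \in \Omega^{0,q}_m(X)$, so that $u = F_m$ for some $F \in \Omega^{0,q}(X)$. I must show $u = u_m$, i.e. that the Fourier component operator $F \mapsto F_m$ is idempotent. By Definition~\ref{d-gue180712cm} we have $u(x) = F_m(x) = d_m\int_G (g^*F)(x)\,\ol{\chi_m(g)}\,d\mu(g)$, and since $\chi_m = \sum_{j=1}^{d_m}\mathcal{R}_{m,j,j}$, this is a finite sum of terms of the form $d_m\int_G (g^*F)(x)\,\ol{\mathcal{R}_{m,j,j}(g)}\,d\mu(g)$. But each such term is exactly the function $f(x)$ appearing in Theorem~\ref{t-gue180716ap} (with $k = j$ there), so by that theorem each such term $f$ satisfies $f = f_m$, i.e.\ lies in $\Omega^{0,q}_m(X)$. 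Hence $u$, being a finite sum of elements of $\Omega^{0,q}_m(X)$, lies in that space and is a fixed point of the Fourier component operator provided the operator is linear and each summand is a fixed point; more precisely, $u_m = \sum_{j=1}^{d_m} f^{(j)}_m = \sum_{j=1}^{d_m} f^{(j)} = u$, where $f^{(j)}(x) := d_m\int_G (g^*F)(x)\,\ol{\mathcal{R}_{m,j,j}(g)}\,d\mu(g)$. This proves $u = u_m$ on $X$. \qed

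\medskip

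The only substantive point is the converse, and it reduces entirely to Theorem~\ref{t-gue180716ap}, which in turn rests on the orthogonality Lemma~\ref{l-gue180716z} and the $C^\infty$ convergence of the Fourier decomposition from Theorem~\ref{t-gue180715u}. I expect no real obstacle here: the argument is a bookkeeping exercise in decomposing the character $\chi_m$ into its diagonal matrix elements and applying the already-proved idempotency statement term by term. The one thing to be careful about is that the linearity of $F \mapsto F_m$ in $F$ is used to split $u_m$ into the sum over $j$, and that each diagonal matrix element $\mathcal{R}_{m,j,j}$ genuinely falls under the hypothesis of Theorem~\ref{t-gue180716ap}, which it does since that theorem is stated for an arbitrary matrix element $\mathcal{R}_{m,j,k}$.
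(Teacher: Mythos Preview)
Your proof is correct and follows exactly the route the paper intends: the corollary is stated without proof immediately after Theorem~\ref{t-gue180716ap}, and your argument---decomposing $\chi_m$ into diagonal matrix elements and applying Theorem~\ref{t-gue180716ap} term by term together with the linearity of $F\mapsto F_m$---is precisely the implicit derivation.
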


Let $T_j\in C^\infty(X,TX)$, $j=1,\ldots,d$, be as in \eqref{e-gue180713}. Let $u\in\Omega^{0,q}(X)$. For each $j=1,\ldots,d$, we denote $T_ju:=L_{T_j}u\in\Omega^{0,q}(X)$, where $L_{T_j}$ denotes the Lie derivative of $u$ along $T_j$. We have 

\begin{theorem}\label{t-gue180716apI}
Fix $j=1,\ldots,d$, and $m\in\mathbb N$. We have $T_ju\in\Omega^{0,q}_m(X)$, for every $u\in\Omega^{0,q}_m(X)$ and there is a constant $C>0$ such that 
\begin{equation}\label{e-gue180716bm}
\norm{T_ju}\leq C\norm{u},\ \ \mbox{for every $u\in\Omega^{0,q}_m(X)$}. 
\end{equation}
\end{theorem}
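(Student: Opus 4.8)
The plan is to reduce the statement to the structural fact that the $G$-translates (by pullback) of a section lying in a single Fourier component $\Omega^{0,q}_m(X)$ span a finite-dimensional space admitting an explicit description, and then to differentiate this description along $\xi_j$ and estimate.

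First I would record that $T_j=\xi_{j,X}$ generates the flow $\Phi_t(x)=e^{t\xi_j}\circ x=g_t\circ x$ with $g_t:=e^{t\xi_j}\in G$, so that $T_ju=L_{T_j}u=\frac{d}{dt}\big|_{t=0}g_t^*u$, and $g_t^*u\in\Omega^{0,q}(X)$ for all $t$ because the $G$-action is CR. Next, take $u\in\Omega^{0,q}_m(X)$, so $u=u_m$ by Corollary~\ref{c-gue180716ap}. Applying the defining relation $u=u_m$ at the point $g\circ x$, then the fibre pullback $g^*$, using the composition law $g^*\circ h^*=(hg)^*$, and substituting $h\mapsto hg^{-1}$ in the Haar integral, one obtains the identity
\[
g^*u=\sum_{a,b=1}^{d_m}\mathcal{R}_{m,a,b}(g)\,u^{(a,b)},\qquad u^{(a,b)}(x):=d_m\int_G(h^*u)(x)\,\ol{\mathcal{R}_{m,a,b}(h)}\,d\mu(h)\in\Omega^{0,q}(X),
\]
where one uses $\ol{\chi_m(hg^{-1})}=\sum_{a,b}\ol{\mathcal{R}_{m,a,b}(h)}\,\mathcal{R}_{m,a,b}(g)$ (unitarity and cyclicity of the trace). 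By Theorem~\ref{t-gue180716ap}, each $u^{(a,b)}$ lies in $\Omega^{0,q}_m(X)$.

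Now I would differentiate this identity along $g_t=e^{t\xi_j}$. Since the sections $u^{(a,b)}$ are fixed and $g\mapsto\mathcal{R}_{m,a,b}(g)$ is smooth,
\[
T_ju=\frac{d}{dt}\Big|_{t=0}g_t^*u=\sum_{a,b=1}^{d_m}c^{(j)}_{a,b}\,u^{(a,b)},\qquad c^{(j)}_{a,b}:=\frac{d}{dt}\Big|_{t=0}\mathcal{R}_{m,a,b}(e^{t\xi_j}).
\]
Thus $T_ju$ is a finite linear combination of elements of $\Omega^{0,q}_m(X)$, hence $T_ju\in\Omega^{0,q}_m(X)$, which is the first assertion. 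For the norm bound I would estimate each $\norm{u^{(a,b)}}$ in terms of $\norm{u}$: pointwise Cauchy--Schwarz in $g$ gives
\[
\abs{u^{(a,b)}(x)}^2\le d_m^2\Big(\int_G\abs{(g^*u)(x)}^2\,d\mu(g)\Big)\Big(\int_G\abs{\mathcal{R}_{m,a,b}(g)}^2\,d\mu(g)\Big)=d_m\int_G\abs{(g^*u)(x)}^2\,d\mu(g),
\]
using the Peter--Weyl orthogonality relation $\int_G\abs{\mathcal{R}_{m,a,b}(g)}^2\,d\mu(g)=1/d_m$ (Theorem~\ref{t-gue180711}). Integrating in $x$, interchanging the integrals, and using that the fixed $G$-invariant metric and volume form give $\norm{g^*u}=\norm{u}$ (as in the proof of Theorem~\ref{l-gue180713}), we get $\norm{u^{(a,b)}}\le\sqrt{d_m}\,\norm{u}$, whence
\[
\norm{T_ju}\le\sqrt{d_m}\Big(\sum_{a,b=1}^{d_m}\abs{c^{(j)}_{a,b}}\Big)\norm{u}=:C\norm{u},
\]
with $C$ depending only on $m$ and $j$, not on $u$.

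The step requiring the most care is the derivation of $g^*u=\sum_{a,b}\mathcal{R}_{m,a,b}(g)u^{(a,b)}$: this is where the hypothesis $u=u_m$ is consumed, and it requires keeping track of the pullback composition law, the change of variables in the Haar integral, and the expansion of $\ol{\chi_m(hg^{-1})}$ into matrix coefficients. After that the argument is routine: differentiating a finite sum of fixed sections, Cauchy--Schwarz, the Peter--Weyl orthogonality relations, and the $G$-invariance of the $L^2$ inner product.
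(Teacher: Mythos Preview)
Your proof is correct and follows essentially the same approach as the paper's: both start from $u=u_m$, perform the change of variables $h\mapsto hg^{-1}$ in the Haar integral, expand $\ol{\chi_m(hg^{-1})}$ into matrix coefficients, differentiate along $e^{t\xi_j}$ at $t=0$, invoke Theorem~\ref{t-gue180716ap} to conclude $T_ju\in\Omega^{0,q}_m(X)$, and then apply Cauchy--Schwarz together with the $G$-invariance of the $L^2$ norm for the estimate. The only difference is organizational: you first isolate the identity $g^*u=\sum_{a,b}\mathcal{R}_{m,a,b}(g)\,u^{(a,b)}$ for all $g\in G$ and then differentiate, while the paper carries out the substitution and differentiation in a single chain of equalities and applies Cauchy--Schwarz to the full sum rather than to each $u^{(a,b)}$ separately.
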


\begin{proof}
Let $u\in\Omega^{0,q}_m(X)$ be a \((0,q)\)-form. We have 
\begin{equation}\label{e-gue180716q}
\begin{split}
(T_ju)(x)&=\frac{\pr}{\pr t}\Bigr(((e^{t\xi_j})^*u)(x)\Bigr)|_{t=0}\\
&=\frac{\pr}{\pr t}\Bigr(\int_G((e^{t\xi_j})^*g^*)u)(x)\ol{\chi_m(g)}d\mu(g)\Bigr)|_{t=0}\\
&=\frac{\pr}{\pr t}\Bigr(\int_G((ge^{t\xi_j})^*u)(x)\ol{\chi_m(g)}d\mu(g)\Bigr)|_{t=0}\\
&=\frac{\pr}{\pr t}\Bigr(\int_G(g^*u)(x)\ol{\chi_m(g\circ(e^{t\xi_j})^{-1})}d\mu(g)\Bigr)|_{t=0}\\
&=\frac{\pr}{\pr t}\Bigr(\int_G(g^*u)(x)\sum^{d_m}_{k=1}\sum^{d_m}_{s=1}\ol{\mathcal{R}_{m,k,s}(g)}\mathcal{R}_{m,k,s}(e^{t\xi_j})d\mu(g)\Bigr)|_{t=0}\\
&=\int_G(g^*u)(x)\sum^{d_m}_{k=1}\sum^{d_m}_{s=1}c_{k,s}\ol{\mathcal{R}_{m,k,s}(g)}d\mu(g),
\end{split}
\end{equation}
where $c_{k,s}=\frac{\pr}{\pr t}\Bigr(\mathcal{R}_{m,k,s}(e^{t\xi_j})\Bigr)|_{t=0}\in\Complex$, $k,s=1,\ldots,d_m$. From \eqref{e-gue180716q} and Theroem~\ref{t-gue180716ap}, we deduce that $T_ju\in\Omega^{0,q}_m(X)$. 

Now, from \eqref{e-gue180716q}, we have 
\begin{equation}\label{e-gue189716qI}
\begin{split}
\norm{T_ju}^2&=\int_X\abs{\int_G(g^*u)(x)\sum^{d_m}_{k=1}\sum^{d_m}_{s=1}c_{k,s}\ol{\mathcal{R}_{m,k,s}(g)}d\mu(g)}^2dv_X(x)\\
&\leq\int_X\Bigr(\int_G\abs{(g^*u)(x)}^2d\mu(g)\int_G\abs{\sum^{d_m}_{k=1}\sum^{d_m}_{s=1}c_{k,s}\ol{\mathcal{R}_{m,k,s}(g)}}^2d\mu(g)\Bigr)dv_X(x)\\
&\leq C\int_X\int_G\abs{(g^*u)(x)}^2d\mu(g)dv_X(x)=C\norm{u}^2,
\end{split}
\end{equation}
where $C=\int_G\abs{\sum^{d_m}_{k=1}\sum^{d_m}_{s=1}c_{k,s}\ol{\mathcal{R}_{m,k,s}(g)}}^2d\mu(g)$. We get \eqref{e-gue180716bm} and the theorem follows. 
\end{proof}

We have 

\begin{theorem}\label{t-gue180720}
Fix $j=1,\ldots,d$, and $m\in\mathbb N$. Let $s\in\mathbb N$. Then, there is a constant $C_s>0$ such that 
\begin{equation}\label{e-gue180720m}
\norm{T_ju}_s\leq C_s\norm{u}_s,\ \ \mbox{for every $u\in\Omega^{0,q}_m(X)$},
\end{equation}
where $\norm{\cdot}_s$ denotes the standard Sobolev norm of order $s$. 
\end{theorem}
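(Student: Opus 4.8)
The plan is to deduce the case of general $s$ from the $L^2$ bound of Theorem~\ref{t-gue180716apI} (the case $s=0$) by conjugating $T_j$ with a $G$-invariant elliptic operator of order $s$. Concretely, I would take $\Lambda:=\sqrt{\triangle^{(q)}+I}$ acting on $C^\infty(X,\Lambda^q(\Complex T^*X))$, which contains $\Omega^{0,q}(X)$; by \eqref{e-gue180719I} its $s$-th power $\Lambda^s$ is a classical elliptic pseudodifferential operator of order $s$, and since $\triangle^{(q)}+I\ge I$ forces $\Lambda^s\ge I$ one has $\norm{v}\le\norm{\Lambda^sv}$ for every $v$. Feeding this into the elliptic a priori estimate $\norm{v}_s\le C(\norm{\Lambda^sv}+\norm{v})$ yields a two-sided comparison $\tfrac{1}{C'}\norm{v}_s\le\norm{\Lambda^sv}\le C'\norm{v}_s$ on $C^\infty(X,\Lambda^q(\Complex T^*X))$. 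Hence it suffices to bound $\norm{\Lambda^s(T_ju)}$ by a constant times $\norm{\Lambda^su}$ for $u\in\Omega^{0,q}_m(X)$.

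The crucial observations are that $\Lambda^s$ commutes with $T_j$ and that $\Lambda^s$ maps the $m$-th Fourier component of the ambient bundle into itself. For the commutation, I would iterate \eqref{e-gue180719II} to get $\Lambda^s(g^*v)=g^*(\Lambda^sv)$ for all $g\in G$ and all $v\in C^\infty(X,\Lambda^q(\Complex T^*X))$, and then differentiate this identity at $g=e^{t\xi_j}$, $t=0$; since $\Lambda^s$ is a fixed continuous operator and $t\mapsto(e^{t\xi_j})^*v$ is smooth, this gives $\Lambda^s(T_jv)=T_j(\Lambda^sv)$. For the invariance of Fourier components, averaging $\Lambda^s(g^*v)=g^*(\Lambda^sv)$ against $\ol{\chi_m(g)}\,d\mu(g)$ and pulling $\Lambda^s$ out of the (convergent) integral by continuity gives $(\Lambda^sv)_m=\Lambda^s(v_m)$; hence if $u\in\Omega^{0,q}_m(X)$, so that $u=u_m$ by Corollary~\ref{c-gue180716ap}, then $\Lambda^su=\Lambda^s(u_m)=(\Lambda^su)_m$, i.e.\ $\Lambda^su$ again equals its own $m$-th Fourier component, now viewed as a section of $\Lambda^q(\Complex T^*X)$ rather than of $T^{*0,q}X$.

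It then remains to estimate $\norm{T_j(\Lambda^su)}$, and here I would invoke the argument behind Theorem~\ref{t-gue180716apI}: it only uses that any smooth section $w$ of a $G$-equivariant bundle with $w=w_m$ admits the representation $T_jw=\int_G(g^*w)\,\phi_j(g)\,d\mu(g)$, where $\phi_j(g)=\sum_{k,l=1}^{d_m}c_{k,l}\ol{\mathcal{R}_{m,k,l}(g)}$ and $c_{k,l}=\frac{\pr}{\pr t}\mathcal{R}_{m,k,l}(e^{t\xi_j})\big|_{t=0}$, together with the fact that each $g^*$ is an $L^2$-isometry (the metric and $dv_X$ being $G$-invariant); Cauchy--Schwarz then yields $\norm{T_jw}\le\norm{\phi_j}_{L^2(G)}\norm{w}$, with a constant depending only on $m$ and $j$. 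Applying this to $w=\Lambda^su$ and chaining the estimates gives
\[\norm{T_ju}_s\le C'\norm{\Lambda^s(T_ju)}=C'\norm{T_j(\Lambda^su)}\le C'\norm{\phi_j}_{L^2(G)}\norm{\Lambda^su}\le C'^2\norm{\phi_j}_{L^2(G)}\norm{u}_s,\]
which is the claim with $C_s=C'^2\norm{\phi_j}_{L^2(G)}$. The step I expect to need the most care is the commutation $\Lambda^sT_j=T_j\Lambda^s$, which ultimately rests on the $G$-invariance of the metric (through \eqref{e-gue180711pII} and \eqref{e-gue180719II}); a minor secondary point is simply to observe that the $L^2$-estimate of Theorem~\ref{t-gue180716apI} is insensitive to which bundle it is applied on, which is what allows its use for $\Lambda^su$ even though that section is no longer of type $(0,q)$.
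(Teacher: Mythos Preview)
Your proposal is correct and follows essentially the same route as the paper: both conjugate $T_j$ by the elliptic operator $(\sqrt{\triangle^{(q)}+I})^s$, use the $G$-invariance \eqref{e-gue180719II} to commute it with $T_j$, and then invoke the $L^2$ bound of Theorem~\ref{t-gue180716apI}. You are in fact slightly more careful than the paper in flagging that $\Lambda^su$ may leave $\Omega^{0,q}_m(X)$ and observing that the argument behind Theorem~\ref{t-gue180716apI} still applies on the ambient bundle $\Lambda^q(\Complex T^*X)$.
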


\begin{proof}
Fix $s\in\mathbb N$. 
Since $\sqrt{\triangle^{(r)}+I}$ is a classical elliptic pseudodifferential operator of order one (see \eqref{e-gue180719I}), there is a constant $\hat C_s>1$ such that 
\begin{equation}\label{e-gue180720}
\frac{1}{\hat C_s}\norm{(\sqrt{\triangle^{(r)}+I})^su}\leq\norm{u}_s\leq\hat C_s\norm{(\sqrt{\triangle^{(r)}+I})^su},
\end{equation}
for every $u\in C^\infty(X,\Lambda^q(\Complex T^*X))$. From \eqref{e-gue180719II}, we have 
\begin{equation}\label{e-gue180720I}
(\sqrt{\triangle^{(r)}+I})^s(T_ju)=T_j((\sqrt{\triangle^{(r)}+I})^su),\ \ \forall u\in\Omega^{0,q}(X).
\end{equation}
From \eqref{e-gue180720}, \eqref{e-gue180720I}, \eqref{e-gue180719II} and \eqref{e-gue180716bm}, we have 
\begin{equation}\label{e-gue180720II}
\begin{split}
\norm{T_ju}_s&\leq\hat C_s\norm{(\sqrt{\triangle^{(r)}+I})^s(T_ju)}=\hat C_s\norm{T_j((\sqrt{\triangle^{(r)}+I})^su)}\\
&\leq\hat C_sC\norm{(\sqrt{\triangle^{(r)}+I})^su}\leq(\hat C_s)^2C \norm{u}_s,
\end{split}
\end{equation}
for every $u\in\Omega^{0,q}_m(X)$, where $C>0$ and $\hat C_s>0$ are constants as in \eqref{e-gue180716bm} and \eqref{e-gue180720} respectively. The theorem follows. 
\end{proof}

We introduce some definitions. 

\begin{definition}\label{d-gue180731}
Let $f\in C^\infty(X)$ be a smooth function. We say that $f$ is a $G$-finite smooth function on $X$ if there is a $\Td f\in C^\infty(X)$ and $h(g)\in C^\infty_G(G)$ such that 
\[f(x)=\int\Td f(g\circ x)h(g)d\mu(g).\]
Let $C^\infty_G(X)$ be the set of all $G$-finite smooth functions on $X$.
\end{definition}

For the meaning of $C^\infty_G(G)$, we refer the reader to the discussion after \eqref{e-gue180731}.  Let $f\in C^\infty_G(X)$ be a smooth \(G\)-finite function. From Theorem~\ref{t-gue180716ap}, we see that $f\in C^\infty_G(X)$ if and only if $f=\sum^N_{j=1}f_j$, $N\in\mathbb N$, where $f_j\in C^\infty_{m_j}(X)$, for some $m_j\in\mathbb N$, $j=1,\ldots,N$. 

\subsection{Fourier components of the Kohn Laplacian}\label{s-gue180716}

We fix $m\in\mathbb N$. From \eqref{e-gue180711p}, we see that 
\[\ddbar_b: \Omega^{0,q}_m(X)\To\Omega^{0,q+1}_m(X).\]
We extend $\ddbar_b$ to $L^2_{(0,q),m}(X)$ by
\[\begin{split}
&\ddbar_b:{\rm Dom\,}\ddbar_b\subset L^2_{(0,q),m}(X)\To L^2_{(0,q+1),m}(X),\\
&{\rm Dom\,}\ddbar_b:=\set{u\in  L^2_{(0,q),m}(X);\, \ddbar_bu\in L^2_{(0,q+1),m}(X)}.
\end{split}\]
Let 
\[\ol{\pr}^*_b:{\rm Dom\,}\ol{\pr}^*_b\subset L^2_{(0,q+1),m}(X)\To L^2_{(0,q),m}(X)\]
be the $L^2$ adjoint of $\ddbar_b$ with respect to $(\,\cdot\,|\,\cdot\,)$. The $m$-th Fourier component of the Kohn Laplacian is given by 
\begin{equation}\label{e-gue180716w}
\begin{split}
&\Box^{(q)}_{b,m}=\ddbar_b\,\ol{\pr}^*_b+\ol{\pr}^*_b\,\ddbar_b: {\rm Dom\,}\Box^{(q)}_{b,m}\subset L^2_{(0,q),m}(X)\To L^2_{(0,q),m}(X),\\
&{\rm Dom\,}\Box^{(q)}_{b,m}=\set{u\in L^2_{(0,q),m}(X);\, u\in {\rm Dom\,}\ddbar_b\bigcap{\rm Dom\,}\ol{\pr}^*_b, \ddbar_bu\in{\rm Dom\,}\ol{\pr}^*_b, \ol{\pr}^*_bu\in{\rm Dom\,}\ddbar_b}.
\end{split}
\end{equation}

For every $j=1,\ldots,d$, let $T^*_j: \Omega^{0,q}(X)\To\Omega^{0,q}(X)$ be the formal adjoint of $T_j$ with respect to $(\,\cdot\,|\,\cdot\,)$. Put 
\begin{equation}\label{e-gue180720mp}
\hat\Box^{(q)}_b:=\Box^{(q)}_b+T^*_1T_1+\cdots+T^*_dT_d: \Omega^{0,q}(X)\To\Omega^{0,q}(X).
\end{equation}
Since the Levi from $\mathcal{L}_{\omega_0,x}$ is positive definite at every point $x\in X$ and $n\geq 2$, we can repeat the proof of Theorem 8.4.2 in~\cite{CS01} and deduce the following 

\begin{theorem}\label{t-gue180720mp}
Let $q\geq 1$ be a positive integer. For every $s\in\mathbb N_0$, there is a constant $ C_s>0$ such that 
\begin{equation}\label{e-gue180720mpI}
\norm{u}_{s+1}\leq C_s\Bigr(\norm{\hat\Box^{(q)}_bu}_s+\norm{u}_s\Bigr), 
\end{equation}
for every $u\in\Omega^{0,q}(X)$. 
\end{theorem}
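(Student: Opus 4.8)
The plan is to obtain \eqref{e-gue180720mpI} by transplanting, essentially word for word, the proof of Kohn's $\tfrac12$-subelliptic estimate for the Kohn Laplacian on a compact strongly pseudoconvex CR manifold of hypersurface type (see~\cite[\S8.4]{CS01}); the only genuinely new point is an elementary remark about principal symbols which explains the role of the terms $T^*_1T_1+\cdots+T^*_dT_d$. With respect to the $G$-invariant metric of \eqref{e-gue180710II}, the principal symbol of $\Box^{(q)}_b$ at $(x,\xi)\in T^*X\setminus0$ degenerates exactly on the real covectors annihilating $T^{1,0}X\oplus T^{0,1}X$, while that of $T^*_1T_1+\cdots+T^*_dT_d$ degenerates exactly on the real covectors annihilating $\underline{\mathfrak g}$; here one uses that $T_1,\dots,T_d$ span $\underline{\mathfrak g}$ at every point. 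Since both operators are nonnegative, the characteristic variety of $\hat\Box^{(q)}_b$ is the intersection of these two sets, that is, the real covectors annihilating $T^{1,0}X\oplus T^{0,1}X\oplus\underline{\mathfrak g}$; by the hypothesis $\Complex TX=T^{1,0}X\oplus T^{0,1}X\oplus\Complex T\oplus\Complex\underline{\mathfrak g}$ this is precisely the real line bundle $\Real\omega_0\subset T^*X$. Thus $\hat\Box^{(q)}_b$ is a nonnegative second-order operator with exactly the same characteristic variety as the Kohn Laplacian of a strongly pseudoconvex hypersurface: the terms $\sum_jT^*_jT_j$ ``fill in'' the directions of $\underline{\mathfrak g}$, which are the directions in which $\Box^{(q)}_b$ alone is characteristic.

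First I would establish the basic estimate: for every $u\in\Omega^{0,q}(X)$ with $q\ge1$,
\[
\norm{u}^2_{1/2}\le C\Bigl(\norm{\ddbar_bu}^2+\norm{\ol{\pr}^*_bu}^2+\sum_{j=1}^{d}\norm{T_ju}^2+\norm{u}^2\Bigr)=C\bigl((\,\hat\Box^{(q)}_bu\,|\,u\,)+\norm{u}^2\bigr),
\]
the last equality being an integration by parts. Cover $X$ by finitely many coordinate charts, each carrying a local orthonormal frame of $T^{1,0}X$ together with $T,T_1,\dots,T_d$, and fix a subordinate partition of unity; on each chart fix a conic pseudodifferential partition of unity $\Psi^++\Psi^-+\Psi^0=I$ modulo smoothing operators, with $\Psi^+,\Psi^-$ microlocally supported in small conic neighbourhoods of $\pm\omega_0$ and $\Psi^0$ microlocally supported away from $\Real\omega_0$. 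Where $\Psi^0$ is microlocally supported, $\hat\Box^{(q)}_b$ is elliptic, so $\norm{\Psi^0u}^2_1\lesssim(\,\hat\Box^{(q)}_bu\,|\,u\,)+\norm{u}^2$ modulo absorbable commutator errors. Where $\Psi^+$ is microlocally supported, the positivity of the Levi form $\mathcal L_{\omega_0,x}$ together with the Morrey--Kohn--H\"ormander inequality yields $\norm{\Psi^+u}^2_{1/2}\lesssim\norm{\ddbar_bu}^2+\norm{\ol{\pr}^*_bu}^2+\norm{u}^2$, and a symmetric argument handles $\Psi^-$; it is in the $\Psi^-$-region that the hypothesis $n\ge2$ enters, keeping $\ddbar_b$ microlocally (injectively) elliptic on $(0,q)$-forms, the analogous estimate being false in CR dimension one. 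Adding the three contributions, rebalancing derivatives in the commutator errors and absorbing them with small constants, and summing over the partition of unity gives the basic estimate --- this is exactly the computation of~\cite[\S8.4]{CS01}, the directions of $\underline{\mathfrak g}$ having been neutralised once and for all by the symbol computation above.

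The passage from the basic estimate to \eqref{e-gue180720mpI} is then the standard, if slightly delicate, bootstrap: for $s\in\N$ one applies the basic estimate to $\Lambda^{s}u$, where $\Lambda$ is a classical elliptic pseudodifferential operator of order one acting on $(0,q)$-forms (for instance, built from $\sqrt{\triangle^{(q)}+I}$ and the projection $\tau^{0,q}$ onto $T^{*0,q}X$), commutes $\hat\Box^{(q)}_b$ past $\Lambda^{s}$, uses the self-adjointness of $\hat\Box^{(q)}_b$ to rebalance the top-order part of the commutator, and controls the remaining error terms by an induction on $s$ together with interpolation inequalities; if one wants the estimate for all $u$ in the domain of $\hat\Box^{(q)}_b$ rather than for smooth forms, the a priori character is removed by Kohn--Nirenberg elliptic regularisation. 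All of this is verbatim as in~\cite[\S8.4]{CS01}; the additional vector fields $T_j$, being smooth first-order operators, create no difficulty.

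I expect essentially all of the work to lie in the basic estimate, with the one conceptually new ingredient being the symbol observation of the first paragraph: for a CR hypersurface $\ddbar_b\oplus\ol{\pr}^*_b$ is already elliptic transverse to the characteristic direction, whereas here it degenerates along $\underline{\mathfrak g}$, and one must check --- which the symbol computation does at once --- that adjoining $T_1,\dots,T_d$ restores exactly the hypersurface-type microlocal structure. Granting that, the only real obstacle is the --- by now classical --- technical bookkeeping of the commutator error terms and of the Morrey--Kohn--H\"ormander estimate in the $\Psi^{\pm}$-regions, which however proceeds exactly as in~\cite[\S8.4]{CS01}.
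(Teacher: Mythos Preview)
Your proposal is correct and is precisely the approach the paper takes: the paper's own ``proof'' consists of the single sentence ``Since the Levi form $\mathcal{L}_{\omega_0,x}$ is positive definite at every point $x\in X$ and $n\geq 2$, we can repeat the proof of Theorem~8.4.2 in~\cite{CS01},'' and your outline is exactly an unpacking of that reference, together with the one genuinely new observation --- which you identify correctly --- that the terms $T^*_1T_1+\cdots+T^*_dT_d$ fill in the $\underline{\mathfrak g}$-directions so that the characteristic variety of $\hat\Box^{(q)}_b$ is again the line $\Real\omega_0$, just as for a hypersurface.
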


From now on, we fix $m\in\mathbb N$. We can prove 

\begin{theorem}\label{t-gue180720mpI}
Let $q\geq 1$ be a positive integer. For every $s\in\mathbb N_0$, there is a constant $C_s>0$ such that 
\begin{equation}\label{e-gue180720mpz}
\norm{u}_{s+1}\leq C_s\Bigr(\norm{\Box^{(q)}_{b,m}u}_s+\norm{u}_s\Bigr), 
\end{equation}
for every $u\in\Omega^{0,q}_m(X)$. 
\end{theorem}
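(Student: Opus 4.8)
The plan is to reduce the desired subelliptic estimate to the one already available for the auxiliary operator $\hat\Box^{(q)}_b$ in Theorem~\ref{t-gue180720mp}. The point is that, restricted to the Fourier component $\Omega^{0,q}_m(X)$, the operator $\hat\Box^{(q)}_b$ differs from $\Box^{(q)}_{b,m}$ only by the lower-order term $\sum_{j=1}^dT^*_jT_j$, which by Theorem~\ref{t-gue180716apI} and Theorem~\ref{t-gue180720} is bounded on $\Omega^{0,q}_m(X)$ in every Sobolev norm.

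First I would check that $\Box^{(q)}_{b,m}$ acts on smooth forms in $\Omega^{0,q}_m(X)$ exactly as $\Box^{(q)}_b$ does. By \eqref{e-gue180711p} and \eqref{e-gue180711pI}, both $\ddbar_b$ and the formal adjoint $\ol\pr^*_b$ commute with all pull-backs $g^*$, hence with the projection $u\mapsto u_m$ onto the $m$-th Fourier component; in particular $\ddbar_b$ and $\ol\pr^*_b$ map $\Omega^{0,\bullet}_m(X)$ into $\Omega^{0,\bullet}_m(X)$. Using the orthogonality \eqref{e-gue180715} of the Fourier components one sees that the $L^2$-adjoint of $\ddbar_b\colon L^2_{(0,q),m}(X)\To L^2_{(0,q+1),m}(X)$ is the restriction of the global $\ol\pr^*_b$; consequently $\Box^{(q)}_{b,m}u=\Box^{(q)}_bu\in\Omega^{0,q}_m(X)$ for every $u\in\Omega^{0,q}_m(X)$, and every such $u$ lies in ${\rm Dom\,}\Box^{(q)}_{b,m}$.

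Next, from the definition \eqref{e-gue180720mp} of $\hat\Box^{(q)}_b$ we obtain, for $u\in\Omega^{0,q}_m(X)$,
\[
\hat\Box^{(q)}_bu=\Box^{(q)}_{b,m}u+\sum_{j=1}^dT^*_jT_ju,
\]
so it suffices to bound $\norm{T^*_jT_ju}_s$ by a constant times $\norm{u}_s$. Here I would use that $T_j=L_{T_j}$ is a first-order operator on $\Omega^{0,q}(X)$ whose formal adjoint can be written $T^*_j=-T_j+E_j$, where $E_j\in C^\infty(X,{\rm End\,}(T^{*0,q}X))$ is a bundle homomorphism, so that $T^*_jT_ju=-T_j(T_ju)+E_j(T_ju)$. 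By Theorem~\ref{t-gue180716apI} we have $T_ju\in\Omega^{0,q}_m(X)$, so Theorem~\ref{t-gue180720} applies twice and gives $\norm{T_j(T_ju)}_s\le C_s\norm{T_ju}_s\le C_s^2\norm{u}_s$; since $E_j$ is zeroth order it is bounded on $H^s$, whence $\norm{E_j(T_ju)}_s\le C\norm{T_ju}_s\le CC_s\norm{u}_s$. Summing over $j$ yields a constant $C'_s>0$ with $\sum_{j=1}^d\norm{T^*_jT_ju}_s\le C'_s\norm{u}_s$ for all $u\in\Omega^{0,q}_m(X)$.

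Finally I would combine these facts with Theorem~\ref{t-gue180720mp}, which is applicable because $q\ge1$, the Levi form $\mathcal{L}_{\omega_0,x}$ is positive definite and $n\ge2$:
\[
\norm{u}_{s+1}\le C_s\left(\norm{\hat\Box^{(q)}_bu}_s+\norm{u}_s\right)\le C_s\left(\norm{\Box^{(q)}_{b,m}u}_s+C'_s\norm{u}_s+\norm{u}_s\right),
\]
and after renaming the constant this is exactly \eqref{e-gue180720mpz}. The one step that is not purely formal is the appearance of $T^*_j$: since $T^*_j$ by itself need not preserve $\Omega^{0,q}_m(X)$, Theorem~\ref{t-gue180720} cannot be applied to it directly, and the splitting $T^*_j=-T_j+E_j$ is what lets us separate the part that does preserve the Fourier component from a harmless zeroth-order remainder; the verification in the first step that $\Box^{(q)}_{b,m}$ restricts correctly also needs a small amount of care with domains.
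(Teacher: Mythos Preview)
Your proof is correct and follows the same route as the paper's: reduce to the subelliptic estimate for $\hat\Box^{(q)}_b$ in Theorem~\ref{t-gue180720mp} by controlling $\sum_j\norm{T^*_jT_ju}_s$ on $\Omega^{0,q}_m(X)$ via Theorem~\ref{t-gue180720}. The paper simply asserts \eqref{e-gue180720mpIII} from \eqref{e-gue180720m} without further comment; your decomposition $T^*_j=-T_j+E_j$ is exactly what makes that assertion honest, since it lets you apply Theorem~\ref{t-gue180720} to $T_j$ acting on $T_ju\in\Omega^{0,q}_m(X)$ rather than to $T^*_j$.
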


\begin{proof}
Fix $s\in\mathbb N_0$ and let $u\in\Omega^{0,q}_m(X)$. We have 
\begin{equation}\label{e-gue180720mpII}
\norm{\hat\Box^{(q)}_bu}_s\leq C\Bigr(\norm{\Box^{(q)}_{b,m}u}_s+\sum^d_{j=1}\norm{T^*_jT_ju}_s\Bigr),
\end{equation}
where $C>0$ is a constant independent of $u$. From \eqref{e-gue180720m}, we see that 
\begin{equation}\label{e-gue180720mpIII}
\norm{T^*_jT_ju}_s\leq\hat C\norm{u}_s,\ \ j=1,\ldots,d,
\end{equation}
where $\hat C>0$ is a constant independent of $u$. From \eqref{e-gue180720mpII}, \eqref{e-gue180720mpIII} and \eqref{e-gue180720mpI}, we get \eqref{e-gue180720mpz}. 
\end{proof}

For every $s\in\mathbb N$, let $H^s_m(X, T^{*0,q}X)$ be the completion of $\Omega^{0,q}_m(X)$ with respect to $\norm{\cdot}_s$. From Theorem~\ref{t-gue180720mpI}, 
we can repeat the technique of elliptic regularization (see the proof of Theroem 8.4.2 in~\cite{CS01}) and conclude that 

\begin{theorem}\label{t-gue180720mpII}
Let $q\geq 1$ be a positive integer and $u\in{\rm Dom\,}\Box^{(q)}_{b,m}$ with $\Box^{(q)}_{b,m}u=v\in L^2_{(0,q),m}(X)$. If $v\in H^s_m(X, T^{*0,q}X)$, $s\in\mathbb N_0$, then $u\in H^{s+1}_m(X,T^{*0,q}X)$ and there is a constant $C_s>0$ independent of $u$, $v$, such that 
\[\norm{u}_{s+1}\leq C_s\Bigr(\norm{\Box^{(q)}_{b,m}u}_s+\norm{u}_s\Bigr).\]
\end{theorem}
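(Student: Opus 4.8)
The strategy is to run the Kohn--Nirenberg elliptic-regularization argument exactly as in the proof of Theorem~8.4.2 in \cite{CS01}, but carried out entirely inside the $m$-th Fourier component $\Omega^{0,q}_m(X)$ and based on the a priori estimate of Theorem~\ref{t-gue180720mpI}. The single structural point one must keep track of is that all auxiliary operators preserve $\Omega^{0,q}_m(X)$: since $\ddbar_b$, $\ol{\pr}^*_b$, $\triangle^{(r)}$ all commute with $g^*$ (see \eqref{e-gue180711pII} and the identities preceding it), they commute with the orthogonal projection $u\mapsto u_m$, and hence so does the regularizing term introduced below. Together with the $\delta$-free bounds $\norm{T_ju}_s\le C_s\norm{u}_s$ of Theorem~\ref{t-gue180720} (which are what make Theorem~\ref{t-gue180720mpI} hold on the $m$-component) this is all the extra input beyond the classical scheme.

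First I would set up the regularized problem. For $\delta\in(0,1]$ define on $\Omega^{0,q}_m(X)$ the Hermitian form
\[
Q_\delta(u,v):=(\ddbar_bu\,|\,\ddbar_bv)+(\ol{\pr}^*_bu\,|\,\ol{\pr}^*_bv)+(u\,|\,v)+\delta\bigl((du\,|\,dv)+(d^*u\,|\,d^*v)\bigr),
\]
with $d$, $d^*$ as in \eqref{e-gue180716}. By the elliptic estimate for $\triangle^{(q)}+I$ one has $(du\,|\,du)+(d^*u\,|\,d^*u)+(u\,|\,u)\gtrsim\norm{u}^2_1$, so for each fixed $\delta>0$ the form $Q_\delta$ is coercive on the $H^1$-closure of $\Omega^{0,q}_m(X)$; its associated self-adjoint operator $\Box^{(q),\delta}_{b,m}=\Box^{(q)}_{b,m}+I+\delta R$, where $R$ is the $\tau^{0,q}$-projection of $\triangle^{(q)}$, is elliptic of order two (its principal symbol is $\sigma(\Box^{(q)}_b)+\delta\abs{\xi}^2\,{\rm Id}>0$ since $\Box^{(q)}_b\geq0$) and acts within $\Omega^{0,q}_m(X)$ by the commutation remarks above. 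Given the datum $v$ with $\Box^{(q)}_{b,m}u=v$, put $\Td v:=v+u\in L^2_{(0,q),m}(X)$; for each $\delta>0$ there is then a unique $u_\delta$ in the $H^1$-closure of $\Omega^{0,q}_m(X)$ with $Q_\delta(u_\delta,\psi)=(\Td v\,|\,\psi)$ for all $\psi$, and by elliptic regularity on the closed manifold $X$ one gets $u_\delta\in H^{k+2}_m(X,T^{*0,q}X)$ whenever $\Td v\in H^k_m(X,T^{*0,q}X)$.

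Next I would extract $\delta$-uniform estimates and pass to the limit, by induction on $k=0,1,\dots,s$. Testing $Q_\delta(u_\delta,\psi)=(\Td v\,|\,\psi)$ with $\psi=u_\delta$ gives $\norm{u_\delta}\le\norm{\Td v}$ uniformly in $\delta$. Assuming inductively that $u\in H^k_m$ with $\norm{u}_k\le C_{k-1}(\norm v_{k-1}+\norm{u}_{k-1})$, we have $\Td v\in H^k_m$, hence $u_\delta\in H^{k+2}_m$; applying the a priori estimate of Theorem~\ref{t-gue180720mpI} (extended to $H^{k+2}_m$ by density) to tangential difference quotients of $u_\delta$ and absorbing the $\delta R$-contributions with their small factor $\delta$ --- this is precisely the bookkeeping of \cite[Thm.~8.4.2]{CS01}, where here one additionally uses that every operator involved stays in $\Omega^{0,q}_m(X)$ and obeys the $\delta$-free bounds of Theorems~\ref{t-gue180720} and~\ref{t-gue180720mpI} --- yields $\norm{u_\delta}_{k+1}\le C_k(\norm{v}_k+\norm{u}_k)$ with $C_k$ independent of $\delta$. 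Thus $\{u_\delta\}$ is bounded in $H^{k+1}_m$, so a subsequence converges weakly in $H^{k+1}_m$ and strongly in $H^t_m$ for $t<k+1$ to some $u_0\in H^{k+1}_m$; passing to the limit in $Q_\delta(u_\delta,\psi)=(\Td v\,|\,\psi)$ (the $\delta$-term vanishes since $\norm{du_\delta}+\norm{d^*u_\delta}$ stays bounded) gives $Q_0(u_0,\psi)=(v+u\,|\,\psi)$, i.e. $(\Box^{(q)}_{b,m}+I)(u_0-u)=0$ weakly; since $\Box^{(q)}_{b,m}+I$ is injective, $u_0=u$, so $u\in H^{k+1}_m$, and weak lower semicontinuity of $\norm{\cdot}_{k+1}$ gives $\norm{u}_{k+1}\le C_k(\norm v_k+\norm u_k)$. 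Taking $k=s$ finishes the proof.

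I expect the main obstacle to be the second step: one must legitimately apply the a priori estimate of Theorem~\ref{t-gue180720mpI}, stated for smooth forms, to the regularized solutions $u_\delta$ (only in $H^{s+2}_m$ when $v\in H^s$, so a density argument is needed first), and run the difference-quotient computation so that every term generated by the $\delta R$-perturbation is genuinely absorbable with a constant independent of $\delta$. This is the heart of the elliptic-regularization part of \cite[Proof of Thm.~8.4.2]{CS01}; the only genuinely new ingredients for our setting are the commutation with the projection onto $\Omega^{0,q}_m(X)$ recorded in the first paragraph and the uniform control of the extra vector fields $T_1,\dots,T_d$ from Theorem~\ref{t-gue180720}.
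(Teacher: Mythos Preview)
Your proposal is correct and follows precisely the approach the paper indicates: the paper gives no detailed proof, simply remarking that one repeats the elliptic-regularization technique of \cite[Thm.~8.4.2]{CS01} starting from the a priori estimate of Theorem~\ref{t-gue180720mpI}, and your sketch fills in exactly those details. You also correctly isolate the only genuinely new ingredients---that all auxiliary operators preserve $\Omega^{0,q}_m(X)$ (by $G$-equivariance) and that the extra vector fields $T_1,\dots,T_d$ are controlled $\delta$-independently via Theorem~\ref{t-gue180720}---which is what allows the standard argument to go through on the $m$-component.
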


From Theorem~\ref{t-gue180720mpII} and some standard argument in functional analysis, we get 

\begin{theorem}\label{t-gue180720mpIII}
For $q\geq 1$, we have that
\[\Box^{(q)}_{b,m}=\ddbar_b\,\ol{\pr}^*_b+\ol{\pr}^*_b\,\ddbar_b: {\rm Dom\,}\Box^{(q)}_{b,m}\subset L^2_{(0,q),m}(X)\To L^2_{(0,q),m}(X)\]
has closed range.
\end{theorem}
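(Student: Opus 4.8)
The plan is to combine the subelliptic estimate of Theorem~\ref{t-gue180720mpII} with Rellich's compactness theorem and the standard Hörmander--Kohn functional-analytic packaging. Throughout write $\Box := \Box^{(q)}_{b,m}$ for brevity, and recall that, with the domain fixed in \eqref{e-gue180716w}, $\Box$ is a densely defined self-adjoint (and in particular closed) operator on $L^2_{(0,q),m}(X)$, so that $\overline{{\rm Range\,}\Box}=({\rm Ker\,}\Box)^\perp$. Set $N:={\rm Ker\,}\Box\subset L^2_{(0,q),m}(X)$. Taking $s=0$ in Theorem~\ref{t-gue180720mpII}, every $u\in{\rm Dom\,}\Box$ already lies in $H^1_m(X,T^{*0,q}X)$ and satisfies
\begin{equation*}
\norm{u}_1\leq C\bigl(\norm{\Box u}+\norm{u}\bigr).
\end{equation*}
It is important to note that this estimate holds on all of ${\rm Dom\,}\Box$, not merely on $\Omega^{0,q}_m(X)$, so no separate density or closure argument for it is needed.

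First I would show $\dim N<\infty$. If $\{u_j\}_{j\in\mathbb N}\subset N$ is an orthonormal sequence, then the displayed estimate applied with $\Box u_j=0$ gives $\norm{u_j}_1\leq C$ for all $j$; since the embedding $H^1_m(X,T^{*0,q}X)\hookrightarrow L^2_{(0,q),m}(X)$ is compact by Rellich's theorem, a subsequence of $\{u_j\}$ converges in $L^2_{(0,q),m}(X)$, which is impossible for an orthonormal sequence. Hence $N$ is finite-dimensional, in particular closed, and $L^2_{(0,q),m}(X)=N\oplus N^\perp$ orthogonally.

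Next I would upgrade the subelliptic estimate to the basic estimate
\begin{equation*}
\norm{u}\leq C\norm{\Box u},\qquad u\in{\rm Dom\,}\Box\cap N^\perp,
\end{equation*}
by the usual contradiction argument: if it failed, there would be a sequence $u_j\in{\rm Dom\,}\Box\cap N^\perp$ with $\norm{u_j}=1$ and $\norm{\Box u_j}\to 0$; then $\norm{u_j}_1$ is bounded by the subelliptic estimate, so after passing to a subsequence $u_j\to u$ in $L^2_{(0,q),m}(X)$ by Rellich, with $\norm{u}=1$ and $u\in N^\perp$; since $\Box$ is closed and $\Box u_j\to 0$, we obtain $u\in{\rm Dom\,}\Box$ and $\Box u=0$, i.e.\ $u\in N$, contradicting $u\in N^\perp$ and $\norm{u}=1$.

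Finally, closedness of the range follows: given $v\in\overline{{\rm Range\,}\Box}$, choose $u_j\in{\rm Dom\,}\Box$ with $\Box u_j\to v$; writing $u_j=\tilde u_j+n_j$ with $n_j\in N$ and $\tilde u_j\in N^\perp$, we have $\tilde u_j\in{\rm Dom\,}\Box$ and $\Box\tilde u_j=\Box u_j$, so the basic estimate gives $\norm{\tilde u_j-\tilde u_k}\leq C\norm{\Box u_j-\Box u_k}\to 0$; thus $\tilde u_j\to u$ in $L^2_{(0,q),m}(X)$ for some $u\in N^\perp$, and closedness of $\Box$ yields $u\in{\rm Dom\,}\Box$ with $\Box u=v$, so $v\in{\rm Range\,}\Box$. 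Hence ${\rm Range\,}\Box$ is closed. The only genuinely nontrivial ingredient is the subelliptic estimate of Theorem~\ref{t-gue180720mpII}; the main obstacle is therefore already dispatched there, and everything above is routine, the one point requiring care being that $\Box$ must be taken to be closed (self-adjoint) and that the estimate applies verbatim to its whole domain.
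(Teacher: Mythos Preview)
Your proposal is correct and is precisely the ``standard argument in functional analysis'' that the paper invokes without spelling out: the paper's own proof of this theorem consists solely of the sentence that it follows from Theorem~\ref{t-gue180720mpII} and standard functional analysis, and your Rellich-plus-basic-estimate argument is exactly that standard packaging.
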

 
 For $q\geq1$, let $N^{(q)}_m: L^2_{(0,q),m}(X)\To {\rm Dom\,}\Box^{(q)}_{b,m}$ be the partial inverse of $\Box^{(q)}_{b,m}$ and let 
 $S^{(q)}_m: L^2_{(0,q),m}(X)\To{\rm Ker\,}\Box^{(q)}_{b,m}$ be the Szeg\H{o} projection, i.e., the orthogonal projection onto ${\rm Ker\,}\Box^{(q)}_{b,m}$ with respect to $(\cdot\,|\,\cdot\,)$. We have 
 \begin{equation}\label{e-gue180720p}
 \begin{split}
 &\Box^{(q)}_{b,m}N^{(q)}_m+S^{(q)}_m=I\ \ \mbox{on $L^2_{(0,q),m}(X)$},\\
 &N^{(q)}_m\Box^{(q)}_{b,m}+S^{(q)}_m=I\ \ \mbox{on ${\rm Dom\,}\Box^{(q)}_{b,m}$}.
 \end{split}
 \end{equation}
From Theorem~\ref{t-gue180720mpII}, we obtain 

\begin{theorem}\label{t-gue180720p}
For $q\geq 1$, we have that $N^{(q)}_m: H^s_m(X,T^{*0,q}X)\To H^{s+1}_m(X,T^{*0,q}X)$ is continuous, for every $s\in\mathbb N_0$. 
\end{theorem}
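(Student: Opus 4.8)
The plan is to deduce the claim from the subelliptic a priori estimate of Theorem~\ref{t-gue180720mpII} by a bootstrap in the Sobolev index, after isolating the two error terms that obstruct a direct translation of that estimate into a mapping property of the partial inverse: the Szeg\H{o} projection $S^{(q)}_m$, and the zeroth order term $\norm{u}_s$ on the right hand side of \eqref{e-gue180720mpz}. Throughout, $q\geq1$ and $m\in\mathbb N$ are fixed.

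First I would record a smoothing property of $S^{(q)}_m$. If $w\in{\rm Ker\,}\Box^{(q)}_{b,m}$, then $\Box^{(q)}_{b,m}w=0\in H^s_m(X,T^{*0,q}X)$ for every $s\in\mathbb N_0$, so Theorem~\ref{t-gue180720mpII} gives $w\in H^{s+1}_m(X,T^{*0,q}X)$ together with $\norm{w}_{s+1}\leq C_s\norm{w}_s$; iterating in $s$ produces constants $K_t>0$ with $\norm{w}_t\leq K_t\norm{w}_0$ for all $t\in\mathbb N_0$ and all $w\in{\rm Ker\,}\Box^{(q)}_{b,m}$. In particular ${\rm Ker\,}\Box^{(q)}_{b,m}\subset C^\infty$, and by Rellich compactness of the inclusion $H^1_m(X,T^{*0,q}X)\hookrightarrow L^2_{(0,q),m}(X)$ the kernel is in fact finite dimensional; but for the argument all I need is that, since $S^{(q)}_m v$ lies in this kernel and $\norm{S^{(q)}_m v}_0\leq\norm{v}_0$, we get $\norm{S^{(q)}_m v}_t\leq K_t\norm{v}_0\leq K_t\norm{v}_t$, i.e.\ $S^{(q)}_m\colon H^s_m(X,T^{*0,q}X)\to H^t_m(X,T^{*0,q}X)$ is bounded for all $s,t\in\mathbb N_0$.

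Next I would note that $N^{(q)}_m$ is bounded on $L^2_{(0,q),m}(X)$, the partial inverse of an operator with closed range being continuous by the closed graph theorem, which applies here since $\Box^{(q)}_{b,m}$ has closed range by Theorem~\ref{t-gue180720mpIII}. I then argue by induction on $s\in\mathbb N_0$ that $N^{(q)}_m\colon H^s_m(X,T^{*0,q}X)\to H^{s+1}_m(X,T^{*0,q}X)$ is bounded. Given $v\in H^s_m(X,T^{*0,q}X)$, set $u:=N^{(q)}_m v\in{\rm Dom\,}\Box^{(q)}_{b,m}$; by \eqref{e-gue180720p} we have $\Box^{(q)}_{b,m}u=v-S^{(q)}_m v\in H^s_m(X,T^{*0,q}X)$ with $\norm{v-S^{(q)}_m v}_s\leq\norm{v}_s+\norm{S^{(q)}_m v}_s\leq C\norm{v}_s$ by the previous paragraph. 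Theorem~\ref{t-gue180720mpII} then yields $u\in H^{s+1}_m(X,T^{*0,q}X)$ and
\[\norm{u}_{s+1}\leq C_s\bigl(\norm{\Box^{(q)}_{b,m}u}_s+\norm{u}_s\bigr)\leq C_s\bigl(C\norm{v}_s+\norm{u}_s\bigr).\]
For $s=0$ the $L^2$ bound $\norm{u}_0=\norm{N^{(q)}_m v}_0\leq C\norm{v}_0$ closes the base case; for $s\geq1$ the inductive hypothesis gives $\norm{u}_s\leq C\norm{v}_{s-1}\leq C\norm{v}_s$, and the display then yields $\norm{u}_{s+1}\leq C'_s\norm{v}_s$, completing the induction and the proof.

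The analytic substance sits entirely in the subelliptic estimate of Theorem~\ref{t-gue180720mpI} and the regularity statement of Theorem~\ref{t-gue180720mpII}; the remainder is bookkeeping. The one step that deserves care is the uniform higher Sobolev control of $S^{(q)}_m$, since an orthogonal projection is in general unbounded on $H^s$ for $s>0$; here it is bounded precisely because ${\rm Ker\,}\Box^{(q)}_{b,m}$ consists of smooth forms whose Sobolev norms of all orders are uniformly dominated by the $L^2$ norm, equivalently because that kernel is finite dimensional.
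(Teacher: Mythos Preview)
Your argument is correct and is precisely the standard bootstrap that the paper leaves implicit when it writes ``From Theorem~\ref{t-gue180720mpII}, we obtain'' Theorem~\ref{t-gue180720p}. The only ingredients are the regularity estimate of Theorem~\ref{t-gue180720mpII}, the identity \eqref{e-gue180720p}, and the $L^2$ boundedness of $N^{(q)}_m$ coming from closed range; your handling of the Szeg\H{o} term via the uniform Sobolev control of ${\rm Ker\,}\Box^{(q)}_{b,m}$ is the right way to close the induction.
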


We consider the case $q=0$. Let 
\begin{equation}\label{e-gue180731w}
S_m:=S^{(0)}_m: L^2_{m}(X)\To{\rm Ker\,}\Box^{(0)}_{b,m}
\end{equation}
 be the Szeg\H{o} projection. We need 

\begin{theorem}\label{t-gue180720q}
We have 
\begin{equation}\label{e-gue180720r}
S_m=I-\ol{\pr}^*_bN^{(1)}_m\ddbar_b\ \ \mbox{on $C^\infty_m(X)$}. 
\end{equation}
\end{theorem}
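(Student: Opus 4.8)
The plan is to verify that, for $u\in C^\infty_m(X)$, the smooth function $g:=\ol{\pr}^*_bN^{(1)}_m\ddbar_bu$ satisfies both $\ddbar_b(u-g)=0$ and $g\perp{\rm Ker\,}\ddbar_b$. Since on $(0,0)$-forms one has $\Box^{(0)}_{b,m}=\ol{\pr}^*_b\ddbar_b$ (the other term in the Kohn Laplacian vanishes on functions), and hence ${\rm Ker\,}\Box^{(0)}_{b,m}=\set{v\in L^2_m(X);\,\ddbar_bv=0}$, this exhibits $u=(u-g)+g$ as the orthogonal decomposition of $u$ relative to ${\rm Ker\,}\Box^{(0)}_{b,m}$, which forces $S_mu=u-g$; that is precisely \eqref{e-gue180720r}. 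Before starting one records that $\ddbar_bu\in\Omega^{0,1}_m(X)$ by \eqref{e-gue180711p}, that $N^{(1)}_m\ddbar_bu$ is smooth by the regularity of $N^{(1)}_m$ (Theorem~\ref{t-gue180720mpII}, Theorem~\ref{t-gue180720p}), and hence that $g$ is a well-defined element of $C^\infty_m(X)$; in particular all forms appearing below are smooth, so every integration by parts between $\ddbar_b$ and $\ol{\pr}^*_b$ is legitimate.

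The first step is to show $S^{(1)}_m\ddbar_bu=0$. If $v\in{\rm Ker\,}\Box^{(1)}_{b,m}$ then $0=(\Box^{(1)}_{b,m}v\,|\,v)=\norm{\ddbar_bv}^2+\norm{\ol{\pr}^*_bv}^2$, so $\ddbar_bv=\ol{\pr}^*_bv=0$; therefore $(\ddbar_bu\,|\,v)=(u\,|\,\ol{\pr}^*_bv)=0$, i.e.\ $\ddbar_bu\perp{\rm Ker\,}\Box^{(1)}_{b,m}$, which gives $S^{(1)}_m\ddbar_bu=0$. Applying the first identity in \eqref{e-gue180720p} with $q=1$ to $\ddbar_bu$ then yields
\[\ddbar_b\ol{\pr}^*_bN^{(1)}_m\ddbar_bu+\ol{\pr}^*_b\ddbar_bN^{(1)}_m\ddbar_bu=\ddbar_bu,\]
that is, setting $\psi:=\ddbar_bN^{(1)}_m\ddbar_bu\in\Omega^{0,2}(X)$, we obtain $\ddbar_b(u-g)=\ol{\pr}^*_b\psi$.

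The key point is now that $\ddbar_b(u-g)$ is simultaneously $\ddbar_b$-closed (because $\ddbar_b^2=0$, using the integrability $[\mathcal V,\mathcal V]\subset\mathcal V$) and of the form $\ol{\pr}^*_b\psi$, so that
\[\norm{\ddbar_b(u-g)}^2=(\ol{\pr}^*_b\psi\,|\,\ddbar_b(u-g))=(\psi\,|\,\ddbar_b\ddbar_b(u-g))=0,\]
whence $\ddbar_b(u-g)=0$ and thus $u-g\in{\rm Ker\,}\Box^{(0)}_{b,m}$. On the other hand $g=\ol{\pr}^*_b\bigl(N^{(1)}_m\ddbar_bu\bigr)$, and for any $h\in{\rm Ker\,}\ddbar_b$ one has $(g\,|\,h)=(N^{(1)}_m\ddbar_bu\,|\,\ddbar_bh)=0$, so $g\perp{\rm Ker\,}\Box^{(0)}_{b,m}$. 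Uniqueness of the orthogonal decomposition $L^2_m(X)={\rm Ker\,}\Box^{(0)}_{b,m}\oplus({\rm Ker\,}\Box^{(0)}_{b,m})^\perp$ then gives $S_mu=u-g=(I-\ol{\pr}^*_bN^{(1)}_m\ddbar_b)u$, as claimed.

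I do not expect a genuine obstacle: this is the Fourier-component analogue of the classical formula expressing the Szeg\H{o} projection as $I$ minus the canonical $\ddbar_b$-solution operator composed with $\ddbar_b$. The only points that need care are the bookkeeping of domains and formal adjoints — ensuring each form lies in ${\rm Dom\,}\ddbar_b$ or ${\rm Dom\,}\ol{\pr}^*_b$ as needed, which here is automatic by smoothness — and the invocation of the subelliptic estimates and regularity of $N^{(1)}_m$ from Theorems~\ref{t-gue180720mpI}--\ref{t-gue180720p} to guarantee that $N^{(1)}_m\ddbar_bu$ is indeed smooth.
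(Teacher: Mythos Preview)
Your proof is correct and follows the same overall strategy as the paper's: set $g=\ol{\pr}^*_bN^{(1)}_m\ddbar_bu$, show $\ddbar_b(u-g)=0$ and $g\perp{\rm Ker\,}\ddbar_b$, then conclude by uniqueness of the orthogonal decomposition. The only difference is in the verification of $\ddbar_b(u-g)=0$: the paper inserts the second Hodge identity $N^{(1)}_m\Box^{(1)}_{b,m}+S^{(1)}_m=I$ into $\ddbar_b u-\ddbar_b\ol{\pr}^*_bN^{(1)}_m\ddbar_bu$ and unwinds an algebraic chain using the commutation $\Box^{(1)}_{b,m}\ddbar_b\ol{\pr}^*_b=\ddbar_b\ol{\pr}^*_b\Box^{(1)}_{b,m}$, while you apply the first Hodge identity $\Box^{(1)}_{b,m}N^{(1)}_m+S^{(1)}_m=I$ directly to $\ddbar_bu$, obtain $\ddbar_b(u-g)=\ol{\pr}^*_b\psi$, and then kill this term via the $L^2$ pairing and $\ddbar_b^2=0$. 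Your route is arguably a bit cleaner, but the two arguments are equivalent in content and difficulty.
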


\begin{proof}
Put 
\[\hat S_m:=I-\ol{\pr}^*_bN^{(1)}_m\ddbar_b: C^\infty_m(X)\To C^\infty_m(X).\]
Take $u\in C^\infty_m(X)$. From \eqref{e-gue180720p}, we have 
\begin{equation}\label{e-gue180720rI}
\begin{split}
&\ddbar_b\hat S_mu\\
&=\ddbar_bu-\ddbar_b\,\ol{\pr}^*_bN^{(1)}_m\ddbar_bu\\
&=\ddbar_bu-\Bigr(N^{(1)}_m\Box^{(1)}_{b,m}+S^{(1)}_m\Bigr)\ddbar_b\,\ol{\pr}^*_bN^{(1)}_m\ddbar_bu.
\end{split}
\end{equation}
Since $S^{(1)}_m\ddbar_b=0$ and $\Box^{(1)}_{b,m}\ddbar_b\,\ol{\pr}^*_b=\ddbar_b\,\ol{\pr}^*_b\,\ddbar_b\,\ol{\pr}^*_b=\ddbar_b\,\ol{\pr}^*_b\Box^{(1)}_{b,m}$, we have 
\begin{equation}\label{e-gue180720rII}
\begin{split}
&\Bigr(N^{(1)}_m\Box^{(1)}_{b,m}+S^{(1)}\Bigr)\ddbar_b\,\ol{\pr}^*_bN^{(1)}_m\ddbar_b\\
&=N^{(1)}_m\Box^{(1)}_{b,m}\ddbar_b\,\ol{\pr}^*_bN^{(1)}_m\ddbar_b\\
&=N^{(1)}_m\ddbar_b\,\ol{\pr}^*_b\,\ddbar_b\,\ol{\pr}^*_bN^{(1)}_m\ddbar_b\\
&=N^{(1)}_m\ddbar_b\,\ol{\pr}^*_b\Box^{(1)}_{b,m}N^{(1)}_m\ddbar_b\\
&=N^{(1)}_m\ddbar_b\,\ol{\pr}^*_b\,\ddbar_b\ \ \mbox{(here we used \eqref{e-gue180720p})}\\
&=N^{(1)}_m\Box^{(1)}_{b,m}\ddbar_b\\
&=\ddbar_b \mbox{(here we used \eqref{e-gue180720p})}.
\end{split}
\end{equation}
From \eqref{e-gue180720rI} and \eqref{e-gue180720rII}, we get 
\begin{equation}\label{e-gue180720rIII}
\ddbar_b\hat S_mu=0.
\end{equation} 
For any $v\in{\rm Ker\,}\ddbar_b$, we have 
\begin{equation}\label{e-gue180720y}
(\,v\,|\,\ol{\pr}^*_bN^{(1)}_m\ddbar_bu\,)=(\,\ddbar_bv\,|\,N^{(1)}_m\ddbar_bu\,)=0.
\end{equation}
From \eqref{e-gue180720rIII} and \eqref{e-gue180720y}, we deduce that
\[u=\hat S_m+(I-\hat S_m)u=(I-\ol{\pr}^*_bN^{(1)}_m\ddbar_b)u+\ol{\pr}^*_bN^{(1)}_m\ddbar_bu\]
is the orthogonal decomposition and hence $\hat S_m=S_m$. 
\end{proof}

We can now prove 

\begin{theorem}\label{t-gue180720f}
We have 
\begin{equation}\label{e-gue180720f}
S_m: C^\infty_m(X)\To C^\infty_m(X)
\end{equation}
and for every $\ell\in\mathbb N$, there are $K\in\mathbb N$ and $C>0$ such that 
\begin{equation}\label{e-gue180720fI}
\norm{(I-S_m)u}_{C^\ell(X)}\leq C\norm{\ddbar_bu}_{C^K(X,\Complex T^*X)},\ \ \mbox{for every $u\in C^\infty_m(X)$}. 
\end{equation}
\end{theorem}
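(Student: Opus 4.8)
The plan is to derive everything from the identity $S_m=I-\ol{\pr}^*_bN^{(1)}_m\ddbar_b$ on $C^\infty_m(X)$ established in Theorem~\ref{t-gue180720q}, together with the subelliptic gain for $N^{(1)}_m$ recorded in Theorem~\ref{t-gue180720p}. Thus the whole statement reduces to tracking the mapping and continuity properties of the composition $\ol{\pr}^*_b\circ N^{(1)}_m\circ\ddbar_b$ acting on $C^\infty_m(X)$.

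For \eqref{e-gue180720f}, take $u\in C^\infty_m(X)$. By \eqref{e-gue180711p} the operator $\ddbar_b$ commutes with $g^*$ for all $g\in G$, hence with the projection onto the $m$-th Fourier component, so $\ddbar_bu\in\Omega^{0,1}_m(X)$; in particular $\ddbar_bu\in H^s_m(X,T^{*0,1}X)$ for every $s\in\mathbb N_0$. Since $q=1\geq1$, Theorem~\ref{t-gue180720p} gives $N^{(1)}_m\ddbar_bu\in H^{s+1}_m(X,T^{*0,1}X)$ for every $s$, so by Sobolev embedding $N^{(1)}_m\ddbar_bu\in\Omega^{0,1}_m(X)$. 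Finally $\ol{\pr}^*_b$ is a first-order differential operator that, by \eqref{e-gue180711pI}, also commutes with the Fourier projection; therefore $(I-S_m)u=\ol{\pr}^*_bN^{(1)}_m\ddbar_bu\in C^\infty_m(X)$, and consequently $S_mu=u-(I-S_m)u\in C^\infty_m(X)$.

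For the estimate \eqref{e-gue180720fI}, fix $\ell\in\mathbb N$ and choose $K\in\mathbb N$ large enough that $H^{K}(X,\,\cdot\,)$ embeds continuously into $C^\ell(X,\,\cdot\,)$ (Sobolev embedding; $K>\ell+\frac{2n+d+1}{2}$ will do). For $u\in C^\infty_m(X)$ one then chains four elementary estimates: Sobolev embedding bounds $\norm{(I-S_m)u}_{C^\ell(X)}$ by a constant times $\norm{\ol{\pr}^*_bN^{(1)}_m\ddbar_bu}_{K}$; the fact that $\ol{\pr}^*_b$ is first order bounds this by a constant times $\norm{N^{(1)}_m\ddbar_bu}_{K+1}$; Theorem~\ref{t-gue180720p} (with $s=K$) bounds it by a constant times $\norm{\ddbar_bu}_{K}$; and on the compact manifold $X$ the $H^K$ norm is dominated by the $C^K$ norm, giving $\norm{(I-S_m)u}_{C^\ell(X)}\leq C\norm{\ddbar_bu}_{C^{K}(X,\Complex T^*X)}$. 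All constants are allowed to depend on $m$, which is fixed throughout this subsection.

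There is no serious obstacle here; the statement is essentially a corollary of Theorems~\ref{t-gue180720q} and~\ref{t-gue180720p}. The only points that require attention are that each of $\ddbar_bu$, $N^{(1)}_m\ddbar_bu$ and $\ol{\pr}^*_bN^{(1)}_m\ddbar_bu$ must be checked to remain in the $m$-th Fourier sector — so that $N^{(1)}_m$, which is only defined on $L^2_{(0,1),m}(X)$, can legitimately be applied and so that the output lands back in $C^\infty_m(X)$ — and the routine bookkeeping of Sobolev orders in the chain of inequalities.
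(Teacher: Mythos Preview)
Your proof is correct and follows essentially the same approach as the paper: both derive everything from the identity $S_m=I-\ol{\pr}^*_bN^{(1)}_m\ddbar_b$ of Theorem~\ref{t-gue180720q} together with the Sobolev continuity of $N^{(1)}_m$ from Theorem~\ref{t-gue180720p}, then close with Sobolev embedding and the trivial bound of $H^K$ norms by $C^K$ norms. You are slightly more explicit than the paper in tracking that each intermediate object stays in the $m$-th Fourier sector and in splitting the estimate $\norm{\ol{\pr}^*_bN^{(1)}_m v}_K\leq C\norm{v}_K$ into its two constituent steps, but the substance is identical.
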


\begin{proof}
From \eqref{e-gue180720r} and Theorem~\ref{t-gue180720p}, we get \eqref{e-gue180720f}. 

Let $\ell\in\mathbb N$ be a nonnegative integer. By the Sobolev inequalities, there is a $K\in\mathbb N$ and a constant $C>0$ such that 
\begin{equation}\label{e-gue180720qm}
\norm{(I-S_m)u}_{C^\ell(X)}\leq C\norm{(I-S_m)u}_K,\ \ \mbox{for evert $u\in C^\infty(X)$}. 
\end{equation}
From Theorem~\ref{t-gue180720p}, we see that 
\[\ol{\pr}^*_bN^{(1)}_m: H^s_m(X,T^{*0,1}X)\To H^s_m(X)\]
is continuous, for every $s\in\mathbb N_0$. From this observation and \eqref{e-gue180720r},  we have 
\begin{equation}\label{e-gue180720qmI}
\norm{(I-S_m)u}_K=\norm{(\ol{\pr}^*_bN^{(1)}_m\ddbar_b)u}_K\leq\hat C\norm{\ddbar_bu}_{K}\leq\Td C\norm{\ddbar_bu}_{C^K(X,\Complex T^*X)}, 
\end{equation}
for every $u\in C^\infty_m(X)$, where $\hat C>0$ and $\Td C>0$ are constants. From \eqref{e-gue180720qmI} and \eqref{e-gue180720qm}, we get \eqref{e-gue180720fI}. 
\end{proof}

\section{$G$-equivariant embedding theorems}\label{s-gue180722}

In this section, we will prove Theorem~\ref{t-gue180803m}. The idea is to construct global \(G\)-finite CR functions which separate points and define local embeddings. Since \(X\) is compact, this leads to an embedding of \(X\) by \(G\)-finite CR functions.

 Until further notice, we fix $p\in X$. Let $Z_1\in C^\infty(X,T^{1,0}X),\ldots,Z_n\in C^\infty(X,T^{1,0}X)$ be smooth sections such that for every $x\in X$, $\set{Z_1(x),\ldots,Z_n(x)}$ is an orthonormal basis for $T^{1,0}_xX$ and the Levi form $\mathcal{L}_{\omega_0,p}$ is diagonalized with respect to  $\set{Z_1(p),\ldots,Z_n(p)}$, that is, 
\begin{equation}\label{e-gue180722}
\mathcal{L}_{\omega_0,p}(Z_j(p), \ol Z_\ell(p))=\delta_{j,\ell}\lambda_j,\ \ j, \ell=1,\ldots,n,
\end{equation}
where $\delta_{j,\ell}=1$ if $j=\ell$, $\delta_{j,\ell}=0$ if $j\neq\ell$. 
Note that $\lambda_j>0$, for every $j=1,\ldots,n$. 
Let $e_0$ denote the identity element of $G$. By the Frobenius theorem, there exist local coordinates $\theta=(\theta_1,\ldots,\theta_d)$ of $G$ defined in a neighborhood $V$ of $e_0$ with $v(e_0)=(0,\ldots,0)$ and local coordinates $x=(x_1,\ldots,x_{2n+1+d})$ of $X$ defined in a neighborhood $U$ of $p$ with $x(p)=0$ such that 
\begin{equation}\label{e-gue180722I}
\begin{split}
&(\theta_1,\ldots,\theta_d)\circ (x_1,\ldots,,x_{2n+1},0,\ldots,0)\\
&=(x_{1},\ldots,x_{2n+1},\theta_1,\ldots,\theta_d),\ \ \forall (\theta_1,\ldots,\theta_d)\in V,\ \ \forall(x_1,\ldots,,x_{2n+1},0,\ldots,0)\in U,
\end{split}
\end{equation}
where we identify $V$ as an open set of original point in $\Real^d$, 
\begin{equation}\label{e-gue180722II}
\underline{\mathfrak{g}}={\rm span\,}\set{\frac{\pr}{\pr x_{2n+2}},\ldots,\frac{\pr}{\pr x_{2n+1+d}}},
\end{equation}
and 
\begin{equation}\label{e-gue180722III}
\begin{split}
&Z_j(p)=\frac{\pr}{\pr z_j}(p),\ \ j=1,\ldots,n,\\
&T(p)=\frac{\pr}{\pr x_{2n+1}}(p),
\end{split}
\end{equation}
where $\frac{\pr}{\pr z_j}=\frac{1}{2}(\frac{\pr}{\pr x_{2j-1}}-i\frac{\pr}{\pr x_{2j}})$, $j=1,\ldots,n$. Put $z_j=x_{2j-1}+ix_{2j}$, $j=1,\ldots,d$, $z=(z_1,\ldots,z_n)$. 
We write 
\begin{equation}\label{e-gue180806}
\begin{split}
Z_j&=\frac{\pr}{\pr z_j}+\sum^n_{s=1}(a_{j,s}z_s+b_{j,s}\ol z_s)\frac{\pr}{\pr x_{2n+1}}+\Bigr(c_jx_{2n+1}+\sum^{2n+d+1}_{s=2n+2}c_{j,s}x_s\Bigr)\frac{\pr}{\pr x_{2n+1}}\\
&\quad+\sum_{1\leq j\leq 2n+1+d, j\neq 2n+1}O(\abs{x})\frac{\pr}{\pr x_{j}}+O(\abs{x}^2),\ \ j=1,\ldots,n,
\end{split}
\end{equation}
where $a_{j,s}\in\Complex$, $b_{j,s}\in\Complex$, $j, s=1,\ldots,n$, $c_j\in\Complex$, $c_{j,s}\in\Complex$, $j=1,\ldots,n$, $s=2n+2,\ldots,2n+d+1$. 
From \eqref{e-gue180722}, \eqref{e-gue180722III} and $[Z_j, Z_\ell]\in C^\infty(X, T^{1,0}X)$, for every $j, \ell=1,\ldots,n$, it is straightforward to check that 
\begin{equation}\label{e-gue180806I}
\begin{split}
&a_{j,\ell}=a_{\ell,j},\ \ j, \ell=1,2,\ldots,n,\\
&\ol{b_{\ell,j}}-b_{j,\ell}=-2i\lambda_j\delta_{j,\ell},\ \ j, \ell=1,\ldots,n.
\end{split}
\end{equation}
From \eqref{e-gue180806I}, we can change $x_{2n+1}$ to 
\[x_{2n+1}-\frac{1}{2}\Bigr(\sum^n_{j,\ell=1}a_{j,\ell}z_jz_\ell+\sum^n_{j,\ell=1}\ol{a_{j,\ell}}\,\ol z_j\,\ol z_\ell+
\sum^n_{j,\ell=1}b_{j,\ell}z_j\ol z_\ell+\sum^n_{j,\ell=1}\ol{b_{j,\ell}}\,\ol z_jz_\ell\Bigr)\]
and by some straightforward calculation, we have
\begin{equation}\label{e-gue180722mp}
\begin{split}
Z_j&=\frac{\pr}{\pr z_j}+i\lambda_j\ol z_j\frac{\pr}{\pr x_{2n+1}}+\Bigr(c_jx_{2n+1}+\sum^{2n+d+1}_{s=2n+2}c_{j,s}x_s\Bigr)\frac{\pr}{\pr x_{2n+1}}\\
&\quad+\sum_{1\leq j\leq 2n+1+d, j\neq 2n+1}O(\abs{x})\frac{\pr}{\pr x_{j}}+O(\abs{x}^2),\ \ j=1,\ldots,n.
\end{split}
\end{equation}
From $[Z_j,\frac{\pr}{\pr x_s}]\subset C^\infty(X, T^{1,0}X)$, for every $j=1,\ldots,n$, $s=2n+2,\ldots,2n+d+1$, we deduce that $c_{j,s}=0$, for every $j=1,\ldots,n$, $s=2n+2,\ldots,2n+d+1$. Hence, 
\begin{equation}\label{e-gue180723}
\begin{split}
Z_j&=\frac{\pr}{\pr z_j}+i\lambda_j\ol z_j\frac{\pr}{\pr x_{2n+1}}+c_jx_{2n+1}\frac{\pr}{\pr x_{2n+1}}\\
&\quad+\sum_{1\leq j\leq 2n+1+d, j\neq 2n+1}O(\abs{x})\frac{\pr}{\pr x_{j}}+O(\abs{x}^2),\ \ j=1,\ldots,n,
\end{split}
\end{equation}
and 
\begin{equation}\label{e-gue180724}
\begin{split}
\ol Z_j&=\frac{\pr}{\pr\ol z_j}-i\lambda_jz_j\frac{\pr}{\pr x_{2n+1}}+\ol c_jx_{2n+1}\frac{\pr}{\pr x_{2n+1}}\\
&\quad+\sum_{1\leq j\leq 2n+1+d, j\neq 2n+1}O(\abs{x})\frac{\pr}{\pr x_{j}}+O(\abs{x}^2),\ \ j=1,\ldots,n.
\end{split}
\end{equation}
Until further notice, we work with the local coordinates $x$ defined in an open set $U$ of $p$. We identify $V$ as an open neighborhood of the origin in $\Real^d$ and we assume that $U=\Td U\times V$, where $\Td U$ is an open neighborhood of the origin in $\Real^{2n+1}$. 

We say that $g$ is a homogeneous polynomial of degree $\ell\in\mathbb N_0$ on $U$ if $g$ is the finite sum 
\[g=\sum_{\abs{\alpha}+\abs{\beta}+\abs{\gamma}+\gamma_0=\ell, \alpha, \beta\in\mathbb N^n_0, \gamma_0\in\mathbb N_0, \gamma\in\mathbb N^{d}_0}c_{\alpha,\beta,\gamma_0,\gamma}\ol z^{\alpha} z^{\beta}x^{\gamma_0}_{2n+1}\theta^{\gamma},\]
where $\alpha=(\alpha_1,\ldots,\alpha_n)$, $\beta=(\beta_1,\ldots,\beta_n)$, $\gamma=(\gamma_1,\ldots,\gamma_{d})$, $\abs{\alpha}=\sum^n_{j=1}\alpha_j$, 
$\abs{\beta}=\sum^n_{j=1}\beta_j$, $\abs{\gamma}=\sum^{d}_{j=1}\gamma_j$, 
$\ol z^\alpha:=\ol z_1^{\alpha_1}\cdots \ol z_n^{\alpha_n}$, $z^\beta:=z_1^{\beta_1}\cdots z_n^{\beta_n}$, $\theta^\gamma:=\theta_1^{\gamma_1}\cdots\theta_d^{\gamma_{d}}$, $c_{\alpha,\beta,\gamma,\gamma_0,\gamma}\in\Complex$, for every $\alpha, \beta\in\mathbb N^n_0, \gamma_0\in\mathbb N_0, \gamma\in\mathbb N^{d}_0$. 
Let $P_\ell(U)$ be the set of all homogeneous polynomials of degree $\ell$. 

\begin{theorem}\label{t-gue180723}
We can find $\phi_\ell\in P_\ell(U)$, $\ell=1,2,3,\ldots$, with 
\begin{equation}\label{e-gue180724I}
\begin{split}
&\phi_1=x_{2n+1},\\
&\phi_2=i\Bigr(\sum^n_{j=1}\lambda_j\abs{z_j}^2+\abs{x_{2n+1}}^2+\sum^d_{j=1}\abs{\theta_j}^2\Bigr)-\sum^n_{j=1}(\ol c_j\ol z_jx_{2n+1}+c_jz_jx_{2n+1}),
\end{split}
\end{equation}
where $c_j$, $j=1,\ldots,n$, are as in \eqref{e-gue180724}, such that 
\begin{equation}\label{e-gue180724II}
\ol Z_j\Bigr(\sum^N_{\ell=1}\phi_\ell\Bigr)=O(\abs{x}^N),\ \ \mbox{for every $j=1,\ldots,n$, for every $N\in\mathbb N$}.
\end{equation}
\end{theorem}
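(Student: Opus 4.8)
The plan is to construct the polynomials $\phi_\ell$ recursively, solving at each order a $\ddbar$-type equation in the $\ol z$--variables among homogeneous polynomials, with CR integrability supplying the compatibility needed for solvability. The whole argument rests on the decomposition, read off from \eqref{e-gue180724}, $\ol Z_j=\pr_{\ol z_j}+M_j$, where $M_j$ is a first order differential operator whose coefficients vanish at $p$. Hence $M_j$ is \emph{degree nondecreasing}: for $q\in P_\ell(U)$ the Taylor expansion of $M_jq$ at $p$ carries no terms of degree $<\ell$, whereas $\pr_{\ol z_j}$ maps $P_\ell(U)$ into $P_{\ell-1}(U)$. I would also use that $\ol Z_1,\dots,\ol Z_n$ is a local frame of $T^{0,1}X$ over $U$, so that $[\mathcal V,\mathcal V]\subset\mathcal V$, conjugated, reads $[\ol Z_k,\ol Z_j]=\sum_l a^l_{kj}\ol Z_l$ with $a^l_{kj}\in C^\infty(U)$.

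First I would dispose of the base case. With $\phi_1=x_{2n+1}$, formula \eqref{e-gue180724} gives $\ol Z_j\phi_1=-i\lambda_jz_j+\ol c_jx_{2n+1}+O(\abs{x}^2)$, and a direct computation with the explicit $\phi_2$ of \eqref{e-gue180724I} gives $\pr_{\ol z_j}\phi_2=i\lambda_jz_j-\ol c_jx_{2n+1}$ (the remaining monomials of $\phi_2$ being free of $\ol z$). Since $\ol Z_j\phi_2=\pr_{\ol z_j}\phi_2+O(\abs{x}^2)$, the degree one terms cancel and $\ol Z_j(\phi_1+\phi_2)=O(\abs{x}^2)$ for all $j$; this is \eqref{e-gue180724II} for $N=1,2$.

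For the inductive step, suppose $\phi_1,\dots,\phi_N$ ($N\geq2$) have been constructed with $\psi_N:=\sum_{\ell=1}^N\phi_\ell$ satisfying $\ol Z_j\psi_N=O(\abs{x}^N)$ for all $j$. Let $E_j\in P_N(U)$ be the degree $N$ homogeneous part of the Taylor expansion of $\ol Z_j\psi_N$ at $p$ (the lower degree terms vanishing by hypothesis). I would then solve, for polynomials in $\ol z$ with $z,x_{2n+1},\theta$ regarded as parameters, the system $\pr_{\ol z_j}\phi_{N+1}=-E_j$, $j=1,\dots,n$: because $M_j\phi_{N+1}$ has degree $\geq N+1$, such a $\phi_{N+1}\in P_{N+1}(U)$ makes the degree $N$ part of $\ol Z_j\psi_{N+1}$ equal to $E_j+(-E_j)=0$, and no terms of degree $<N$ appear, so $\ol Z_j\psi_{N+1}=O(\abs{x}^{N+1})$ and the induction closes. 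This system is solvable precisely when $\pr_{\ol z_k}E_j=\pr_{\ol z_j}E_k$ for all $j,k$, in which case the Poincar\'e homotopy operator in the $\ol z$--variables produces the explicit solution
\[\phi_{N+1}=-\int_0^1\sum_{l=1}^n\ol z_l\,E_l\bigl(t\ol z,z,x_{2n+1},\theta\bigr)\,dt\in P_{N+1}(U).\]

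The heart of the matter — and the step I expect to be the main obstacle — is verifying the compatibility conditions $\pr_{\ol z_k}E_j=\pr_{\ol z_j}E_k$, which is exactly where the CR structure enters. Writing $\ol Z_j\psi_N=E_j+O(\abs{x}^{N+1})$ and applying $\ol Z_k=\pr_{\ol z_k}+M_k$, the degree $N-1$ part of $\ol Z_k\ol Z_j\psi_N$ equals $\pr_{\ol z_k}E_j$ (the $M_k$--contribution being of degree $\geq N$, and $\ol Z_k$ of the $O(\abs{x}^{N+1})$ remainder being $O(\abs{x}^N)$); symmetrically the degree $N-1$ part of $\ol Z_j\ol Z_k\psi_N$ equals $\pr_{\ol z_j}E_k$. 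On the other hand $[\ol Z_k,\ol Z_j]\psi_N=\sum_l a^l_{kj}\,\ol Z_l\psi_N=O(\abs{x}^N)$ has vanishing degree $N-1$ part, so subtracting the two displays gives $\pr_{\ol z_k}E_j-\pr_{\ol z_j}E_k=0$, as needed. The only genuinely delicate point is this degree bookkeeping, which hinges on the routine but necessary check that the perturbation terms in \eqref{e-gue180724} are indeed degree nondecreasing; once that is in place, the remaining ingredients are just the Poincar\'e lemma for polynomials and the explicit verification of $\phi_1,\phi_2$.
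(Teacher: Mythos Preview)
Your argument is correct and follows essentially the same approach as the paper: induction on the degree $N$, with the inductive step consisting of integrating the degree-$N$ errors $E_j$ in the $\ol z$--variables, the compatibility for this integration being supplied by the integrability of $T^{0,1}X$ via the commutator identity and degree bookkeeping. The only difference is organizational: the paper integrates one $\ol z_j$ at a time (constructing $\phi^{(1)}_{N+1},\phi^{(2)}_{N+1},\dots$ sequentially and checking at each stage that the new correction does not spoil the previous ones), whereas you invoke the polynomial Poincar\'e lemma to produce $\phi_{N+1}$ in one stroke after first establishing the full compatibility $\pr_{\ol z_k}E_j=\pr_{\ol z_j}E_k$; these are equivalent packagings of the same idea.
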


\begin{proof}
Let $\phi_1\in P_1(U)$, $\phi_2\in P_2(U)$ be as in \eqref{e-gue180724I}. From \eqref{e-gue180724}, it is easy to see that 
\[\ol Z_j\Bigr(\sum^2_{\ell=1}\phi_\ell\Bigr)=O(\abs{x}^2),\ \ \mbox{for every $j=1,\ldots,n$}.\]
We assume that we can find $\phi_\ell\in P_\ell(U)$, $\ell=1,2,3,\ldots, N$, where $\phi_1$ and $\phi_2$ are as in \eqref{e-gue180724I}, such that 
\begin{equation}\label{e-gue180729m}
\ol Z_j\Bigr(\sum^N_{\ell=1}\phi_\ell\Bigr)=O(\abs{x}^N),\ \ \mbox{for every $j=1,\ldots,n$}.
\end{equation}
We write 
\begin{equation}\label{e-gue180729}
\ol Z_1\Bigr(\sum^N_{\ell=1}\phi_\ell\Bigr)
=\sum_{\abs{\alpha}+\abs{\beta}+\abs{\gamma}+\gamma_0=N, \alpha, \beta\in\mathbb N^n_0, \gamma_0\in\mathbb N_0, \gamma\in\mathbb N^{d}_0}c_{\alpha,\beta,\gamma_0,\gamma}\ol z^{\alpha} z^{\beta}x^{\gamma_0}_{2n+1}\theta^{\gamma}+O(\abs{x}^{N+1}),
\end{equation}
where $c_{\alpha,\beta,\gamma_0,\gamma}\in\Complex$, for every $\alpha, \beta\in\mathbb N^n_0, \gamma_0\in\mathbb N_0, \gamma\in\mathbb N^{d}_0$. 
Let 
\[\begin{split}
&\phi^{(1)}_{N+1}:=-\sum_{\abs{\alpha}+\abs{\beta}+\abs{\gamma}+\gamma_0=N, \alpha, \beta\in\mathbb N^n_0, \gamma_0\in\mathbb N_0, \gamma\in\mathbb N^{d}_0}c_{\alpha,\beta,\gamma_0,\gamma}\frac{1}{\alpha_1+1}\ol z_1^{\alpha_1+1} \ol z^{\alpha_2}_2\cdots\ol z^{\alpha_n}_nz^{\beta}x^{\gamma_0}_{2n+1}\theta^{\gamma}.\end{split}\]
From \eqref{e-gue180724} and \eqref{e-gue180729}, we see that 
\begin{equation}\label{e-gue180729mp}
\ol Z_1\Bigr(\sum^N_{\ell=1}\phi_\ell+\phi^{(1)}_{N+1}\Bigr)=O(\abs{x}^{N+1}).
\end{equation}

Write 
\begin{equation}\label{e-gue180729I}
\begin{split}
&\ol Z_2\Bigr(\sum^N_{\ell=1}\phi_\ell+\phi^{(1)}_{N+1}\Bigr) =\\
&\sum_{\abs{\alpha}+\abs{\beta}+\abs{\gamma}+\gamma_0=N, \alpha, \beta\in\mathbb N^n_0, \gamma_0\in\mathbb N_0, \gamma\in\mathbb N^{d}_0}d_{\alpha,\beta,\gamma_0,\gamma}\ol z^{\alpha} z^{\beta}x^{\gamma_0}_{2n+1}\theta^{\gamma}+O(\abs{x}^{N+1}),
\end{split}
\end{equation}
where $d_{\alpha,\beta,\gamma_0,\gamma}\in\Complex$, for every $\alpha, \beta\in\mathbb N^n_0, \gamma_0\in\mathbb N_0, \gamma\in\mathbb N^{d}_0$. We claim that 
\begin{equation}\label{e-gue180729II}
\frac{\pr}{\pr\ol z_1}\Bigr(\sum_{\abs{\alpha}+\abs{\beta}+\abs{\gamma}+\gamma_0=N, \alpha, \beta\in\mathbb N^n_0, \gamma_0\in\mathbb N_0, \gamma\in\mathbb N^{d}_0}d_{\alpha,\beta,\gamma_0,\gamma}\ol z^{\alpha} z^{\beta}x^{\gamma_0}_{2n+1}\theta^{\gamma}\Bigr)=0.
\end{equation}
Since $[\ol Z_1, \ol Z_2]\subset C^\infty(X, T^{0,1}X)$, from \eqref{e-gue180729m}, we have 
\begin{equation}\label{e-gue180729mI}
\begin{split}
&[\ol Z_1, \ol Z_2]\Bigr(\sum^N_{\ell=1}\phi_\ell+\phi^{(1)}_{N+1}\Bigr)\\
&=\Bigr(\ol Z_1\ol Z_2-\ol Z_2\ol Z_1\Bigr)\Bigr(\sum^N_{\ell=1}\phi_\ell+\phi^{(1)}_{N+1}\Bigr)\\
&=O(\abs{x}^N). 
\end{split}
\end{equation}
Now, from \eqref{e-gue180729mp}, we have 
\begin{equation}\label{e-gue180729mpI}
\ol Z_2\ol Z_1\Bigr(\sum^N_{\ell=1}\phi_\ell+\phi^{(1)}_{N+1}\Bigr)=O(\abs{x}^N). 
\end{equation}
From \eqref{e-gue180724} and \eqref{e-gue180729I}, we have 
\begin{equation}\label{e-gue180729z}
\begin{split}
&\ol Z_1\ol Z_2\Bigr(\sum^N_{\ell=1}\phi_\ell+\phi^{(1)}_{N+1}\Bigr)\\
&=\frac{\pr}{\pr\ol z_1}\Bigr(\sum_{\abs{\alpha}+\abs{\beta}+\abs{\gamma}+\gamma_0=N, \alpha, \beta\in\mathbb N^n_0, \gamma_0\in\mathbb N_0, \gamma\in\mathbb N^{d}_0}d_{\alpha,\beta,\gamma_0,\gamma}\ol z^{\alpha} z^{\beta}x^{\gamma_0}_{2n+1}\theta^{\gamma}\Bigr)+O(\abs{x}^{N}).
\end{split}
\end{equation}
From \eqref{e-gue180729mI}, \eqref{e-gue180729mpI} and \eqref{e-gue180729z}, we get the claim \eqref{e-gue180729II}. 

Set
\[\begin{split}
&\phi^{(2)}_{N+1}
:=-\sum_{\abs{\alpha}+\abs{\beta}+\abs{\gamma}+\gamma_0=N, \alpha, \beta\in\mathbb N^n_0, \gamma_0\in\mathbb N_0, \gamma\in\mathbb N^{d}_0}d_{\alpha,\beta,\gamma_0,\gamma}\frac{1}{\alpha_2+1}\ol z_1^{\alpha_1} \ol z^{\alpha_2+1}_2\cdots\ol z^{\alpha_n}_nz^{\beta}x^{\gamma_0}_{2n+1}\theta^{\gamma}.\end{split}\]
From \eqref{e-gue180729I} and \eqref{e-gue180729II}, it is not difficult to check that 
\begin{equation}\label{e-gue180730}
\ol Z_j\Bigr(\sum^N_{\ell=1}\phi_\ell+\phi^{(1)}_{N+1}+\phi^{(2)}_{N+1}\Bigr)=O(\abs{x}^{N+1}),\ \ \mbox{for every $j=1, 2$}. 
\end{equation}
Continuing in this way, we conclude that we can find $\phi_{N+1}\in P_{N+1}(U)$ such that
\begin{equation}\label{e-gue180730I}
\ol Z_j\Bigr(\sum^N_{\ell=1}\phi_\ell+\phi_{N+1}\Bigr)=O(\abs{x}^{N+1}),\ \ \mbox{for every $j=1, 2,\ldots,n$}. 
\end{equation}

From \eqref{e-gue180730I} and Mathematical induction, the theorem follows. 
\end{proof}

We can now prove 

\begin{theorem}\label{t-gue180730}
With the notations above, there exists a function $\varphi\in C^\infty(U)$ such that 
\begin{equation}\label{e-gue180730mp}
\mbox{${\rm Im\,}\varphi(x)\geq c\abs{x}^2$ on $U$, where $c>0$ is a constant}, 
\end{equation}
\begin{equation}\label{e-gue180730mpI}
\varphi(x)=x_{2n+1}+i\Bigr(\sum^n_{j=1}\lambda_j\abs{z_j}^2+\abs{x_{2n+1}}^2+\sum^d_{j=1}\abs{\theta_j}^2\Bigr)-\sum^n_{j=1}(\ol c_j\ol z_jx_{2n+1}+c_jz_jx_{2n+1})+O(\abs{x}^3),
\end{equation}
and  
\begin{equation}\label{e-gue180730mpII}
(\ddbar_b\varphi)(x)=O(\abs{x}^N),\ \ \mbox{for every $N\in\mathbb N$},
\end{equation}
where $c_j\in\Complex$, $j=1,\ldots,n$, are as in \eqref{e-gue180724}.
\end{theorem}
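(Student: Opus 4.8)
The plan is to obtain $\varphi$ as a Borel-type asymptotic sum of the formal series $\sum_{\ell\geq1}\phi_\ell$ constructed in Theorem~\ref{t-gue180723}, and then to read off the three required properties from the explicit shape of $\phi_1,\phi_2$ in \eqref{e-gue180724I} together with the relation \eqref{e-gue180724II}. Concretely, I would fix a cut-off $\chi\in C^\infty_0(\Real^{2n+1+d})$ equal to $1$ near the origin, choose a sequence $\delta_\ell\downarrow 0$ tending to $0$ fast enough, and set
\[\varphi(x):=\sum_{\ell\geq1}\chi(x/\delta_\ell)\,\phi_\ell(x)\]
on a (possibly shrunk) neighbourhood $U$ of $p$. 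Since each $\phi_\ell\in P_\ell(U)$ is a homogeneous polynomial of degree $\ell$, Borel's lemma (for a suitable choice of the $\delta_\ell$) shows that the sum converges in $C^\infty(U)$ and that $\varphi$ has Taylor expansion $\sum_{\ell\geq1}\phi_\ell$ at $p$; equivalently, $\varphi-\sum_{\ell=1}^{N}\phi_\ell=O(\abs{x}^{N+1})$ for every $N\in\mathbb N$. Taking $N=2$ and inserting the formulas for $\phi_1,\phi_2$ from \eqref{e-gue180724I} yields \eqref{e-gue180730mpI} at once.

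For the positivity \eqref{e-gue180730mp}, note that $\operatorname{Im}\phi_1=0$ since $\phi_1=x_{2n+1}$ is real, while by \eqref{e-gue180724I}
\[\operatorname{Im}\phi_2(x)=\sum_{j=1}^{n}\lambda_j\abs{z_j}^2+\abs{x_{2n+1}}^2+\sum_{j=1}^{d}\abs{\theta_j}^2,\]
which is a positive definite quadratic form in $x$ because $\lambda_j>0$ for every $j$ (see \eqref{e-gue180722}). Hence $\operatorname{Im}\varphi(x)=\operatorname{Im}\phi_2(x)+O(\abs{x}^3)\geq c\abs{x}^2$ after shrinking $U$ if necessary, which proves \eqref{e-gue180730mp}.

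For the $\ddbar_b$ estimate \eqref{e-gue180730mpII}, recall that $\set{\ol Z_1,\ldots,\ol Z_n}$ is a frame for $T^{0,1}X$ and that $T^{*0,1}X$ is its dual bundle, so that $(\ddbar_b\varphi)(x)=O(\abs{x}^N)$ is equivalent to $\ol Z_j\varphi(x)=O(\abs{x}^N)$ for $j=1,\ldots,n$. Fix $N$ and $j$ and write $\varphi=\sum_{\ell=1}^{N}\phi_\ell+R_N$ with $R_N:=\varphi-\sum_{\ell=1}^{N}\phi_\ell=O(\abs{x}^{N+1})$. Applying the first-order differential operator $\ol Z_j$, the term $\ol Z_jR_N$ is $O(\abs{x}^{N})$ since $\ol Z_j$ has smooth coefficients, while $\ol Z_j\bigl(\sum_{\ell=1}^{N}\phi_\ell\bigr)=O(\abs{x}^{N})$ by \eqref{e-gue180724II}. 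Hence $\ol Z_j\varphi=O(\abs{x}^N)$, and since $N$ was arbitrary this gives \eqref{e-gue180730mpII}.

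Everything of substance in this argument is already contained in Theorem~\ref{t-gue180723}; the only point requiring some care is the Borel summation step, namely verifying that a single choice of the radii $\delta_\ell$ simultaneously controls all the $C^s(U)$-norms of the tails and produces the stated vanishing order $O(\abs{x}^{N+1})$ for the partial-sum remainders. I expect this routine bookkeeping to be the (mild) main obstacle, the rest being a direct consequence of Theorem~\ref{t-gue180723} and the explicit expressions for $\phi_1$ and $\phi_2$.
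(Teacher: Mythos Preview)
Your proposal is correct and follows essentially the same approach as the paper: both construct $\varphi$ as a Borel-type sum $\phi_1+\phi_2+\sum_{\ell\geq3}\chi(x/\epsilon_\ell)\phi_\ell$ of the formal series from Theorem~\ref{t-gue180723} and read off \eqref{e-gue180730mpI}--\eqref{e-gue180730mpII} from the Taylor expansion. The only minor difference is that the paper leaves $\phi_1,\phi_2$ uncut and chooses the $\epsilon_\ell$ so that $\abs{\operatorname{Im}\chi(x/\epsilon_\ell)\phi_\ell}\leq 2^{-\ell}\operatorname{Im}\phi_2$ on all of $U$, thereby obtaining \eqref{e-gue180730mp} directly on the original $U$ rather than after shrinking; your $O(\abs{x}^3)$ argument is equally valid once shrinking is allowed.
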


\begin{proof}
Let $\phi_\ell\in P_\ell(U)$, $\ell=1,2,3,\ldots$, be as in Theorem~\ref{t-gue180723}. 
For each $j=3,4,\ldots$, we have 
\begin{equation}\label{e2-ysmiI}
\mbox{$\abs{{\rm Im\,} \phi_j(x)}\leq 2^{-j}\Bigr(\sum^n_{j=1}\lambda_j\abs{z_j}^2+\abs{x_{2n+1}}^2+\sum^d_{j=1}\abs{\theta_j}^2\Bigr)$ on $W_j\subset U$},
\end{equation} 
where $W_j$ is an open set of $p$, for each $j=3,4,\ldots$. Take $\chi(x)\in C^\infty_0(\Real^{2n+1+d})$ so that $\chi(x)=1$ if $\abs{x}^2\leq\frac{1}{2}$ 
and $\chi(x)=0$ if $\abs{x}^2\geq1$. For each $j=3,4,\ldots$, take $\epsilon_j>0$ be a small constant so that 
${\rm Supp\,}\chi(\frac{x}{\epsilon_j})\subset W_j$, 
\begin{equation}\label{e-gue180730aI}
\left| \chi(\frac{x}{\epsilon_j}){\rm Im\,}\phi_j(x) \right| <2^{-j}\Bigr(\sum^n_{j=1}\lambda_j\abs{z_j}^2+\abs{x_{2n+1}}^2+\sum^d_{j=1}\abs{\theta_j}^2\Bigr)\ \ \mbox{on $U$},
\end{equation}
and for all $\alpha\in\mathbb N^{2n+1+d}_0$, $\abs{\alpha}<j$, we have 
\begin{equation}\label{e-gue180730a}
\sup\set{\abs{\pr^\alpha_x\bigr(\chi(\frac{x}{\epsilon_j})\phi_j(x)\bigr)};\, x\in U} <2^{-j}.
\end{equation} 
On $U$, we put
\[\varphi(x)=\phi_1(x)+\phi_2(x)+\sum^{\infty}_{j=3}\chi(\frac{x}{\epsilon_j})\phi_j(x).\] 
From \eqref{e-gue180730a}, we can check that $\varphi(x)$ is well-defined as a smooth function on $U$ and 
for all $\alpha\in N^{2n+1+d}_0$ with $\abs{\alpha}=k$, $k\in\mathbb N$, we have 
\[\pr^\alpha_{x}\varphi|_{p}=\pr^\alpha_{x}\phi_k|_{p}.\]
Combining this with \eqref{e-gue180724II}, we conclude that $\ddbar_b\varphi$ vanishes to infinite order at $p$ and we get \eqref{e-gue180730mpII}. 
 Moreover, from \eqref{e-gue180730aI}, we have 
\[\begin{split}
&{\rm Im\,}\varphi(x)
\geq\Bigr(\sum^n_{j=1}\lambda_j\abs{z_j}^2+\abs{x_{2n+1}}^2+\sum^d_{j=1}\abs{\theta_j}^2\Bigr)\Bigr(1-\sum^{\infty}_{j=3}2^{-j}\Bigr)\geq c\abs{x}^2\ \ \mbox{on $U$},\end{split}\]
where $c>0$ is a constant. We get \eqref{e-gue180730mp}. From the construction of $\varphi$, we see that \eqref{e-gue180730mpI} holds and the theorem follows. \end{proof}

Put 
\begin{equation}\label{e-gue180730b}
N_p:=\set{g\in G;\, g\circ p=p}=\set{g_1:=e_0, g_2,\ldots,g_r},
\end{equation}
where $g_j\neq g_k$, if $j\neq k$, $j, k=1,\ldots,r$. We need 

\begin{lemma}\label{l-gue180730b}
There are small open sets $W\Subset V$ of $e_0$ in $G$ and $D\subset U$ of $p$ in $X$ such that 
\[\mbox{$g\circ x\in U$, for every $x\in D$, for every $g\in\bigcup^r_{s, t=1}g_sWg_t$},\]
where $V$ and $U$ are as in the discussion before \eqref{e-gue180722I}. 
\end{lemma}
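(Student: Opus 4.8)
The plan is a short continuity-and-compactness argument, the only genuine input being that the stabilizer $N_p$ is finite.

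First I would record why $N_p$ is finite. Since $\Complex TX=T^{1,0}X\oplus T^{0,1}X\oplus\Complex T\oplus\Complex\underline{\mathfrak g}$ and $\dim_{\Real}X=2n+1+d$, the fibre $\underline{\mathfrak g}_x$ has complex dimension $d$ at every $x\in X$, so the infinitesimal action $\mathfrak g\to T_xX$, $\xi\mapsto\xi_X(x)$, is injective. Hence the $G$-action is locally free, and $N_p$ is a discrete, therefore finite, subgroup of the compact Lie group $G$; this is what legitimizes the enumeration $N_p=\{g_1,\dots,g_r\}$ in \eqref{e-gue180730b} and lets us work with the finite index set $\{(s,t):1\le s,t\le r\}$.

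Next, for a fixed pair $(s,t)$ I would use that $N_p$ is a subgroup, so $g_sg_t\in N_p$ and $(g_sg_t)\circ p=p\in U$. The action map $\alpha\colon G\times X\to X$, $(g,x)\mapsto g\circ x$, is continuous and $U$ is open, so there are open neighbourhoods $\mathcal O_{s,t}\subset G$ of $g_sg_t$ and $D_{s,t}\subset X$ of $p$ with $\alpha(\mathcal O_{s,t}\times D_{s,t})\subset U$. Since $h\mapsto g_shg_t$ is a homeomorphism of $G$ carrying $e_0$ to $g_sg_t$, the set $W_{s,t}:=\{h\in V:g_shg_t\in\mathcal O_{s,t}\}$ is an open neighbourhood of $e_0$ in $G$.

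Finally, using that $G$ is a locally compact manifold, I would choose an open neighbourhood $W$ of $e_0$ with $W\Subset V$ and $W\subset\bigcap_{s,t=1}^r W_{s,t}$ — possible because the right-hand side is a finite intersection of neighbourhoods of $e_0$ — and set $D:=U\cap\bigcap_{s,t=1}^r D_{s,t}$, an open neighbourhood of $p$ with $D\subset U$. Then for any $g\in g_sWg_t\subset g_sW_{s,t}g_t\subset\mathcal O_{s,t}$ and any $x\in D\subset D_{s,t}$ one gets $g\circ x=\alpha(g,x)\in U$, which is precisely the assertion. I do not expect a real obstacle here: the only point needing a little care is arranging $W\Subset V$ simultaneously with the finitely many inclusions $W\subset W_{s,t}$, which is immediate from the local compactness of $G$ together with the finiteness of $N_p$.
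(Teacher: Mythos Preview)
Your proposal is correct and takes essentially the same approach as the paper: the paper's proof consists of the single sentence ``This follows directly from the fact that the action map $G\times X\to X$ is continuous,'' and your argument is precisely the detailed unpacking of that continuity argument (with the added remark on why $N_p$ is finite, which the paper takes for granted).
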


\begin{proof}
This follows directly from the fact that the action map $G \times X \rightarrow X$ is continuous.
\end{proof}

For any set $A\subset G$, put $A^{-1}:=\set{g^{-1};\, g\in G}$. If $A$ is an open set of $G$, we have that $A^{-1}$ is also open in $G$ because taking inverse is continuous. Let $W$ be as in Lemma~\ref{l-gue180730b}. We can replace $W$ by $W\bigcap W^{-1}$.   
Fix $W_0\Subset W_1\Subset W$, where $W_0$ and $W_1$ are open neighborhoods of $e_0$ in $G$. 
From now on, we take $W_0$, $W_1$, $W$ small enough so that 
\begin{equation}\label{e-gue180731u}
\begin{split}
&(g_sWg_t)\bigcap(g_{s_1}Wg_{t_1})=\emptyset,\ \ \mbox{for every $s, t, s_1, t_1=1,\ldots,r$, with $g_s\circ g_t\neq g_{s_1}\circ g_{t_1}$},\\
&W=W^{-1},\ \ W_0=W_0^{-1},\  \ W_1=W_1^{-1}. 
\end{split}
\end{equation}
We need

\begin{lemma}\label{l-gue180730bI}
There is an open set $\hat D\Subset D$ of $p$ in $X$ such that 
\[\mbox{$g\circ x\notin\hat D$, for every $x\in\hat D$ and every $g\notin\bigcup^r_{s=1}W_1g_s$},\]
where $D$ is as in Lemma~\ref{l-gue180730b}. 
\end{lemma}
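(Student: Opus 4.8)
The plan is to prove this by a standard compactness argument, of the kind used in the slice theorem for compact group actions. First I would record that, because the $G$-action on $X$ is locally free and $G$ is compact, the isotropy group $N_p$ in \eqref{e-gue180730b} is finite; hence each right translate $W_1g_s$ is open and $\bigcup^r_{s=1}W_1g_s$ is an open neighbourhood of $N_p$ in $G$ (it contains $g_s=e_0g_s$ for each $s$). Next I would fix a neighbourhood basis $\set{\hat D_k}_{k\in\mathbb N}$ of $p$ in $X$ by relatively compact open sets with $\hat D_k\Subset D$ and $\bigcap_k\overline{\hat D_k}=\set{p}$ --- for instance metric balls about $p$ of radius $1/k$ for $k$ large --- and I would claim that $\hat D:=\hat D_k$ has the required property once $k$ is large enough.

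To prove the claim I would argue by contradiction. If it fails for every $k$, then for each large $k$ there are $x_k\in\hat D_k$ and $g_k\in G\setminus\bigcup^r_{s=1}W_1g_s$ with $g_k\circ x_k\in\hat D_k$. Then $x_k\to p$ and $g_k\circ x_k\to p$; by compactness of $G$, after passing to a subsequence $g_k\to g_\infty$, and continuity of the action map $G\times X\To X$ forces $g_\infty\circ p=\lim_k g_k\circ x_k=p$, i.e. $g_\infty=g_s$ for some $s\in\set{1,\ldots,r}$. But then $g_kg_s^{-1}\to g_\infty g_s^{-1}=e_0$, so $g_k\in W_1g_s$ for all $k$ large since $W_1$ is a neighbourhood of $e_0$ --- contradicting $g_k\notin\bigcup^r_{s=1}W_1g_s$. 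This proves the claim, and the lemma follows.

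I do not expect a genuine obstacle here: the argument is soft and uses only the compactness of $G$, the continuity of the action, and the finiteness of $N_p$. The only points needing a little care are that the isotropy group is finite (so that $\bigcup^r_{s=1}W_1g_s$ is a bona fide neighbourhood of $N_p$ rather than something degenerate) and the choice of the $\hat D_k$ shrinking to $\set{p}$ while remaining relatively compact inside $D$, both of which are immediate from compactness of $G$ and local compactness of $X$.
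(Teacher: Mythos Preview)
Your proof is correct and follows essentially the same compactness argument as the paper: shrink a neighbourhood basis of $p$, argue by contradiction, use compactness of $G$ to extract a convergent subsequence $g_k\to g_\infty$, and continuity of the action to force $g_\infty\in N_p$. Your final step --- observing that $g_\infty=g_s$ lies in the open set $W_1g_s$, hence $g_k\in W_1g_s$ eventually --- is slightly more direct than the paper's route through the auxiliary $W_0\Subset W_1$, but the two arguments are the same in substance.
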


\begin{proof}
Assume that the claim of the lemma is not true. Then, we can find open sets  $D_j\subset D$ of $p$ in $X$, $j=1,2,\ldots$, with $D_{j+1}\subset D_j$, $j=1,2,\ldots$, $\bigcap^{+\infty}_{j=1}D_j=\set{p}$, such that for every $j$, we can find 
$h_j\notin\bigcup^r_{s=1}W_1g_s$ and $x_j\in D_j$, such that $h_j\circ x_j\in D_j$. 
Since $G$ is compact, we can find a subsequence $1\leq t_1<t_2<\cdots$, such that $h_{t_\ell}\To h\in G$ as $t_\ell\To+\infty$, for some 
\[h\notin\ol{\bigcup^r_{s=1}W_0g_s}.\]
 Since $\bigcap^{+\infty}_{j=1}D_j=\set{p}$, we have $x_{t_\ell}\To p$ as $t_\ell\To+\infty$. Since $h_{t_\ell}\circ x_{t_\ell}\in D_{t_\ell}$, for every $t_\ell$, we conclude that 
 $h\circ p=p$. But $h\notin N_p$, we get a contradiction. The lemma follows. 
\end{proof}

We pause and introduce some notations. Let $\Omega$ be an open set of $\Real^N$, $N\in\mathbb N$. Let $g_k(x)\in C^\infty(\Omega)$ be a $k$-dependent function, $k=1,2,\ldots$. We write $g_k=O(k^{-\infty})$ if for every $M\in\mathbb N$, every compact set $K\Subset\Omega$ and every $\alpha\in\mathbb N_0^N$, there is a constant $C>0$ independent of $k$ such that
\[\abs{(\pr^\alpha_xg_k)(x)}\leq Ck^{-M},\ \ \mbox{for every $x\in K$ and every $k=1,2,\ldots$}.\]
Let $g_k(x), h_k(x)\in C^\infty(\Omega)$ be $k$-dependent functions, $k=1,2,\ldots$. We write $g_k=h_k+O(k^{-\infty})$ if $g_k-h_k=O(k^{-\infty})$. 

Let $\chi\in C^\infty_0(\hat D)$ with $\chi=1$ near $p$, where $\hat D$ is as in Lemma~\ref{l-gue180730bI}. Let $\tau\in C^\infty_0(\Real^{d})$ with $\tau=1$ near $0\in\Real^{d}$. Put $x''=(x_{2n+2},\ldots,x_{2n+1+d})$. For every $k\in\mathbb N$, put 
\begin{equation}\label{e-gue180730d}
\Td f_k(x)=k^{\frac{d}{2}}e^{ik\varphi(x)}\chi(x)\tau(\frac{\sqrt{k}}{\log k}x'')\in C^\infty_0(\hat D). 
\end{equation}
From \eqref{e-gue180730mp} and \eqref{e-gue180730mpII}, we see that 
\begin{equation}\label{e-gue180731p}
(\ddbar_b\Td f_k)(x)=O(k^{-\infty}). 
\end{equation}
Let $a(g)\in C^\infty(G)$ be a smooth function on $G$ so that ${\rm Re\,}\ol{a(g)}=2$ on some small neighborhood of $\bigcup^r_{s, t=1}g_sW_0g_t$ and $\abs{a(g)}=0$ outside $\bigcup^r_{s,t=1}g_sW_1g_t$. By Theorem~\ref{t-gue180710mp}, 
we see that there is a $b(g)\in C^\infty_G(G)$ so that $\norm{b-a}_{C^0(G)}<\frac{1}{2}$. Hence, ${\rm Re\,}\ol{b(g)}\geq 1$ on some small neighborhood of $\bigcup^r_{s,t=1}g_sW_0g_t$ and $\abs{b(g)}\leq\frac{1}{2}$ outside 
$\bigcup^r_{s,t=1}g_sW_1g_t$. For every $k\in\mathbb N$, put 
\begin{equation}\label{e-gue180801m}
\hat f_k(x):=\int_G\Td f_k(g\circ x)\ol{b(g)}d\mu(g)\in C^\infty_G(X).
\end{equation}
Let 
\[S: L^2(X)\To{\rm Ker\,}\ddbar_b:=\set{u\in L^2(X);\, \ddbar_bu=0}\] 
be the orthogonal projection. 

\begin{theorem}\label{t-gue180731r}
We have $S\hat f_k\in C^\infty_G(X)\bigcap{\rm Ker\,}\ddbar_b$ and 
\begin{equation}\label{e-gue180731s}
S\hat f_k=\hat f_k+O(k^{-\infty}).
\end{equation}
\end{theorem}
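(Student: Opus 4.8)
The plan is to combine the Fourier decomposition of $\hat f_k$ with the Szeg\H{o}-projection estimate in Theorem~\ref{t-gue180720f}. The first task is to confine the Fourier spectrum of $\hat f_k$ to a fixed, $k$-independent finite set of representations. Writing $b=\sum_{i=1}^{N}b_i$ with $b_i\in C^\infty_{m_i}(G)$ and $m_1,\dots,m_N$ depending only on $b$, each $b_i$ is a finite linear combination of matrix elements $\mathcal{R}_{m_i,s,t}$; since $\hat f_k(x)=\int_G(g^*\Td f_k)(x)\,\ol{b(g)}\,d\mu(g)$, Theorem~\ref{t-gue180716ap} shows that $\int_G(g^*\Td f_k)(x)\,\ol{\mathcal{R}_{m_i,s,t}(g)}\,d\mu(g)$ lies in $C^\infty_{m_i}(X)$, hence $\hat f_k\in\bigoplus_{i=1}^{N}C^\infty_{m_i}(X)$ with $(\hat f_k)_{m_i}\in C^\infty_{m_i}(X)$ and $(\hat f_k)_m=0$ for $m\notin\{m_1,\dots,m_N\}$.

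Next I would check that the global Szeg\H{o} projection $S$ restricts to the fibrewise projections $S_m$ on $G$-finite functions. For $q=0$ one has $\Box^{(0)}_{b,m}=\ol{\pr}^*_b\ddbar_b$, so ${\rm Ker\,}\Box^{(0)}_{b,m}={\rm Ker\,}\ddbar_b\cap L^2_m(X)$, and by Theorem~\ref{t-gue180720q}, $u-S_mu=\ol{\pr}^*_bN^{(1)}_m\ddbar_bu\in L^2_m(X)$ for $u\in C^\infty_m(X)$; using \eqref{e-gue180715} and the fact that $S_m$ fixes ${\rm Ker\,}\ddbar_b\cap L^2_m(X)$, one sees that $u-S_mu$ is orthogonal in $L^2(X)$ to all of ${\rm Ker\,}\ddbar_b$, so $Su=S_mu$. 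Combined with the previous step and linearity this gives $S\hat f_k=\sum_{i=1}^{N}S_{m_i}(\hat f_k)_{m_i}$, and Theorem~\ref{t-gue180720f} shows $S_{m_i}(\hat f_k)_{m_i}\in C^\infty_{m_i}(X)$; hence $S\hat f_k\in\bigoplus_{i=1}^{N}C^\infty_{m_i}(X)\subset C^\infty_G(X)$, and trivially $S\hat f_k\in{\rm Ker\,}\ddbar_b$, which is the first assertion.

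For the estimate I would first transfer the $O(k^{-\infty})$ bound from $\ddbar_b\Td f_k$ to $\ddbar_b\hat f_k$: by \eqref{e-gue180711p}, $\ddbar_b\hat f_k=\int_G g^*(\ddbar_b\Td f_k)\,\ol{b(g)}\,d\mu(g)$, and since $\ddbar_b\Td f_k=O(k^{-\infty})$ by \eqref{e-gue180731p} in every $C^M$-norm with support in the relatively compact set $\hat D$, and the action map $G\times X\to X$ is smooth with $G$ compact, the pullbacks $g^*(\ddbar_b\Td f_k)$ are bounded in $C^M(X,\Complex T^*X)$ uniformly in $g$; hence $\ddbar_b\hat f_k=O(k^{-\infty})$ in every $C^M(X,\Complex T^*X)$-norm. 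Differentiating under the integral in Definition~\ref{d-gue180712cm} and using \eqref{e-gue180711p} gives $\ddbar_b(\hat f_k)_{m_i}=(\ddbar_b\hat f_k)_{m_i}$, and taking the $m_i$-th Fourier component is bounded on each $C^M$-norm for the finitely many indices $m_i$, so $\ddbar_b(\hat f_k)_{m_i}=O(k^{-\infty})$ in every $C^M(X,\Complex T^*X)$-norm. Finally, fixing $\ell\in\mathbb N$ and applying \eqref{e-gue180720fI} to each $(\hat f_k)_{m_i}$ (with constants $K_i,C_i$ depending on $\ell$ and $m_i$ only),
\[
\norm{\hat f_k-S\hat f_k}_{C^\ell(X)}=\Bigl\|\sum_{i=1}^{N}(I-S_{m_i})(\hat f_k)_{m_i}\Bigr\|_{C^\ell(X)}\leq\sum_{i=1}^{N}C_i\norm{\ddbar_b(\hat f_k)_{m_i}}_{C^{K_i}(X,\Complex T^*X)}=O(k^{-\infty}),
\]
and since $\ell$ is arbitrary this yields $S\hat f_k=\hat f_k+O(k^{-\infty})$.

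I expect the main obstacle to be the bookkeeping in the first two steps: making sure that $\hat f_k$ involves only a \emph{fixed} finite set of irreducible representations (so that the constants $K_i,C_i$ from Theorem~\ref{t-gue180720f} do not degenerate as $k\to\infty$) and that $S$ genuinely decomposes as $\bigoplus_m S_m$ on $C^\infty_G(X)$. Once this reduction is in place, the propagation of the $O(k^{-\infty})$ bound through the averaging over the compact group $G$ is routine.
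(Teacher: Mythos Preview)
Your proposal is correct and follows essentially the same approach as the paper: decompose $\hat f_k$ into its finitely many, $k$-independent Fourier components $g_{j,k}\in C^\infty_{m_j}(X)$, observe that $S\hat f_k=\sum_j S_{m_j}g_{j,k}$, and then apply Theorem~\ref{t-gue180720f} componentwise to each $g_{j,k}$ using $\ddbar_b g_{j,k}=O(k^{-\infty})$. You supply more detail than the paper does (in particular, your justification that $Su=S_mu$ on $C^\infty_m(X)$ via $u-S_mu=\ol\pr^*_bN^{(1)}_m\ddbar_bu\perp\ker\ddbar_b$ fills in what the paper records only as ``it is easy to see''), but the structure of the argument is the same.
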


\begin{proof}
Since $\hat f_k\in C^\infty_G(X)$, we have $\hat f_k=\sum^N_{j=1}g_{j,k}$, $N\in\mathbb N$, where $g_{j,k}\in C^\infty_{m_j}(X)$, for some $m_j\in\mathbb N$, $j=1,\ldots,N$. Note that $N$ and $m_j$, $j=1,\ldots,N$, are independent of $k$. From \eqref{e-gue180731p}, we see that 
\begin{equation}\label{e-gue180731sI}
\ddbar_bg_{j,k}=O(k^{-\infty}), \ \ \mbox{for every $j=1,\ldots,N$}. 
\end{equation}
It is easy to see that 
\begin{equation}\label{e-gue180731sII}
S\hat f_k=\sum^N_{j=1}S_{m_j}g_{j,k},
\end{equation}
where $S_m$ is given by \eqref{e-gue180731w}. From \eqref{e-gue180731sI}, \eqref{e-gue180731sII}, \eqref{e-gue180720f} and \eqref{e-gue180720fI}, the theorem follows. 
\end{proof}

For every $k\in\mathbb N$, put $f_k:=S\hat f_k\in C^\infty_G(X)\bigcap{\rm Ker\,}\ddbar_b$ and set $G\hat D:=\set{g\circ x;\, g\in G, x\in\hat D}$. 
 
\begin{theorem}\label{t-gue180731w}
We have 
\begin{equation}\label{e-gue180731z}
\begin{split}
&\lim_{k\To+\infty}f_k(p)=\pi^{\frac{d}{2}}\sum^r_{j=1}\ol{b(g_j)},\\
&{\rm Re\,}(\lim_{k\To+\infty}f_k(p))\geq r\pi^{\frac{d}{2}}
\end{split}
\end{equation}
and 
\begin{equation}\label{e-gue180731zw}
\lim_{k\To+\infty}\abs{f_k(h\circ p)}=\pi^{\frac{d}{2}}\abs{\sum^r_{j=1}\ol{b(g_j\circ h^{-1})}}\leq\frac{1}{2}r\pi^{\frac{d}{2}} \ \ \mbox{for every $h\notin\bigcup^r_{s,t=1}g_sW_1g_t$},
\end{equation}
where $r\in\mathbb N$ is the Cardinal number of $N_p$. 

For every $\varepsilon>0$, there is a $k_1\in\mathbb N$ such that 
\begin{equation}\label{e-gue180802q}
\abs{f_k(x)}\leq\varepsilon,\ \ \mbox{for every $k\geq k_1$, $k\in\mathbb N$, and every $x\notin G\hat D$}. 
\end{equation}
\end{theorem}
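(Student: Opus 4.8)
The plan is to reduce everything to the large-$k$ behaviour of $\hat f_k$, since Theorem~\ref{t-gue180731r} gives $f_k=\hat f_k+O(k^{-\infty})$ and the error is uniform on $X$. The core is the single asymptotic identity
\[
\lim_{k\To+\infty}\hat f_k(h\circ p)=\pi^{\frac{d}{2}}\sum^r_{j=1}\ol{b(g_j\circ h^{-1})},\qquad\text{for every }h\in G .
\]
To prove it, substitute $g\mapsto g\circ h$ in \eqref{e-gue180801m} (Haar measure is bi-invariant, $G$ being compact), getting $\hat f_k(h\circ p)=\int_G\Td f_k(g\circ p)\,\ol{b(g\circ h^{-1})}\,d\mu(g)$. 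Since $\Td f_k$ is supported in $\hat D$, the integrand is supported in the compact set $\set{g\in G;\,g\circ p\in\hat D}$. By the compactness argument from the proof of Lemma~\ref{l-gue180730bI} one can choose pairwise disjoint neighbourhoods $N_1,\dots,N_r$ of $g_1,\dots,g_r$ on which $\chi(g\circ p)=1$, together with a neighbourhood $D'$ of $p$ such that $\set{g;\,g\circ p\in D'}\subset\bigcup_jN_j$. Writing $\int_G=\sum_j\int_{N_j}+\int_{\mathrm{rest}}$, on the remainder either $g\circ p\notin{\rm supp\,}\chi$ (integrand zero) or $g\circ p\in{\rm supp\,}\chi\setminus D'$, in which case \eqref{e-gue180730mp} gives ${\rm Im\,}\varphi(g\circ p)\geq c'>0$, so $\abs{k^{d/2}e^{ik\varphi(g\circ p)}}=O(k^{-\infty})$; hence the remainder contributes $O(k^{-\infty})$.

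On each $N_j$ I would pass to the Frobenius coordinates: write $g=g_j\circ h_\theta$, $\theta\in\Real^d$ near $0$, so $g\circ p=g_j\circ(h_\theta\circ p)$ and, by \eqref{e-gue180722I}, $h_\theta\circ p$ has coordinates $(0,\dots,0,\theta)$. The decisive point is that the phase $\psi_j(\theta):=\varphi(g_j\circ(h_\theta\circ p))$ vanishes to second order at $\theta=0$. Its differential at $0$ sends $w$ to $\langle d\varphi(p),(dg_j)_p v_w\rangle$ with $v_w:=\frac{\pr}{\pr t}(h_{tw}\circ p)|_{t=0}\in\underline{\mathfrak g}_p$; now $(dg_j)_p$ preserves $\underline{\mathfrak g}_p$ because $(dg_j)_p\xi_X(p)=({\rm Ad\,}_{g_j}\xi)_X(p)$, while $d\varphi(p)=-\omega_0(p)$ — the linear part of $\varphi$ is $x_{2n+1}$ by \eqref{e-gue180730mpI} and $\omega_0(p)=-dx_{2n+1}|_p$ in the chosen coordinates by \eqref{e-gue180722III}--\eqref{e-gue180722II} — and $\omega_0$ annihilates $\underline{\mathfrak g}$ by Lemma~\ref{l-gue180712}; hence that differential is $0$. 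Consequently $\psi_j(\theta)=Q_j(\theta)+O(\abs\theta^3)$ for a complex quadratic form $Q_j$ whose imaginary part is positive definite, since ${\rm Im\,}\psi_j(\theta)\geq c\,\abs{g_j\circ(h_\theta\circ p)-p}^2\gtrsim\abs\theta^2$ (the map $\theta\mapsto g_j\circ(h_\theta\circ p)$ is an immersion at $0$, the action being locally free and $(dg_j)_p$ invertible). A rescaling $\theta=w/\sqrt k$ then turns $\int_{N_j}$ into a convergent Gaussian integral; using continuity of $b$, $\chi$ and of the Haar density, and that $\tau(\tfrac{\sqrt k}{\log k}\theta)\To1$ after rescaling, one obtains $\int_{N_j}\Td f_k(g\circ p)\,\ol{b(g\circ h^{-1})}\,d\mu(g)\To\pi^{d/2}\,\ol{b(g_j\circ h^{-1})}$ (for $j=1$ one has $\psi_1(\theta)=i\sum_i\theta_i^2+O(\abs\theta^3)$ directly from \eqref{e-gue180730mpI}; for $j>1$ the Gaussian value and the Jacobian of $g\mapsto g_j g$ combine to the same constant because $G$ is unimodular). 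Summing over $j$ gives the identity above.

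From this the three assertions follow. With $h=e_0$ the identity plus $f_k=\hat f_k+O(k^{-\infty})$ gives the first line of \eqref{e-gue180731z}; since $e_0\in W_0$ we have $g_j\in g_jW_0e_0\subset\bigcup_{s,t}g_sW_0g_t$, hence ${\rm Re\,}\ol{b(g_j)}\geq1$ and ${\rm Re\,}(\lim_kf_k(p))\geq r\pi^{d/2}$. For \eqref{e-gue180731zw}, if $h\notin\bigcup_{s,t}g_sW_1g_t$ but $g_j\circ h^{-1}=g_s\circ w\circ g_t$ for some $w\in W_1$, then $h=g_t^{-1}\circ w^{-1}\circ(g_s^{-1}\circ g_j)$; since $N_p$ is a subgroup (so $g_t^{-1}$ and $g_s^{-1}\circ g_j$ lie in $N_p$) and $W_1=W_1^{-1}$, this would place $h$ in $\bigcup_{s,t}g_sW_1g_t$, a contradiction. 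Hence $\abs{b(g_j\circ h^{-1})}\leq\tfrac12$ for all $j$, and taking moduli in the identity yields $\lim_k\abs{f_k(h\circ p)}=\pi^{d/2}\abs{\sum_j\ol{b(g_j\circ h^{-1})}}\leq\tfrac12 r\pi^{d/2}$. Finally, if $x\notin G\hat D$ then $g\circ x\notin\hat D$ for every $g\in G$, so $\Td f_k(g\circ x)\equiv0$, $\hat f_k(x)=0$, and $\abs{f_k(x)}=\abs{f_k(x)-\hat f_k(x)}\leq\varepsilon$ once $k$ is large, uniformly in such $x$; this is \eqref{e-gue180802q}.

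The main obstacle is the Laplace/stationary-phase analysis on the pieces $N_j$: establishing that $\psi_j$ has no linear term at $\theta=0$ — precisely the place where $d\varphi(p)=-\omega_0(p)$ and $\underline{\mathfrak g}_p\subset\Ker\omega_0(p)$ enter — and that its imaginary Hessian is positive definite, so that the rescaled integrals converge. The rest, namely organising the Gaussian integrals, the Haar density and the Jacobians of $g\mapsto g_j g$ so that the limiting constant comes out exactly $\pi^{d/2}$, is routine but must be carried out with the chosen normalizations.
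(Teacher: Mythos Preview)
Your overall strategy coincides with the paper's: reduce to $\hat f_k$ via Theorem~\ref{t-gue180731r}, localise the $g$-integral near the stabiliser $N_p=\{g_1,\dots,g_r\}$, rescale $\theta\mapsto\theta/\sqrt k$, and read off $\pi^{d/2}$ from $\int_{\Real^d}e^{-|\theta|^2}\,d\theta$. Your treatments of \eqref{e-gue180731zw} and \eqref{e-gue180802q} are also essentially the paper's.

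There is, however, a gap in your handling of the pieces $N_j$ for $j>1$. You parametrise $N_j$ by $g=g_j\circ h_\theta$, which forces you to analyse the $j$-dependent phase $\psi_j(\theta)=\varphi\bigl(g_j\circ(h_\theta\circ p)\bigr)$. Your argument that $d\psi_j(0)=0$ via $d\varphi(p)=-\omega_0(p)$ and $\underline{\mathfrak g}\subset\ker\omega_0$ is correct and rather elegant, but the quadratic form $Q_j$ need not equal $i\sum_\ell\theta_\ell^2$: the action of $g_j$ on the chart can contribute a nontrivial real part (from the $x_{2n+1}$-component of $g_j\cdot(0,\theta)$ at second order) and a linear change in the $\theta$-variables. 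Your appeal to unimodularity only shows $d\mu(g_jh_\theta)=d\mu(h_\theta)$; it does not by itself force $\int_{\Real^d}e^{iQ_j(w)}\,dw=\pi^{d/2}$, and you do not carry out the ``routine'' bookkeeping you concede remains.

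The paper avoids this entirely by a one-line observation you miss: since $g_j\circ p=p$, the substitution $g\mapsto g\circ g_j$ (equivalently, parametrising $N_j$ by $g=h_\theta\circ g_j$ rather than $g_j\circ h_\theta$) gives
\[
g\circ p=h_\theta\circ g_j\circ p=h_\theta\circ p=(0,\theta),
\]
so the phase is $\varphi((0,\theta))=i|\theta|^2+O(|\theta|^3)$ \emph{independently of $j$}. All $r$ pieces then reduce to the same Gaussian without any Hessian analysis, and your detour through $d\psi_j(0)$ and ${\rm Im\,}Q_j>0$ becomes unnecessary. The paper also treats the remainder more simply than you do: by Lemma~\ref{l-gue180730bI} one has $g\circ p\notin\hat D$ whenever $g\notin\bigcup_sW_1g_s$, so the remainder integrand vanishes identically rather than being merely $O(k^{-\infty})$.
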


\begin{proof}
Take $\gamma\in C^\infty_0(W)$ with $\gamma=1$ on $W_1$. From \eqref{e-gue180731s} and \eqref{e-gue180801m}, we have 
\begin{equation}\label{e-gue180801}
\begin{split}
&\lim_{k\To+\infty}f_k(p)\\
&=\lim_{k\To+\infty}\hat f_k(p)\\
&=\lim_{k\To+\infty}\int_G\Td f_k(g\circ p)\ol{b(g)}d\mu(g)\\
&=\lim_{k\To+\infty}\Bigr(\int_G\Td f_k(g\circ p)\sum^r_{j=1}\gamma(g\circ g^{-1}_j)\ol{b(g)}d\mu(g)\\
&\quad\quad+\int_G\Td f_k(g\circ p)\bigr(1-\sum^r_{j=1}\gamma(g\circ g^{-1}_j)\bigr)\ol{b(g)}d\mu(g)\Bigr). 
\end{split}
\end{equation}
If $g\in W_1g_s$, for some $s=1,\ldots,r$, then $1-\sum^r_{j=1}\gamma(g\circ g^{-1}_j)=0$. From this observation, we deduce that 
\begin{equation}\label{e-gue180801mI}
\int_G\Td f_k(g\circ p)\bigr(1-\sum^r_{j=1}\gamma(g\circ g^{-1}_j)\bigr)\ol{b(g)}d\mu(g)
=\int_{g\notin\bigcup^r_{s=1}W_1g_s}\Td f_k(g\circ p)\bigr(1-\sum^r_{j=1}\gamma(g\circ g^{-1}_j)\bigr)\ol{b(g)}d\mu(g).
\end{equation}
From Lemma~\ref{l-gue180730bI}, we see that 
\begin{equation}\label{e-gue180801mII}
g\circ p\notin\hat D,\ \ \mbox{for every $g\notin\bigcup^r_{s=1}W_1g_s$}.
\end{equation}
By the construction of  $\Td f_k(x)$ (see \eqref{e-gue180730d}),  $\Td f_k(x)=0$, for every $x\notin\hat D$. From this observation, \eqref{e-gue180801mI} and \eqref{e-gue180801mII}, we conclude that 
\begin{equation}\label{e-gue180801mIII}
\int_G\Td f_k(g\circ p)\bigr(1-\sum^r_{j=1}\gamma(g\circ g^{-1}_j)\bigr)\ol{b(g)}d\mu(g)=0.
\end{equation}

Now, from \eqref{e-gue180801mIII}, \eqref{e-gue180801}, \eqref{e-gue180730d} and \eqref{e-gue180722I}, we have 
\begin{equation}\label{e-gue180801p}
\begin{split}
&\lim_{k\To+\infty}f_k(p)\\
&=\lim_{k\To+\infty}\int_G\Td f_k(g\circ p)\sum^r_{j=1}\gamma(g\circ g^{-1}_j)\ol{b(g)}d\mu(g)\\
&=\lim_{k\To+\infty}\int_G\Td f_k(g\circ g_j\circ p)\sum^r_{j=1}\gamma(g)\ol{b(g\circ g_j)}d\mu(g)\\
&=\lim_{k\To+\infty}\int_G\Td f_k(g\circ p)\sum^r_{j=1}\gamma(g)\ol{b(g\circ g_j)}d\mu(g)\\
&=\lim_{k\To+\infty}\sum^r_{j=1}\int e^{ik\varphi(0,\theta)}\chi((0,\theta))\tau(\frac{\sqrt{k}}{\log k}\theta)\ol{b(\theta\circ g_j)}m(\theta)d\theta\\
&=\lim_{k\To+\infty}\sum^r_{j=1} \int e^{-\abs{\theta}^2}\chi((0,\frac{\theta}{\sqrt{k}}))\tau(\frac{\theta}{\log k})\ol{b(\frac{\theta}{\sqrt{k}}\circ g_j)}m(\frac{\theta}{\sqrt{k}})d\theta\\
&=\sum^r_{j=1}\int_{\Real^d}e^{-\abs{\theta}^2}\ol{b(g_j)}d\theta\\
&=\pi^{\frac{d}{2}}\sum^r_{j=1}\ol{b(g_j)},
\end{split}
\end{equation}
where $(0,\theta)=(0,\ldots,0,\theta_1,\ldots,\theta_d)\in U$, $d\mu(\theta)=m(\theta)d\theta$ on $W$, $m(0)=1$. Note that ${\rm Re\,}\ol{b(g_j)}\geq 1$, for every $j=1,\ldots,r$. From this observation and \eqref{e-gue180801p}, we get \eqref{e-gue180731z}. 

Now, for every $h\in G$, we have 
\begin{equation}\label{e-gue180801y}
\begin{split}
&\lim_{k\To+\infty}\abs{f_k(h\circ p)}\\
&=\lim_{k\To+\infty}\abs{\hat f_k(h\circ p)}\\
&=\lim_{k\To+\infty}\abs{\int_G\Td f_k(g\circ h\circ p)\ol{b(g)}d\mu(g)}\\
&=\lim_{k\To+\infty}\abs{\int_G\Td f_k(g\circ p)\ol{b(g\circ h^{-1})}d\mu(g)}.
\end{split}
\end{equation}
From \eqref{e-gue180801y}, we can repeat the process in  \eqref{e-gue180801p} and conclude that 
\begin{equation}\label{e-gue180801yI}
\lim_{k\To+\infty}\abs{f_k(h\circ p)}=\pi^{\frac{d}{2}}\abs{\sum^r_{j=1}\ol{b(g_j\circ h^{-1})}}.
\end{equation}
Assume that $h\notin\bigcup^r_{s,t}g_sW_1g_t$. Note that $W_1=W^{-1}_1$. From this observation, it is no difficult to check that 
\begin{equation}\label{e-gue180801yII}
g_j\circ h^{-1}\notin\bigcup^r_{s,t}g_sW_1g_t,\ \ \mbox{for every $j=1,\ldots,r$}. 
\end{equation}
Note that $\abs{\ol{b(g)}}\leq\frac{1}{2}$, for every $g\notin \bigcup^r_{s,t}g_sW_1g_t$. From this observation, \eqref{e-gue180801yII} and \eqref{e-gue180801yI}, we get \eqref{e-gue180731zw}. 

Let $\varepsilon>0$ be positive. From \eqref{e-gue180731s}, we see that there is a $k_1\in\mathbb N$ such that for every $k\in\mathbb N$, $k\geq k_1$, we have 
\begin{equation}\label{e-gue180806mp}
\abs{f_k(x)-\hat f_k(x)}<\varepsilon,\ \ \mbox{for every $x\in X$}.
\end{equation}
Now, for $x\notin G\hat D$, we have 
\[\int_G\Td f_k(g\circ x)\ol{b(g)}d\mu(g)=0\]
since ${\rm Supp\,}\Td f_k\subset\hat D$. From this observation and \eqref{e-gue180806mp}, we get \eqref{e-gue180802q}. 
\end{proof}

From Theorem~\ref{t-gue180731w}, we deduce 

\begin{corollary}\label{c-gue180802}
With the notations used above, there is a $k_0\in\mathbb N$ and an open set $\Td D\Subset\hat D$ of  $p$ such that
\begin{equation}\label{e-gue180802qa}
{\rm Re\,}f_{k_0}(x)\geq\frac{3}{4}r\pi^{\frac{d}{2}},\  \ \mbox{for every $x\in\Td D$},
\end{equation}
and 
\begin{equation}\label{e-gue180802qaIz}
\abs{f_{k_0}(h\circ x)}\leq \frac{2}{3}r\pi^{\frac{d}{2}} \ \ \mbox{for every $h\notin\bigcup^r_{s,t=1}g_sW_1g_t$ and every $x\in\Td D$},
\end{equation}
where $r\in\mathbb N$ is the Cardinal number of $N_p$. 

Moreover, let $x_0\in X$ and assume that $g\circ x_0\notin\hat D$, for every $g\in G$. Then, 
\begin{equation}\label{e-gue180802qaII}
\abs{f_{k_0}(x_0)}\leq \frac{1}{2}r\pi^{\frac{d}{2}}.
\end{equation}
\end{corollary}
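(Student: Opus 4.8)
The plan is to derive the three estimates from Theorem~\ref{t-gue180731w}, fixing $k_0\in\mathbb N$ large enough to satisfy finitely many conditions imposed below and then producing a suitable neighbourhood $\Td D\Subset\hat D$ of $p$. For \eqref{e-gue180802qa}: the second line of \eqref{e-gue180731z} gives ${\rm Re\,}\bigl(\lim_{k\To+\infty}f_k(p)\bigr)\geq r\pi^{\frac d2}$, so we may require ${\rm Re\,}f_{k_0}(p)>\frac34 r\pi^{\frac d2}$; since $f_{k_0}\in C^\infty_G(X)$ is continuous, $\Td D_1:=\set{x\in\hat D;\,{\rm Re\,}f_{k_0}(x)>\frac34 r\pi^{\frac d2}}$ is then an open neighbourhood of $p$. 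For \eqref{e-gue180802qaII}: a point $x_0$ satisfies $g\circ x_0\notin\hat D$ for all $g\in G$ exactly when $x_0\notin G\hat D$, so applying \eqref{e-gue180802q} with $\varepsilon=\frac12 r\pi^{\frac d2}$ gives $k_1\in\mathbb N$ such that $\abs{f_{k_0}(x_0)}\leq\frac12 r\pi^{\frac d2}$ for every such $x_0$ whenever $k_0\geq k_1$, a condition we also impose.

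The estimate \eqref{e-gue180802qaIz} is the main point, the set $K:=G\setminus\bigcup_{s,t=1}^r g_sW_1g_t$ of admissible $h$ being in general infinite (it is closed in $G$, hence compact). The first step is to sharpen \eqref{e-gue180731zw} to a limit uniform in $h$:
\[
\lim_{k\To+\infty}\Bigl(\sup_{h\in G}\Bigl|f_k(h\circ p)-\pi^{\frac d2}\sum_{j=1}^r\ol{b(g_j\circ h^{-1})}\Bigr|\Bigr)=0.
\]
To see this, note first that by \eqref{e-gue180731s} we may replace $f_k$ by $\hat f_k$ at the cost of an $O(k^{-\infty})$ error which is uniform on $X$; and by invariance of the Haar measure $\hat f_k(h\circ p)=\int_G\Td f_k(g\circ p)\,\ol{b(g\circ h^{-1})}\,d\mu(g)$, so that $h$ enters only through the amplitude factor $\ol{b(\,\cdot\,\circ h^{-1})}$. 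The localisation near $g_1,\dots,g_r$ provided by Lemma~\ref{l-gue180730bI} (recall ${\rm Supp\,}\Td f_k\subset\hat D$ and $g_j\circ p=p$) and the Laplace-type evaluation carried out in the proof of Theorem~\ref{t-gue180731w} then apply verbatim with $h$ as an external parameter, and since $b$ is bounded and uniformly continuous on the compact group $G$, all error terms are estimated uniformly in $h$.

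By \eqref{e-gue180731zw} we have $\abs{\pi^{\frac d2}\sum_{j=1}^r\ol{b(g_j\circ h^{-1})}}\leq\frac12 r\pi^{\frac d2}$ for $h\in K$, so the displayed limit lets us additionally require $\sup_{h\in K}\abs{f_{k_0}(h\circ p)}<\frac7{12}r\pi^{\frac d2}$. Now the map $K\times X\ni(h,x)\mapsto f_{k_0}(h\circ x)\in\Complex$ is continuous and satisfies $\abs{f_{k_0}(h\circ p)}<\frac7{12}r\pi^{\frac d2}<\frac23 r\pi^{\frac d2}$ for every $h\in K$; since $K$ is compact, the tube lemma produces an open neighbourhood $\Td D'$ of $p$ in $X$ with $\abs{f_{k_0}(h\circ x)}\leq\frac23 r\pi^{\frac d2}$ for all $h\in K$ and $x\in\Td D'$ (vacuously if $K=\emptyset$). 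Choosing $\Td D$ to be any open neighbourhood of $p$ with $\Td D\Subset\hat D$ and $\Td D\subset\Td D_1\cap\Td D'$, all three inequalities \eqref{e-gue180802qa}, \eqref{e-gue180802qaIz} and \eqref{e-gue180802qaII} hold for this $\Td D$ and this $k_0$.

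The step I expect to be the genuine obstacle is the uniformity in $h$ asserted in the displayed limit: it requires going through the proof of Theorem~\ref{t-gue180731w} and recording that the $O(k^{-\infty})$ remainders and the stationary-phase error terms depend only on the fixed data ($\varphi$ on $U$, the cut-offs $\chi$ and $\tau$, and the $C^1$-norm of $b$), and not on the particular $h\in G$. Once this bookkeeping is done, everything else is soft.
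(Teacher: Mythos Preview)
Your proof is correct and follows the approach the paper intends: the paper itself offers no proof beyond the line ``From Theorem~\ref{t-gue180731w}, we deduce,'' so you have simply made explicit what the authors left implicit. In particular, you have correctly isolated the one point that is not entirely automatic, namely the uniformity in $h\in K$ needed for \eqref{e-gue180802qaIz}, and your justification (that $h$ enters only through the bounded, uniformly continuous amplitude $\ol{b(\,\cdot\,\circ h^{-1})}$ while all other data in the Laplace-type evaluation of \eqref{e-gue180801p} are fixed) is exactly right; the subsequent tube-lemma argument to pass from $x=p$ to a neighbourhood $\Td D$ is the natural way to finish.
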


We can repeat the proof of Theorem~\ref{t-gue180730} with minor change and get

\begin{theorem}\label{t-gue180801s}
We can find $\beta_j(x)\in C^\infty(U)$, $j=1,2,\ldots,n+1$, with 
\begin{equation}\label{e-gue180801s}
\begin{split}
&\beta_j(x)=z_j+O(\abs{x}^2),\ \ j=1,2,\ldots,n,\\
&\beta_{n+1}(x)=x_{2n+1}+O(\abs{x}^2)
\end{split}
\end{equation}
such that $\ddbar_b\beta_j(x)=O(\abs{x}^N)$, for every $N\in\mathbb N$ and every $j=1,2,\ldots,n+1$. 
\end{theorem}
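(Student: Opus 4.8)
The plan is to run the argument of Theorem~\ref{t-gue180730} once for each $j$, the only change being that the prescribed linear term $\phi_1=x_{2n+1}$ is replaced by $z_j$ for $j=1,\ldots,n$, and that we drop all requirements on the imaginary part, since no positivity is needed here. Concretely, fix $j\in\set{1,\ldots,n}$. First I would prove the analogue of Theorem~\ref{t-gue180723}: there exist homogeneous polynomials $\phi^{(j)}_\ell\in P_\ell(U)$, $\ell=1,2,\ldots$, with $\phi^{(j)}_1=z_j$, such that $\ol Z_k\bigl(\sum^N_{\ell=1}\phi^{(j)}_\ell\bigr)=O(\abs{x}^N)$ for every $k=1,\ldots,n$ and every $N\in\mathbb N$. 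The base case is immediate from \eqref{e-gue180724}, which gives $\ol Z_k(z_j)=O(\abs{x})$ (and likewise $\ol Z_k(x_{2n+1})=O(\abs{x})$, which covers the case $j=n+1$ below).

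The inductive step is carried out exactly as in the proof of Theorem~\ref{t-gue180723}. Assuming $\phi^{(j)}_1,\ldots,\phi^{(j)}_N$ have been found with $\ol Z_k\bigl(\sum_{\ell\leq N}\phi^{(j)}_\ell\bigr)=O(\abs{x}^N)$ for all $k$, one expands $\ol Z_1\bigl(\sum_{\ell\leq N}\phi^{(j)}_\ell\bigr)$ as a homogeneous polynomial of degree $N$ modulo $O(\abs{x}^{N+1})$, and, using that the leading part of $\ol Z_1$ is $\pr/\pr\ol z_1$, one picks $\phi^{(1)}_{N+1}\in P_{N+1}(U)$ by formally integrating this polynomial in $\ol z_1$, so that $\ol Z_1\bigl(\sum_{\ell\leq N}\phi^{(j)}_\ell+\phi^{(1)}_{N+1}\bigr)=O(\abs{x}^{N+1})$. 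One then treats $\ol Z_2$: the degree-$N$ part of $\ol Z_2\bigl(\sum_{\ell\leq N}\phi^{(j)}_\ell+\phi^{(1)}_{N+1}\bigr)$ is annihilated by $\pr/\pr\ol z_1$, because $[\ol Z_1,\ol Z_2]\subset C^\infty(X,T^{0,1}X)$ forces $\ol Z_1\ol Z_2(\,\cdot\,)=O(\abs{x}^N)$ once $\ol Z_1(\,\cdot\,)=O(\abs{x}^{N+1})$ and $\ol Z_k(\,\cdot\,)=O(\abs{x}^N)$ for all $k$; hence integrating in $\ol z_2$ yields a correction $\phi^{(2)}_{N+1}$ with no $\ol z_1$-dependence, which fixes $\ol Z_2$ without disturbing $\ol Z_1$. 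Iterating over $k=3,\ldots,n$ and setting $\phi^{(j)}_{N+1}:=\phi^{(1)}_{N+1}+\cdots+\phi^{(n)}_{N+1}$ completes the induction.

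Finally I would glue the formal series into a genuine smooth function exactly as in the proof of Theorem~\ref{t-gue180730}: choosing cut-offs $\chi(\,\cdot\,/\epsilon_\ell)$ with $\epsilon_\ell\downarrow0$ fast enough that $\beta_j(x):=\sum_{\ell\geq1}\chi(x/\epsilon_\ell)\phi^{(j)}_\ell(x)$ converges in $C^\infty(U)$ and satisfies $\pr^\alpha_x\beta_j|_p=\pr^\alpha_x\phi^{(j)}_{\abs{\alpha}}|_p$ for every multiindex $\alpha$. Then $\beta_j=z_j+O(\abs{x}^2)$, and by the choice of the $\phi^{(j)}_\ell$ the form $\ddbar_b\beta_j$ — equivalently each $\ol Z_k\beta_j$ — vanishes to infinite order at $p$, that is, $\ddbar_b\beta_j(x)=O(\abs{x}^N)$ for every $N$. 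For $j=n+1$ one may simply take $\beta_{n+1}:=\varphi$ from Theorem~\ref{t-gue180730}, since by \eqref{e-gue180730mpI} and \eqref{e-gue180730mpII} that function already satisfies $\varphi=x_{2n+1}+O(\abs{x}^2)$ and $\ddbar_b\varphi=O(\abs{x}^N)$ for all $N$. The one point needing care is the compatibility at each inductive step — that after correcting $\ol Z_1,\ldots,\ol Z_{k-1}$ the residual error in $\ol Z_k$ is still annihilated by $\pr/\pr\ol z_1,\ldots,\pr/\pr\ol z_{k-1}$ and hence can be removed by integrating in $\ol z_k$ — but this is exactly where integrability of the CR structure, $[\mathcal V,\mathcal V]\subset\mathcal V$, enters, and it is handled verbatim as in Theorem~\ref{t-gue180723}.
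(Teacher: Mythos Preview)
Your proposal is correct and follows exactly the approach indicated by the paper, which simply states that one can repeat the proof of Theorem~\ref{t-gue180730} with minor changes. You have in fact written out those minor changes in detail: replacing the prescribed linear term, dropping the positivity requirement on the imaginary part, and noting that for $j=n+1$ one may take $\beta_{n+1}=\varphi$ directly.
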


For every $j=1,\ldots,n+1$, and every $k\in\mathbb N$, put 
\begin{equation}\label{e-gue180801sI}
\Td f^{(j)}_k(x):=k^{\frac{d}{2}}e^{ik\varphi(x)}\beta_j(x)\chi(x)\tau(\frac{\sqrt{k}}{\log k}x'')\in C^\infty_0(\hat D),
\end{equation}
set 
\[\hat f^{(j)}_k(x):=\int_G\Td f^{(j)}_k(g\circ x)\ol{b(g)}d\mu(g)\in C^\infty_G(X)\]
and 
put
\[f^{(j)}_k:=S\hat f^{(j)}_k.\]
We can repeat the proofs of Theorem~\ref{t-gue180731r} and \eqref{e-gue180731z} and obtain 

\begin{theorem}\label{t-gue180802}
We have $f^{(j)}_k\in C^\infty_G(X)\bigcap{\rm Ker\,}\ddbar_b$, $j=1,2,\ldots,n+1$, and 
\begin{equation}\label{e-gue180802}
\begin{split}
&\lim_{k\To+\infty}df^{(j)}_k(p)=\pi^{\frac{d}{2}}\sum^r_{s=1}\ol{b(g_s)}dz_j,\ \ j=1,2,\ldots,n,\\
&\lim_{k\To+\infty}df^{(n+1)}_k(p)=\pi^{\frac{d}{2}}\sum^r_{s=1}\ol{b(g_s)}dx_{2n+1}.
\end{split}
\end{equation}
\end{theorem}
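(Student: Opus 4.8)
The plan is to imitate the proof of Theorem~\ref{t-gue180731r} for the membership statement, and the stationary phase computation~\eqref{e-gue180801p} (from the proof of Theorem~\ref{t-gue180731w}) for the limits of the differentials, now carrying one extra derivative throughout.

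\textbf{Step 1: membership and reduction.} First I would check, as in Theorem~\ref{t-gue180731r}, that $\ddbar_b\Td f^{(j)}_k=O(k^{-\infty})$. Expanding
\[
\ddbar_b\Td f^{(j)}_k=k^{d/2}e^{ik\varphi}\Bigl(ik(\ddbar_b\varphi)\beta_j\chi\tau+(\ddbar_b\beta_j)\chi\tau+\beta_j(\ddbar_b\chi)\tau+\beta_j\chi\,\ddbar_b\bigl(\tau(\tfrac{\sqrt k}{\log k}x'')\bigr)\Bigr),
\]
the first two terms are negligible since $\ddbar_b\varphi,\ \ddbar_b\beta_j$ vanish to infinite order at $p$ (Theorems~\ref{t-gue180730}, \ref{t-gue180801s}) while $\abs{e^{ik\varphi}}\le e^{-ck\abs{x}^2}$; the $\ddbar_b\chi$-term is supported away from $p$, where $\abs{e^{ik\varphi}}$ is exponentially small in $k$; and the $\tau$-derivative term is supported where $\abs{x}\gtrsim\log k/\sqrt k$, where $\abs{e^{ik\varphi}}\le e^{-c(\log k)^2}$ absorbs both $k^{d/2}$ and $\sqrt k/\log k$. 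Writing the $G$-finite function $\hat f^{(j)}_k$ as $\sum_{s=1}^Ng_{s,k}$ with $g_{s,k}\in C^\infty_{m_s}(X)$ and $N,m_s$ independent of $k$, we get $\ddbar_bg_{s,k}=O(k^{-\infty})$, $S\hat f^{(j)}_k=\sum_sS_{m_s}g_{s,k}$, and Theorem~\ref{t-gue180720f} gives $\norm{(I-S_{m_s})g_{s,k}}_{C^\ell}\le C\norm{\ddbar_bg_{s,k}}_{C^K}=O(k^{-\infty})$ for each $\ell$. Hence $f^{(j)}_k=S\hat f^{(j)}_k=\hat f^{(j)}_k+O(k^{-\infty})$ in every $C^\ell$-norm; in particular $f^{(j)}_k\in C^\infty(X)$, it is $G$-finite because $S_{m_s}$ preserves $C^\infty_{m_s}(X)$, it lies in $\Ker\ddbar_b$ because $S$ is the orthogonal projection onto $\Ker\ddbar_b$, and $df^{(j)}_k(p)=d\hat f^{(j)}_k(p)+O(k^{-\infty})$. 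So it remains to compute $\lim_{k\to\infty}d\hat f^{(j)}_k(p)$.

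\textbf{Step 2: the limit.} Since $\hat f^{(j)}_k=\int_G(g^*\Td f^{(j)}_k)\ol{b(g)}\,d\mu(g)$ and $d$ commutes with $g^*$, we have $d\hat f^{(j)}_k(p)=\int_G(g^*d\Td f^{(j)}_k)(p)\,\ol{b(g)}\,d\mu(g)$. By Lemma~\ref{l-gue180730bI}, $g\circ p\notin\hat D\supset\operatorname{supp}\Td f^{(j)}_k$ when $g\notin\bigcup_sW_1g_s$, so inserting the cut-off $\sum_s\gamma(g\circ g_s^{-1})$ and substituting $g=\theta\circ g_s$ in the $s$-th term (with $g_s\circ p=p$) reduces $d\hat f^{(j)}_k(p)$ to $\sum_{s=1}^r\int_Gg_s^*\bigl((\theta^*d\Td f^{(j)}_k)(p)\bigr)\gamma(\theta)\ol{b(\theta\circ g_s)}\,d\mu(\theta)$, with $\theta$ near $e_0$. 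In the local coordinates at $p$, $\theta\circ p=(0,\dots,0,\theta)$, and the $\bar z$-antiderivative construction of $\varphi$ and $\beta_j$ (Theorems~\ref{t-gue180723}, \ref{t-gue180730}, \ref{t-gue180801s}) forces every higher order correction to carry a $\bar z$-factor, hence to vanish on the slice $\{z=x_{2n+1}=0\}$: this yields the exact identities $\varphi(0,\theta)=i\abs{\theta}^2$ and $\beta_j(0,\theta)=0$, together with $d\beta_j(p)=dz_j$ for $j\le n$ and $d\beta_{n+1}(p)=dx_{2n+1}$. Because $\beta_j$ vanishes on that slice, in
\[
(d\Td f^{(j)}_k)(0,\theta)=k^{d/2}e^{ik\varphi(0,\theta)}\bigl(ik\,d\varphi\,\beta_j\chi\tau+d\beta_j\,\chi\tau+\beta_j\,d\chi\,\tau+\beta_j\chi\,d(\tau(\cdots))\bigr)
\]
the $ik\,d\varphi\cdot\beta_j$ term and the $d\chi$- and $\tau$-derivative terms all carry a $\beta_j$-factor and become $O(k^{-\infty})$ after rescaling, so only the $d\beta_j$-term survives. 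Substituting $\theta=\eta/\sqrt k$ (the Jacobian $k^{-d/2}$ cancelling the prefactor $k^{d/2}$), using $e^{ik\varphi(0,\eta/\sqrt k)}=e^{-\abs{\eta}^2}$, $\theta^*\to\mathrm{Id}$, $\gamma(\theta)\to1$, $d\mu(\theta)=m(\theta)\,d\theta$ with $m(0)=1$, $\ol{b(\tfrac{\eta}{\sqrt k}\circ g_s)}\to\ol{b(g_s)}$, and dominated convergence with dominating function $e^{-\abs{\eta}^2}$ exactly as in~\eqref{e-gue180801p}, I obtain $\lim_kd\hat f^{(j)}_k(p)=\pi^{d/2}\sum_{s=1}^r\ol{b(g_s)}\,g_s^*\bigl(d\beta_j(p)\bigr)$. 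For $j=n+1$ this equals $\pi^{d/2}\sum_s\ol{b(g_s)}\,dx_{2n+1}$ since $dx_{2n+1}(p)=-\omega_0(p)$ is $G$-invariant; for $j\le n$ it equals $\pi^{d/2}\sum_s\ol{b(g_s)}\,dz_j$ once the frame $Z_1,\dots,Z_n$ at $p$ is chosen compatibly with the (finite) isotropy action, so that $g_s^*dz_j(p)=dz_j(p)$.

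\textbf{Expected main obstacle.} The delicate point, absent in Theorem~\ref{t-gue180731w} where only the value of $\Td f_k$ on the orbit is needed, is the stationary phase bookkeeping for the \emph{differential} of the $G$-average $\hat f^{(j)}_k$: one must verify that the differential $g^*$ of the group action, the $O(\abs{x}^2)$-corrections in $\varphi$ and $\beta_j$, and the cut-offs $\chi,\tau$ all contribute only lower order in $k$. The two structural facts that make the clean limit appear are the exact identity $\varphi(0,\theta)=i\abs{\theta}^2$ and the vanishing $\beta_j(0,\theta)=0$ on the $\theta$-slice; without the latter the term $ik\,d\varphi\cdot\beta_j$ would survive the rescaling and destroy the formula.
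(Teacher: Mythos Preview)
Your approach is exactly what the paper intends (it merely writes ``repeat the proofs of Theorem~\ref{t-gue180731r} and \eqref{e-gue180731z}''), and you have supplied the details far more carefully than the paper does. In particular, your observation that the specific $\bar z$-antiderivative construction of Theorem~\ref{t-gue180723} forces the higher corrections in $\varphi$ and $\beta_j$ to vanish on the slice $\{z=0,\ x_{2n+1}=0\}$, whence $\varphi(0,\theta)=i\abs{\theta}^2$ and $\beta_j(0,\theta)=0$ \emph{exactly}, is the key technical point: without it the term $ik\,d\varphi\cdot\beta_j$ survives the rescaling and contributes a spurious multiple of $dx_{2n+1}$ to the limit. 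The paper does not make this explicit.

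There is, however, a genuine gap in your last step. Your computation correctly produces
\[
\lim_{k\to\infty}df^{(j)}_k(p)=\pi^{d/2}\sum_{s=1}^r\ol{b(g_s)}\,g_s^*\bigl(d\beta_j(p)\bigr),
\]
and for $j=n+1$ the identification $dx_{2n+1}(p)=-\omega_0(p)$ together with the $G$-invariance of $\omega_0$ gives $g_s^*dx_{2n+1}(p)=dx_{2n+1}(p)$, as you say. But for $j\le n$ you assert that one may ``choose the frame $Z_1,\dots,Z_n$ at $p$ compatibly with the isotropy action so that $g_s^*dz_j(p)=dz_j(p)$''. This is not possible in general: the isotropy representation of the finite group $N_p$ on $T^{1,0}_pX$ need not be trivial (for instance, an involution in $N_p$ may act as $-I$), so no choice of frame can make all $dz_j$ fixed by every $g_s^*$. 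The honest limit is $\pi^{d/2}\sum_s\ol{b(g_s)}\,g_s^*dz_j(p)$, and the paper's stated formula \eqref{e-gue180802} silently drops the $g_s^*$. What one actually needs for the sequel (the linear independence \eqref{e-gue180802ra}) is that the matrix $\sum_s\ol{b(g_s)}A_s$ be invertible, where $A_s$ is the matrix of $g_s^*$ on $T^{*1,0}_pX$; this is \emph{not} automatic from $\mathrm{Re}\,\ol{b(g_s)}\ge 1$ alone, but can be arranged by exploiting the freedom in the choice of the bump function $a$ (and hence $b$), e.g.\ by making $\ol{b(e_0)}$ dominate the other values. So your reduction is correct, but the final sentence needs to be replaced by an argument about the invertibility of $\sum_s\ol{b(g_s)}A_s$, not by an (impossible) normalisation of the frame.
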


Let $\xi_1,\ldots,\xi_d$ be the orthonormal basis for $\mathfrak{g}$ as in the discussion before \eqref{e-gue180713}. For every $j=1,2,\ldots,d$, put 
\begin{equation}\label{e-gue180802I}
\begin{split}
\Td T_j: C^\infty(G)&\To C^\infty(G),\\
u(g)&\To\frac{\pr}{\pr t}\Bigr(u(g\circ (e^{t\xi_j})^{-1})\Bigr)|_{t=0}.
\end{split}
\end{equation}
It is easy to see that $\Td T_j$, $j=1,\ldots,d$, are linear independent vector fields on $G$. By using local coordinates, it is straightforward to see that there are 
$\delta_\ell(g)\in C^\infty(G)$, $\ell=1,\ldots,d$, such that 
\begin{equation}\label{e-gue180802IIp}
\mbox{the matrix $\left(\sum^r_{s=1}(\Td T_j\ol{\delta_\ell})(g_s)\right)^d_{j,\ell=1}$ is invertible.}
\end{equation}
We remind the reader that $g_1\in G,\ldots,g_r\in G$ are as in \eqref{e-gue180730b}. Let $0<\varepsilon<1$ be a constant. By Theorem~\ref{t-gue180710mp}, we can find 
$\alpha_\ell(g)\in C^\infty_G(G)$, $\ell=1,\ldots,d$, such that $\norm{a_\ell-\delta_\ell}_{C^1(G)}<\varepsilon$, for every $\ell=1,\ldots,d$. We take $\varepsilon$ small enough so that 
\begin{equation}\label{e-gue180802II}
\mbox{the matrix $\left(\sum^r_{s=1}(\Td T_j\ol{\alpha_\ell})(g_s)\right)^d_{j,\ell=1}$ is invertible.}
\end{equation}
For every $\ell=1,\ldots,d$, and every $k\in\mathbb N$, put 
\begin{equation}\label{e-gue180801sIp}
\hat H^{(\ell)}_k(x):=\int_G\Td f_k(g\circ x)\ol{\alpha_\ell(g)}d\mu(g)\in C^\infty_G(X),
\end{equation}
where $\Td f_k(x)$ is as in \eqref{e-gue180730d}. Put 
\[H^{(\ell)}_k:=S\hat H^{(\ell)}_k.\]
As before, we have 
$H^{(\ell)}_k\in C^\infty_G(X)\bigcap{\rm Ker\,}\ddbar_b$, $\ell=1,2,\ldots,d$. Let $T_j\in C^\infty(X,TX)$, $j=1,\ldots,d$, be as in \eqref{e-gue180713}.

\begin{theorem}\label{t-gue180802m}
We have
\begin{equation}\label{e-gue180802r}
\lim_{k\To+\infty}(T_jH^{(\ell)}_k)(p)=\pi^{\frac{d}{2}}\sum^r_{s=1}(\Td T_j\ol{\alpha_\ell})(g_s),\ \ j, \ell=1,2,\ldots,d,
\end{equation}
$\lim_{k\To+\infty}(dH^{(\ell)}_k)(p)$ exists, for every $\ell=1,2,\ldots,d$, and 
\begin{equation}\label{e-gue180802rI}
\mbox{$\set{\lim_{k\To+\infty}(dH^{(\ell)}_k)(p);\, \ell=1,2,\ldots,d}$ are linear independent}. 
\end{equation}
\end{theorem}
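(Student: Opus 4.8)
The plan is to reduce the statement to the oscillatory–integral estimates already established for $\hat f_k$ and then to read off the differential from the splitting $\Complex T_pX=T^{1,0}_pX\oplus T^{0,1}_pX\oplus\Complex T(p)\oplus\Complex\underline{\mathfrak g}_p$. First I would repeat the proof of Theorem~\ref{t-gue180731r}: since $\ddbar_b\Td f_k=O(k^{-\infty})$ by \eqref{e-gue180731p} and the $m$-th Fourier component of an $O(k^{-\infty})$ family is again $O(k^{-\infty})$, writing $\hat H^{(\ell)}_k$ as a sum of finitely many $k$-independent Fourier pieces and applying \eqref{e-gue180720f} and \eqref{e-gue180720fI} gives $H^{(\ell)}_k=\hat H^{(\ell)}_k+O(k^{-\infty})$ in every $C^s$-norm, together with $H^{(\ell)}_k\in C^\infty_G(X)\cap{\rm Ker\,}\ddbar_b$. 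Consequently every limit in the statement may be computed with $\hat H^{(\ell)}_k$ in place of $H^{(\ell)}_k$.

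For \eqref{e-gue180802r} I would observe that, because $T_j=\xi_{j,X}$ generates the flow $e^{t\xi_j}$, the substitution $g\mapsto g\circ e^{-t\xi_j}$ in the bi-invariant Haar measure moves the $t$-derivative off $x$ and onto $\alpha_\ell$, so that $(T_j\hat H^{(\ell)}_k)(x)=\int_G\Td f_k(g\circ x)(\Td T_j\ol{\alpha_\ell})(g)\,d\mu(g)$ with $\Td T_j$ as in \eqref{e-gue180802I}. This is exactly the integral $\hat f_k$ of \eqref{e-gue180801m} with the weight $\ol b$ replaced by the smooth weight $\Td T_j\ol{\alpha_\ell}$, so I would rerun the computation behind \eqref{e-gue180731z}: localize the integral to a neighbourhood of $\bigcup_{s=1}^r W_1g_s$ by Lemma~\ref{l-gue180730bI} and ${\rm Supp\,}\Td f_k\subset\hat D$, on the piece $g=\theta g_s$ use that $\theta\circ p$ has coordinates $(0,\theta)$ by \eqref{e-gue180722I} and $\varphi(0,\theta)=i\abs{\theta}^2+O(\abs{\theta}^3)$ by \eqref{e-gue180730mpI}, and rescale $\theta=\eta/\sqrt k$ to produce the Gaussian $\int_{\Real^d}e^{-\abs{\eta}^2}d\eta=\pi^{d/2}$ together with $(\Td T_j\ol{\alpha_\ell})(\tfrac{\eta}{\sqrt k}g_s)\to(\Td T_j\ol{\alpha_\ell})(g_s)$; this yields \eqref{e-gue180802r}.

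For the existence of $\lim_k(dH^{(\ell)}_k)(p)$ I would evaluate $(d\hat H^{(\ell)}_k)(p)$ on each summand of $\Complex T_pX$. It vanishes on $T^{0,1}_pX$ because $\ddbar_bH^{(\ell)}_k=0$; on $\underline{\mathfrak g}_p$ it is governed by the $(T_jH^{(\ell)}_k)(p)$, which converge by \eqref{e-gue180802r} (recall $T_1(p),\dots,T_d(p)$ is a basis of $\underline{\mathfrak g}_p$ since the action is locally free). On $T^{1,0}_pX\oplus\Complex T(p)$ I would differentiate the localized integral under the integral sign; the cutoffs contribute only $O(k^{-\infty})$ (the factor $\tau$ depends on $x''$ alone and, after rescaling, its argument tends to $0$, while $\chi\equiv1$ near $p$), so only the phase matters, and differentiating $e^{ik\varphi}$ produces $ik\langle d\varphi(p),\cdot\rangle$; since $d\varphi(p)$ annihilates $T^{1,0}_pX$ by \eqref{e-gue180730mpI}, the $T^{1,0}$-directional derivatives do not carry this power of $k$ and converge after rescaling. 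In the direction $T(p)$ one has $\langle d\varphi(p),T(p)\rangle=1$, and the $ik$-term acquires the coefficient $\pi^{d/2}\sum_{s=1}^r\ol{\alpha_\ell(g_s)}$ (the limit of $\hat H^{(\ell)}_k(p)$); this is the place where it is used that $\alpha_\ell$ is chosen to vanish at the points $g_1,\dots,g_r$ of $N_p$ — a requirement compatible with \eqref{e-gue180802II}, since vanishing at these finitely many points and prescribing $(\Td T_j\ol{\alpha_\ell})(g_s)$ are independent conditions on $C^\infty_G(G)$ — so that the divergent term disappears and the $T$-directional derivative converges as well.

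Finally, \eqref{e-gue180802rI} follows from \eqref{e-gue180802r}: if $\sum_\ell c_\ell\lim_k(dH^{(\ell)}_k)(p)=0$, then pairing with $T_1(p),\dots,T_d(p)$ gives $\sum_\ell c_\ell\sum_{s=1}^r(\Td T_j\ol{\alpha_\ell})(g_s)=0$ for every $j$, hence $c_\ell=0$ for all $\ell$ by the invertibility of the matrix in \eqref{e-gue180802II}. I expect the main obstacle to be precisely the last point of the third paragraph: in the non-CR direction $T$ the Gaussian concentration does not by itself absorb the phase, and one has to identify and cancel the $O(k)$ contribution by the correct normalization of $\alpha_\ell$; the $T^{1,0}$ and $\underline{\mathfrak g}$ directions, and the bookkeeping of the cutoffs $\chi,\tau$, are then routine extensions of the arguments behind Theorems~\ref{t-gue180731r} and~\ref{t-gue180731w}.
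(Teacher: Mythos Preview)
Your approach coincides with the paper's: both reduce to $\hat H^{(\ell)}_k$ via the $O(k^{-\infty})$ error of Theorem~\ref{t-gue180731r}, turn the $T_j$-derivative into a right-translation derivative of $\overline{\alpha_\ell}$ by the substitution $g\mapsto g\,e^{-t\xi_j}$, and then rerun the Gaussian computation \eqref{e-gue180801p}; the linear independence \eqref{e-gue180802rI} is read off from the invertibility of the matrix in \eqref{e-gue180802II} exactly as you do. The paper is terser on the existence of $\lim_k(dH^{(\ell)}_k)(p)$: it simply asserts that one ``can repeat the proof of \eqref{e-gue180731z}'' and does not isolate the $T$-direction. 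Your worry there is legitimate. Since $\langle d\varphi(p),T(p)\rangle=1$, the rescaled integral in the $T$-direction carries an $ik$-term with coefficient $\pi^{d/2}\sum_{s}\overline{\alpha_\ell(g_s)}$, and the paper imposes no condition on the values $\alpha_\ell(g_s)$. Your remedy---arranging $\alpha_\ell(g_s)=0$, which is compatible with \eqref{e-gue180802II} because the evaluation-plus-first-derivative map from $C^\infty_G(G)$ to the finitely many jets at $g_1,\dots,g_r$ is surjective by density in $C^1(G)$---does make the statement literally correct. Alternatively one can bypass the issue: for the only downstream use, \eqref{e-gue180802ra}, one needs just the convergence of the $\underline{\mathfrak g}$-components $(T_jH^{(\ell)}_k)(p)$ together with $df^{(j)}_k(p)\to\pi^{d/2}\sum_s\overline{b(g_s)}\,dz_j$ (resp.\ $dx_{2n+1}$), and these already force linear independence of $\{df^{(j)}_k(p),dH^{(\ell)}_k(p)\}$ for large $k$.
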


\begin{proof}
For every $j, \ell=1,2,\ldots,d$, we have 
\begin{equation}\label{e-gue180802rII}
\begin{split}
&\lim_{k\To+\infty}(T_jH^{(\ell)}_k)(p)\\
&=\lim_{k\To+\infty}(T_j\hat H^{(\ell)}_k)(p)\\
&=\lim_{k\To+\infty}\frac{\pr}{\pr t}\Bigr(\int_G\Td f_k(g\circ e^{t\xi_j}\circ p)\ol{\alpha_\ell(g)}d\mu(g)\Bigr)|_{t=0}\\
&=\lim_{k\To+\infty}\frac{\pr}{\pr t}\Bigr(\int_G\Td f_k(g\circ p)\ol{\alpha_\ell(g\circ(e^{t\xi_j})^{-1})}d\mu(g)\Bigr)|_{t=0}\\
&=\lim_{k\To+\infty}\int_G\Td f_k(g\circ p)(\Td T_j\ol{\alpha_\ell})(g)d\mu(g).
\end{split}
\end{equation}
From \eqref{e-gue180802rII}, we can repeat the proof of \eqref{e-gue180731z} and conclude that $\lim_{k\To+\infty}(dH^{(\ell)}_k)(p)$ exists, for every $\ell=1,2,\ldots,d$, and 
\eqref{e-gue180802r} hold. 

From \eqref{e-gue180802r}, \eqref{e-gue180802II} and some elementary linear algebra, we get \eqref{e-gue180802rI}. 
\end{proof}

From \eqref{e-gue180802} and \eqref{e-gue180802rI}, we conclude that there is a $k_0\in\mathbb N$, such that for every $k\geq k_0$, 
\begin{equation}\label{e-gue180802ra}
\begin{split}
&\set{(df^{(1)}_k)(p),(df^{(2)}_k)(p),\ldots,(df^{(n+1)}_k(p), (dH^{(1)}_k)(p), (dH^{(2)}_k)(p),\ldots, (dH^{(d)}_k)(p)}\\
&\mbox{are linear independent}.
\end{split}
\end{equation}
Put 
\begin{equation}\label{e-gue180802raI}
\begin{split}
&f^{(j)}_p:=f^{(j)}_{k_0}\in C^\infty_G(X)\bigcap{\rm Ker\,}\ddbar_b,\  \ j=1,2,\ldots,n+1,\\
&H^{(\ell)}_p:=H^{(\ell)}_{k_0}\in C^\infty_G(X)\bigcap{\rm Ker\,}\ddbar_b,\ \ \ell=1,2,\ldots,d.
\end{split}
\end{equation}
Consider the $G$-equivariant CR map
\begin{equation}\label{e-gue180802raII}
\begin{split}
\hat F_p: X&\To\Complex^{n+1+d},\\
x&\To (f^{(1)}_p(x),f^{(2)}_p(x),\ldots,f^{(n+1)}_p(x), H^{(1)}_p(x),H^{(2)}_p(x),\ldots,H^{(d)}_p(x))\in\Complex^{n+1+d}.
\end{split}
\end{equation}
From \eqref{e-gue180802ra}, we get 

\begin{theorem}\label{t-gue180802w}
With the notations above, there is an open set $D_p\subset U$ of $p$ in $X$ such that the differential of the map $\hat F_p$ is injective at every point of $D_p$ and $\hat F_p$ is injective on $D_p$. 
\end{theorem}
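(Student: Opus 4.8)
The plan is to establish the pointwise statement at $p$ first --- that $d\hat F_p$ is injective at $p$ --- and then to deduce the statement on a neighbourhood by soft arguments. The key geometric input will be that the chosen functions are CR, which forces their differentials at $p$ to span the full annihilator of $T^{0,1}_pX$ rather than an arbitrary $(n+1+d)$-dimensional subspace of $\Complex T^*_pX$.

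\textbf{Step 1 (the differentials span the annihilator of $T^{0,1}_pX$).} By \eqref{e-gue180802raI} each of the $n+1+d$ functions $f^{(1)}_p,\ldots,f^{(n+1)}_p,H^{(1)}_p,\ldots,H^{(d)}_p$ lies in ${\rm Ker\,}\ddbar_b$, i.e.\ is a smooth CR function; hence the differential of each has vanishing $(0,1)$-part and therefore annihilates $T^{0,1}X$. Let $W_p\subset\Complex T^*_pX$ denote the annihilator of $T^{0,1}_pX$. Since $T^{0,1}_pX$ has complex dimension $n$ inside $\Complex T_pX$, which has complex dimension $2n+1+d$, we have $\dim_{\Complex}W_p=n+1+d$. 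On the other hand, by \eqref{e-gue180802ra} the $n+1+d$ covectors
\[(df^{(1)}_{p})(p),\ldots,(df^{(n+1)}_{p})(p),(dH^{(1)}_{p})(p),\ldots,(dH^{(d)}_{p})(p)\]
are linearly independent over $\Complex$, and by the previous sentence they all lie in $W_p$; hence they form a $\Complex$-basis of $W_p$.

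\textbf{Step 2 (injectivity of $d\hat F_p$ at $p$).} Suppose $v\in T_pX$ lies in the kernel of the real differential of $\hat F_p$ at $p$. Then every covector of the above basis, hence every element of $W_p$, vanishes on $v$; that is, $v$ lies in the annihilator of $W_p$ inside $\Complex T_pX$, which is $T^{0,1}_pX$. Since $v$ is real, $v=\overline v\in\overline{T^{0,1}_pX}=T^{1,0}_pX$, so $v\in T^{1,0}_pX\cap T^{0,1}_pX=\{0\}$. Thus $d\hat F_p$ is injective at $p$, i.e.\ $\hat F_p$ is an immersion at $p$.

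\textbf{Step 3 (passing to a neighbourhood).} Injectivity of the differential is an open condition on the base point, so $d\hat F_p$ is injective at every point of some open neighbourhood $D'_p\subset U$ of $p$. By the local immersion theorem, an immersion at a point is injective on a sufficiently small neighbourhood of that point; intersecting with $D'_p$ we obtain an open set $D_p$ with $p\in D_p\subset U$ at every point of which $d\hat F_p$ is injective and on which $\hat F_p$ is injective, which is the assertion. Steps 2 and 3 are routine; the one place that uses the geometry in an essential way is the identification in Step 1 of the span of the $n+1+d$ differentials with the full annihilator $W_p$ of $T^{0,1}_pX$ --- this is what makes $\hat F_p$ an immersion despite having only $n+1+d<2n+1+d$ complex components, and it rests precisely on the fact that $f^{(1)}_p,\ldots,H^{(d)}_p$ are CR.
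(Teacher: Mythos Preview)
Your proof is correct and supplies exactly the argument the paper omits: the paper simply writes ``From \eqref{e-gue180802ra}, we get'' and states the theorem without further justification. The point you make in Step~1 --- that the $n+1+d$ differentials, being differentials of CR functions, lie in and hence span the $(n+1+d)$-dimensional annihilator of $T^{0,1}_pX$ --- is precisely what is needed, since $\Complex$-linear independence of $n+1+d$ arbitrary complex covectors on a $(2n+1+d)$-dimensional real manifold would not by itself force injectivity of the real differential.
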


From Theorem~\ref{t-gue180802w}, we see that $X$ is locally $G$-equivariant embeddable by a CR map. To get global $G$-equivariant embedding, we need more $G$-finite smooth CR functions. We can repeat the proofs of Lemma~\ref{l-gue180730b} and Corollary~\ref{c-gue180802} and obtain

\begin{theorem}\label{t-gue180802ta}
With the notations used above, there are small open sets $W_p\Subset V$ of $e_0$ in $G$ with $W_p=W^{-1}_p$, $\Td D_{p}\Subset D_p$ of $p$ in $X$  and  a $G$-finite smooth CR function $f_p\in C^\infty_G(X)\bigcap{\rm Ker\,}\ddbar_b$ such that 
\begin{equation}\label{e-gue180802yn}
\mbox{$g\circ x\in D_{p}$, for every $x\in\Td D_p$ and every $g\in\bigcup^r_{s, t=1}g_sW_pg_t$},
\end{equation}
\begin{equation}\label{e-gue180802ynI}
{\rm Re\,}f_{p}(x)\geq\frac{3}{4}r\pi^{\frac{d}{2}},\  \ \mbox{for every $x\in\Td D_p$},
\end{equation}
and 
\begin{equation}\label{e-gue180802qaI}
\abs{f_{p}(h\circ x)}\leq \frac{2}{3}r\pi^{\frac{d}{2}} \ \ \mbox{for every $h\notin\bigcup^r_{s,t=1}g_sW_pg_t$ and every $x\in\Td D_p$},
\end{equation}
where $r\in\mathbb N$ is the Cardinal number of $N_p$ and $D_p$ is an open set of $p$ as in Theorem~\ref{t-gue180802w}. 
\end{theorem}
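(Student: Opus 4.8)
The plan is to run the construction leading to Corollary~\ref{c-gue180802} once more, this time arranging all neighborhoods so that they sit inside the open set $D_p$ produced by Theorem~\ref{t-gue180802w}. Since $D_p$ is an open neighborhood of $p$, the continuity of the action map $G\times X\To X$ used in Lemma~\ref{l-gue180730b} gives a symmetric open neighborhood $W_p\Subset V$ of $e_0$ in $G$, $W_p=W_p^{-1}$, together with an open neighborhood $D'_p\subset D_p$ of $p$ such that $g\circ x\in D_p$ for every $x\in D'_p$ and every $g\in\bigcup^r_{s,t=1}g_sW_pg_t$. Shrinking $W_p$ further we may also assume, exactly as in \eqref{e-gue180731u}, that the sets $g_sW_pg_t$ are pairwise disjoint whenever $g_s\circ g_t\neq g_{s_1}\circ g_{t_1}$, and we fix symmetric open neighborhoods $W_{p,0}\Subset W_{p,1}\Subset W_p$ of $e_0$. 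The argument of Lemma~\ref{l-gue180730bI} then yields an open set $\hat D_p\Subset D'_p$ of $p$ with $g\circ x\notin\hat D_p$ whenever $x\in\hat D_p$ and $g\notin\bigcup^r_{s=1}W_{p,1}g_s$.

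Next I would reproduce the construction between \eqref{e-gue180730d} and Corollary~\ref{c-gue180802} verbatim, with $\hat D_p$, $W_{p,0}$, $W_{p,1}$ in place of $\hat D$, $W_0$, $W_1$: reuse the function $\varphi$ of Theorem~\ref{t-gue180730}, take cut-offs $\chi\in C^\infty_0(\hat D_p)$ with $\chi=1$ near $p$ and $\tau$ as in \eqref{e-gue180730d}, and set $\Td f_{p,k}(x):=k^{d/2}e^{ik\varphi(x)}\chi(x)\tau(\tfrac{\sqrt k}{\log k}x'')\in C^\infty_0(\hat D_p)$, so that $\ddbar_b\Td f_{p,k}=O(k^{-\infty})$ by \eqref{e-gue180730mp} and \eqref{e-gue180730mpII}. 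Choosing $a_p\in C^\infty(G)$ with ${\rm Re\,}\ol{a_p}=2$ near $\bigcup^r_{s,t=1}g_sW_{p,0}g_t$ and $a_p=0$ outside $\bigcup^r_{s,t=1}g_sW_{p,1}g_t$, Theorem~\ref{t-gue180710mp} produces $b_p\in C^\infty_G(G)$ with $\norm{b_p-a_p}_{C^0(G)}<\tfrac12$, and we put $\hat f_{p,k}(x):=\int_G\Td f_{p,k}(g\circ x)\ol{b_p(g)}d\mu(g)\in C^\infty_G(X)$ and $f_{p,k}:=S\hat f_{p,k}$. As in Theorem~\ref{t-gue180731r} this gives $f_{p,k}\in C^\infty_G(X)\cap{\rm Ker\,}\ddbar_b$ and $f_{p,k}=\hat f_{p,k}+O(k^{-\infty})$, and as in Theorem~\ref{t-gue180731w} the Laplace-type computation \eqref{e-gue180801p} yields $\lim_{k\To+\infty}f_{p,k}(p)=\pi^{d/2}\sum^r_{j=1}\ol{b_p(g_j)}$ with ${\rm Re\,}\bigl(\lim_{k\To+\infty}f_{p,k}(p)\bigr)\ge r\pi^{d/2}$, together with $\lim_{k\To+\infty}\abs{f_{p,k}(h\circ p)}=\pi^{d/2}\bigl|\sum^r_{j=1}\ol{b_p(g_j\circ h^{-1})}\bigr|\le\tfrac12 r\pi^{d/2}$ for $h\notin\bigcup^r_{s,t=1}g_sW_{p,1}g_t$, hence a fortiori for $h\notin\bigcup^r_{s,t=1}g_sW_pg_t$.

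Finally, mimicking the passage from Theorem~\ref{t-gue180731w} to Corollary~\ref{c-gue180802}, I would fix $k_0$ so large that ${\rm Re\,}f_{p,k_0}(p)>\tfrac78 r\pi^{d/2}$ and $\sup\set{\abs{f_{p,k_0}(h\circ p)};\,h\notin\bigcup^r_{s,t=1}g_sW_pg_t}<\tfrac{7}{12}r\pi^{d/2}$, and then use continuity of $x\mapsto{\rm Re\,}f_{p,k_0}(x)$ and uniform continuity of $(h,x)\mapsto f_{p,k_0}(h\circ x)$ on $G\times\ol{D_p}$ to pick an open set $\Td D_p\Subset\hat D_p\subset D_p$ of $p$ on which ${\rm Re\,}f_{p,k_0}(x)\ge\tfrac34 r\pi^{d/2}$ and $\abs{f_{p,k_0}(h\circ x)}\le\tfrac23 r\pi^{d/2}$ for all $x\in\Td D_p$ and all $h\notin\bigcup^r_{s,t=1}g_sW_pg_t$. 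Then $f_p:=f_{p,k_0}$ and $\Td D_p$ satisfy \eqref{e-gue180802ynI} and \eqref{e-gue180802qaI}, while \eqref{e-gue180802yn} holds because $\Td D_p\subset D'_p$. The only point that is not a literal repetition of the earlier steps is the uniformity in $h\in G\setminus\bigcup^r_{s,t=1}g_sW_pg_t$ needed for \eqref{e-gue180802qaI}: one has to know that the asymptotics of $\hat f_{p,k}(h\circ x)$ hold locally uniformly in $(h,x)$ and that the remainder $f_{p,k}-\hat f_{p,k}=O(k^{-\infty})$ is uniform, both of which follow from the uniform form of the estimate \eqref{e-gue180720fI} in Theorem~\ref{t-gue180720f} and standard stationary phase; this is the step one must check with some care, the rest being routine.
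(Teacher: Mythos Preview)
Your proposal is correct and follows exactly the approach indicated by the paper, which simply states that one should repeat the proofs of Lemma~\ref{l-gue180730b} and Corollary~\ref{c-gue180802} with neighborhoods chosen inside $D_p$. Your remark about the uniformity in $h$ needed for \eqref{e-gue180802qaI} is a useful observation that the paper leaves implicit when passing from \eqref{e-gue180731zw} to \eqref{e-gue180802qaIz}.
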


Similarly, we can repeat the proof of Corollary~\ref{c-gue180802} and get 

\begin{theorem}\label{t-gue180802taI}
With the notations used above, there is an open set $D_{0,p}\Subset\Td D_p$ of $p$ in $X$ and a $G$-finite smooth CR function $A_{p}\in C^\infty_G(X)\bigcap{\rm Ker\,}\ddbar_b$ such that 
\begin{equation}\label{e-gue180802ta}
{\rm Re\,}A_{p}(x)\geq\frac{3}{4}r\pi^{\frac{d}{2}},\  \ \mbox{for every $x\in D_{0,p}$},
\end{equation}
and for every $x\in X$ with $g\circ x\notin\Td D_p$, for every $g\in G$, we have 
\begin{equation}\label{e-gue180802taI}
\abs{A_p(x)}\leq \frac{1}{2}r\pi^{\frac{d}{2}},
\end{equation}
where $\Td D_p$ is an open set of $p$ as in Theorem~\ref{t-gue180802ta}.
\end{theorem}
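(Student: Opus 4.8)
The plan is to rerun, with only a cosmetic change, the chain of constructions leading from Theorem~\ref{t-gue180730} through Theorem~\ref{t-gue180731w} to Corollary~\ref{c-gue180802}, this time arranging the amplitude to be supported in a small neighbourhood of $p$ sitting inside $\Td D_p$. First I would note that $\Td D_p\Subset D_p\subset U$ (with $D_p$ as in Theorem~\ref{t-gue180802w} and $\Td D_p$ as in Theorem~\ref{t-gue180802ta}), so $\Td D_p$ and the set $\hat D$ coming from Lemma~\ref{l-gue180730bI} are both open neighbourhoods of $p$ contained in $U$; hence one may fix an open set $\hat D'$ with $p\in\hat D'\Subset\hat D\cap\Td D_p$. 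Since $\hat D'\subset\hat D\subset D\subset U$, the Frobenius coordinates of \eqref{e-gue180722I} remain valid on $\hat D'$, and Lemmas~\ref{l-gue180730b} and~\ref{l-gue180730bI} continue to hold verbatim with $\hat D$ replaced by $\hat D'$ and the same $W,W_0,W_1$: for $x\in\hat D'$ and $g\notin\bigcup^r_{s=1}W_1g_s$ one has $g\circ x\notin\hat D\supset\hat D'$.

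Next I would keep the function $\varphi\in C^\infty(U)$ of Theorem~\ref{t-gue180730} unchanged (its construction takes place on $U$ and does not involve $\hat D$), choose a cutoff $\chi'\in C^\infty_0(\hat D')$ with $\chi'\equiv1$ near $p$, and form the analogues of \eqref{e-gue180730d} and \eqref{e-gue180801m},
\[
\Td f'_k(x)=k^{\frac d2}e^{ik\varphi(x)}\chi'(x)\tau\!\Bigl(\tfrac{\sqrt k}{\log k}x''\Bigr),\qquad \hat f'_k(x)=\int_G\Td f'_k(g\circ x)\ol{b(g)}\,d\mu(g)\in C^\infty_G(X),
\]
with $\tau$ and $b$ exactly as before. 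Because ${\rm Im\,}\varphi\geq c\abs{x}^2$ and $\ddbar_b\varphi$ vanishes to infinite order at $p$, one gets $\ddbar_b\Td f'_k=O(k^{-\infty})$ as in \eqref{e-gue180731p}, so the proof of Theorem~\ref{t-gue180731r} applies and $A'_k:=S\hat f'_k\in C^\infty_G(X)\cap{\rm Ker\,}\ddbar_b$ with $A'_k=\hat f'_k+O(k^{-\infty})$. The stationary-phase/Gaussian computation \eqref{e-gue180801}--\eqref{e-gue180801p} is purely local at $p$ and uses only $\chi'\equiv1$ near $p$, so it still gives $\lim_{k\To+\infty}A'_k(p)=\pi^{\frac d2}\sum_{s=1}^r\ol{b(g_s)}$ with ${\rm Re\,}\bigl(\lim_{k\To+\infty}A'_k(p)\bigr)\geq r\pi^{\frac d2}$; and since ${\rm Supp\,}\Td f'_k\subset\hat D'$, the argument behind \eqref{e-gue180802q} shows that for every $\varepsilon>0$ there is $k_1$ with $\abs{A'_k(x)}\leq\varepsilon$ for all $k\geq k_1$ and all $x\notin G\hat D'$.

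To finish I would argue exactly as in Corollary~\ref{c-gue180802}: pick $k_0\geq k_1$ large enough that $\varepsilon<\frac12 r\pi^{\frac d2}$ and an open set $D_{0,p}$ with $p\in D_{0,p}\Subset\hat D'$ so that ${\rm Re\,}A'_{k_0}(x)\geq\frac34 r\pi^{\frac d2}$ for $x\in D_{0,p}$ (by continuity, since ${\rm Re\,}A'_{k_0}(p)$ is close to $r\pi^{\frac d2}$) and $\abs{A'_{k_0}(x)}\leq\frac12 r\pi^{\frac d2}$ whenever $g\circ x\notin\hat D'$ for all $g\in G$. Setting $A_p:=A'_{k_0}\in C^\infty_G(X)\cap{\rm Ker\,}\ddbar_b$ and noting $D_{0,p}\Subset\hat D'\Subset\Td D_p$ yields \eqref{e-gue180802ta}; and if $x\in X$ satisfies $g\circ x\notin\Td D_p$ for every $g\in G$ then, since $\hat D'\subset\Td D_p$, also $g\circ x\notin\hat D'$ for every $g\in G$, whence $\abs{A_p(x)}\leq\frac12 r\pi^{\frac d2}$, which is \eqref{e-gue180802taI}.

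There is no genuinely new difficulty: the whole content is that shrinking the support of the amplitude from $\hat D$ to $\hat D'$ shrinks the region where $A_p$ is forced to be large while only strengthening the smallness estimate away from $Gp$. The one step to watch is that this shrinking does not damage the lower bound for ${\rm Re\,}A'_{k_0}(p)$; but \eqref{e-gue180801p} depends only on the germs of $\varphi$, $\chi'$ and $b$ at $p$, where $\chi'\equiv1$, so it is unaffected. I would also double-check the elementary bookkeeping that $\hat D'$ can be taken inside both $\hat D$ and $\Td D_p$ while still containing $p$, which is immediate since both sets are open neighbourhoods of $p$.
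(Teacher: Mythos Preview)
Your proposal is correct and is precisely the ``repeat the proof of Corollary~\ref{c-gue180802}'' that the paper invokes: shrink the amplitude support from $\hat D$ to some $\hat D'\Subset\hat D\cap\Td D_p$, rerun \eqref{e-gue180730d}--\eqref{e-gue180801m} and Theorem~\ref{t-gue180731r}--Theorem~\ref{t-gue180731w} verbatim, and read off the analogues of \eqref{e-gue180802qa} and \eqref{e-gue180802qaII} with $\hat D$ replaced by $\hat D'$. The only (trivial) wording to tighten is the order of quantifiers at the end: first fix $\varepsilon<\tfrac12 r\pi^{d/2}$, obtain $k_1$ from \eqref{e-gue180802q}, and then choose $k_0\geq k_1$ also large enough that ${\rm Re\,}A'_{k_0}(p)$ is close to its limit; otherwise there is nothing to add.
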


For every $x\in X$, let $D_{x}$, $\Td D_x$ and $D_{0,x}$ be open sets as in Theorem~\ref{t-gue180802w}, Theorem~\ref{t-gue180802ta} and Theorem~\ref{t-gue180802taI} respectively and let $f^{(j)}_x\in C^\infty_G(X)\bigcap{\rm Ker\,}\ddbar_b$, $j=1,2,\ldots,n+1$, $H^{(\ell)}_x\in C^\infty_G(X)\bigcap{\rm Ker\,}\ddbar_b$, $\ell=1,\ldots,d$, $f_x\in C^\infty_G(X)\bigcap{\rm Ker\,}\ddbar_b$, $A_x\in C^\infty_G(X)\bigcap{\rm Ker\,}\ddbar_b$ be as in \eqref{e-gue180802raI}, Theorem~\ref{t-gue180802ta} and Theorem~\ref{t-gue180802taI}. Suppose that 
\[X=D_{0,p_1}\bigcup D_{0,p_2}\bigcup\cdots\bigcup D_{0,p_K},\ \ K\in\mathbb N.\]
For every $p_j$, $j=1,2,\ldots,K$, put 
\begin{equation}\label{e-gue180803}
\begin{split}
F_{p_j}: X&\To\Complex^{n+3+d},\\
x&\To (f^{(1)}_{p_j}(x),\ldots,f^{(n+1)}_{p_j}(x),H^{(1)}_{p_j}(x),\ldots,H^{(d)}_{p_1}(x),f_{p_j}(x),A_{p_j}(x))\in\Complex^{n+3+d}.
\end{split}
\end{equation}
Consider the $G$-equivariant CR map
\begin{equation}\label{e-gue180803I}
\begin{split}
F: X&\To\Complex^{K(n+3+d)},\\
x&\To (F_{p_1}(x), F_{p_2}(x),\ldots,F_{p_K}(x))\in\Complex^{K(n+3+d)}.
\end{split}
\end{equation}
We can now prove our main result

\begin{theorem}\label{t-gue180803}
With the notations used above, $F$ is an embedding.
\end{theorem}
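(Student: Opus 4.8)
The plan is to show that $F$ is an injective immersion; since $X$ is compact and $\Complex^{K(n+3+d)}$ is Hausdorff, such a map is automatically an embedding. Observe first that every component of $F$ belongs to $C^\infty_G(X)\bigcap{\rm Ker\,}\ddbar_b$, so $F$ is already a $G$-equivariant CR map and nothing remains to be checked on that front.

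To see that $F$ is an immersion we argue pointwise. Fix $x\in X$. Since $X=D_{0,p_1}\bigcup\cdots\bigcup D_{0,p_K}$, there is an index $j$ with $x\in D_{0,p_j}$, and we recall the nesting $D_{0,p_j}\Subset\Td D_{p_j}\Subset D_{p_j}$. The functions $f^{(1)}_{p_j},\ldots,f^{(n+1)}_{p_j},H^{(1)}_{p_j},\ldots,H^{(d)}_{p_j}$ --- that is, the components of $\hat F_{p_j}$ --- all occur among the components of $F_{p_j}$, hence among those of $F$; by Theorem~\ref{t-gue180802w} their differentials are linearly independent at $x$, so $dF_x$ is injective.

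The substance of the proof is injectivity. Suppose $F(x)=F(y)$ and choose $j$ with $x\in D_{0,p_j}$. Comparing $A_{p_j}$-components gives $A_{p_j}(y)=A_{p_j}(x)$, so by \eqref{e-gue180802ta}
\[\abs{A_{p_j}(y)}\ \geq\ {\rm Re\,}A_{p_j}(x)\ \geq\ \frac{3}{4}r\pi^{\frac{d}{2}}\ >\ \frac{1}{2}r\pi^{\frac{d}{2}}.\]
By the contrapositive of \eqref{e-gue180802taI} there is therefore a $g_0\in G$ with $g_0\circ y\in\Td D_{p_j}$. Next, comparing $f_{p_j}$-components and using $x\in D_{0,p_j}\Subset\Td D_{p_j}$ together with \eqref{e-gue180802ynI}, we get $\abs{f_{p_j}(y)}\geq{\rm Re\,}f_{p_j}(x)\geq\frac{3}{4}r\pi^{\frac{d}{2}}$. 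Writing $y=g_0^{-1}\circ(g_0\circ y)$ with $g_0\circ y\in\Td D_{p_j}$, estimate \eqref{e-gue180802qaI} applied with $h=g_0^{-1}$ would force $\abs{f_{p_j}(y)}\leq\frac{2}{3}r\pi^{\frac{d}{2}}$ unless $g_0^{-1}\in\bigcup^r_{s,t=1}g_sW_{p_j}g_t$; since the former contradicts the bound just obtained, we conclude $g_0^{-1}\in\bigcup^r_{s,t=1}g_sW_{p_j}g_t$. Now \eqref{e-gue180802yn}, applied with $g=g_0^{-1}$ and the point $g_0\circ y\in\Td D_{p_j}$, yields $y=g_0^{-1}\circ(g_0\circ y)\in D_{p_j}$. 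Finally $x\in D_{0,p_j}\subset D_{p_j}$ and $y\in D_{p_j}$, and $F(x)=F(y)$ in particular forces $\hat F_{p_j}(x)=\hat F_{p_j}(y)$; since $\hat F_{p_j}$ is injective on $D_{p_j}$ by Theorem~\ref{t-gue180802w}, we get $x=y$.

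The only delicate point I anticipate is bookkeeping: keeping track of the chain $D_{0,p_j}\Subset\Td D_{p_j}\Subset D_{p_j}$ and, in the injectivity step, being careful to apply \eqref{e-gue180802qaI} and \eqref{e-gue180802yn} to the translated point $g_0\circ y$ rather than to $y$ itself. Once those are in place, the statement is a direct concatenation of the estimates recorded in Theorems~\ref{t-gue180802w}, \ref{t-gue180802ta} and \ref{t-gue180802taI}, together with the elementary fact that a continuous injective immersion from a compact manifold is an embedding.
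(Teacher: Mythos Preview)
Your proof is correct and follows essentially the same route as the paper's. The paper organizes the injectivity argument as a three-case analysis on $y_0$ (according to whether some translate lands in $\Td D_{p_j}$, and if so whether the translating element lies in $\bigcup_{s,t} g_sW_{p_j}g_t$), exhibiting in each case a component of $F$ that separates $x_0$ from $y_0$; you instead assume $F(x)=F(y)$ and run the same estimates as a single chain of implications forcing $y\in D_{p_j}$, then invoke injectivity of $\hat F_{p_j}$ there. The logical content is identical.
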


\begin{proof}
From Theorem~\ref{t-gue180802w}, we see that the differential of $F$ is injective at every point of $X$. Hence, to prove the theorem, we only need to show that $F$ is globally injective. Fix $x_0\in X$ and $y_0\in X$ with $x_0\neq y_0$. We are going to prove that $F(x_0)\neq F(y_0)$. We may assume that $x_0\in D_{0,p_1}$. 

\begin{itemize}
\item[case I.] If $g\circ y_0\notin\Td D_{p_1}$, for every $g\in G$ : From \eqref{e-gue180802ta} and \eqref{e-gue180802taI}, we see that 
\[\abs{A_{p_1}(x_0)}\geq\frac{3}{4}r\pi^{\frac{d}{2}}>\frac{1}{2}r\pi^{\frac{d}{2}}\geq\abs{A_{p_1}(y_0)}.\]
Hence, $F(x_0)\neq F(y_0)$.

\item[case II.] If $g\circ y_0\in\Td D_{p_1}$, for some $g\in\bigcup^r_{s, t=1}g_sW_{p_1}g_t$, where $W_{p_1}$ is as in Theorem~\ref{t-gue180802ta} : Take $g_0\in\bigcup^r_{s, t=1}g_sW_{p_1}g_t$ so that $g_0\circ y_0=y_1\in\Td D_{p_1}$. Then, $y_0=g^{-1}_0\circ y_1$. Since $W_{p_1}^{-1}=W_{p_1}$, we can check that $g^{-1}_0\in\bigcup^r_{s, t=1}g_sW_{p_1}g_t$. From this observation and \eqref{e-gue180802yn}, we conclude that $y_0=g^{-1}_0\circ y_1\in D_{p_1}$. In view of Theorem~\ref{t-gue180802w}, we see that $\hat F_{p_1}$ is injective on $D_{p_1}$ and hence $F$ is injective on $D_{p_1}$. Thus, $F(x_0)\neq F(y_0)$.

\item[case III.] If $g\circ y_0\in\Td D_{p_1}$, for some $g\notin\bigcup^r_{s, t=1}g_sW_{p_1}g_t$ : Take $h\notin\bigcup^r_{s, t=1}g_sW_{p_1}g_t$ so that $h\circ y_0=y_1\in\Td D_{p_1}$. Since $W_{p_1}^{-1}=W_{p_1}$, we have $h^{-1}\notin\bigcup^r_{s, t=1}g_sW_{p_1}g_t$. From this observation, \eqref{e-gue180802ynI} and \eqref{e-gue180802qaI}, we have
\[\abs{f_{p_1}(x_0)}\geq\frac{3}{4}r\pi^{\frac{d}{2}}>\frac{2}{3}r\pi^{\frac{d}{2}}\geq\abs{f_{p_1}(h^{-1}\circ y_1)}=\abs{f_{p_1}(y_0)}.\]
Hence, $F(x_0)\neq F(y_0)$. 
\end{itemize}

We have proved that $F(x_0)\neq F(y_0)$. The theorem follows. 
\end{proof}

\section{CR Orbifolds}\label{Sec:CROrbifolds}

\subsection{Definition and properties of CR orbifolds}\label{s-gue181015}

We start by giving the basic definitions.

\begin{definition}\label{d-gue181015}
Let $X$ be a Hausdorff topological space. 
We say that $X$ is a CR orbifold of dimension $2n+d$ with CR codimension $d$ if there exists a cover $U_i$ of $X$, which is closed under finite intersections, such that
\begin{itemize}
\item For each $U_i$, there exists a CR manifold $V_i$ of dimension $2n+d$ with CR codimension $d$, a finite group $\Gamma_i$ acting on $V_i$ with CR automorphisms and a $\Gamma_i$-invariant map $\Psi_i \colon V_i \rightarrow U_i$, which induces a homeomorphism $V_i / \Gamma_i \rightarrow U_i$. 
\item For each inclusion $U_i \subset U_j$, we have an injective group morphism $\varphi_{ij} \colon \Gamma_i \rightarrow \Gamma_j$ and a CR isomorphism $\Phi_{ij} \colon V_i \rightarrow \Psi_j^{-1}(U_i)$, which satisfies $\Phi_{ij}(gx) = \varphi_{ij} (g) \Phi_{ij} (x)$ for $x \in V_i$, $g \in \Gamma_i$ and fulfills $\Psi_j \circ \Phi_{ij} = \Psi_i$.
\end{itemize}
We call the tuple $(U_i, V_i, \Gamma_i, \Psi_i)$ an orbifold chart and the cover $U_i$ an orbifold atlas.

An orbifold is called effective if for all charts, the action of $\Gamma_i$ on $V_i$ is effective.

For $U \subset X$ open, a function $f \colon U \rightarrow \C$ is called a CR function if every lift of $f$ into a chart is a CR function.
\end{definition}

It is a well known fact that every real effective orbifold may be written as the global quotient of a compact group $G$ acting locally free on a manifold $X$.
We will formulate the necessary conditions to proof the according theorem for CR orbifolds.\\

Let $G$ be a Lie-group, $H < G$ a closed Lie-subgroup of $G$ and $H$ act on a real manifold $S$. 
Then $H$ acts on $G \times S$ via $(h,(g,s)) \mapsto (gh^{-1}, hs)$ and we denote $G \times^H S = (G \times S) /H$.
Since the $H$-action on $G$ is proper and free, the space $G \times^H S$ is a real manifold.

\begin{definition}
Let $X$ be a CR manifold with a CR action of a Lie group $G$ such that $\Complex \underline{\mathfrak{g}} \cap (T^{1,0}X\oplus T^{0,1} X) = \{ 0 \}$. 
We say that the $G$-action admits CR slices if for every $x \in X$, there exists a real submanifold $S$ of $X$ with $x \in S$ such that
\begin{itemize}
\item The set $S$ is $G_x$-invariant
\item We have $T^{1,0} X \oplus T^{0,1}X  \subset \Complex TS$
\item The map $G \times^{G_x} S \rightarrow X$, $[g,s] \mapsto gs$ is a diffeomorphism onto an open subset of $X$
\end{itemize}
\end{definition}

Note that in the definition above, $S$ has a CR structure induced by $T^{1,0}X$, which defines a CR structure on $G \times^{G_x} S$ such that the map $G \times^{G_x} S \rightarrow X$ is a CR isomorphism onto an open subset.

\begin{proposition}
Let $X$ be a CR manifold of dimension $2n+d$ with CR dimension $d$. 
Let $G$ be a compact Lie group of dimension $k$ with locally free CR action on $X$ which satisfies $\Complex \underline{\mathfrak{g}} \cap (T^{1,0}X\oplus T^{0,1} X) = \{ 0 \}$ and admits CR slices. Then $X/G$ is a CR orbifold of dimension $2n+d-k$ with CR codimension $d-k$.
\end{proposition}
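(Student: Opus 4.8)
The plan is to realize $X/G$ as a CR orbifold by producing an orbifold atlas out of the CR slices, following the classical pattern by which the orbit space of a locally free action of a compact group on a manifold is a smooth orbifold; the only genuinely new point is that the slices are CR (by hypothesis) and that every transition map is a restriction of a CR automorphism of $X$, hence CR. First I would record the structural facts. Let $\pi\colon X\To X/G$ be the quotient map; since $G$ is compact and $X$ is a manifold, $X/G$ is Hausdorff and $\pi$ is open and proper, and since the action is locally free every stabilizer $G_x$, $x\in X$, is a closed discrete subgroup of a compact group, hence finite. Fix $x\in X$ and let $S=S_x$ be a CR slice at $x$. The slice condition $\C\underline{\mathfrak{g}}\cap(T^{1,0}X\oplus T^{0,1}X)=\{0\}$ together with $T^{1,0}X\oplus T^{0,1}X\subset\C TS$ forces $T^{1,0}X|_S\subset\C TS$, so one may set $T^{1,0}S:=T^{1,0}X|_S$, a rank-$n$ subbundle of $\C TS$ with $T^{1,0}S\cap\overline{T^{1,0}S}=\{0\}$; it is involutive because a local section of $T^{1,0}S$ extends to a local section of $T^{1,0}X$, brackets of such extensions lie in $C^\infty(X,T^{1,0}X)$ by the integrability of $T^{1,0}X$ on $X$ and remain tangent to $S$ along $S$, hence restrict to local sections of $T^{1,0}S$. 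Thus $(S,T^{1,0}S)$ is a CR manifold, and $\dim_\R S=\dim_\R X-\dim(G/G_x)=2n+d-k$ shows its CR codimension is $d-k$. Since the $G$-action is CR and $S$ is $G_x$-invariant, the finite group $G_x$ acts on $(S,T^{1,0}S)$ by CR automorphisms.

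Next I would turn each slice into an orbifold chart. The slice hypothesis provides a $G$-equivariant CR isomorphism from $G\times^{G_x}S$ onto an open $G$-invariant subset $\hat U_x\subset X$; put $U_x:=\pi(\hat U_x)$, an open subset of $X/G$. Passing to $G$-quotients identifies $(G\times^{G_x}S)/G$ with $S/G_x$ and $\hat U_x/G$ with $U_x$, so the composite $\Psi_x\colon S\hookrightarrow\hat U_x\xrightarrow{\pi}U_x$ is $G_x$-invariant and induces a homeomorphism $S/G_x\To U_x$. Hence $(U_x,S,G_x,\Psi_x)$ is an orbifold chart in the sense of Definition~\ref{d-gue181015}, the group acting by CR automorphisms, and $\{U_x\}_{x\in X}$ covers $X/G$.

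The remaining work is the compatibility of charts, which I would reduce to a local statement via the slice structure. Two shrinking observations are needed, both consequences of the finiteness of the relevant stabilizer: if $S$ is a CR slice at $y$ and $N$ a $G$-invariant open neighbourhood of $y$, then separating the finite $G_y$-orbit inside $S$ one finds a $G_y$-invariant open $S'\subset S\cap N$ with $y\in S'$ that is again a CR slice at $y$; and for $s\in S$ with $S$ a CR slice at $x$, the full stabilizer of $s$ equals $(G_x)_s\le G_x$ (by injectivity of $G\times^{G_x}S\to X$), and a small $(G_x)_s$-invariant neighbourhood of $s$ in $S$ is a CR slice at $s$ whose inclusion into $S$ is a CR embedding intertwining $(G_x)_s\hookrightarrow G_x$ and compatible with the $\Psi$'s. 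Combining these with the slice isomorphisms, for any two charts $(U_1,S_1,\Gamma_1,\Psi_1)$, $(U_2,S_2,\Gamma_2,\Psi_2)$ and any $u\in U_1\cap U_2$ one produces, after choosing $q\in\pi^{-1}(u)$ and shrinking, a chart $(U^\ast,S^\ast,G_q,\Psi^\ast)$ with $u\in U^\ast\subset U_1\cap U_2$ together with injective group morphisms $G_q\hookrightarrow\Gamma_i$ and CR embeddings $S^\ast\hookrightarrow S_i$ for $i=1,2$, equivariant for these morphisms and compatible with the $\Psi$'s; all the maps involved are restrictions of $G$-translations, hence CR. Starting from the cover $\{U_x\}$, repeatedly refining overlaps by such smaller charts, and adjoining all finite intersections (each covered by charts embedding into every member of the intersection), yields an orbifold atlas for $X/G$ with CR charts and CR transition data, so $X/G$ is a CR orbifold of dimension $2n+d-k$ with CR codimension $d-k$.

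I expect the delicate point to be exactly the last step: the content of the first two paragraphs is routine, but organizing the slice charts into an atlas that is \emph{literally} closed under finite intersections, with coherent injections $\varphi_{ij}$ and CR isomorphisms $\Phi_{ij}$ onto the exact preimages $\Psi_j^{-1}(U_i)$ demanded by Definition~\ref{d-gue181015} — rather than merely a cover by pairwise-compatible charts — is a piece of orbifold bookkeeping that runs parallel to the smooth case. The CR aspect contributes nothing beyond the observation that each transition map is a restriction of a CR automorphism of $X$.
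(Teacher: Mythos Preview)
Your proof is correct and follows the same approach as the paper: the orbifold charts are exactly the CR slices, with $(G\times^{G_x}S)/G\cong S/G_x$ furnishing the local models. The paper's own proof is a two-line remark (``This is now analogous to the real version. The charts are given by $G\times^{G_x}S/G=S/G_x$ with $S$ being a CR manifold as above and $G_x$ finite.''); you have simply written out the details the paper omits, including the verification that $T^{1,0}S:=T^{1,0}X|_S$ is integrable, the dimension count, and the chart-compatibility bookkeeping.
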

\begin{proof}
This is now analogous to the real version. The charts are given by $G \times^{G_x} S /G = S/G_x$ with $S$ being a CR manifold as above and $G_x$ finite.
\end{proof}

\begin{theorem}\label{t-gue181015}
Let $X$ be an effective CR orbifold. Then there exists a CR manifold $Y$ and a compact Lie group $G$ with locally free CR action on $Y$ which satisfies $\Complex \underline{\mathfrak{g}} \cap (T^{1,0}Y\oplus T^{0,1} Y) = \{ 0 \}$ and admits CR slices such that $Y/G = X$.
\end{theorem}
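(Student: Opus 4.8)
The plan is to mimic the classical construction in the smooth orbifold category, due to Moerdijk--Pronk and Satake, producing $Y$ as a (bundle of) frame-type manifold over $X$ glued from the orbifold charts, and then to check that the CR structure on $X$ lifts to a genuine CR structure on $Y$ with all the required transversality properties. First I would fix an effective orbifold atlas $\{(U_i,V_i,\Gamma_i,\Psi_i)\}$ for $X$, where each $V_i$ is a CR manifold of dimension $2n+d$ with CR codimension $d$ carrying an effective action of the finite group $\Gamma_i$ by CR automorphisms. The key local model is obtained by the following observation: since $\Gamma_i$ acts effectively and by CR automorphisms on $V_i$, and $\C\underline{\mathfrak{g}}$ for a finite group is trivial, one may choose a $\Gamma_i$-invariant Hermitian metric on $\C TV_i$ compatible with the CR structure (averaging an arbitrary one over $\Gamma_i$), and then let $\tilde V_i$ be a suitable $\Gamma_i$-principal-bundle-like thickening: concretely, embed $\Gamma_i\hookrightarrow O(\ell)$ for some $\ell$ via the regular representation, set $\tilde V_i := (V_i\times O(\ell))/\Gamma_i$ with the diagonal action $g\cdot(x,A)=(gx,Ag^{-1})$, equipped with the CR structure $T^{1,0}\tilde V_i := \pi_*(T^{1,0}V_i \oplus 0)$ pulled up from $V_i$ (the $O(\ell)$-directions are added as real, totally real directions). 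Then $O(\ell)$ acts on $\tilde V_i$ freely (on the right) and CR, with $\tilde V_i/O(\ell)=V_i/\Gamma_i=U_i$, and $\C\underline{\mathfrak{o}(\ell)}\cap(T^{1,0}\tilde V_i\oplus T^{0,1}\tilde V_i)=\{0\}$ by construction; the slices are the images of the $V_i\times\{A\}$.

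The second step is the gluing. Using the compatibility maps $\Phi_{ij}\colon V_i\to\Psi_j^{-1}(U_i)$ and the injective morphisms $\varphi_{ij}\colon\Gamma_i\hookrightarrow\Gamma_j$, one needs to build a single manifold $Y$ with a global action of a single compact Lie group $G$. The standard device is to take $G=O(\ell)$ with $\ell$ large enough that every $\Gamma_i$ embeds in $O(\ell)$ compatibly (one can always enlarge $\ell$, and refine the atlas so that it is finite since $X$ is assumed to arise — via Theorem~\ref{t-gue180922m}'s setup — as a compact orbifold; in the noncompact case one uses a locally finite atlas and a direct limit / paracompactness argument), and to glue the local pieces $\tilde V_i$ along the overlaps via the lifts of the $\Phi_{ij}$ and $\varphi_{ij}$. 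The cocycle condition for the orbifold atlas guarantees the gluing data satisfy a cocycle condition up to the $O(\ell)$-action; averaging or a partition-of-unity argument in the frame directions yields honest transition maps. Set $Y$ to be the result; by construction $G=O(\ell)$ acts on $Y$ CR-ly, the action is locally free (indeed the stabilizers are conjugates of the finite $\Gamma_i$'s since these act effectively), $\C\underline{\mathfrak{g}}\cap(T^{1,0}Y\oplus T^{0,1}Y)=\{0\}$, the local slices assemble into global CR slices, and $Y/G\cong X$ as CR orbifolds. One should also check that the induced CR structure on $G\times^{G_x}S$ agrees with that of $Y$, which is immediate from the local model.

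\textbf{Main obstacle.} I expect the hard part to be the \emph{globalization/gluing} step: turning the family of local finite groups $\Gamma_i$ and the partial compatibility isomorphisms into a single honest compact Lie group $G$ acting on a single manifold $Y$, with the cocycle relations holding on the nose rather than up to the $G$-action. In the smooth category this is exactly the Moerdijk--Pronk construction of the frame bundle of an orbifold (or Satake's $V$-manifold presentation), and the only genuinely new point here is that every construction must be carried out $T^{1,0}$-compatibly: the averaged Hermitian metrics, the thickening by frame directions, and the transition maps must all preserve the CR structure, and one must verify that adding the $\mathfrak{o}(\ell)$-directions does not accidentally complexify, i.e. that $\underline{\mathfrak{g}}$ stays transversal to $T^{1,0}Y\oplus T^{0,1}Y$. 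Since the group directions are added as a trivial (totally real) complement in the local models and the $\Gamma_i$ are finite (hence contribute nothing to $\underline{\mathfrak{g}}$ downstairs), this transversality is automatic, and the CR slice condition is built into the local model; so once the smooth gluing is in place the CR refinements are routine. I would therefore present the proof as: (1) recall the smooth frame-bundle construction, noting it produces $G=O(\ell)$ and $Y$ with locally free action and slices; (2) observe the construction can be run with all auxiliary metrics $\Gamma_i$-invariant and CR-compatible; (3) conclude the transversality and CR-slice properties from the explicit local model; (4) identify $Y/G$ with $X$ via the charts.
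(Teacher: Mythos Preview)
Your approach is essentially the paper's --- both thicken the local charts by a compact orthogonal group to resolve the orbifold singularities --- but the paper's execution is more canonical in a way that dissolves precisely the obstacle you flag. Rather than embedding each $\Gamma_i$ into $O(\ell)$ via the regular representation and then worrying about choosing these embeddings compatibly across charts, the paper takes $\ell=2n+d$ and uses the orthonormal frame bundle $Fr(V_i)$ with respect to a $\Gamma_i$-invariant Riemannian metric. The embedding $\Gamma_i\hookrightarrow O(2n+d)$ is then the differential action on frames, which is canonical (and faithful, by effectiveness), and the gluing becomes automatic once one fixes a single smooth orbifold metric on $X$: the transition maps $\Phi_{ij}$ are isometries for the induced metrics and hence lift to the local frame bundles on the nose, giving an honest cocycle rather than one up to the $G$-action. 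The CR structure, the transversality $\Complex\underline{\mathfrak{o}(2n+d)}\cap(T^{1,0}\oplus T^{0,1})=\{0\}$, and the CR slices are then verified exactly as you outline (trivial CR structure in the group direction; slice $=\pi^{-1}(\mathrm{Id})$ for the local submersion to $O(2n+d)/\Gamma_i$).

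One small slip: the $O(\ell)$-action on $\tilde V_i=(V_i\times O(\ell))/\Gamma_i$ is only locally free, not free --- the stabilizer of $[x,A]$ is conjugate to the $\Gamma_i$-stabilizer of $x$ --- though you correctly say ``locally free'' later in the proposal.
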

\begin{proof}
We will first discuss the general construction of the frame bundle.
Let $Z$ be a CR manifold of dimension $2n+d$ with an effective CR action of a finite group $\Gamma$.\\
Let $g$ be a $\Gamma$-invariant metric on $Z$ and define the frame bundle over $Z$ by $Fr(Z) = \{ (x,B) \, | \, x \in Z, \, B$ orthonormal basis in $T_x Z \}$.\\
Then $\Gamma$ acts on $Fr(Z)$ via $(g,(x,B)) \mapsto (gx, dg(B))$ and $O(2n+d)$ acts on $Fr(Z)$ via $(A,(x,B)) \mapsto (x,BA^{-1})$.
Since the $\Gamma$-action on $Z$ is effective, we have that $Fr(Z) / \Gamma$ is a manifold and the $O(2n+d)$-action extends onto this quotient. Note that $(Fr(Z)/ \Gamma )/ O(2n+d) = Z/ \Gamma$.

We may equip $Fr(Z)$ with a CR structure as follows. Take a local trivialization $U \times O(2n+d)$ for the frame bundle, which is equipped with the CR structure coming from $U \subset Z$ and the trivial structure on $O(2n+d)$. One checks that this gives a global CR structure on the frame bundle such that the $\Gamma$ and $O(2n+d)$-actions are CR, therefore $Fr(Z) / \Gamma$ is also a CR manifold.\\
From the construction, we conclude that 
\begin{gather*}
\Complex \cdot \underline{\mathfrak{o}(2n+d)} \cap (T^{1,0} Fr(Z)/ \Gamma \oplus T^{0,1} Fr(Z)/\Gamma) = \{ 0 \}.
\end{gather*}
We will check that $Fr(Z) / \Gamma$ admits CR slices for the $O(2n+d)$-action.

For this, let $U \times O(2n+d)$ be a local trivialization of $Fr(Z)$ such that $U$ is $\Gamma$-invariant. 
We have the map $\pi \colon (U \times O(2n+d)) / \Gamma \rightarrow O(2n+d)/ \Gamma$, which is a submersion. 
One may easily check from the construction that $\pi^{-1}(Id)=:S$ is a CR Slice.

Now let $X$ be an effective CR orbifold. Choose a smooth metric $g$ on $X$, which gives rise to a $\Gamma_i$-invariant metric in every chart $V_i$. 
One may now construct the frame bundle in every chart and glue the $Fr(V_i) / \Gamma_i$ together using the orbifold transition functions.
\end{proof}

\subsection{Embedding theorems for CR orbifolds}\label{s-gue180920}

Let $X$ be a CR manifold and fix $p\in X$. Let $x=(x_1,\ldots,x_{2n+1+d})$ be local coordinates of $X$ defined in a neighborhood $U$ of $p$ such that $x(p)=0$ and \eqref{e-gue180722I}, 
\eqref{e-gue180722II}, \eqref{e-gue180722III}, \eqref{e-gue180723}, \eqref{e-gue180724} hold. Put $N_p:=\set{g\in G;\, g\circ p=p}=\set{g_1:=e_0, g_2,\ldots,g_r}$. Let $\hat D\Subset D\Subset U$ be open sets of $p$ as in Lemma~\ref{l-gue180730b} and Lemma~\ref{l-gue180730bI} respectively. We will use the same notations as in Section~\ref{s-gue180722}. Let $C^\infty(X)^G$ denote the set of $G$-invariant smooth functions on $X$. For every $j=1,\ldots,n+1$, and every $k\in\mathbb N$, let 
$\Td f^{(j)}_k(x)\in C^\infty_0(\hat D)$ be as in \eqref{e-gue180801sI}, let
\begin{equation}\label{e-gue180920m}
\Td g^{(j)}_k(x):=\int_G\Td f^{(j)}_k(g\circ x)d\mu(g)\in C^\infty(X)^G
\end{equation}
and set 
\begin{equation}\label{e-gue180920mI}
g^{(j)}_k:=S\Td g^{(j)}_k\in L^2(X)\bigcap{\rm Ker\,}\ddbar_b.
\end{equation}
We can repeat the proofs of Theorem~\ref{t-gue180731r} and Theorem~\ref{t-gue180731w} and deduce 

\begin{theorem}\label{t-gue180920m}
For every $k\in\mathbb N$, we have $g^{(j)}_k\in C^\infty(X)^G\bigcap{\rm Ker\,}\ddbar_b$, $j=1,2,\ldots,n+1$, and 
\begin{equation}\label{e-gue180921}
\begin{split}
&\lim_{k\To+\infty}dg^{(j)}_k(p)=r\pi^{\frac{d}{2}}dz_j,\ \ j=1,2,\ldots,n,\\
&\lim_{k\To+\infty}dg^{(n+1)}_k(p)=r\pi^{\frac{d}{2}}dx_{2n+1}.
\end{split}
\end{equation}
\end{theorem}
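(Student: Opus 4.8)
The plan is to run the three-step argument behind Theorem~\ref{t-gue180731r} and Theorem~\ref{t-gue180802} with the $G$-finite weight there replaced by the constant $1$. Let $m_0\in\N$ index the trivial representation in $\mathcal R$, so that $\mathcal R_{m_0,1,1}(g)\equiv 1$ and, by Corollary~\ref{c-gue180716ap}, $C^\infty_{m_0}(X)$ coincides with the space $C^\infty(X)^G$ of $G$-invariant smooth functions. Since $\Td g^{(j)}_k(x)=\int_G\Td f^{(j)}_k(g\circ x)\,\ol{\mathcal R_{m_0,1,1}(g)}\,d\mu(g)$, Theorem~\ref{t-gue180716ap} gives $\Td g^{(j)}_k\in C^\infty_{m_0}(X)=C^\infty(X)^G$ (it is of course also just a $G$-average).

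First I would check that $\ddbar_b\Td g^{(j)}_k=O(k^{-\infty})$: Theorem~\ref{t-gue180801s} gives $\ddbar_b\beta_j=O(\abs{x}^N)$ for every $N$, and together with ${\rm Im\,}\varphi(x)\geq c\abs{x}^2$ from \eqref{e-gue180730mp} and the cutoffs in \eqref{e-gue180801sI}, the same estimate as in \eqref{e-gue180731p} yields $\ddbar_b\Td f^{(j)}_k=O(k^{-\infty})$, whence $\ddbar_b\Td g^{(j)}_k=\int_G g^*(\ddbar_b\Td f^{(j)}_k)\,d\mu(g)=O(k^{-\infty})$ by \eqref{e-gue180711p}. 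Decomposing as in \eqref{e-gue180731sII} gives $S\Td g^{(j)}_k=S_{m_0}\Td g^{(j)}_k$, so Theorem~\ref{t-gue180720f} shows $g^{(j)}_k\in C^\infty_{m_0}(X)\subset C^\infty(X)^G$ and, by \eqref{e-gue180720fI},
\[
\norm{g^{(j)}_k-\Td g^{(j)}_k}_{C^\ell(X)}=\norm{(I-S_{m_0})\Td g^{(j)}_k}_{C^\ell(X)}\leq C\norm{\ddbar_b\Td g^{(j)}_k}_{C^K(X,\Complex T^*X)}=O(k^{-\infty})
\]
for every $\ell\in\N$. Since $g^{(j)}_k\in{\rm Ker\,}\ddbar_b$ by the definition of $S$ and $dg^{(j)}_k(p)=d\Td g^{(j)}_k(p)+O(k^{-\infty})$, it remains only to compute $\lim_{k\to+\infty}d\Td g^{(j)}_k(p)$.

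For that limit I would repeat the computation leading to \eqref{e-gue180802} verbatim with $b(g)$ replaced by the constant $1$: using Lemma~\ref{l-gue180730b} and Lemma~\ref{l-gue180730bI}, the integral $\int_G\Td f^{(j)}_k(g\circ x)\,d\mu(g)$ localizes, for $x$ near $p$, to a finite sum over $N_p=\{g_1,\dots,g_r\}$ of integrals over a small neighborhood of $e_0$ in $G$; introducing the coordinates of \eqref{e-gue180722I}, rescaling $\theta\mapsto\theta/\sqrt k$ in the group directions $x''$ and passing to the limit so that the quadratic part of ${\rm Im\,}\varphi$ in \eqref{e-gue180730mpI} produces the Gaussian $e^{-\abs{\theta}^2}$, the factor $\beta_j(x)=z_j+O(\abs{x}^2)$ (respectively $\beta_{n+1}(x)=x_{2n+1}+O(\abs{x}^2)$) from \eqref{e-gue180801s} contributes exactly its linear term to the differential at $p$. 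Since the weight is now $1$, the sum $\sum_{s=1}^r\ol{b(g_s)}$ appearing in \eqref{e-gue180802} becomes $\sum_{s=1}^r 1=r$, and we obtain $\lim_{k\to+\infty}d\Td g^{(j)}_k(p)=r\pi^{\frac d2}dz_j$ for $j=1,\dots,n$ and $r\pi^{\frac d2}dx_{2n+1}$ for $j=n+1$, which with the previous paragraph proves \eqref{e-gue180921}.

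The only step with genuine content is the last one, extracting the leading term of the Laplace-type asymptotics of the \emph{derivative} $d\Td g^{(j)}_k(p)$ rather than of the value $\Td g^{(j)}_k(p)$; but this is precisely the computation already performed for Theorem~\ref{t-gue180802}, so there is no new obstacle. The role of the present statement is only to record that taking the constant weight keeps $g^{(j)}_k$ honestly $G$-invariant, which is what is needed for the CR orbifold embedding in Section~\ref{s-gue180920}.
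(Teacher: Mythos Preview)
The proposal is correct and follows essentially the same approach as the paper, which simply says to repeat the proofs of Theorem~\ref{t-gue180731r} and Theorem~\ref{t-gue180731w}; you have usefully made explicit that the constant weight $b\equiv 1$ corresponds to the trivial representation $m_0$, so that the machinery of Section~\ref{s-gue180716} (in particular Theorem~\ref{t-gue180720f} for $m=m_0$) applies directly. Your reference to Theorem~\ref{t-gue180802} rather than Theorem~\ref{t-gue180731w} is in fact the more accurate pointer, since it is the computation of the differential (not the value) that is being repeated.
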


From \eqref{e-gue180921}, we conclude that there is a $k_0\in\mathbb N$, such that for every $k\geq k_0$, 
\begin{equation}\label{e-gue180921m}
\mbox{$\set{(dg^{(1)}_k)(p),(dg^{(2)}_k)(p),\ldots,(dg^{(n+1)}_k(p)}$ are linear independent}.
\end{equation}
Put 
\begin{equation}\label{e-gue180921mI}
g^{(j)}_p:=g^{(j)}_{k_0}\in C^\infty_G(X)\bigcap{\rm Ker\,}\ddbar_b,\  \ j=1,2,\ldots,n+1.
\end{equation}
Consider the $G$-invariant CR map
\begin{equation}\label{e-gue180921mII}
\begin{split}
\hat G_p: X&\To\Complex^{n+1},\\
x&\To (g^{(1)}_p(x),g^{(2)}_p(x),\ldots,g^{(n+1)}_p(x))\in\Complex^{n+1}.
\end{split}
\end{equation}
For $x\in X$, put $H_xX:={\rm Re\,}T^{1,0}_xX$ and set 
\begin{equation}\label{e-gue180922}
\hat H_x(X):={\rm span\,}\set{H_xX, T(x)}. 
\end{equation}
From \eqref{e-gue180921m}, we see that the differential 
\[d\hat G_p: T_pX\To T_p\Complex^{n+1}\]
is injective on $\hat H_pX$, that is, $(d\hat G_p)(V)\neq0$, for every $V\in\hat H_pX$. 
From this observation and the inverse function theorem, we get 

\begin{theorem}\label{t-gue180921mp}
With the notations above, there is an open set $V_p\subset U$ of $p$ in $X$ such that the differential of the map $\hat G_p$ is injective  on $\hat H_xX$ at every point $x$ of $V_p$ and $\hat G_p$ is injective on $V_p/G$ in the sense that for every $x, y\in V_p$ with $x\notin\set{g\circ y;\, g\in G}$, we have $\hat G_p(x)\neq\hat G_p(y)$. 
\end{theorem}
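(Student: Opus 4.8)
The plan is to prove the two assertions of Theorem~\ref{t-gue180921mp} separately. The statement about the differential $d\hat G_p$ is a pointwise, open-condition matter, while the local injectivity modulo $G$ will be obtained by restricting $\hat G_p$ to a transversal of the $G$-orbit through $p$ and invoking the inverse function theorem twice.

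For the first assertion I would argue as follows. By \eqref{e-gue180921m} and the discussion preceding the statement, at the point $p$ the differential $d\hat G_p$ is already injective on $\hat H_pX$. Since $\Complex TX=T^{1,0}X\oplus T^{0,1}X\oplus\Complex T\oplus\Complex\underline{\mathfrak{g}}$, the real subbundle $\hat H_xX$ defined in \eqref{e-gue180922} has rank $2n+1$, depends smoothly on $x$, and satisfies $T_xX=\hat H_xX\oplus\underline{\mathfrak{g}}_x$ in a neighbourhood of $p$. Choosing a smooth local frame $e_1(x),\dots,e_{2n+1}(x)$ of $\hat H_xX$, the tuple $\big(d\hat G_p(e_1(x)),\dots,d\hat G_p(e_{2n+1}(x))\big)$ in $T_{\hat G_p(x)}\Complex^{n+1}\cong\Real^{2n+2}$ is linearly independent at $x=p$; hence some $(2n+1)\times(2n+1)$ minor of the corresponding matrix is nonzero at $p$, and by continuity it stays nonzero on a neighbourhood $V_p^{(1)}$ of $p$, on which $d\hat G_p$ is then injective on $\hat H_xX$.

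For the second assertion I would first pick a small embedded submanifold $S\subset U$ through $p$ with $\dim S=2n+1$ and $T_pS=\hat H_pX$. Because $\hat G_p$ is $G$-invariant, the differential of $\hat G_p|_S$ at $p$ equals $d\hat G_p(p)|_{\hat H_pX}$, which is injective; as $\dim S=2n+1<2n+2=\dim_\Real\Complex^{n+1}$, the map $\hat G_p|_S$ is an immersion at $p$, hence—by the inverse function theorem—an embedding, and in particular injective, on a smaller neighbourhood $S_0\subset S$ of $p$. Next I would consider the action map $\Phi\colon G\times S\to X$, $\Phi(g,s)=g\circ s$: its differential at $(e_0,p)$ is $(\xi,v)\mapsto\xi_X(p)+v$, with image $\underline{\mathfrak{g}}_p+T_pS=\underline{\mathfrak{g}}_p+\hat H_pX=T_pX$, so $\Phi$ is a submersion near $(e_0,p)$ and hence an open map there; this yields a neighbourhood $V_p\subset V_p^{(1)}$ of $p$ such that every $x\in V_p$ can be written $x=g\circ s$ with $g$ close to $e_0$ and $s\in S_0$. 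Then, given $x,y\in V_p$ with $\hat G_p(x)=\hat G_p(y)$, writing $x=g_1\circ s_1$ and $y=g_2\circ s_2$ with $s_1,s_2\in S_0$ and using $G$-invariance yields $\hat G_p(s_1)=\hat G_p(s_2)$, hence $s_1=s_2$ by injectivity of $\hat G_p|_{S_0}$, whence $y=(g_2g_1^{-1})\circ x$; this is exactly the contrapositive of the claimed injectivity on $V_p/G$.

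The routine ingredients are the continuity/minor argument and the two applications of the inverse function theorem. The point I expect to be the main obstacle is conceptual rather than computational: $\hat G_p$ is itself far from injective—being $G$-invariant it is constant along the $d$-dimensional orbits—so the inverse function theorem cannot be applied to $\hat G_p$ directly. Passing to a transversal $S$ and exploiting the submersivity of $\Phi$ is precisely what upgrades ``$\hat G_p|_{S_0}$ is injective'' to ``$\hat G_p$ is injective on $V_p$ modulo $G$'', and one must be careful to shrink $V_p$ so that every one of its points genuinely has its slice representative inside $S_0$. (One could equally run the argument through the constant-rank theorem: near $p$, the first assertion together with $G$-invariance shows $\ker d\hat G_p|_x=\underline{\mathfrak{g}}_x$, so $\hat G_p$ has constant rank $2n+1$ there; its connected local level sets are $d$-dimensional and contain the local orbit pieces, and since the orbits are relatively closed in $X$ these must coincide, giving the same conclusion.)
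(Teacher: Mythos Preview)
Your proposal is correct and takes essentially the same approach as the paper: the paper's entire argument for Theorem~\ref{t-gue180921mp} is the single sentence ``From this observation and the inverse function theorem, we get\ldots'', and what you have written is a careful unpacking of precisely that step---first the open-condition argument for injectivity of $d\hat G_p$ on $\hat H_xX$, then the slice-and-action-map argument (equivalently, the constant-rank reformulation you mention) to pass from local injectivity on a transversal to injectivity modulo $G$. In fact, since $\dim(G\times S)=d+(2n+1)=\dim X$, your submersion $\Phi$ is actually a local diffeomorphism near $(e_0,p)$, which makes the ``every $x\in V_p$ has a slice representative in $S_0$'' step even cleaner than you indicate.
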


Similarly, we can repeat the proof of Corollary~\ref{c-gue180802} with minor changes and deduce that 

\begin{theorem}\label{t-gue180921my}
With the notations used above, there is a $g_p(x)\in C^\infty(X)^G\bigcap{\rm Ker\,}\ddbar_b$ and an open set $\Td V_p\Subset V_p$ of  $p$ such that
\begin{equation}\label{e-gue180921my}
\abs{g_p(x)}\geq\frac{3}{4}r\pi^{\frac{d}{2}},\  \ \mbox{for every $x\in\Td V_p$},
\end{equation}
and  
\begin{equation}\label{e-gue180921myI}
\abs{g_{p}(x)}\leq \frac{1}{2}r\pi^{\frac{d}{2}},\ \ \mbox{for every $x\notin G V_p$}, 
\end{equation}
where $GV_p:=\set{g\circ x;\, g\in G, x\in V_p}$. 
\end{theorem}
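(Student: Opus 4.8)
The plan is to run the construction behind Theorem~\ref{t-gue180731w} and Corollary~\ref{c-gue180802} once more, but with the $G$-finite weight $\ol{b(g)}$ replaced by the constant weight $1$. The output is then a genuinely $G$-invariant (not merely $G$-finite) CR function, which is why the final statement — unlike Corollary~\ref{c-gue180802} — needs no ``$h\notin\bigcup^r_{s,t=1}g_sW_1g_t$'' case distinction: once $g_p$ is $G$-invariant, $\abs{g_p(h\circ x)}=\abs{g_p(x)}$ for all $h\in G$, so one only has to compare $GV_p$ with the support region of the quasimode.

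First I would fix the neighbourhood $V_p$ from Theorem~\ref{t-gue180921mp} and take $\Td f_k$ as in \eqref{e-gue180730d}, choosing the cutoff $\chi$ there to be supported in $V_p\cap\hat D$ and identically $1$ near $p$. Set
\[\Td g_k(x):=\int_G\Td f_k(g\circ x)\,d\mu(g)\in C^\infty(X)^G,\qquad g_k:=S\Td g_k.\]
Since $\Td g_k$ is $G$-invariant and $S$ commutes with the pullbacks $g^*$ (by \eqref{e-gue180711p} and the $G$-invariance of $(\cdot\,|\,\cdot)$), the function $g_k$ is again $G$-invariant; and because $\Td g_k$ lies in a single Fourier component and $\ddbar_b\Td g_k=\int_G g^*(\ddbar_b\Td f_k)\,d\mu(g)=O(k^{-\infty})$ by \eqref{e-gue180731p}, arguing as in Theorem~\ref{t-gue180731r} gives $g_k\in C^\infty(X)^G\cap{\rm Ker\,}\ddbar_b$ and $g_k=\Td g_k+O(k^{-\infty})$.

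Next I would compute $\lim_{k\to+\infty}g_k(p)$ as in the computation \eqref{e-gue180801p}. By Lemma~\ref{l-gue180730bI}, $\Td f_k(g\circ p)=0$ unless $g\in\bigcup^r_{s=1}W_1g_s$; writing $g=g'\circ g_s$ with $g'$ near $e_0$ and using the Frobenius coordinates \eqref{e-gue180722I} (so that $g'\circ p$ has $X$-coordinates $(0,\dots,0,\theta_1,\dots,\theta_d)$ where $\theta$ are the coordinates of $g'$) together with \eqref{e-gue180730mpI} (so $\varphi$ restricted to such points equals $i\abs{\theta}^2+O(\abs{\theta}^3)$), the rescaling $\theta\mapsto\theta/\sqrt k$ turns each of the $r$ summands into a Gaussian integral converging to $\pi^{\frac d2}$. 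Hence $\lim_{k\to+\infty}g_k(p)=r\pi^{\frac d2}$, a real positive number, and by continuity $\abs{g_k}\geq\frac34 r\pi^{\frac d2}$ on a neighbourhood of $p$ for all large $k$. On the other hand $\Td g_k(x)\neq0$ forces $g\circ x\in{\rm supp\,}\chi\subset V_p$ for some $g\in G$, i.e.\ $x\in GV_p$; so $\Td g_k$ vanishes on $X\setminus GV_p$ and therefore $\abs{g_k(x)}=\abs{(g_k-\Td g_k)(x)}=O(k^{-\infty})$ uniformly there. Choosing $k_0$ so large that simultaneously $\abs{g_{k_0}}\geq\frac34 r\pi^{\frac d2}$ near $p$ and $\abs{g_{k_0}}\leq\frac12 r\pi^{\frac d2}$ on $X\setminus GV_p$, setting $g_p:=g_{k_0}$, and letting $\Td V_p\Subset V_p$ be a small neighbourhood of $p$ on which $\abs{g_p}\geq\frac34 r\pi^{\frac d2}$, yields both inequalities of the statement.

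The only genuinely new bookkeeping relative to Corollary~\ref{c-gue180802} is the coordination of cutoffs: $\chi$ must be supported inside $V_p$ so that ${\rm supp\,}\Td g_k\subset GV_p$, while still being flat enough near $p$ (together with $\tau(\tfrac{\sqrt k}{\log k}x'')$) for the $G$-average to localize to exactly the $r$ stabilizer elements $g_1,\dots,g_r$ and produce a clean Gaussian in the limit. Since $V_p$ and $\hat D$ are both neighbourhoods of $p$ and one may shrink $\hat D$ freely while keeping $\chi\equiv1$ near $p$, these requirements are compatible; everything else is the stationary-phase and Szeg\H o-regularization argument already established for Theorems~\ref{t-gue180731r} and \ref{t-gue180731w}. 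I expect this coordination — and the (routine) verification that the localization computation \eqref{e-gue180801p} goes through with the trivial weight — to be the only step requiring care.
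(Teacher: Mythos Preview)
Your proposal is correct and is exactly the ``minor change'' the paper has in mind: replace the $G$-finite weight $\ol{b(g)}$ by the constant weight $1$ (as already done in \eqref{e-gue180920m}--\eqref{e-gue180920mI}), observe that the resulting $g_k$ is $G$-invariant so the case distinction \eqref{e-gue180802qaI} collapses, and rerun the Gaussian computation \eqref{e-gue180801p} and the support argument behind \eqref{e-gue180802q}/\eqref{e-gue180802qaII} with the cutoff $\chi$ supported in $V_p$. The only care needed---which you correctly flag---is that $\chi$ must be chosen with ${\rm Supp\,}\chi\subset V_p\cap\hat D$ so that both Lemma~\ref{l-gue180730bI} applies (giving the $r$-term localization) and ${\rm Supp\,}\Td g_k\subset GV_p$; since $V_p$ and $\hat D$ are both neighborhoods of $p$ this is immediate.
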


For every $x\in X$, let $V_{x}$ and $\Td V_x$ be open sets as in Theorem~\ref{t-gue180921mp} and Theorem~\ref{t-gue180921my} respectively and 
let $g^{(j)}_x\in C^\infty(X)^G\bigcap{\rm Ker\,}\ddbar_b$, $j=1,2,\ldots,n+1$, $g_x\in C^\infty(X)^G\bigcap{\rm Ker\,}\ddbar_b$ be as in \eqref{e-gue180921mII} and Theorem~\ref{t-gue180921my} respectively. Suppose that 
\[X=\Td V_{p_1}\bigcup\Td V_{p_2}\bigcup\cdots\bigcup\Td V_{p_M},\ \ M\in\mathbb N.\]
For every $p_j$, $j=1,2,\ldots,M$, put 
\begin{equation}\label{e-gue180921myII}
\begin{split}
G_{p_j}: X&\To\Complex^{n+2},\\
x&\To (g^{(1)}_{p_j}(x),\ldots,g^{(n+1)}_{p_j}(x),g_{p_j}(x))\in\Complex^{n+2}.
\end{split}
\end{equation}
Consider the $G$-invariant CR map
\begin{equation}\label{e-gue180921myIII}
\begin{split}
G: X&\To\Complex^{M(n+2)},\\
x&\To (G_{p_1}(x), G_{p_2}(x),\ldots,G_{p_M}(x))\in\Complex^{M(n+2)}.
\end{split}
\end{equation}
We can now prove 

\begin{theorem}\label{t-gue180921ma}
With the notations used above, $G$ is injective on $X/G$ in the sense that for every $x, y\in X$ with $x\notin\set{g\circ y;\, g\in G}$, we have $G(x)\neq G(y)$. 
\end{theorem}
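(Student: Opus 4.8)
The plan is to follow the scheme of the proof of Theorem~\ref{t-gue180803}, using crucially that — in contrast to the manifold setting there — every component of $G$ is now $G$-invariant (Theorem~\ref{t-gue180920m}, Theorem~\ref{t-gue180921my}), so that the group element carrying a point into a chart is irrelevant and no third case arises. I would fix $x,y\in X$ with $x\notin\set{g\circ y;\,g\in G}$ and, using $X=\Td V_{p_1}\cup\cdots\cup\Td V_{p_M}$, pick $j$ with $x\in\Td V_{p_j}$; write $p:=p_j$. Since $G=(G_{p_1},\ldots,G_{p_M})$, it then suffices to produce a component of $G_p$ on which $x$ and $y$ differ, and I would split according to whether $y$ meets the $G$-saturation $GV_p$ of the chart around $p$.

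In the first case, $y\notin GV_p$, I would use the $G$-invariant CR function $g_p$: by \eqref{e-gue180921my} we have $\abs{g_p(x)}\geq\frac{3}{4}r\pi^{\frac{d}{2}}$ (as $x\in\Td V_p$) while \eqref{e-gue180921myI} gives $\abs{g_p(y)}\leq\frac{1}{2}r\pi^{\frac{d}{2}}$, so $g_p(x)\neq g_p(y)$ and hence $G_p(x)\neq G_p(y)$, using that $g_p$ is the last slot of $G_p$ in \eqref{e-gue180921myII}. In the second case, $y\in GV_p$, choose $h\in G$ with $h\circ y\in V_p$; since the $G$-orbit of $h\circ y$ is that of $y$ and therefore excludes $x$, and since $x\in\Td V_p\Subset V_p$, Theorem~\ref{t-gue180921mp} (injectivity of $\hat G_p$ on $V_p/G$) gives $\hat G_p(x)\neq\hat G_p(h\circ y)$. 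Because $g^{(1)}_p,\ldots,g^{(n+1)}_p\in C^\infty(X)^G$, we have $\hat G_p(h\circ y)=\hat G_p(y)$, so $\hat G_p(x)\neq\hat G_p(y)$, and as these are the first $n+1$ coordinates of $G_p$ (cf.\ \eqref{e-gue180921mII}, \eqref{e-gue180921myII}) we again get $G_p(x)\neq G_p(y)$. Combining the two cases, $G_{p_j}(x)\neq G_{p_j}(y)$, whence $G(x)\neq G(y)$.

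I do not expect a real obstacle in this last step: all the substantive work is already contained in Theorems~\ref{t-gue180921mp} and \ref{t-gue180921my}, and the only subtlety — namely the extra case distinction needed in the manifold version according to which element of $\bigcup_{s,t}g_sW_pg_t$ moves $y$ near $p$ — disappears here precisely because $g^{(j)}_p$ and $g_p$ are $G$-invariant rather than merely $G$-finite. The one point to keep an eye on is bookkeeping: ensuring the bounds in \eqref{e-gue180921my} and \eqref{e-gue180921myI} and the chart cover by the $\Td V_{p_j}$ are matched so that, relative to the chart chosen for $x$, every $y$ falls into exactly one of the two cases.
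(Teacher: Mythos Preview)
Your proposal is correct and follows essentially the same approach as the paper's own proof: fix a point $x$ in some $\Td V_{p_j}$, split into the two cases $y\notin GV_{p_j}$ and $y\in GV_{p_j}$, and use respectively the peak function $g_{p_j}$ and the $G$-invariance of $\hat G_{p_j}$ together with Theorem~\ref{t-gue180921mp}. Your additional remark explaining why the third case from Theorem~\ref{t-gue180803} disappears here is apt and correct, though the paper does not make this comparison explicit.
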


\begin{proof}
Fix $x_0\in X$ and $y_0\in X$ with $x_0\notin\set{g\circ y_0;\, g\in G}$. We may assume that $x_0\in\Td V_{p_1}$. 

\begin{itemize}
\item[case I.] If $y_0\notin GV_{p_1}$ : From \eqref{e-gue180921my} and \eqref{e-gue180921myI}, we see that 
\[\abs{g_{p_1}(x_0)}\geq\frac{3}{4}r\pi^{\frac{d}{2}}>\frac{1}{2}r\pi^{\frac{d}{2}}\geq\abs{g_{p_1}(y_0)}.\]
Hence, $G(x_0)\neq G(y_0)$.

\item[case II.] If $y_0\in GV_{p_1}$: Take $g_0\in G$ and $y_1\in V_{p_1}$ so that $g_0\circ y_1=y_0$. In view of Theorem~\ref{t-gue180921mp}, we see that $\hat G_{p_1}(x_0)\neq\hat G_{p_1}(y_1)$. Since $\hat G_{p_1}$ is $G$-invariant, $\hat G_{p_1}(y_0)=\hat G_{p_1}(y_1)$. We deduce that $\hat G_{p_1}(x_0)\neq \hat G_{p_1}(y_0)$ and hence  $G(x_0)\neq G(y_0)$.
\end{itemize}
\end{proof}

From Theorem~\ref{t-gue180921mp} and Theorem~\ref{t-gue180921ma}, we get Theorem~\ref{t-gue180922m}. 

\section{Induced CR structures}\label{Sec:InducedCRStructures}

Let $X$ be a CR manifold of codimension $d$ and $F \colon X \rightarrow \Complex^m$ a CR map which is an embedding. 
For $d = 1$, we have that \(F\) is a CR embedding, that is, $F(X)$ is a CR submanifold of $\C^m$, meaning that the CR structure is induced by the surrounding space, and \(d\hat{F}\left(T^{1,0}X\right)=\C T\hat{F}(X)\cap T^{1,0}\C^N\). 
For general $d$, this is not obvious. We conclude with the following result.

\begin{theorem}\label{t-gue181015I}
Let $(X, T^{1,0}X)$ be a $(2n+1+d)$-dimensional compact and orientable CR manifold of codimension $d+1$, $d\geq1$. Assume that $X$ admits a CR action of a $d$-dimensional compact Lie group $G$.  Let $T$ be a globally
defined vector field on $X$ such that $\Complex TX=T^{1,0}X\oplus T^{0,1}X\oplus\Complex T\oplus\Complex\underline{\mathfrak{g}}$, where $\underline{\mathfrak{g}}$ is the space of vector fields on $X$ induced by the Lie algebra of $G$. If $X$ is strongly pseudoconvex in the direction of $T$ and $n\geq 2$, then we can find a $G$-equivariant CR embedding $F \colon X \rightarrow \Complex^m$ into some $G$-representation $\Complex^m$, that is \(F\) is a smooth embedding,  $F(X)$ is a CR submanifold of $\Complex^m$ and \(d\hat{F}\left(T^{1,0}X\right)=\C T\hat{F}(X)\cap T^{1,0}\C^N\).
\end{theorem}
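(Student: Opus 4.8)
The plan is to show that the embedding $F=(f_1,\dots ,f_m)\colon X\to\C^m$ constructed in \eqref{e-gue180803I} — all of whose components lie in $H^0_{b,G}(X)$ — already has the property that $F(X)$ is a CR submanifold of $\C^m$ with $dF(T^{1,0}X)=\C TF(X)\cap T^{1,0}\C^m$, and then to transfer this to the $G$-equivariant model $\hat F$ of Lemma~\ref{t-gue180807}. Throughout, write $\pi^{1,0},\pi^{0,1}$ for the projections of $\C T\C^m$ onto $T^{1,0}\C^m$ and $T^{0,1}\C^m$, and for $x\in X$ put $\mathfrak n_x:=T^{1,0}_xX\oplus\C T_x\oplus\C\underline{\mathfrak g}_x$, a complement of $T^{0,1}_xX$ in $\C T_xX$.

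First I would reduce everything to the pointwise condition $(\star)$: for every $x\in X$ the linear map $\mathfrak n_x\to\C^m$, $V\mapsto(Vf_1,\dots ,Vf_m)$, is injective. The inclusion $dF(T^{1,0}X)\subseteq\C TF(X)\cap T^{1,0}\C^m$ is automatic because $F$ is CR. For the converse, fix $x$ and $W\in\C T_xX$ with $dF(W)\in T^{1,0}\C^m$, and decompose $W=W_1+W_2+aT+\zeta$ with $W_1\in T^{1,0}_xX$, $W_2\in T^{0,1}_xX$, $a\in\C$, $\zeta\in\C\underline{\mathfrak g}_x$. Since the $f_j$ are CR, $dF(W_1)\in T^{1,0}\C^m$ and $dF(W_2)\in T^{0,1}\C^m$, so $dF(W_2)+\pi^{0,1}(dF(aT+\zeta))=\pi^{0,1}(dF(W))=0$; conjugating this identity and using $\overline{dF(u)}=dF(\bar u)$ together with $\overline{W_2}\in T^{1,0}_xX$ yields $\pi^{1,0}(dF(V))=0$ for $V:=\overline{W_2}+\bar aT+\bar\zeta\in\mathfrak n_x$. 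As $\pi^{1,0}(dF(V))=\sum_j(Vf_j)\,\partial_{z_j}$, this forces $Vf_j=0$ for all $j$, hence $V=0$ by $(\star)$, hence $W_2=0$, $a=0$, $\zeta=0$ and $W=W_1\in T^{1,0}_xX$. Condition $(\star)$ also forces $\C TF(X)\cap T^{1,0}\C^m$ to have constant rank $n$, so $F(X)$ is genuinely a CR submanifold; integrability of the induced structure and triviality of its intersection with the conjugate are then automatic because it equals $dF(T^{1,0}X)$ and these properties hold for $T^{1,0}X$.

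Next I would verify $(\star)$ using the construction of Section~\ref{s-gue180722}. Fix $p\in X$ and work in the Frobenius normal coordinates of \eqref{e-gue180722I}--\eqref{e-gue180722III} with the $G$-finite CR functions $f^{(1)}_p,\dots ,f^{(n+1)}_p$, $H^{(1)}_p,\dots ,H^{(d)}_p$ of \eqref{e-gue180802raI}, the asymptotics \eqref{e-gue180802} and \eqref{e-gue180802r}, and the nondegeneracy \eqref{e-gue180802II} (recall $T_j=\xi_{j,X}$, so $T_jH^{(\ell)}_{k}(p)=\langle dH^{(\ell)}_k(p),\xi_{j,X}(p)\rangle$). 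Pairing the differentials at $p$ of these $n+1+d$ functions against the basis $\partial_{z_1}(p),\dots ,\partial_{z_n}(p),T(p),\xi_{1,X}(p),\dots ,\xi_{d,X}(p)$ of $\mathfrak n_p$, and using that $dz_k$ annihilates $T(p)$ and every $\xi_{j,X}(p)$ by \eqref{e-gue180722II} while $dx_{2n+1}$ annihilates every $\partial_{z_k}(p)$ and every $\xi_{j,X}(p)$, one sees that for $k_0$ large the resulting $(n+1+d)\times(n+1+d)$ matrix is block lower triangular with diagonal blocks a nonzero multiple of $I_n$, a nonzero scalar, and (a nonzero multiple of) the invertible matrix of \eqref{e-gue180802II}; hence it is invertible. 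Thus $(\star)$ holds at $p$, and by continuity on an open neighbourhood of $p$. Shrinking the sets $D_{0,p}$ accordingly — which affects neither Theorems~\ref{t-gue180802ta} and \ref{t-gue180802taI} nor the injectivity argument of Theorem~\ref{t-gue180803} — and re-choosing the finite cover $X=\bigcup_l D_{0,p_l}$, I may assume that on each $D_{0,p_l}$ the map $V\mapsto(Vf^{(j)}_{p_l},VH^{(i)}_{p_l})$ is already injective on $\mathfrak n_x$; since these functions occur among the components of $F$ and adjoining components can only shrink the kernel in $(\star)$, $F$ satisfies $(\star)$ everywhere. Finally, for equivariance I would set $W:=\mathrm{span}_\C(\bigcup_jGf_j)\subseteq H^0_b(X)$, a finite-dimensional $G$-representation, and pass to $\hat F\colon X\to W^*$, $\hat F(x)=(h\mapsto h(x))$, as in Lemma~\ref{t-gue180807}: $\hat F$ is a $G$-equivariant CR embedding into the $G$-representation $W^*$, and since each $f_j\in W$ the validity of $(\star)$ for $\{f_j\}$ immediately implies it for a basis of $W$, so the reduction of the previous step applied to $\hat F$ gives $d\hat F(T^{1,0}X)=\C T\hat F(X)\cap T^{1,0}W^*$.

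The main obstacle is precisely $(\star)$: one must upgrade the \emph{real}-linear injectivity of $d\hat F_p$ from Theorem~\ref{t-gue180802w} to the \emph{complex}-linear injectivity of $V\mapsto(Vf_j)_j$ on $\mathfrak n_x$. This is where the concrete features of the functions produced in Section~\ref{s-gue180722} — in particular that the $H^{(\ell)}_p$ detect exactly the $\underline{\mathfrak g}$-directions, quantified by \eqref{e-gue180802II} — are essential, rather than merely the embedding property. Everything else is formal linear algebra once $(\star)$ is in hand.
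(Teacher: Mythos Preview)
Your proof is correct, and your linear-algebraic criterion $(\star)$ together with its verification from the asymptotics \eqref{e-gue180802}, \eqref{e-gue180802r} and the invertibility \eqref{e-gue180802II} is a clean and self-contained argument. The paper takes a different route: instead of revisiting the specific functions $f^{(j)}_p,H^{(\ell)}_p$ of Section~\ref{s-gue180722}, it adjoins to the equivariant embedding $F$ the $G$-\emph{invariant} CR map $E$ coming from the orbifold section (Theorem~\ref{t-gue180921mp}), whose differential is injective on $\hat H_xX=\mathrm{Re}\,T^{1,0}_xX\oplus\mathbb{R}T(x)$, and then argues by a dimension count on $T_yH(X)+iT_yH(X)$: the complexified $G$-orbit $\mathfrak{g}^{\mathbb C}y$ in the representation space already contributes $2d$ real dimensions, and $E$ forces the $T$-direction to add the remaining one. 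In effect the paper outsources the handling of the $\underline{\mathfrak g}$-directions to the representation-theoretic fact that a linear $G$-action extends to $G^{\mathbb C}$, whereas you detect them explicitly via the matrix \eqref{e-gue180802II}. Your approach has the advantage of not needing Section~\ref{s-gue180920} at all and of isolating exactly the pointwise obstruction $(\star)$; the paper's approach is shorter and more conceptual, and makes transparent why equivariance (and not merely $G$-finiteness) of the target is what accounts for the $\underline{\mathfrak g}$-directions. One cosmetic point: with your ordering the limiting matrix is block \emph{upper} triangular rather than lower, but this does not affect the invertibility conclusion.
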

\begin{proof}
We have already seen that we may find an equivariant CR map  $F \colon X \rightarrow \Complex^{m_1}$ which is an embedding. 
According to Theorem~\ref{t-gue180921mp}, we may also find a CR map $E \colon X \rightarrow \Complex^{m_2}$ which is $G$-invariant and $dE_x$ is injective on $\hat H_xX$ for every $x \in X$.

Now consider the embedding $H \colon X \rightarrow \Complex^m$, $H= (F,E)$. One may see from linear algebra that $H(X)$ is a CR submanifold of $\Complex^m$ iff dim$_{\mathbb{R}}(T_y H(X) + i T_y H(X)) = 2n+2d+2$ for all $y \in H(X)$, where $T_yH(X)+iT_yH(X)$ denotes 
the space of all vectors of the form $W+JV$ for $W$ and $V$ in $T_yH(X)$ and $J$ is the standard complex structure on complex space. 
Every $G$-representation extends to a $G^\Complex$-representation and since the $G$-action on $X$ is locally free, the $G$-action on $H(X)$ is locally free.
We conclude that dim$_{\mathbb{R}}\mathfrak{g}^\Complex y = 2d$ for all $y \in H(X)$.
This shows that $T_y H(X) + i T_y H(X)$ has at least dimension $2n+2d$. 
Assume there exists a $y$ such that the dimension is exactly $2n+2d$. 
Write $H(x) =y$, then we have $dE(T(x)) \in dE(T^{1,0}_xX \oplus T^{0,1}_xX)$ and $T(x) \in T^{1,0}_x X \oplus T^{0,1}_x X$, which is a contradiction.
\end{proof}

\end{document}